\documentclass[12pt]{amsart}
\usepackage[margin=1in]{geometry}
\usepackage{amsmath,latexsym,amssymb,amsthm,graphicx}
\usepackage{mathtools}
\usepackage{amsthm}
\usepackage{thmtools}
\usepackage{subfig}
\usepackage{xfrac}
\usepackage{fix-cm}
\usepackage{color,transparent}
\usepackage{verbatim}
\usepackage{multirow}

\DeclareMathOperator\Cone{Cone}

\usepackage[bookmarks=true,%
    colorlinks=true,%
    linkcolor=blue,%
    citecolor=blue,%
    filecolor=blue,%
    menucolor=blue,%
    urlcolor=blue]{hyperref}
\usepackage{doi}
 \usepackage{multirow}
\usepackage{float}
\usepackage{array}
\usepackage[shortlabels]{enumitem}
\usepackage{pdfsync}
\usepackage{tikz}
\usepackage{tikz-cd}
\usepackage{float}
\usepackage[T1]{fontenc}
\usepackage[utf8]{inputenc}
\usepackage[textsize=small]{todonotes}
\usepackage{cleveref}

\usepackage{hyphenat}
\usepackage{booktabs}
\usepackage{pifont}
\usepackage{pdfpages}
\usepackage[graphicx]{realboxes}
\usepackage{bm} % Bold math symbols
\usepackage{svg}

\setenumerate[1]{label=(\arabic*)}

\numberwithin{equation}{section}
\numberwithin{figure}{section}

\graphicspath{{mpdraws/}{draws/}}

\newcommand{\dsym}{\ensuremath{d_{\operatorname{Lip}}}}
\newcommand{\dasym}{\ensuremath{d_{\operatorname{Th}}}}
\newcommand{\Cf}{\mathcal{C}_{\mathit{f}}}
\newcommand{\Cnf}{\mathcal{C}_{\mathit{nf}}}

\newcommand{\PCf}{\mathbb{P}\mathcal{C}_{\mathit{f}}}
\newcommand{\PCnf}{\mathbb{P}\mathcal{C}_{\mathit{nf}}}
\renewcommand{\P}{\mathbb{P}}
\newcommand{\calP}{\mathcal{P}}

\newcommand\X{\ensuremath{\mathcal{X}}}
\renewcommand\L{\ensuremath{\mathcal{L}}}

\DeclareMathOperator{\adim}{asdim}

\newtheorem{mainthm}{Theorem}[section]
\newtheorem{maincor}[mainthm]{Corollary}

\newtheorem{theorem}{Theorem}[section]
\newtheorem{lem}[theorem]{Lemma}

\newtheorem{cor}[theorem]{Corollary}
\newtheorem{prop}[theorem]{Proposition}

\theoremstyle{definition}
\newtheorem{rem}[theorem]{Remark}

\newtheorem{example}[theorem]{Example}
\newtheorem{defi}[theorem]{Definition}
\newtheorem{question}[theorem]{Question}

\numberwithin{equation}{section}

\DeclareMathOperator{\Thin}{Thin}

\theoremstyle{plain}

 \newcommand\inc{\ensuremath{\prec}}
 \newcommand\sinc{\ensuremath{{%
  \mathrel{\prec\mkern-9mu\raisebox{0.02ex}{\scalebox{0.9}{$\vert$}}}%
}}}

\newcommand{\Hh}{\mathbb H}
\newcommand{\Th}{\mathrm{Th}}

\newcommand{\eps}{\epsilon}

\newcommand\N{\ensuremath{\mathbb{N}}}
\newcommand\Z{\ensuremath{\mathbb{Z}}}

\newcommand\R{\ensuremath{\mathbb{R}}}
\newcommand\C{\ensuremath{\mathcal{C}}}

\def\H{\ensuremath{\mathbb{H}}} % Redefining "long Hungarian umlaut"
\newcommand\D{\ensuremath{\mathcal{D}}}

\newcommand{\PC}{\mathbb{P}\mathcal{C}}

\DeclareMathOperator{\Curr}{Curr}
\DeclareMathOperator{\Teich}{Teich}

\newcommand{\rank}{\operatorname{rank}}
\newcommand{\rankF}{\operatorname{rankF}}

\DeclareMathOperator{\Dil}{Dil}

\newcommand*{\wt}[1]{\widetilde{#1}}

\renewcommand{\epsilon}{\varepsilon}

\newcommand{\E}{\mathbb{E}}

\DeclareMathOperator{\PSL}{\mathit{PSL}}

\DeclarePairedDelimiter\norm{\lVert}{\rVert}

\newcommand{\Simples}{\mathcal{S}}
\newcommand{\Curves}{\mathrm{Curves}}

\newcommand{\A}{\mathcal{A}}

\newcommand{\PML}{\mathbb{P}\mathcal{ML}}
\newcommand{\sys}{\operatorname{sys}}

\DeclareMathOperator{\MCG}{\mathit{MCG}}
\DeclareMathOperator{\CV}{\mathit{CV}}
\DeclareMathOperator{\Out}{\mathit{Out}}

\DeclareMathOperator{\Hit}{Hit}

\newcommand{\PSimplex}{\mathbb{P}\Delta}
\newcommand{\Simplex}{\Delta}

\usetikzlibrary{arrows}
\tikzset{
    labl/.style={anchor=south, rotate=90, inner sep=.5mm}
}
\tikzstyle{every picture}=[> = to]
\tikzset{cdlabel/.style={execute at begin node=$\scriptstyle,execute at end node=$}}
\tikzset{implication/.style={double equal sign distance, -implies}}
\tikzset{biimplication/.style={double equal sign distance, implies-implies}}

\begin{document}
\title{Large-scale rank and geometry of the Lipschitz metric on geodesic currents}
\author[Martínez-Granado]{Dídac Martínez-Granado}
\address{Department of Mathematics\\University of Luxembourg\\Av. de la Fonte 6, Esch-sur-Alzette, L-4364, Luxembourg}       
\email{didac.martinezgranado@uni.lu}
\author[Sapir]{Jenya Sapir}
\address{Department of Mathematics\\
Whitney Hall\\
4400 Vestal Parkway E\\
Binghamton, New York 13902 }
\email{esapir@binghamton.edu}

%\subjclass[2020]{Primary xxx; Secondary xxx}
%\keywords{}

\begin{abstract}
    For a closed orientable surface of genus at least 2, we study the large-scale geometry of the (symmetrized) Lipschitz metric on the space of projective filling geodesic currents. We show that the rank and, hence, asymptotic dimension of this space is infinite. This gives the first example of a natural proper space associated to a closed surface, with a metrically proper isometric action of the mapping class group, that has infinite asymptotic dimension. In contrast, we prove that the Teichm\"uller space equipped with the Lipschitz metric has rank equal to $3g-3$. In particular, the space of projective filling geodesic currents and the Teichm\"uller space are not quasi-isometric.
We study how simplices of geodesic currents provide natural flats for the Lipschitz metric and reveal the simplicial structure of boundary of projective filling currents. We show that, in contrast to the infinite rank of their ambient space, the rank of simplices is uniformly bounded above by $8g-9$. We then establish a sufficient criterion for the rank and asymptotic dimension of a simplex to be equal to $8g-9$ and construct examples satisfying it, thus showing the bound is sharp. 
\end{abstract}
\maketitle

\vspace{-2.5em}
\section{Introduction}

The \textit{rank} of a metric space is a quasi-isometry invariant that measures the extent to which the space exhibits higher-dimensional Euclidean behavior at large scale. The systematic study of rank originates in work of Gromov~\cite[6.$B_2$]{Gromov1993Asymptotic}, as an obstruction to hyperbolicity. In particular, spaces associated to a hyperbolic surface $S$, such as the Teichm\"uller space, $\Teich(S)$,
with various metrics, and the mapping class group $\MCG(S)$, demonstrate both aspects of hyperbolicity~\cite{Minsky1996,Rafi2014}, and spectacular failures thereof~\cite{LenzhenMasur2010}. However, relatively little is known about the rank and the large-scale geometry of Teichmuller space with the symmetrized Thurston metric~\cite{Th98:ThurstonMetric}, also known as the \emph{Lipschitz metric}, and its extension to the space of projective filling geodesic currents $\PCf(S)$~\cite{Sap22:ExtensionThurston}. In this paper we make progress in this direction. We record what is known, and the contributions in this paper, in the table below. 
\begin{table}[h]
\centering
  \makebox[\textwidth][c]{
   {\small
\begin{tabular}{cclp{14em}r}
\toprule
\textbf{Space} & \textbf{Metric} & \textbf{Geometric rank~$(r)$} & \textbf{Asymptotic dim.~$(d)$}& \textbf{References}\\
\midrule
$\Teich(S)$ & Weil--Petersson
& $r=\lfloor(\xi(S)+1)/2\rfloor$
& $ 
d \le 5\xi(S)^2 + \xi(S)$
&  \\

$\Teich(S)$ & Teichm\"uller
& $r=\xi(S)$
& $
d \le 5\xi(S)^2 + \xi(S)$
& See Section~\ref{subsec:history} below \\

$\MCG(S)$ & Word metric
& $r=\xi(S)$
& $ 
d \le 5\xi(S)^2$
& \\
\hline
$\Teich(S)$ & \textbf{Lipschitz}
& $\mathbf{r=\xi(S)}$& unknown
& Thm.~\ref{mainthm:teichrank} \\
$\PSimplex(\vec \mu)$ & \textbf{Lipschitz}
& $\mathbf{0 \leq r\leq 2\xi(S)-\chi(S)}$& \mbox{\textbf{cases when }$\mathbf{r=d=2\xi(S)-\chi(S)}$} &  Thm.~\ref{mainthm:max_rank}, Thm.~\ref{mainthm:criterion}\\
$\PCf(S)$ & \textbf{Lipschitz}
& $\mathbf{r=\infty}$& \textbf{$\mathbf{d=\infty}$} & Thm.~\ref{mainthm:infinite_geo_rank}, Cor.~\ref{maincor:adim_infinite} \\
\bottomrule
\end{tabular}}}
\caption{\small{Values and bounds for geometric rank and asymptotic dimension of spaces associated to closed surfaces: the main results of this paper are in the three last rows. Note that we always have $d \geq r$.}}
\label{table:ranks}
\end{table}

Here, given a closed orientable genus $g$ surface $S$, we set $\xi(S) = 3g-3$ to be its the complexity, and $\chi(S) = 2-2g$ its Euler characteristic. $\PCf(S)$ is the space of projective filling currents, and $\PSimplex(\vec \mu) \subset \PCf(S)$ is any so-called simplex spanned by a sequence $\vec \mu = (\mu_1, \dots, \mu_n)$ of geodesic currents, defined in \Cref{def:simplex} below.

\subsection{Rank and asymptotic dimension for Teichm\"uller space and mapping class group}
\label{subsec:history}
There is a long history of work done on the rank (Definitions~\ref{def:geo_rank},~\ref{def:fat_rank}) and asymptotic dimension (Definition~\ref{defi:adim}) of Teichm\"uller space $\Teich(S)$ with various metrics, as well as the mapping class group $\MCG(S)$ with any word metric (See Table~\ref{table:ranks} for the most up-to-date results). This has proven to be quite a difficult problem, with various methods of approach.

Lower bounds on geometric rank were found first. Sharp lower bounds were obtained for $\Teich(S)$ with the Teichm\"uller metric by Minsky~\cite{Minsky1996ProductRegions}. For $\MCG(S)$ with any word metric sharp lower bounds follow naturally from Masur-Minsky~\cite{MM00}, though they can be found explicitly in Farb--Lubotzky--Minsky~\cite{FarbLubotzkyMinsky2001}. Lastly, Brock--Farb~\cite{BrockFarb2006} obtained a lower bound of $\lfloor \xi(S)/2 \rfloor$ for the rank of $\Teich(S)$ with the Weil-Petersson metric, but conjectured the actual rank of  $\lfloor (\xi(S) +1)/2 \rfloor$ that appears in \Cref{table:ranks}.

Exact computations of geometric rank followed. Behrstock--Minsky~\cite{BehrstockMinsky2008} gave the exact rank of $\Teich(S)$ with the Weil-Petersson metric, as well as an upper bound on the rank of $\MCG(S)$ that matched the previously found lower bound (see also \cite{ham05} for the latter). The first exact rank computation for the Teichm\"uller metric on $\Teich(S)$ appeared in~\cite{EMR17:Rank}, which also provided a  common framework via coarse differentiation that recovers the results for $\Teich(S)$ with the Weil-Petersson metric, as well as for $\MCG(S)$. 
It should be noted that a different line of inquiry due to Bowditch~\cite{Bow16,Bow18:MCG,Bow20} provides an alternative approach to computing rank via coarse median spaces, which were first introduced in~\cite{Bow13}.

Later work of Bestvina--Bromberg--Fujiwara~\cite{BestvinaBrombergFujiwara2015} established finiteness of the  asymptotic dimension of mapping class groups, giving upper bounds that were exponential in the genus. Subsequently, Behrstock--Hagen--Sisto~\cite{BehrstockHagenSisto2017} obtained the quadratic upper bounds appearing in Table~\ref{table:ranks} for the Teichm\"uller metric and Weil-Petersson metrics on $\Teich(S)$, and $\MCG(S)$ with word metric. In fact, they axiomatized the earlier work of Masur-Minsky~\cite{MM99,MM00} via the language of hierarchically hyperbolic spaces: their work~\cite{BHS17:HHS1} gave another unified framework for computing rank, and~\cite{BHS21:Quasiflats} gave further control on the structure of quasi-flats.

The rank for the Lipschitz metric in Teichm\"uller space was less well understood.  Choi-Rafi's~\cite[Theorem~C]{CR07:Comparison} thick-thin decomposition theorem gives the lower bound $3g-3$ for the rank of $(\Teich(S),\dsym)$. In \Cref{mainthm:teichrank} we will find this to be the exact rank, bypassing the machinery in~\cite{EMR17:Rank,BHS17:HHS1}, which is not currently  applicable for $(\Teich(S),\dsym)$.

\subsection{Rank and asymptotic dimension for geodesic currents}
The main focus of this paper is the extension of the rank problem of $\dsym$ to the setting of geodesic currents. Bonahon introduced the space of geodesic currents $\C(S)$~\cite{Bonahon86:EndsHyperbolicManifolds,Bonahon88:GeodesicCurrent} as a cone of measures providing a completion of the set of weighted closed geodesics on $S$. Its projectivization $\PC(S)$ naturally contains both $\Teich(S)$ and its Thurston boundary $\PML(S)$. The second author~\cite{Sap23:LengthComparison,Sap22:ExtensionThurston} introduced the \emph{Lipschitz metric} $\dsym$ on the subspace $\PCf(S) \subset \PC(S)$ of projective filling geodesic currents---the projectivization of the geometric interior of the cone $\C(S)$--- that extends the 
symmetrized Thurston metric on $\Teich(S)$. 

Geodesic currents have  become a fundamental tool in 
low-dimensional topology and Teichmüller theory, with applications including curve counting~\cite{ES16:Counting,RS19:CountingProblems,ES22:GeodesicCount} and compactifications of higher- rank character varieties~\cite{MOT21:Core,OT21:Blaschke,BIPP21:Currents,BIPP21:Crossratios}, due to the fact that many spaces of marked geometric structures on $S$ and families of representations of $\pi_1(S)$ into higher-rank Lie groups, such as Hitchin representations (see~Section~\ref{subsec:hitchin}), embed into $\PC(S)$. 

The intrinsic geometry of $\PC(S)$ has only begun to be understood recently. An asymmetric version extending Thurston's classical asymmetric metric has been studied by Jyothis and the first author~\cite{JMG26:Horoboundary}, developing its horoboundary and isometric rigidity. The asymmetric extended Thurston metric as well as its symmetrization $\dsym$ restrict to geometrically meaningful metrics on many subspaces of $\PCf(S)$, such as families of higher-rank representations of $\pi_1(S)$~\cite{MZ19:PositivelyRatioed,CDPW24:Asymmetric,Nol24:Holder}. Beyond the setting of surfaces and geodesic currents, the Lipschitz metric $\dsym$ also appears naturally in the more general space $\mathcal{D}(\Gamma)$ of proper hyperbolic invariant pseudometrics on hyperbolic groups~\cite{Fur02:Coarse} and~\cite[Proposition~3.5]{OR22:SpaceMetric}, satisfying $\PCf(S) \subset \mathcal{D}(\pi_1(S))$~\cite{cantrell-reyes.manhattan, DRMG23:Duals}. By work of the first author with Cantrell and Reyes, the metric completion of $\mathcal{D}(\Gamma)$ can be isometrically identified with the space of reparameterizations of the geodesic flow~\cite{CMGR26:Flows-v1}, on which $\dsym$ coincides with the \emph{Hilbert metric} introduced by Tholozan~\cite[Theorem~1.31]{tholozan}.

In this paper, we examine the question of the geometric rank and asymptotic dimension of $\PCf(S)$. We also study a natural family of subspaces. A fundamental feature of $\C(S)$ is that it is closed under positive linear combinations. This means that we can consider a \textit{simplex} of geodesic currents, that is, the projectivized positive cone generated by
a finite family of geodesic currents $\vec \mu = (\mu_0, \dots, \mu_n)$, denoted by $\PSimplex(\vec \mu)$.
We will be mostly interested in simplices for which $\sum_i \mu_i$ is filling, which we will call \emph{filling simplices} (see Definition~\ref{def:simplex}). These form natural geodesic subspaces of $\PCf(S)$. 
Indeed, if $[\mu_1+\mu_2] \in \PCf(S)$, it follows that $t \mapsto [(1-t)\mu_1+t\mu_2]$ for $t \in (0,1)$ forms an unparameterized geodesic connecting $[\mu_1]$ to $[\mu_2]$ ~\cite[Theorem~1.2, Proposition~5.6]{COR22:Manhattan} with respect to $\dsym$. 
Furthermore, it follows from~\cite[Proposition~3.1]{LRT11:Bounded} that every point of Teichm\"uller space is at a uniformly bounded $\dsym$-distance from the simplex generated by its short regular marking.

We will work with two notions of rank. The \emph{geometric rank}, denoted by $\rank(X)$, measures the largest dimension of any family of uniformly quasi-isometrically embedded Euclidean cubes into $X$ of arbitrarily large scale. On the other hand, the \emph{flat rank} $\rankF(X)$ is the largest dimension $n$ of any quasi-isometrically embedded Euclidean $n$-orthant in $X$ (see Section~\ref{subsec:ranks}).
They clearly satisfy $\rankF(X) \leq \rank(X)$ (\Cref{lem:flat_rank_lower}).

\subsection{Main results}

Our first result establishes that the space $\PCf(S)$ has infinite geometric rank.

\begin{restatable}{mainthm}{InfiniteRank}
\label{mainthm:infinite_geo_rank}
Let $S$ be a closed orientable surface of genus $\geq 2$.
Then
\[
\rank(\PCf(S),\dsym)=\infty.
\]
\end{restatable}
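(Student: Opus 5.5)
To prove the theorem, for every $n$ we will find a quasi-isometrically embedded $n$-dimensional orthant (or arbitrarily large $n$-cubes with uniform constants) inside $(\PCf(S),\dsym)$. This gives $\rankF(\PCf(S))\ge n$, and hence $\rank\ge n$ by \Cref{lem:flat_rank_lower}. The natural source of such flats is filling simplices $\PSimplex(\vec\mu)$. We use the description of the Lipschitz metric as
\[
\dsym([\mu],[\nu])=\log\sup_{c}\frac{i(\mu,c)}{i(\nu,c)}+\log\sup_{c}\frac{i(\nu,c)}{i(\mu,c)},
\]
where the supremum runs over closed curves, or equivalently over projective currents. This formula holds after normalization, and the lengths are taken with respect to filling currents.

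\textbf{Step 1: the generating family.} Choose $\mu_0$ filling, for example the Liouville current of a hyperbolic metric. Then choose $n$ closed curves $\gamma_1,\dots,\gamma_n$ (for instance, distinct curves in a fixed pants decomposition, or more generally, non-simple curves) that are ``mutually independent'' in the following sense. There should be test curves $c_1,\dots,c_n$ such that $i(\gamma_j,c_k)$ is small (zero, or tiny relative to $i(\gamma_k,c_k)$) for $j\ne k$. Since non-simple curves are allowed, $n$ is unbounded; this is the key contrast with $\Teich(S)$, whose rank is $3g-3$. A concrete choice: take pairwise disjoint, essentially distinct curves $\gamma_1,\dots,\gamma_n$ on a subsurface, for example ones supported in distinct annular or pants regions, taken after passing to long iterated curves so that their supports are "separated" in the sense of intersection numbers. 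Next, consider
\[
\Phi(t_1,\dots,t_n)=\Bigl[\mu_0+\sum_k e^{t_k}\gamma_k\Bigr],\qquad t\in\R_{\ge0}^n.
\]
This lies in $\PCf(S)$ because $\mu_0$ is filling.

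\textbf{Step 2: upper bound.} For any test current $c$,
\[
\frac{i(\mu_0,c)+\sum_k e^{t_k}i(\gamma_k,c)}{i(\mu_0,c)+\sum_k e^{s_k}i(\gamma_k,c)}\le e^{\max_k |t_k-s_k|}.
\]
Applying this symmetrically gives $\dsym(\Phi(t),\Phi(s))\le 2\|t-s\|_\infty$, so $\Phi$ is Lipschitz for the $\ell^\infty$ norm.

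\textbf{Step 3: lower bound (the main obstacle).} We need $\dsym(\Phi(t),\Phi(s))\ge c\|t-s\|_\infty - C$. Suppose, say, $t_k-s_k=\|t-s\|_\infty$. We want a test curve $c$ whose intersection with $\Phi$ is dominated by the $\gamma_k$ term at $t$, and by a bounded term at $s$. The naive choice $c=c_k$ fails, because $i(\mu_0,c_k)$ is a fixed constant and the other $\gamma_j$ can still contribute. So we instead choose $c=c_k^{(m)}$ depending on the scale, for example Dehn-twist-like powers, or curves of the form $c_k^{N}$ concatenated with pieces that avoid the other $\gamma_j$. The aim is to make the ratio $i(\gamma_k,c)/\bigl(i(\mu_0,c)+\sum_{j\ne k}i(\gamma_j,c)\bigr)$ as large as needed. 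Concretely, I would pick the $\gamma_j$ pairwise disjoint simple curves whenever $n\le 3g-3$. For large $n$, I would instead take the $\gamma_j$ to be $n$ distinct closed curves inside a fixed annulus-free subsurface $Y$, with test currents $c_k$ built so that $i(\gamma_j,c_k)=0$ for $j\ne k$. If such exact disjointness is impossible, I would replace it by a diagonal-dominance condition on the matrix $A_{jk}=i(\gamma_j,c_k)$ and argue using the sup over all currents.

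With such a diagonal-dominant choice in hand, the ratio $\Phi(t)/\Phi(s)$ evaluated at $c_k$ is comparable to
\[
\frac{a+e^{t_k}+\epsilon\sum_{j\ne k}e^{t_j}}{a+e^{s_k}+\epsilon\sum_{j\ne k}e^{s_j}}.
\]
To control the cross terms, I would restrict to the cube $[0,R]^n$ and let the entries $A_{jk}$ scale with $R$, so that we get uniform quasi-isometric cubes of arbitrarily large size. This is exactly what the definition of $\rank$ requires, and uniformity of the constants in $R$ is the delicate point to verify.
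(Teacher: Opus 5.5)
\textbf{There is a genuine gap: the construction that carries the proof is missing.}

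As framed, Steps 1--3 use a fixed family $\mu_0,\gamma_1,\dots,\gamma_n$. So $\Phi$ takes values in the single filling simplex $\PSimplex(\mu_0,\gamma_1,\dots,\gamma_n)$. By Theorem~\ref{mainthm:max_rank}, every filling simplex has geometric rank at most $8g-9$. Hence no choice of fixed curves and test currents can produce uniform cubes of dimension $n>8g-9$, whether you use exact disjointness or any diagonal-dominance condition. Still less can it produce $n$-orthants.

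Your concrete suggestions fail for exactly this reason:
\begin{itemize}
\item There are at most $5g-5$ pairwise disjoint pants and annulus regions.
\item Any two non-simple curves in a common pair of pants $P$ both fill $P$. So they intersect and have mutually finite dilatation (Lemma~\ref{lem:bounded_sup}). In fact, a filling current together with finitely many such curves spans a simplex quasi-isometric to a line (Lemma~\ref{lem:tau_simplex_is_line}).
\item A system of test currents with $i(\gamma_j,c_k)=0$ exactly for $j\neq k$ would, by your own Step 3 computation, give an $n$-orthant inside a fixed simplex. That is impossible for $n>8g-9$.
\end{itemize}
Allowing non-simple curves does not by itself make $n$ unbounded. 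What is unbounded is only the \emph{non-uniformity} of the dilatation bounds between curves filling the same pants. In particular, your opening goal $\rankF(\PCf(S))\ge n$ is not reachable this way. Whether the flat rank is infinite is left open in the paper (Question~\ref{que:rank_infinite}).

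The remedy you mention in your last sentence, letting the generators depend on $R$, is the entire content of the proof. You give neither a construction nor a reason the constants stay uniform in $R$. The paper supplies both as follows.
\begin{itemize}
\item It uses the curves $\tau_m=[a^mb]$ in a pair of pants, whose intersections $i(\tau_m,\tau_k)=2\min\{m,k\}$ are explicit.
\item It takes $n_k=e^{6kR}$, generators $e^{4dR}\tau_{n_k}/\sqrt{n_k}$, and a filling $\rho$ rescaled so that $i(\rho,\tau_m)\le m$.
\item The test curve for the $k$-th coordinate is then $\tau_{n_k}$ itself. The $1/\sqrt{n}$ normalization makes the kernel $\min\{n_j,n_k\}/\sqrt{n_j}$ peak at $j=k$.
\item The off-diagonal terms are smaller by a factor $e^{-3R}$. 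This beats the range $e^{R}$ of the coefficients $e^{x_j}$ once $R>\log d$.
\item The prefactor $e^{4dR}$ swamps the term $i(\rho,\tau_{n_k})$.
\end{itemize}
Together these give $e^{x_k}e^{(4d+3k)R}\le i(f(\vec x),\tau_{n_k})\le 5e^{x_k}e^{(4d+3k)R}$.

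The paper also uses a curve disjoint from the pants to get $\Dil\ge 1$ in both directions. This controls the second logarithm in $\dsym$, which your Step 3 does not address: you only bound one dilatation from below.

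Your Step 2 upper bound is correct and coincides with the paper's.
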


As an immediate corollary of Theorem~\ref{mainthm:infinite_geo_rank} and \Cref{lem:rank_adim}, this gives infinite asymptotic dimension.
\begin{maincor}
\label{maincor:adim_infinite}
Let $S$ be a closed orientable surface of genus $\geq 2$. Then
\[
\adim(\PCf(S),\dsym) = \infty.
\]
\end{maincor}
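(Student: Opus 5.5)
The corollary follows by combining Theorem~\ref{mainthm:infinite_geo_rank} with \Cref{lem:rank_adim}. I expect \Cref{lem:rank_adim} to be the standard monotonicity statement: for a metric space $X$ one has $\rank(X) \le \adim(X)$. It holds because asymptotic dimension is monotone under quasi-isometric embeddings, and because Euclidean cubes of arbitrarily large scale, uniformly quasi-isometrically embedded, force $\adim \ge n$. So I would argue by contradiction.

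Suppose $\adim(\PCf(S),\dsym) = d < \infty$. By Theorem~\ref{mainthm:infinite_geo_rank}, for every $n$ there are constants $(K,C)$ and $(K,C)$-quasi-isometric embeddings $f_R\colon [0,R]^n \to \PCf(S)$ for all $R$. Fix $n = d+1$.

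Asymptotic dimension $\le d$ gives the following: for every $r>0$ there is a cover of $\PCf(S)$ by $d+1$ families of uniformly bounded sets, each family $r$-disjoint. Pull this cover back along $f_R$. Taking $r$ large compared with $C$ and rescaling by $1/R$, we get covers of the unit cube $[0,1]^n$ by $d+1$ families. Each family has mesh at most $D/R$ and is $(r/K - C)/(KR)$-disjoint.

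Choose $r$ so that $r/K - C \gg K$ times the uniform bound $D$ on the diameters. Then in the rescaled picture the Lebesgue-type disjointness constant dominates the mesh, and both tend to zero as $R \to \infty$. Thickening each set slightly gives open covers of $[0,1]^n$ of order $\le d+1$ with arbitrarily small mesh. This contradicts the Lebesgue covering theorem, since $[0,1]^n$ with $n=d+1$ has covering dimension $n > d$.

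The only real care needed is to keep the additive constant $C$ of the quasi-isometric embeddings under control relative to $r$ and $D$. Here the uniformity of $(K,C)$ across scales, which is built into the definition of geometric rank (Definition~\ref{def:geo_rank}), is essential. Since \Cref{lem:rank_adim} is stated earlier in the paper, in practice the proof is a one-line application: $\adim(\PCf(S),\dsym) \ge \rank(\PCf(S),\dsym) = \infty$.
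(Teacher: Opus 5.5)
Your proposal is correct and is the paper's argument: the corollary is just $\adim(\PCf(S),\dsym) \ge \rank(\PCf(S),\dsym) = \infty$, combining Theorem~\ref{mainthm:infinite_geo_rank} with \Cref{lem:rank_adim}. Your sketch of that lemma (pull back a $D$-bounded, $r$-separated cover along $f_R$, thicken, rescale by $1/R$, and contradict $\dim [0,1]^{d+1} = d+1$) mirrors the paper's proof.

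One small fix to that sketch: since $D = D(r)$, you cannot in general arrange $r/K - C \gg KD$. You do not need this, though. It suffices that $r$ exceeds the additive constant, so that same-colored preimages stay disjoint after a thickening small compared to $(r-C)/K$. For fixed $r$, the rescaled mesh, of order $K(D+C)/R$, then tends to $0$ as $R \to \infty$.
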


To our knowledge, this gives the first example of a natural space associated to a closed surface that has infinite asymptotic dimension while carrying a \textbf{proper metric}~\cite{Sap22:ExtensionThurston} (i.e., closed balls are compact), and a \textbf{metrically proper}~\cite{EM18:Ergodic} isometric action of the mapping class group, i.e., an action by isometries where for every $R>0$ and every $[\mu] \in \PCf(S)$, the set $\{ [\nu] \in \PCf(S) \colon \dsym([\mu],g [\mu]) \leq R \}$ is finite.

We 
construct, for every dimension $d \in \mathbb{N}$, a family of uniformly quasi-isometrically embedded cubes $\{ f_R \colon [0,R]^d \to \PCf(S)\}_{R \in \mathbb{R}_+}$ with quasi-isometry constants independent of $R$ and, in fact, of $d$. Furthermore, for each $R$ we have $f_R([0,R]^d) \subset \PSimplex_R$, where $\PSimplex_R$ is a simplex of geodesic currents depending on $R$ which, strikingly, is quasi-isometric to a line (\Cref{lem:tau_simplex_is_line}). This shows that infinite rank can emerge at infinity even if, at each finite scale, the rank of the ambient simplex is equal to 1.

On the other hand, we give an exact computation of the rank of Teichm\"uller space with the Lipschitz metric.
\begin{mainthm}
\label{mainthm:teichrank}
Let $S$ be a closed orientable surface of genus $g \geq 2$. Then
\[
\rank(\Teich(S),\dsym)=\rankF(\Teich(S),\dsym)=3g-3.
\]
\end{mainthm}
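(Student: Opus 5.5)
We prove the two inequalities $3g-3 \le \rankF(\Teich(S),\dsym) \le \rank(\Teich(S),\dsym) \le 3g-3$. The middle inequality is \Cref{lem:flat_rank_lower}, so it remains to give a lower bound on the flat rank and an upper bound on the geometric rank.

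\emph{Lower bound.} Fix a pants decomposition $P=\{\alpha_1,\dots,\alpha_{3g-3}\}$. Consider hyperbolic metrics $X_{\vec t}$ with twist parameters zero, and length parameters $\ell_{\alpha_i}(X_{\vec t})=\epsilon_0 e^{-t_i}$ for $\vec t=(t_1,\dots,t_{3g-3})\in\R_{\geq 0}^{3g-3}$. Here $\epsilon_0$ is below the Margulis constant.

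By Choi--Rafi \cite[Theorem~C]{CR07:Comparison}, in the thin part the Lipschitz distance is, up to additive error, the maximum of two quantities. The first is the sup-metric on the product of the thick-part distance and the horoball factors $\Hh_{\alpha}$. The second is the comparison of the boundary curves via $\bigl|\log \ell_\alpha(X)-\log\ell_\alpha(Y)\bigr|$.

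With zero twists and a fixed thick part, this gives
\[
\dsym(X_{\vec t},X_{\vec s}) \asymp \max_i |t_i-s_i|.
\]
This distance is the $\ell^\infty$-norm, which is bi-Lipschitz to the Euclidean one with constants depending only on $g$. Hence the orthant $\R_{\geq0}^{3g-3}$ quasi-isometrically embeds, and $\rankF\ge 3g-3$. In fact it suffices to use the simpler comparison $\dsym(X,Y)\ge \max_{\gamma}|\log(\ell_\gamma(X)/\ell_\gamma(Y))|$ for the lower estimate. The upper estimate then comes from an explicit Lipschitz map that shrinks the collars.

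\emph{Upper bound.} This is the main obstacle. The plan is to compare $\dsym$ with the Teichmüller metric $d_T$, or with the product-region model, and transfer the known rank $\xi(S)$ from \cite{EMR17:Rank}.

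We cannot simply quote a quasi-isometry, since $\dsym$ and $d_T$ are not quasi-isometric: in thin parts the twist directions behave differently. Instead we use Choi--Rafi's description. Up to bounded additive error, $(\Teich(S),\dsym)$ is locally, in each thin region $\mathrm{Thin}(P)$, quasi-isometric to
\[
\Teich(S\setminus P)_{\mathrm{thick}}\times \prod_{\alpha\in P} \Hh'_\alpha .
\]
In this product each $\Hh'_\alpha$ is a two-dimensional factor in which length is measured logarithmically. Twist, however, contributes only logarithmically, $\log(1+|\mathrm{tw}|\,\ell_\alpha)$-type, so each factor is quasi-isometric to a space of rank one, a tree-like or hyperbolic piece.

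The strategy has two steps. First, show that each factor has geometric rank at most one per curve. Second, run the coarse differentiation argument of \cite{EMR17:Rank} in the following form. Given a quasi-isometric embedding of a large cube $[0,R]^n$, coarse differentiation produces a subcube of proportional size on which the map is close to affine along a hierarchy. The image then lies in a single product region, up to a controlled error.

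In that region the rank is bounded by the number of factors, namely at most $3g-3$ curve factors. The thick factor contributes none, since the thick part of $\Teich$ of lower complexity is handled by induction on complexity. Moreover, the thick part of $\Teich(S)$ is hyperbolic-like: by Rafi's thick-part results the Lipschitz and Teichmüller metrics agree coarsely there. Hence counting gives $n\le 3g-3$.

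If running the full coarse differentiation is too heavy, the alternative is to exhibit a coarsely Lipschitz, coarsely injective-on-cubes map to $(\Teich(S),d_T)$ restricted to product regions, and invoke \cite{EMR17:Rank} directly. Either route keeps the difficulty concentrated in controlling the twist directions, which is what I expect to be the crux.
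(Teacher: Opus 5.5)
Your lower bound is correct and is essentially the paper's argument. You pinch a pants decomposition with zero twists, get from \cite[Theorem~C]{CR07:Comparison} that $\dsym(X_{\vec t},X_{\vec s})$ is comparable to $\max_i|t_i-s_i|$, and then use $\rankF\le\rank$.

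The upper bound, however, has a genuine gap. You offer two strategies, and the decisive step of each is either unproved or false.
\begin{itemize}
\item \textbf{Single product region.} You assert that coarse differentiation yields a proportional subcube whose image lies coarsely in a single product region. Nothing like this is established for $\dsym$. The inputs of \cite{EMR17:Rank}, such as Rafi's distance formula and the hierarchy structure of $d_T$, have no established $\dsym$ counterpart, which is exactly why the paper avoids that machinery.
\item \textbf{The count.} Even granting that step, the count is wrong. In $\Thin_\eps(S,P)$ the complementary factor is all of $\Teich(S\setminus P)$, thin parts included, and it carries rank $\xi(S)-|P|$, not zero. Nor is the thick part of $\Teich(S)$ rank one: orbits of a $\Z^2$ generated by pseudo-Anosovs supported on two disjoint one-holed tori are quasi-flats in it.
\item \textbf{The fallback map to $d_T$.} A coarsely Lipschitz comparison map to $(\Teich(S),d_T)$ fails for the identity. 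By Lemma~\ref{lemma:CR} and \eqref{eq:min-minsky}, twisting about an $\eps$-short curve $\alpha$ by roughly $\log(1/\ell(\alpha))/\ell(\alpha)$ costs $O(1)$ in $d_L$ but about $\log\log(1/\ell(\alpha))$ in $d_T$. This is an unbounded discrepancy (it is the $B\log P_\eps$ term in Lemma~\ref{lem:min-global}), and you specify no other map.
\end{itemize}

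The missing idea is to pass to asymptotic cones and sort curves by depth relative to the scale.
\begin{itemize}
\item \textbf{One product region for the whole cone.} Since $|\delta_X(\alpha)-\delta_Y(\alpha)|\le d_L(X,Y)$, after a level-dependent change of marking there is a single multicurve $\Gamma$ whose depth grows superlinearly in $R_i$ along every admissible sequence. All other curves have depth $O_\omega(R_i)$ (Lemmas~\ref{lem:min-deep-selection} and~\ref{lem:min-deep-persistence}). Hence the whole cone sits in one product region and embeds isometrically in $\prod_r\mathcal X_r\times\prod_{\gamma\in\Gamma}\mathcal A_\gamma$ (Lemma~\ref{lem:min-cone-product}). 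This is what replaces your coarse-differentiation step.
\item \textbf{Shallow factors.} On these factors the $d_T$--$d_L$ discrepancy is $O(\log R_i)=o(R_i)$. So their $d_L$-cones are bilipschitz to $d_T$-cones, whose compact subsets have topological dimension at most $\xi(Q_r)$ by Bowditch and Durham.
\item \textbf{Deep annular factors.} As you correctly anticipated, these are horoball-like, so their cones are $\R$-trees (Lemma~\ref{lem:min-annular-cones}).
\item \textbf{Conclusion.} The product theorem for topological dimension gives $\dim K\le\sum_r\xi(Q_r)+|\Gamma|=\xi(S)$ for compact $K$ in the cone. The uniform quasi-cubes produce a bilipschitz copy of $[0,1]^m$ there (Lemma~\ref{lem:min-cube-cone}), so $m\le\xi(S)$.
\end{itemize}
Working with the dimension of compact subsets of cones, rather than with rank, is also what makes the factor-by-factor count legitimate, since rank is not obviously subadditive under products.
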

The proof uses an alternative method that bypasses the hierarchical hyperbolicity machinery of~\cite{EMR17:Rank,BehrstockHagenSisto2017}, which is not currently available for the Lipschitz metric.

In fact, it rests on a comparison between $\dsym$ and $d_T$ at the level of asymptotic cones that might be of independent interest. Even though $\dsym$ and $d_T$ are not quasi-isometric~\cite[Theorem~E]{CR07:Comparison}, their discrepancy at the level of asymptotic cones can be localized on a simple multi-curve consisting of ``very short curves''. 

Our global comparison estimate,~\Cref{lem:min-global}, which builds on~\cite{Minsky1996ProductRegions} and~\cite{CR07:Comparison}, shows that on the complementary factors, the Teichm\"uller and Lipschitz asymptotic cones are bilipschitz equivalent, while
the annular factors corresponding to the very short curves have cones bilipschitz equivalent to real trees. Combining these comparisons with results of Bowditch and Durham~\cite{Durham,Bow16} gives that every compact subset of an asymptotic cone of $(\Teich(S),\dsym)$ has topological dimension bounded by $3g-3$ (Lemma~\ref{lem:min-dimension}) which implies then an upper bound on geometric rank. Theorem~\ref{mainthm:teichrank} immediately gives a lower bound to $\adim(\Teich(S),\dsym)$,
\begin{maincor}
Let $S$ be a closed orientable surface of genus $g \geq 2$.
Then
\[
\adim(\Teich(S),\dsym) \geq 3g-3.
\]
\end{maincor}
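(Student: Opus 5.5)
The corollary follows directly from Theorem~\ref{mainthm:teichrank} together with the general inequality between geometric rank and asymptotic dimension, \Cref{lem:rank_adim}, which the paper also uses to derive Corollary~\ref{maincor:adim_infinite}. The plan is to state this inequality precisely, check that its hypotheses hold for $(\Teich(S),\dsym)$, and combine it with the rank computation.

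First I recall the content of \Cref{lem:rank_adim}: for a metric space $X$ one has $\rank(X) \le \adim(X)$. The underlying reason is standard. Asymptotic dimension is monotone under quasi-isometric embeddings, with constants that can be taken uniform over a family. A family of $(K,C)$-quasi-isometric embeddings $f_R\colon [0,R]^n \to X$, $R\to\infty$, therefore forces $\adim(X)$ to be at least the asymptotic dimension of the family $\{[0,R]^n\}_R$ taken uniformly, and this equals $n$. One way to see the last point is to pass to an asymptotic cone, which contains a bilipschitz copy of $[0,1]^n$; the covering dimension of the cone is bounded by $\adim(X)$ (Dranishnikov-type bound). Alternatively one can argue directly with Lebesgue covering arguments on large cubes: a uniformly bounded cover of $[0,R]^n$ with Lebesgue number proportional to $R$ must have multiplicity at least $n+1$.

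Second, I apply this to $X = (\Teich(S),\dsym)$. Theorem~\ref{mainthm:teichrank} gives $\rank(\Teich(S),\dsym) = 3g-3$. Indeed, already $\rankF = 3g-3$, and $\rankF \le \rank$ by \Cref{lem:flat_rank_lower}. So there are uniformly quasi-isometrically embedded cubes $[0,R]^{3g-3}$ in $(\Teich(S),\dsym)$, and \Cref{lem:rank_adim} yields $\adim(\Teich(S),\dsym) \ge 3g-3$.

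The only point needing care is to confirm that \Cref{lem:rank_adim} requires no additional hypothesis on $X$, such as properness or geodesicity, that might fail here. If it does, I would note that $\dsym$ on $\Teich(S)$ is a proper metric, being the restriction of the proper metric on $\PCf(S)$. The quasi-isometric embedding argument is purely metric, so nothing beyond the definitions is needed. I expect no real obstacle: the corollary is only the lower-bound half of the comparison, and it is immediate once Theorem~\ref{mainthm:teichrank} is established.
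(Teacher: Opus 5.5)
Your argument is correct and is the paper's own: the corollary follows at once from $\rank(\Teich(S),\dsym)=3g-3$ (Theorem~\ref{mainthm:teichrank}) and the inequality $\rank(X)\le\adim(X)$ of \Cref{lem:rank_adim}, which holds for an arbitrary metric space $X$ with no extra hypotheses. One caution about your first heuristic: in general the covering dimension of an asymptotic cone is bounded by the Assouad--Nagata dimension, not by $\adim$, so only your second justification is valid (pulling back uniformly bounded, $r$-separated covers along the uniformly quasi-isometrically embedded cubes, which is how the paper proves \Cref{lem:rank_adim}); this does not affect your proof, since you simply cite the lemma.
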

As an immediate corollary of Theorem~\ref{mainthm:teichrank} as well as Theorem~\ref{mainthm:infinite_geo_rank}, we can distinguish the large-scale geometry of the space of filling geodesic currents and Teichm\"uller space, answering~\cite[Conjecture~1.4]{Sap22:ExtensionThurston}. 

\begin{maincor}
   Let $S$ be a closed orientable hyperbolic surface of genus $\geq 2$.  Then $(\PCf(S),\dsym)$ and $(\Teich(S),\dsym)$ are not quasi-isometric.
\end{maincor}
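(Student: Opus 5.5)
The plan is to read the corollary off from the two rank computations, using that geometric rank is a quasi-isometry invariant. The key steps, in order, are: establish invariance, apply it to a hypothetical quasi-isometry, and compare the two ranks given by the main theorems.

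\emph{Invariance.} Suppose $F\colon(\PCf(S),\dsym)\to(\Teich(S),\dsym)$ were a $(K,C)$-quasi-isometry. The first step is to check that $\rank$ is preserved. By Definition~\ref{def:geo_rank}, $\rank(X)\ge n$ means there is a family $\{f_R\colon[0,R]^n\to X\}_{R>0}$ of $(K_0,C_0)$-quasi-isometric embeddings with $K_0,C_0$ independent of $R$. Composing with $F$ gives a family $F\circ f_R$ whose constants depend only on $(K_0,C_0,K,C)$, hence are still uniform in $R$. Therefore $\rank(X)\le\rank(Y)$, and by symmetry (using a quasi-inverse) the two ranks are equal. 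If the paper's definition instead normalizes the cube side length or the constants differently, the same composition argument applies verbatim.

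\emph{Comparison.} Theorem~\ref{mainthm:infinite_geo_rank} gives $\rank(\PCf(S),\dsym)=\infty$. In particular, there is a uniformly quasi-isometrically embedded family of $n$-cubes for $n=3g-2$. Pushing this family forward by $F$ yields $\rank(\Teich(S),\dsym)\ge 3g-2$. This contradicts Theorem~\ref{mainthm:teichrank}, which gives $\rank(\Teich(S),\dsym)=3g-3$. Hence no such $F$ exists.

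The only step needing care is the invariance lemma, since one must confirm that the paper's definition of geometric rank tolerates the additive constant $C$. Because the cubes have arbitrarily large scale $R$, an additive error is harmless, so I expect no real obstacle. As an alternative route, one could instead use Corollary~\ref{maincor:adim_infinite} together with the quasi-isometry invariance of asymptotic dimension. However, no finite upper bound on $\adim(\Teich(S),\dsym)$ is available in the paper, so the rank argument above is the one to use.
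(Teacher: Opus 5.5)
Your proposal is correct and is essentially the paper's argument: the corollary is read off from $\rank(\PCf(S),\dsym)=\infty$ (Theorem~\ref{mainthm:infinite_geo_rank}) versus $\rank(\Teich(S),\dsym)=3g-3$ (Theorem~\ref{mainthm:teichrank}), using that uniform families of quasi-isometrically embedded cubes push forward under a quasi-isometric embedding. You are also right that the asymptotic-dimension route is not available, since the paper notes that finiteness of $\adim(\Teich(S),\dsym)$ is open.
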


On the other hand, the proof of \Cref{mainthm:infinite_geo_rank}, given in Section~\ref{sec:infinite}, does not give bounds on the flat rank of $\PCf(S)$. 
Still, simplices of geodesic currents give a natural place to look for quasi-isometrically embedded flats for $(\P \Cf(S),\dsym)$. We say that a finite sequence of geodesic currents $\vec{\mu}=(\mu_0,\cdots, \mu_n)$ is \emph{jointly filling} if their sum is filling in $\Cf(S)$, and call its simplex, $\PSimplex(\vec \mu) \subset \PCf(S)$, \emph{filling}. Since $\C(S)$ is an infinite-dimensional cone inside of an infinite dimensional vector space of signed measures, filling simplices can have arbitrarily large \emph{linear} dimension. For example, any $(n+1)$-tuple $\vec \mu$ of mutually singular geodesic currents is linearly independent~\cite{Glo19:Negatively}, and hence the cone $\Simplex(\vec \mu)$ they span has linear dimension $n+1$. Thus, the topological dimension of its projectivization, the simplex $\PSimplex(\vec \mu)$, is $n$; see Example~\ref{ex:singular_currents}. In general, the geometric rank of a filling simplex $\PSimplex(\vec \mu)$ is at most $n$ (Corollary~\ref{cor:upper_bound_rank}) but it can typically be strictly smaller than $n$. For example, if all the geodesic currents in $\vec \mu$ are filling, $\PSimplex(\vec \mu)$ is bounded, and hence its  rank and asymptotic dimension are zero. It is therefore natural to ask how much the topological dimension of a simplex is detected by its large-scale geometry.

For an arbitrary filling simplex, in sharp contrast with \Cref{mainthm:infinite_geo_rank}, we show that the geometric rank and flat rank of an arbitrary simplex of geodesic currents is uniformly bounded in terms of the topology of $S$, and construct examples where the bounds are realized.

\begin{restatable}{mainthm}{MaxRank}
\label{mainthm:max_rank}
   Let $S$ be a closed orientable hyperbolic surface of genus $g \geq 2$. Let $\vec \mu = (\mu_0, \dots, \mu_n)$ be any jointly filling sequence of geodesic currents. Then
   \[
 \rank(\PSimplex(\vec \mu)) \leq 8g-9.
   \]
\end{restatable}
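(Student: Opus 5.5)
The plan is to replace $\dsym$ on $\PSimplex(\vec \mu)$, up to bounded additive error, by a \emph{tropical} (piecewise-linear) metric in logarithmic coordinates. Then I bound the dimension of the vector space on which that tropical metric actually lives by a topological count on $S$. The count is the step I am least sure of.

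\textbf{Step 1: coarse formula in log-coordinates.} For $[\mu]=[\sum a_i\mu_i]$ and $[\nu]=[\sum b_i\mu_i]$ in the filling simplex, recall
\[
\dsym([\mu],[\nu])=\log\sup_c\frac{i(\mu,c)}{i(\nu,c)}+\log\sup_c\frac{i(\nu,c)}{i(\mu,c)},
\]
where the supremum is over closed curves $c$, or equivalently over a compact transversal of $\PC(S)$. Write $v_c=(i(\mu_0,c),\dots,i(\mu_n,c))$. The ratio is $\langle a,v_c\rangle/\langle b,v_c\rangle$, which agrees up to a multiplicative factor of $n+1$ with
\[
\frac{\max_{i\in Z(c)^c}a_i\,i(\mu_i,c)}{\max_{i\in Z(c)^c}b_i\,i(\mu_i,c)},
\]
where $Z(c)=\{i: i(\mu_i,c)=0\}$.

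I would then show, by a compactness argument on $\PC(S)$, that only finitely many \emph{zero patterns} $Z\subset\{0,\dots,n\}$ matter. I would also show that, within each pattern, the nonzero intersection numbers can be taken bounded above and below, except along a controlled family of degenerations. The outcome I aim for is
\[
\dsym \;\asymp\; \max_{Z\in\mathcal Z}\Bigl(\max_{i\notin Z}(x_i-y_i)\Bigr)+\max_{Z\in\mathcal Z}\Bigl(\max_{i\notin Z}(y_i-x_i)\Bigr)
\]
up to additive constants, where $x_i=\log a_i$, $y_i=\log b_i$, and $\mathcal Z$ is the set of realizable zero patterns.

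If the degeneration within a pattern cannot be dismissed, I would refine the formula to allow finitely many linear forms with bounded coefficients. The upshot is the same: $\PSimplex(\vec \mu)$ is quasi-isometric to $\R^{n+1}/\R(1,\dots,1)$ equipped with a polyhedral pseudo-norm (an asymmetric, max-type expression). Its degeneracy directions are exactly the directions in which all the relevant forms are constant.

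\textbf{Step 2: rank of a polyhedral pseudo-norm space.} A pseudo-normed space of this type is quasi-isometric to its quotient by its kernel, where it becomes a genuine norm. It therefore has geometric rank equal to the dimension $k$ of that quotient. This mirrors Corollary~\ref{cor:upper_bound_rank}, which gives rank at most $n$; here I need the sharper statement with $n$ replaced by $k$.

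The quotient dimension $k$ equals one less than the number of classes of indices under the following relation: $i\sim j$ if and only if $i$ and $j$ lie in exactly the same complements $Z^c$, for $Z\in\mathcal Z$. Indices in the same class move together to within a bounded distance. The "minus one" comes from projectivization.

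\textbf{Step 3: topological count (main obstacle).} For each current $\mu_i$ I would consider its \emph{support}: the multicurve or subsurface $F_i$ it fills, in the sense of the Burger–Iozzi–Parreau–Pozzetti decomposition, with annular components for atoms. The zero pattern of a test curve $c$ is then determined by which supports $c$ misses. Two indices with the same support behave identically, so the number of classes is at most the number of distinct subsurfaces that can occur as supports.

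The distinguished supports that actually separate patterns should form a family of essential subsurfaces closed under the relevant unions and complements. I would count it as follows:
\begin{itemize}
\item at most $3g-3$ annular pieces, from a pants decomposition;
\item at most $2g-2$ pairs of pants;
\item the intermediate nested unions, bounded by a tree count in the spirit of a hierarchy.
\end{itemize}
I would aim to show that the total number of classes is at most $8g-8=2\xi(S)-\chi(S)$, giving rank at most $8g-9$.

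The hardest part is proving that every realizable family of zero patterns embeds in such a combinatorial configuration. This must also handle filling minimal laminations and currents supported on the same subsurface but mutually singular. Those cases should collapse to a single class, since no curve distinguishes them, and a careful argument via subsurface projections is needed to confirm this.
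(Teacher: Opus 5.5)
There is a genuine gap, and it starts in Step 1: zero patterns are the wrong invariant for the coarse geometry of a simplex.

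\textbf{Step 1 is false as written.} A maximum over $Z\in\mathcal Z$ of a maximum over $i\notin Z$ is a maximum over $\bigcup_{Z\in\mathcal Z}Z^{c}$. That union contains every index with $\mu_i\neq 0$, so your right-hand side equals $\max_i(x_i-y_i)+\max_i(y_i-x_i)$ whatever $\mathcal Z$ is, and it would give every filling simplex rank $n$. This fails as soon as all $\mu_i$ are filling: then $\mathcal Z=\{\varnothing\}$, yet $\PSimplex(\vec\mu)$ is bounded by \Cref{lem:dist_to_sum}. The underlying error is that $\sup_c\langle a,v_c\rangle/\langle b,v_c\rangle$ is only bounded \emph{above} by $\max_i a_i/b_i$. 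Attaining $a_i/b_i$ up to a bounded factor needs test curves on which $i(\mu_i,c)$ beats every other $i(\mu_j,c)$ by an unbounded factor, i.e.\ $\Dil(\mu_j,\mu_i)=\infty$. What controls the geometry is whether ratios of \emph{nonzero} intersection numbers stay bounded (the paper's order $\inc$), not which intersection numbers vanish. Your fallback of ``finitely many linear forms'' does not say how to capture this.

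\textbf{The count in Steps 2--3 is false.} Take $i(a,b)=1$, $\beta_k=T_a^k(b)$ for $k=1,\ldots,m$, and $\rho$ filling. Since $i(\beta_k,\beta_l)=|k-l|$, the test curve $\beta_l$ has $Z(\beta_l)=\{l\}$. So $(\rho,\beta_1,\ldots,\beta_m)$ has $m+1$ classes for every $m$. Its rank, however, is bounded independently of $m$: any sum of two or more $\beta_k$ fills the same one-holed torus, and the chain argument below gives rank at most $2$. Hence ``rank equals the number of classes minus one'' fails, and no topological argument can bound the number of classes, or of distinct supports of the generators, by $8g-8$. Similarly, mutually singular measures on one non-uniquely ergodic lamination have identical zero patterns but unbounded intersection ratios (\Cref{lem:walsh_lem63}). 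So the fact that no curve separates them by a zero does not show they collapse.

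\textbf{The missing idea is to reduce to chains before counting.} The paper proceeds as follows.
\begin{enumerate}
\item \Cref{lem:increasing} covers $\Simplex(\vec\mu)$ by finitely many simplices on dominated sequences of partial sums.
\item \Cref{lem:strictly_increasing} deletes generators of mutually finite dilatation at bounded Hausdorff cost, leaving increasing sequences.
\item \Cref{cor:upper_bound_rank}, via Borsuk--Ulam, bounds the rank of each such simplex by its length. No coarse formula for $\dsym$ is needed.
\item Along a chain the supports are nested (\Cref{lem:inc_implies_support_containment}). After non-uniquely ergodic components are replaced by nested multicurves (\Cref{lem:remove-nue-component}), every step enlarges the support in a suited pants-and-annuli decomposition. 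This gives at most $(2g-2)+2(3g-3)=8g-8$ terms.
\item \Cref{prop:rank_union} handles the finite union.
\end{enumerate}
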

Note that by \Cref{lem:flat_rank_lower}, this also gives an upper bound on flat rank, as $\rankF(\PSimplex(\vec \mu)) \leq \rank(\PSimplex(\vec \mu))$. 

The boundary of $\PCf(S)$ has a natural cellular decomposition that induces a partial ordering on geodesic currents. The value $8g-9$ corresponds to the length of the longest chain of non-filling geodesic currents for this partial order. 

More concretely, it follows from a detailed analysis of the structure of simplices of geodesic currents. One of its pillars is the canonical decomposition of a geodesic current into a ``filling component'' and  a ``lamination component'' on the complementary subsurfaces given in~\cite[Theorem~1.2]{BIPP21:Currents} as well as~\cite[Proposition~A]{EM18:Ergodic} (see  Eq.~\eqref{eq:current_decomposition} below).  We show that, in order for geometric rank to increase, we can assume that these components evolve monotonically. 
After choosing a common pants-and-annuli decomposition of the surface adapted to
the sequence $\vec \mu$, every pair of pants can become part of the filling
support at most once, while each annulus can contribute at most twice:
once through the lamination component and once through the filling component.
A pants-and-annulus decomposition of a closed surface of genus $g$ contains $2g-2$ pairs of pants and
$3g-3$ annuli, giving
\[
(2g-2)+2(3g-3)=8g-8
\]
available contributions. The initial nonzero geodesic current consumes at least one
contribution, and thus a strictly increasing sequence contains  at most $8g-9$ strict transitions. 

Even though $\Simplex(\vec \mu)$ is in general homeomorphic to a Euclidean orthant, this alone does not imply an upper bound on the asymptotic dimension of $\PSimplex(\vec \mu)$: indeed, there are examples of geodesic and uniformly contractible metric spaces homeomorphic to $\mathbb{R}^n$ with infinite asymptotic dimension~\cite{DFW03,Dra00}. However, in some cases, $\PSimplex(\vec \mu)$ is, in fact, quasi-isometric to a Euclidean orthant. 

In particular, a jointly filling sequence $\vec{\mu}$ is said 
\emph{strictly increasing} if, roughly speaking, every step introduces a ``new direction'' detected by intersection with some closed geodesic (see  Definition~\ref{def:increasing_sequence} for details). 
We prove that for simplices spanned by strictly increasing families, the flat rank, geometric rank and asymptotic dimension are as large as possible and, furthermore, coincide with the topological dimension.

\begin{restatable}{mainthm}{StrIncr}
\label{mainthm:criterion}
   Let $S$ be a closed orientable hyperbolic surface of genus $\geq 2$. Let $\vec\mu=(\mu_0,\ldots,\mu_n)$ be a jointly filling, strictly
increasing sequence of nonzero geodesic currents. Then
\[
\rankF(\PSimplex(\vec \mu)) = \rank(\PSimplex(\vec \mu)) = \adim(\PSimplex(\vec \mu)) = n.
\]
Moreover, there exist $\vec \mu$ so that $n = 8g-9$. Thus, \[\rankF(\PCf(S)) \geq 8g-9\]
\end{restatable}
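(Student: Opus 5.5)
The plan is to show that, for a strictly increasing jointly filling sequence, $\PSimplex(\vec\mu)$ is quasi-isometric to the Euclidean orthant $\R_{\ge 0}^n$, and that this orthant is bilipschitz to the log-coordinate model. All three equalities then follow at once. Indeed, a quasi-isometry to $\R_{\ge0}^n$ gives $\rankF \ge n$ directly. We also have $\rankF\le\rank$ (\Cref{lem:flat_rank_lower}) and $\rank\le n$ (Corollary~\ref{cor:upper_bound_rank}). For asymptotic dimension, $\adim \ge \rank = n$ by \Cref{lem:rank_adim}, and $\adim(\R^n_{\ge0}) = n$ is a quasi-isometry invariant, which gives $\adim\le n$.

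\textbf{Coordinates.} Normalize a point of the simplex as $[\sum_i e^{t_i}\mu_i]$, with $t_0 = 0$ and $(t_1,\dots,t_n)\in\R^n$. For two such points the Lipschitz distance is
\[
\dsym = \log\sup_\gamma \frac{i(\nu,\gamma)}{i(\nu',\gamma)} + \log\sup_\gamma\frac{i(\nu',\gamma)}{i(\nu,\gamma)},
\]
the suprema running over closed geodesics $\gamma$, and $i(\sum e^{t_i}\mu_i,\gamma)=\sum_i e^{t_i}i(\mu_i,\gamma)$.

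\textbf{Upper bound.} Termwise comparison of the two sums gives $\dsym \le 2\max_i|t_i-t_i'|$ immediately.

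\textbf{Lower bound.} This is the substantive part, and it is where strict increase (Definition~\ref{def:increasing_sequence}) enters. I expect the definition to supply, for each $k$, a closed geodesic $\gamma_k$ such that $i(\mu_j,\gamma_k)=0$ for $j>k$ and $i(\mu_k,\gamma_k)>0$. The ratio $i(\nu,\gamma_k)/i(\nu',\gamma_k)$ then only involves $\mu_0,\dots,\mu_k$. Even so, the coordinate $t_k$ may be swamped by the lower terms, so a single test curve does not suffice.

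To handle this I would fold the coordinates into a ``cumulative max'' chart. Set $s_k = \max_{j\le k}(t_j + c_{jk})$, where the constants $c_{jk}$ absorb the finitely many values $i(\mu_j,\gamma_k)$. The map $t\mapsto s$ lands in the cone $\{s_0\le s_1\le\dots\le s_n\}$ (after shifting), and this cone is bilipschitz to an $n$-orthant. By testing against the $\gamma_k$, the distance $\dsym$ is bounded below up to an additive constant by $\max_k|s_k-s_k'|$. The matching upper bound in the $s$-coordinates then has to be rederived, which I would do by grouping the terms of $\nu$ according to which $s_k$ dominates.

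The main obstacle is this comparison. One must show that the sup over all closed geodesics is coarsely realized by the finite test family $\{\gamma_k\}$ together with filling curves. For a general curve $\gamma$, the ratio is controlled by the largest dominant block. I would try to prove that for every $\gamma$ the vector $(i(\mu_j,\gamma))_j$ has a zero pattern that is an initial-segment truncation of the sequence, using the monotone nesting of supports that comes from strict increase. Then $\log i(\nu,\gamma) = s_{m(\gamma)} + O(1)$ for some index $m(\gamma)$, with the constant uniform in $\gamma$. If uniformity fails, I would instead compactify, passing to projective limits of $\gamma$ in $\PC(S)$ and using continuity of $i$.

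\textbf{Sharpness.} For $n=8g-9$, I would build the sequence following the chain count described after \Cref{mainthm:max_rank}. Fix a pants decomposition with curves $\alpha_1,\dots,\alpha_{3g-3}$. Successively add a curve $\alpha_j$ as a lamination current, then a filling current on each pair of pants once it is bounded by already-present curves, then the current $\alpha_j$ twisted by a transverse arc (the annular filling contribution). The order is arranged so that each step is strictly increasing, witnessed by a test curve meeting only the new piece. Counting $(2g-2)+2(3g-3)-1 = 8g-9$ strict steps gives the required $n$, and then $\rankF(\PCf(S))\ge 8g-9$.
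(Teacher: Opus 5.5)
Your plan has a genuine gap at the upper bound in the $s$-coordinates, and a compounding orientation error in the test curves.

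\textbf{Orientation.} Your test curves point the wrong way, and this is not cosmetic. In \Cref{def:increasing_sequence}, $\mu_{k-1}\sinc\mu_k$ gives two things:
\begin{itemize}
\item domination, $i(\mu_{k-1},\gamma)\le i(\mu_k,\gamma)$ for every $\gamma$;
\item a current $\gamma_k$ with $i(\mu_{k-1},\gamma_k)=0\neq i(\mu_k,\gamma_k)$.
\end{itemize}
Together these force $i(\mu_j,\gamma_k)=0$ for $j<k$, and $i(\mu_j,\gamma_k)\ge i(\mu_k,\gamma_k)>0$ for $j\ge k$. So for $k<n$ a $\gamma_k$ that kills every $\mu_j$ with $j>k$ but not $\mu_k$ cannot exist. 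Consequently $s_k$ must be $\max_{j\ge k}(t_j+c_{jk})$. The coordinate pinned by projectivization is then the top one, so you must normalize $t_n=0$. With your normalization $t_0=0$ the claimed lower bound $\dsym\gtrsim\max_k|s_k-s_k'|$ is false. Take $t=(0,\dots,0,L)$ and $t'=0$: every $s_k-s_k'$ is roughly $L$, yet by domination both points lie within $\log(n+1)$ of $[\mu_n]$.

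\textbf{The main gap.} The more serious problem is the step you flag as the main obstacle, the upper bound $\dsym\lesssim\max_k|s_k-s_k'|$. The mechanism you propose cannot work:
\begin{itemize}
\item Zero patterns of $(i(\mu_j,\gamma))_j$ carry no multiplicative information.
\item The claim $\log i(\nu,\gamma)=s_{m(\gamma)}+O(1)$ uniformly in $\gamma$ is false. For fixed $\nu$ the left side is unbounded in $\gamma$, while the right side takes finitely many values.
\item Passing to limits in $\PC(S)$ does not help, since the non-compactness lies in the coefficients $t$, not in $\gamma$.
\end{itemize}
The missing ingredient is the domination clause of $\sinc$, which your plan never uses. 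With it the step is short. Put $T_k=\max_{j\ge k}t_j$. Each term $e^{T_k}\mu_k$ is bounded in intersection by $e^{t_j}\mu_j$ for a maximizing $j\ge k$, so
\[
i(\nu,\gamma)\le i\Bigl(\sum_{k} e^{T_k}\mu_k,\gamma\Bigr)\le (n+1)\,i(\nu,\gamma)\quad\text{for all }\gamma .
\]
Hence every point of $\PSimplex(\vec\mu)$ is within $\log(n+1)$ of a point with non-increasing log-coefficients. Termwise comparison then gives $\dsym\le 2\max_k|T_k-T_k'|+2\log(n+1)$, and $T_k=s_k+O(1)$.

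This is the structure of the paper's proof of \Cref{lem:qi_to_Rn}. It works on the sector $\mathcal R=\{x_n=1,\ x_i>2x_{i+1}\}$, where $i(\vec x\cdot\vec\mu,\gamma_i)$ is comparable to $x_i$, so upper and lower bounds hold in the same coordinates. It then uses domination to show every point of the simplex lies within bounded distance of the image of $\mathcal R$.

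\textbf{Sharpness construction.} Your construction is a variant of \Cref{lem:build_increasing}, with single crossing curves in place of the paper's currents filling the growing subsurfaces $F_i$. It works only in block order: all pants curves, then all pants currents, then the crossing currents. Suppose instead the current crossing $\alpha_1$ is added before the remaining pants curves, as your interleaved description suggests. Then $\alpha_1$ and that curve fill a subsurface $Y$ (e.g.\ the union of the pants adjacent to $\alpha_1$). Adding a pants curve in $\partial Y$ then admits no witness: every closed geodesic crossing it has an essential arc in $Y$, and so meets this filling pair.
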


\subsection{Further questions}
\label{sec:questions}

We list further avenues of research suggested by the main results of this paper.

\paragraph{\textbf{Asymptotic dimension of $\Teich(S)$}} It would be interesting to compute the exact value of $\adim(\Teich(S),\dsym)$, or, more modestly, give upper bounds in terms of a function of $g$. We note that, unlike for $\Teich(S)$ with Weil-Petersson and Teichm\"uller metrics, the machinery in \cite{BehrstockHagenSisto2017} does not apply in this case.

\begin{question}
Can we give an upper bound for $\adim(\Teich(S),\dsym)$ in terms of $g$?
\end{question}

At the moment we do not even know if $\adim(\Teich(S),\dsym)$ is finite. 

\paragraph{\textbf{Infinite rank and dimension}}

 We have not determined the flat rank of projective filling geodesic currents, although
 we know it is at least $8g-9$ by Theorem~\ref{mainthm:max_rank}. For the natural spaces associated to closed surfaces, flat rank and geometric rank coincide. 
\begin{question}
Do we have  $\rankF(\PCf(S),\dsym)=\rank(\PCf(S),\dsym)=\infty$?
\label{que:rank_infinite}
\end{question}

Another natural follow-up question, given the $\adim(\PCf(S),\dsym)$ is infinite, is to determine its growth rate. There are several such notions, eg, ``dimension growth rate'' as defined in~\cite[Definition~1.3]{DS11:InfiniteGrowth}, or ``asymptotic dimension growth rate''~\cite{Dra06:GrowthRate}. These are notions measuring how the multiplicity or number of colors of a given cover vary as a function of a given parameter, such as the separation constant, or the Lebesgue covering constant. Even though they are related, at the moment it is yet unknown if they coincide in general~\cite[Question~1.5]{DS11:InfiniteGrowth}.
Computing these notions of dimension growth for $(\PCf(S),\dsym)$ might give new insights on how these invariants compare, and give finer detail on the geometry of $\PCf(S)$.

\begin{question}
Can we compute or bound the several notions of dimension growth of $(\PCf(S),\dsym)$?
\end{question} 

\paragraph{\textbf{Flats from Hitchin representations}}
\label{subsec:hitchin}
Another promising source of flats for $\dsym$ might be via representations of $\pi_1(S)$ into higher rank Lie groups.
Martone--Zhang~\cite{MZ19:PositivelyRatioed} associate filling geodesic currents to every Hilbert length function of $\Hit_n(S)$, the space of Hitchin representations $\rho \colon \pi_1(S)\to \PSL(n,\mathbb{R})$. This allows to embed, up to an involution, $\Hit_n(S)$ into $\PCf(S)$. Furthermore, there are 
pants-adapted coordinates of Sun--Wienhard--Zhang~\cite{SWZ20,SZG25:Goldman} growing quadratically in $n$. This motivates the following question.

\begin{question}
What are $\rank(\operatorname{Hit}_n(S),\dsym)$ and $\rankF(\operatorname{Hit}_n(S),\dsym)$?
\end{question}

\paragraph{\textbf{The space of pseudometrics, and free groups}}

The space $\PCf(S)$ embeds isometrically into the much larger space $\D_{\pi_1(S)}$ of equivalence classes of Gromov-hyperbolic metrics on $\pi_1(S)$~\cite{DRMG23:Duals,COR22:Manhattan} quasi-isometric to a word-metric, where two metrics are considered equivalent if they have the same stable length spectrum up to global scaling; in fact, this space is equipped with a natural extension of $\dsym$~\cite{Fur02:Coarse}. Hence, by Theorem~\ref{mainthm:infinite_geo_rank}, it follows that
\[
\rank(\D_{\pi_1(S)}, \dsym)=\adim(\D_{\pi_1(S)},\dsym)=\infty.
\]
The space $\D_\Gamma$ has recently received renewed attention~\cite{OR22:SpaceMetric, COR22:Manhattan,CR24:Marked,CRS24:Joint,CT25:Manhattan,CMGR26:Flows-v1}. When $\Gamma=F_n$, the free group of rank $n \geq 2$, it contains an isometrically embedded copy of Culler--Vogtmann Outer space $\CV(F_n)$~\cite{OR22:SpaceMetric,FrancavigliaMartino2011}. The asymptotic dimension of $\Out(F_n)$ remains unknown for $n\geq 3$. 
It would be interesting to know if the the techniques in this paper can be extended to $\D_\Gamma$ for an arbitrary hyperbolic group $\Gamma$, and, in particular, give insight into the asymptotic dimension of $\Out(F_n)$.

\subsection*{Acknowledgments}
The authors thank Kasra Rafi and Eduardo Reyes for helpful discussions related to this work, as well as Jason Behrstock and Matthew Durham for their comments on the first draft of the manuscript. Support for D\'idac Mart\'inez-Granado was provided by the Marie Sklodowska--Curie Action CurrGeo grant (101154865), hosted by the University of Luxembourg; he also thanks the Binghamton Math Department for their hospitality during the visit where this project started. Support for Jenya Sapir was provided by a grant from the Simons Foundation [MPS-TSM-00965278, JS].
The authors did not use AI or LLM tools for any aspect of this research or the writing of this manuscript.

\subsection*{Organization}

In Section~\ref{subsec:geodesic currents} we give the background on geodesic currents and on coarse geometry. Specifically, we define two notions of rank, asymptotic cones, and asymptotic dimension.
In Section~\ref{sec:simplices_currents}, we introduce simplices spanned by geodesic currents and the several properties they can satisfy.
Section~\ref{sec:outlineCD} outlines the proof of Theorems~\ref{mainthm:max_rank} and~\ref{mainthm:criterion}, which runs from Sections~\ref{sec:upper_bound_on_rank_of_simplices} to~\ref{sec:strictly_increasing_seq}.
In Section~\ref{sec:infinite} we prove \Cref{mainthm:infinite_geo_rank}, and 
Section~\ref{app:dsym-rank-minimal}  proves Theorem~\ref{mainthm:teichrank}.
Section~\ref{sec:appendix} is the Appendix, where we collect a couple of technical lemmas.

\section{Background}
\label{subsec:geodesic currents}

Let $S$ be a closed orientable surface of genus $g \geq 2$. Fix a hyperbolic metric on $S$ and let
\[
G(S)=\bigl(S^1\times S^1-\Delta\bigr)/(\mathbb Z/2)
\]
be the space of unoriented, unparametrized geodesics of the universal cover
$\widetilde S$ isometric to the hyperbolic plane $\mathbb H^2$, where $\mathbb Z/2$ acts by exchanging the two endpoints at the boundary at infinity.

A \textbf{geodesic current} on $S$ is a $\pi_1(S)$-invariant, Radon measure on $G(S)$ or, equivalently in this setting, a locally finite Borel measure on $G(S)$. We denote by $\C(S)$ the space of geodesic currents, equipped with the weak$^*$ topology. This topology is metrizable and locally compact, and the projectivization $\PC(S) \coloneqq (\C(S) - \{ 0 \})/\mathbb{R_+}$ is compact \cite[Proposition~11]{Bonahon88:GeodesicCurrent}.

A \textbf{closed curve} $\gamma$ in $S$ is a free homotopy class of closed paths on $S$. We say it is \textbf{essential} if it is non-nullhomotopic. Every closed curve determines a geodesic current as follows. If $\gamma$ is \textbf{primitive}, i.e., not a non-trivial power of another essential curve, we take the counting measure $\delta_\gamma$ on its full lift $\wt{\gamma}$ to $\widetilde S$. If $\gamma$ is non-primitive, i.e., it is of the form $[\gamma]=[\delta^k]$ for some primitive curve $\delta$, and some $k \in \mathbb Z \backslash \{\pm 1\}$ we let $\delta_\gamma \coloneqq k \delta_\delta$. Let $\Curves(S)$ denote the set of all essential closed curves on $S$, and $\Simples(S) \subset \Curves(S)$ the subset of \textbf{simple closed curves}. If $F$ is a surface with non-empty boundary, a \textbf{boundary-parallel curve} $\alpha \subset F$ is a curve homotopic to a boundary component. If a curve is not a boundary-parallel curve, we say is \textbf{non-peripheral}. If $F \subset S$ is a subsurface with boundary, we let $\Curves(F)$ denote the non-peripheral curves of $F$, and similarly for simple curves. Weighted closed curves are dense in $\Curr(S)$~\cite[Proposition~4.4]{Bonahon86:EndsHyperbolicManifolds}, and, consequently, $\Curves(S)$ is dense in $\PC(S)$. By \cite[Proposition~4.5]{Bonahon86:EndsHyperbolicManifolds}, the \textbf{geometric intersection number} of curves, i.e., the minimal number of transverse intersections among all free homotopy class representatives in general position, extends continuously to a symmetric bilinear form
\[
i(\cdot,\cdot)\colon \C(S)\times \C(S)\to \mathbb R_{\geq 0}.
\]
In fact, the weak$^*$-topology can be characterized by convergence of intersection numbers:
\[
\mu_n\to\mu
\qquad\Longleftrightarrow\qquad
i(\mu_n,\gamma)\to i(\mu,\gamma)
\]
for every closed curve $\gamma$ on $S$
\cite[Theorem~12]{DLR10:DegenerationFlatMetrics} (c.f.~\cite[Th\'eor\`eme~2]{Otal90:SpectreMarqueNegative}).

A geodesic current $\mu\in\C(S)$ is called \textbf{filling} if every geodesic on $S$ intersects its support transversely. We denote the space of filling geodesic currents by $\Cf(S)$ and its projectivization by $\PCf(S)$. The space $\Cf(S)$ is an open and dense in $\C(S)$, and hence so is $\PCf(S)$ in $\PC(S)$~\cite[Lemma~3.5]{EM18:Ergodic}.
In particular, as an open subset of the compact space $\PC(S)$, the space $\PCf(S)$ is locally compact.

By \cite[Theorem~1.5]{BIPP21:Currents}, the following are equivalent for $\mu\in\C(S)$:
\begin{enumerate}
\item $\mu$ is filling;
\item $i(\mu,\nu)>0$ for every non-zero $\nu\in\C(S)$;
\item the \textbf{systole} of $\mu$, defined as
\[
\sys(\mu)\coloneqq
\inf_{\gamma} i(\mu,\gamma)
\]
is positive; here the infimum is taken over all (essential) closed curves $\gamma$ on $S$.
\end{enumerate}

Bonahon associated to every marked hyperbolic structure
$\X\in\Teich(S)$ a \textbf{Liouville geodesic current}
$\L_\X\in\Cf(S)$, yielding a proper embedding
\[
\Teich(S)\hookrightarrow \PCf(S), \qquad \mathcal X \longmapsto [\mathcal L_{\mathcal X}].
\]
Moreover,
\begin{equation}
i(\L_\X,\gamma)=\ell_\X(\gamma)
\label{eq:length_inter}
\end{equation}
for every closed curve $\gamma$ \cite{Bonahon86:EndsHyperbolicManifolds,Bonahon88:GeodesicCurrent}.

For $\mu\in\Cf(S)$, its \textbf{entropy} is the positive real number given by~\cite[Lemma~2.2]{JMG26:Horoboundary}
\begin{equation}
h(\mu)
=
\limsup_{n\to\infty}
\frac1n
\log
\#\{\gamma\text{ closed geodesic}: i(\mu,\gamma)<n\}.
\label{eq:entropy}
\end{equation}
It satisfies~\cite[Lemma~2.1]{JMG26:Horoboundary}
\[
h(a\mu)=\frac{1}{a}h(\mu),
\qquad a>0.
\]

For $\mu,\nu\in\C(S)$, let the \textbf{dilatation} of $\mu$ and $\nu$ be given by
\begin{equation}
\Dil(\mu,\nu)
\coloneqq
\sup_{\gamma}
\frac{i(\nu,\gamma)}{i(\mu,\gamma)},
 \label{eq:dilatation}
\end{equation}
where the supremum is taken over all closed curves $\gamma$ where the denominator does not vanish.
At times it will be convenient to know that the supremum can be taken over other collections of curves or geodesic currents without changing the value of the dilatation.
Let $\operatorname{Curves}_f(S)$  denote the spaces of all filling closed curves, and recall that $\Curves(S)$ denotes the space of all closed curves.
 \begin{lem}
\label{lem:dilations}
We have the following equalities:
\[
\Dil(\nu, \mu) = \sup_{\gamma \in \operatorname{Curves}_f(S)}\frac{i(\mu,\gamma)}{i(\nu,\gamma)} = \sup_{\eta \in \PCf(S)}\frac{i(\mu,\eta)}{i(\nu,\eta)}  = \sup_{\eta \in \PC(S)}\frac{i(\mu,\eta)}{i(\nu,\eta)},
\]
where the last supremum is taken over geodesic currents that do not make the denominators vanish.
Furthermore, if $\mu$ and $\nu$ are filling, the last quantities are equal to
\[
\max_{\eta \in \PC(S)}\frac{i(\mu,\eta)}{i(\nu,\eta)}.
\]
\end{lem}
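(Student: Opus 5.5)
Write $D_0=\Dil(\nu,\mu)=\sup_{\gamma\in\Curves(S)} i(\mu,\gamma)/i(\nu,\gamma)$. Let $D_1,D_2,D_3$ be the suprema over $\operatorname{Curves}_f(S)$, over $\PCf(S)$ and over $\PC(S)$, in the order they appear in the statement. Each ratio is invariant under scaling $\eta$, so it is well defined on projective classes. Closed curves embed in $\C(S)$, and filling closed curves give filling currents. Hence $D_1\le D_0\le D_3$ and $D_1\le D_2\le D_3$. It therefore suffices to prove $D_3\le D_1$.

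\textbf{Reducing $D_3$ to curves.} Fix $\eta\in\C(S)$ with $i(\nu,\eta)>0$, and let $r$ be any number with $r<i(\mu,\eta)/i(\nu,\eta)$. Weighted closed curves are dense in $\C(S)$ \cite[Proposition~4.4]{Bonahon86:EndsHyperbolicManifolds}, and $i$ is continuous. So there are weighted multicurves $\sum_j t_j\gamma_j$ converging to $\eta$ whose ratio exceeds $r$. To make a single curve do the job, approximate $\eta$ by $t_k\gamma_k$ with $\gamma_k$ one closed curve. This is standard: concatenate high powers of the components along short connecting arcs, which changes intersection numbers by a bounded additive error that becomes negligible after rescaling.

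To force the curves to be filling, fix a filling closed curve $c$. Replace $\gamma_k$ by the curve $\gamma_k'$ obtained by concatenating $\gamma_k$ with $c$ at a point. Then $\gamma_k'$ is filling, since its geodesic contains an arc homotopic to $c$ and, by the usual argument, every closed geodesic crosses it. For any current $\lambda$, $i(\lambda,\gamma_k')$ differs from $i(\lambda,\gamma_k)$ by at most $i(\lambda,c)$ plus a term bounded by a constant depending on $\lambda$. Since $t_k\to0$ whenever the word length of $\gamma_k$ grows, $t_k\gamma_k'\to\eta$ as well. Alternatively, take $\gamma_k'=\gamma_k^{N}c$ with $N\to\infty$ and divide by $N$. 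In either case $i(\mu,\gamma_k')/i(\nu,\gamma_k')\to i(\mu,\eta)/i(\nu,\eta)$, because $i(\nu,\eta)>0$. This gives $D_3\le D_1$, so all four quantities are equal.

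\textbf{The maximum.} Now assume $\mu,\nu$ are filling. Then $i(\nu,\eta)>0$ for every nonzero $\eta$ by \cite[Theorem~1.5]{BIPP21:Currents}. So the map $[\eta]\mapsto i(\mu,\eta)/i(\nu,\eta)$ is a well-defined continuous function on all of the compact space $\PC(S)$. It therefore attains its supremum, which equals $D_3$ by the first part.

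The step I expect to be the main obstacle is the curve-approximation step, namely controlling $i(\lambda,\gamma_k')$ for an arbitrary current $\lambda$ under concatenation. I would first try to handle it by approximating with $\gamma^N c$ and using the known asymptotics $i(\lambda,\gamma^N c)/N\to i(\lambda,\gamma)$, which follow from weak$^*$ convergence of $\tfrac1N\delta_{\gamma^N c}\to\delta_\gamma$.
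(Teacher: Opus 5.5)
Your reduction to proving $D_3\le D_1$ and your argument for the maximum are correct and agree with the paper. The gap is in the step that makes the approximating curves filling.

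The reason you give for $\gamma_k'=\gamma_k\ast c$ is that its geodesic contains an arc homotopic to $c$. This does not hold. Concatenation is an operation on based loops $g,h$, and tightening $gh$ to its closed geodesic can cancel part of $h$. So there need not be any subarc freely homotopic to $c$. In fact the product need not fill at all. If the based representative of $\gamma_k$ happens to be $h^{-1}k$, with $k$ carried by a proper subsurface, then $gh=h^{-1}kh$ is conjugate to $k$. Likewise, concatenating $c$ with itself in the opposite orientation gives an inessential loop.

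Your alternative $\gamma_k^Nc$ relies on the same unjustified claim. The convergence $\tfrac1N\delta_{\gamma_k^Nc}\to\delta_{\gamma_k}$ is fine. But showing that $\gamma_k^Nc$ is filling for large $N$ needs an actual argument, uniform over all simple closed curves that could miss it, and you give none. Your additive-error estimate for $i(\lambda,\gamma_k')$ in the first variant has a similar problem: the lower bound requires controlling cancellation uniformly in $k$.

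The missing idea is that filling is an open condition, so you need no combinatorics of concatenation. The paper argues this way. $\PCf(S)$ and $\Curves(S)$ are both dense in $\PC(S)$. The systole is continuous \cite[Corollary~1.5(1)]{BIPP21:Currents} and positive exactly on filling currents \cite[Theorem~1.5]{BIPP21:Currents}. So closed curves approximating a filling current are eventually filling, and hence $\Curves_f(S)$ is dense in $\PC(S)$.

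In your notation, take $\eta$ with $i(\nu,\eta)>0$. The current $\eta+\epsilon c$ is filling, and its ratio is arbitrarily close to $i(\mu,\eta)/i(\nu,\eta)$ for small $\epsilon$. Approximate it by $t_k\gamma_k$. Continuity of $\sys$ gives $\sys(t_k\gamma_k)\to\sys(\eta+\epsilon c)>0$, so $\gamma_k\in\Curves_f(S)$ for large $k$. This yields $D_3\le D_1$.
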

\begin{proof}
First, linearity of intersection number implies that the quantity $\frac{i(\mu,\eta)}{i(\nu,\eta)}$ is invariant under scaling $\eta$. Thus, we can take the supremum over projective classes of geodesic currents. 

The fact that $\PCf(S)$ is a dense subspace of $\PC(S)$ and the continuity of intersection number, shows that
\[
\sup_{\eta \in \PCf(S)}\frac{i(\mu,\eta)}{i(\nu,\eta)}  = \sup_{\eta \in \PC(S)}\frac{i(\mu,\gamma)}{i(\nu,\gamma)}.
\]
If, furthermore, $\mu, \nu$ are both filling, then the supremum can be taken to be a maximum by compactness of $\PC(S)$~\cite{Bonahon88:GeodesicCurrent}.

The fact that $\operatorname{Curves}(S)$ is dense in $\PC(S)$, shows
\[
\Dil(\nu, \mu)  = \max_{\eta \in \PC(S)}\frac{i(\mu,\eta)}{i(\nu,\eta)}.
\]
Finally, we claim that $\operatorname{Curves}_f(S)$ is dense in $\PC(S)$, and hence
\[
\sup_{\gamma \in \operatorname{Curves}_f(S)}\frac{i(\mu,\gamma)}{i(\nu,\gamma)}= \max_{\eta \in \PC(S)}\frac{i(\mu,\eta)}{i(\nu,\eta)}.
\]
To prove the last claimed density, note that since $\PCf(S)$ is dense in $\PC(S)$, and since $\operatorname{Curves}(S)$ is also dense in $\PC(S)$, it follows that given any $\eta \in \PC(S)$, and any neighborhood $U$ of $\eta$ in $\PC(S)$, we can find $\eta_f \in U \cap \PCf(S)$. By density of $\operatorname{Curves}(S)$ in $\PC(S)$, we can take a sequence of closed curves $\gamma_n \to \eta_f$. The systole function is continuous on geodesic currents~\cite[Corollary~1.5(1)]{BIPP21:Currents}, and hence satisfies $\sys(\gamma_n) \to \sys(\eta_f)$ as $n$ goes to infinity. Since $\sys(\eta_f)>0$, for $n$ large enough it follows that $\sys(\gamma_n)>0$ and $\gamma_n \in U$, hence $\gamma_n \in U \cap \PCf(S)$, and since $\gamma_n \in \Curves(S)$, it follows that $\gamma_n \in U \cap \operatorname{Curves}_f(S)$.
\end{proof}
Note that $\Dil(\mu, \nu)$ could be infinite, eg, if $\nu$ is filling but $\mu$ is not. 

The
\textbf{extended asymmetric Thurston metric} is the asymmetric metric
\begin{equation}
\dasym([\mu],[\nu])
=
\log\!\left(
\Dil(\mu,\nu)\frac{h(\nu)}{h(\mu)}
\right),
\label{eq:asym_metric}
\end{equation}
studied by Jyothis and the first author, which extends Thurston's classical metric on $\Teich(S)$ \cite{JMG26:Horoboundary,CRS24:Joint}.
However, to study geometric rank, it is more natural to work with a symmetrized version, the \textbf{extended symmetrized Thurston metric}---which we will call the \textbf{Lipschitz metric} for short---given by
\begin{equation}
\dsym([\mu],[\nu])
\coloneqq \dasym([\mu],[\nu]) + \dasym([\nu],[\mu]) =
\log\!\left(
\Dil(\mu,\nu)\Dil(\nu,\mu)
\right).
\label{eq:sym_metric}
\end{equation}
The metric $\dsym$ was originally introduced by the second author in the setting of geodesic currents~\cite{Sap22:ExtensionThurston} and Cantrell--Reyes from the more general perspective of invariant pseudometrics~\cite{COR22:Manhattan}). 
From the definition, we have
\begin{equation}
\dasym([\nu], [\mu]) \leq \dsym([\nu], [\mu])
\label{eq:asym_sym_comp}
\end{equation}
for all $[\nu],[\mu] \in \PCf(S)$.
The topology induced by $\dsym$ on $\PCf(S)$ coincides with the restriction of the weak$^*$-topology (see~\cite{Sap22:ExtensionThurston} and \cite[Lemma~7.3]{CMGR26:Flows-v1} for details).

\subsubsection*{Non-filling geodesic currents and decomposition}

The space of \textbf{non-filling} geodesic currents will be denoted by $\Cnf(S) \coloneqq \C(S) \backslash \Cf(S)$ and its projectivization by $\PCnf(S)$.
Every non-filling geodesic current admits a canonical decomposition~\cite[Theorem~1.2]{BIPP21:Currents} which can be written (see~\cite[Theorem~1.7]{JMG26:Horoboundary} for the following particular restatement) as
\begin{equation}
\label{eq:current_decomposition}
\mu=\mu_{S'}+\lambda_\mu,
\end{equation}
where $\mu_{F}$ is a filling geodesic current on a proper subsurface with boundary
$F\subset S$ and $\lambda_\mu$ is a measured lamination with support disjoint from $F$.

\subsection{Rank, asymptotic cones, asymptotic dimension}

\subsubsection{Geometric and flat rank}
\label{subsec:ranks}

We first recall the definition of quasi-isometry and coarse isometry.

\begin{defi}
    Let $(X,d_1)$, $(Y, d_2)$ be metric spaces, and let $f \colon X \to Y$. Let $K \geq 1, C \geq 0$. Then we say $f$ is a \textbf{quasi-isometric embedding}  if for all $x_1, x_2 \in X$,
    \[
     \frac 1K d_2(f(x_1), f(x_2)) - C \leq d_1(x_1, x_2) \leq K d_2(f(x_1), f(x_2)) + C.
    \]
    We say $f$ is a \textbf{coarse isometry} if we can choose $K = 1$.

    Moreover, if there exists a $D$ so that, for all $y \in Y$, there is some $x \in X$ so that $d_2(f(x), y) < D$, then we say $X$ is $(K,C)$- \textbf{quasi-isometric} to $Y$, or \textbf{coarsely isometric} to $Y$, respectively. We write $X \simeq_{(K,C)} Y$ or $X \simeq_{(1,C)} Y$, respectively. 
\end{defi}

\subsubsection{Rank}

We will consider two quasi-isometrically invariant notions of rank for a metric space and its subspaces.
They are standard notions that, in particular, have been considered in Teichm\"uller geometry, for example, in~\cite{BrockFarb2006,EMR17:Rank}.

\begin{defi}
Let $(X,d)$ be a metric space. We say $X$ has \textbf{geometric rank} (or just \textbf{rank}) at least $n$, and we write $\rank(X) \geq n$, if there exist constants $K,C>0$ so that for all $R>0$
we have a $(K,C)$-quasi-isometric embedding $f_R \colon ([0,R]^n,d_\E) \to (X,d)$, 
where $d_\E$ denotes the Euclidean metric.
We say that 
\[
\rank(X)=n
\] 
if $n$ is the largest natural number (or zero) with that property. If no such largest number exists, we say $\rank(X)=\infty$. If $A \subset X$ is a subspace of $X$ with the restricted $d$ metric, we say $\rank(A)=n$ if its geometric rank with respect to $d$ is $n$.
\label{def:geo_rank}
\end{defi}

We will use that the notion of rank behaves as expected under finite unions, a property which we were not able to find in the literature.

\begin{prop}
\label{prop:rank_union}
Let $X_1$ and $X_2$ be subsets of a metric space. Then
\[
\operatorname{rank}(X_1\cup X_2)
=
\max\{\operatorname{rank}(X_1),\operatorname{rank}(X_2)\}.
\]
\end{prop}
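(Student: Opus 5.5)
The inequality $\operatorname{rank}(X_1\cup X_2)\geq\max\{\operatorname{rank}(X_1),\operatorname{rank}(X_2)\}$ is immediate. Each $X_i$ carries the metric restricted from the ambient space, so a family of $(K,C)$-quasi-isometric embeddings $[0,R]^n\to X_i$ is also a family of $(K,C)$-quasi-isometric embeddings into $X_1\cup X_2$. All the work is in the reverse inequality. Suppose $\operatorname{rank}(X_1\cup X_2)\geq n$, witnessed by $(K,C)$-quasi-isometric embeddings $f_R\colon[0,R]^n\to X_1\cup X_2$. We must produce, for one fixed $i$, uniform quasi-isometric embeddings of arbitrarily large cubes into $X_i$. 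The difficulty is that the coloring $A_1=f_R^{-1}(X_1)$, $A_2=f_R^{-1}(X_2)$ of the cube can be arbitrarily intricate, for instance a fine checkerboard. So one cannot simply hope to find a large Euclidean subcube lying inside a single $A_i$.

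I would pass to asymptotic cones, where this combinatorial problem becomes a Baire category statement. Fix a non-principal ultrafilter $\omega$, scales $R_k\to\infty$, and base points $f_{R_k}(0)$. Rescale the metric by $1/R_k$. Then the maps $x\mapsto f_{R_k}(R_kx)$ on $[0,1]^n$ have multiplicative constant $K$ and additive error $C/R_k\to 0$. Their ultralimit is therefore a $K$-bilipschitz embedding $F\colon[0,1]^n\to\operatorname{Cone}_\omega(X_1\cup X_2)$. Every point of this cone is an ultralimit of a sequence whose terms, $\omega$-almost surely, lie in exactly one of $X_1$, $X_2$. Hence the cone is the union of the two closed subsets $\operatorname{Cone}_\omega(X_1)$ and $\operatorname{Cone}_\omega(X_2)$ (taken with the same base points and scales, after moving base points by bounded amounts, or treating the cones as subsets when $X_i$ is far from the base point). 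By the Baire category theorem applied to the cube, one of $F^{-1}(\operatorname{Cone}_\omega(X_i))$ has nonempty interior. So it contains a small cube $Q$ of side $\delta$, and $F|_Q$ is a $K$-bilipschitz cube inside $\operatorname{Cone}_\omega(X_i)$. Rescaling the scales $R_k$ by $\delta$ yields another asymptotic cone of $X_i$ containing a $K$-bilipschitz unit $n$-cube.

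The final step, which I expect to be the main obstacle, is converting this back to a statement about $\operatorname{rank}(X_i)$ with \emph{uniform additive} constants. Unpacking the ultralimit directly gives maps $[0,R]^n\to X_i$ that are $(2K,\epsilon R)$-quasi-isometric with $\epsilon\to0$. This has sublinear rather than uniformly bounded additive error. I would first try to avoid this loss by redoing the Baire step discretely. Lay down the lattice of mesh $1$ in $[0,R]^n$ and use the bounded additive constant $C$ of the original $f_R$. For each mesh-$1$ cell, record which $X_i$ contains the image of a chosen point. The aim is to show that, for a fixed color, there are cubes of size comparable to $R$ in which every cell within a bounded distance $D(K,C,n)$ has that color. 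On such a region one snaps points to nearby cells of that color, which changes distances by at most an additive $2KD+C$. If that fails, I would fall back on the characterization of geometric rank through bilipschitz cubes in asymptotic cones. This characterization is the standard framework used for Teichm\"uller space in \cite{EMR17:Rank}, and the paper itself relies on it later via dimension of compact subsets of cones. Granting it, the cone argument above finishes the proof.
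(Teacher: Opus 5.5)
Your proposal has a genuine gap: it never produces uniform $(K',C')$-quasi-isometric embeddings of arbitrarily large cubes into a single $X_i$. The cone-and-Baire step is sound. But rescaling discards the bounded additive constant, and, as you note yourself, unpacking the ultralimit only gives additive error $o(R)$. Neither fallback repairs this.

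\emph{The cone characterization you want to grant is false in this generality.} You propose to assume that a bilipschitz $[0,1]^n$ in some asymptotic cone forces $\operatorname{rank}\geq n$. The proposition concerns arbitrary subsets with the restricted metric, and there this fails. Take $R_k\to\infty$ and put $Y_k=\sqrt{R_k}\,\mathbb{Z}^n\cap[0,R_k]^n$ at height $h_k$ in $\mathbb{R}^{n+1}$, with $h_{k+1}-h_k\to\infty$. Let $X=\bigcup_k Y_k$.
\begin{itemize}
\item The cone with base points in $Y_k$ and scales $R_k$ contains an isometric copy of $[0,1]^n$.
\item A $(K,C)$-quasi-isometric image of a connected cube is $(K+C)$-coarsely connected.
\item Every $(K+C)$-coarsely connected subset of $X$ is bounded, with a bound independent of the cube's size. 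So $\operatorname{rank}(X)=0$.
\end{itemize}
The paper, and the Teichm\"uller literature you cite, use only the forward direction, \Cref{lem:min-cube-cone}. You need its converse, which is not available.

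\emph{Your discrete aim is also false as stated.} With $D=D(K,C,n)$ fixed, you cannot expect subcubes of size comparable to $R$ on which one colour is $D$-dense. A checkerboard of mesh $\sqrt R$ in the domain rules this out, although the proposition still holds in that example.

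\emph{The missing idea} is a dichotomy that keeps the original constant $C$ and lets the monochromatic subcubes have size independent of $R$. This is the paper's proof, and it needs no cones or Baire category.
\begin{itemize}
\item Case 1: there are $d,R_0$ with $f_R([0,R]^n)\subset N_d(X_1)$ for all $R\geq R_0$. Compose with a map $N_d(X_1)\to X_1$ that moves points by at most $d+1$. This gives $(K,C+2d+2)$-quasi-isometric embeddings into $X_1$. The projection happens in the target, not by snapping inside the domain. That is what disposes of your checkerboard worry: there every image point is already close to $X_1$.
\item Case 2: otherwise, fix $L$ and choose $d>K\sqrt n\,L+C$. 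There is some $R\geq 2L$ and a point $p=f_R(\vec q)$ with $d(p,X_1)>d$. Any side-$L$ subcube $Q\ni\vec q$ satisfies $f_R(Q)\subset B(p,d)\subset X_2$, and $f_R|_Q$ is still $(K,C)$. Since $L$ is arbitrary, $\operatorname{rank}(X_2)\geq n$.
\end{itemize}
Your domain-snapping idea can be rescued the same way if you let $D$ vary. Either, for some $D$, $f_R^{-1}(X_1)$ is $D$-dense in $[0,R]^n$ for all large $R$, and you snap. Or, for every $D$, some radius-$D$ ball intersected with $[0,R]^n$ lies in $f_R^{-1}(X_2)$, and you use the subcubes it contains.
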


\begin{proof}
By monotonicity of rank,
\[
\max\{\operatorname{rank}(X_1),\operatorname{rank}(X_2)\}
\leq
\operatorname{rank}(X_1\cup X_2).
\]
It therefore remains to prove the reverse inequality.

Suppose that
\[
\operatorname{rank}(X_1\cup X_2)= n.
\]
Then there exist constants $\lambda\geq 1$ and $C\geq 0$ such that, for every
sufficiently large $R$, there is a $(\lambda,C)$--quasi-isometric embedding
\[
f_R\colon [0,R]^n\longrightarrow X_1\cup X_2.
\]

In what follows, given a subset $A \subset X$, let $N_d(A) \coloneqq \{ x \in X \mid d(x,A) < d\}$ denotes its open $d$-neighborhood. We consider two cases.

\smallskip

\noindent
\textbf{Case 1.}
There exist $d\geq 0$ and $R_0>0$ such that
\[
f_R([0,R]^n)\subset N_d(X_1)
\]
for every $R\geq R_0$.

Choose a map
\[
\pi\colon N_d(X_1)\longrightarrow X_1
\]
such that
\[
d(x,\pi(x))\leq d+1
\]
for every $x\in N_d(X_1)$. Then, for all $x,y\in N_d(X_1)$,
\[
\bigl|d(\pi(x),\pi(y))-d(x,y)\bigr|\leq 2d+2.
\]
Hence
\[
\pi\circ f_R\colon [0,R]^n\longrightarrow X_1
\]
is a $(\lambda,C+2d+2)$--quasi-isometric embedding for every $R\geq R_0$.
Therefore
\[
\operatorname{rank}(X_1)\geq n.
\]
\smallskip

\noindent
\textbf{Case 2.}
For every $d\geq 0$ and every $R_0>0$, there exists $R\geq R_0$ such that
\[
f_R([0,R]^n)\not\subset N_d(X_1).
\]

Let $L>0$ be arbitrary. Choose $d$ such that
\[
d>\lambda\sqrt{n}\,L+C.
\]
Applying the assumption with $R_0=2L$, we obtain $R\geq 2L$ and a point
$q\in[0,R]^n$ such that, writing $p=f_R(\vec q)$,
\[
d(p,X_1)>d.
\]
Choose a subcube $Q\subset[0,R]^n$ of side length $L$ containing $\vec q$. For every
$\vec z\in Q$,
\[
d_{\mathbb E}(\vec z,\vec q)\leq \sqrt{n}\,L,
\]
and therefore
\[
d(f_R(\vec z),p)
\leq \lambda d_{\mathbb E}(\vec z,\vec q)+C
\leq \lambda\sqrt{n}\,L+C
<d.
\]
Thus
\[
f_R(Q)\subset B(p,d).
\]
Since $d(p,X_1)>d$, we have
\[
B(p,d)\cap X_1=\varnothing.
\]
Because $f_R(Q)\subset X_1\cup X_2$, it follows that
\[
f_R(Q)\subset X_2.
\]
The restriction
\[
f_R|_Q\colon Q\longrightarrow X_2
\]
is still a $(\lambda,C)$--quasi-isometric embedding. Since $Q$ is isometric to
$[0,L]^n$, we obtain a $(\lambda,C)$--quasi-isometric embedding
\[
[0,L]^n\longrightarrow X_2.
\]
As $L>0$ was arbitrary, it follows that
\[
\operatorname{rank}(X_2)\geq n.
\]

In either case,
\[
n\leq
\max\{\operatorname{rank}(X_1),\operatorname{rank}(X_2)\}.
\]
Therefore
\[
\operatorname{rank}(X_1\cup X_2)
\leq
\max\{\operatorname{rank}(X_1),\operatorname{rank}(X_2)\},
\]
which proves the claim.
\end{proof}

A related definition of rank is the notion of largest quasi-isometrically embedded flat. 

\begin{defi}\label{def:fat_rank}An $n$-\textbf{flat} (or $n$-\textbf{quasi-flat}) in $X$ is a quasi-isometric embedding of the $n$-orthant $(\mathbb{R}^n_+, d_\E)$ into $X$ (for $n \in \mathbb{N}$ or zero).
Given a metric space $X$, the \textbf{flat rank} of $X$ is the largest $n$ for which $X$ admits a $n$-flat.
\end{defi}
The following property is immediate.
\begin{lem}
\label{lem:compare_rank}
For every metric space $X$, we have
\[
\rankF(X) \leq \rank(X).
\]
\end{lem}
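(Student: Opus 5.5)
The plan is to show that an $n$-flat yields, by restriction, a uniform family of quasi-isometrically embedded cubes of every size; the comparison is then immediate from the definitions. Suppose $\rankF(X) \geq n$. By Definition~\ref{def:fat_rank}, there is a $(K,C)$-quasi-isometric embedding $F \colon (\mathbb{R}^n_+, d_\E) \to (X,d)$ for some fixed $K \geq 1$ and $C \geq 0$.

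For each $R>0$, the cube $[0,R]^n$ is a subset of the orthant $\mathbb{R}^n_+$, and the Euclidean metric on the cube is the restriction of $d_\E$ on the orthant. So I will set $f_R \coloneqq F|_{[0,R]^n}$. The two-sided inequalities defining a $(K,C)$-quasi-isometric embedding are pointwise conditions on pairs of points. They therefore hold for all pairs in $[0,R]^n$, because they already hold for all pairs in $\mathbb{R}^n_+$. Hence each $f_R$ is a $(K,C)$-quasi-isometric embedding, with constants independent of $R$. This is exactly the requirement in Definition~\ref{def:geo_rank}, so $\rank(X) \geq n$.

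Since this holds for every $n$ for which an $n$-flat exists, taking the supremum over such $n$ gives $\rankF(X) \leq \rank(X)$. This also covers the cases where the flat rank is zero or infinite: if $X$ admits $n$-flats for arbitrarily large $n$, then $\rank(X)=\infty$ by the same argument.

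The only point needing attention is that the cube embeddings must be uniform in $R$. This is automatic here because every $f_R$ is restricted from a single map $F$ with fixed constants. No coarse surjectivity is involved, since Definition~\ref{def:geo_rank} only asks for quasi-isometric embeddings. I therefore do not expect any genuine obstacle in this argument.
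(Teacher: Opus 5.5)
Your proposal is correct and is essentially the paper's own proof: restrict a single $n$-flat to the cubes $[0,R]^n$ to obtain quasi-isometric embeddings whose constants do not depend on $R$. One small point: if $\mathbb{R}^n_+$ is read as the open orthant (as in the convention of Section~3), first precompose with the translation $\vec x \mapsto \vec x + (1,\dots,1)$, which is an isometry from $[0,R]^n$ into the orthant; this changes nothing in the argument.
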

\begin{proof}
Let $n=\rank(X)$, and $f \colon \mathbb{R}^n_+ \to X$ a quasi-isometric embeddings realizing the flat rank. For every $R>0$, we can restrict $f$ to $[0,R]^n$ to obtain the desired quasi-isometric embeddings $f_R$, showing that $\rank(X) \geq n$.
\end{proof}

\subsubsection{Asymptotic cones}
\label{subsec:asymptotic_cone}
Asymptotic cones capture the modes of divergence of a metric space and will be useful in studying geometric rank. Indeed, families of uniformly quasi-isometrically embedded cubes in a metric space will promote to bilipschitz embedded cubes in its asymptotic cone.

We first recall the definition of asymptotic cone. See~\cite[Chapter I, Definition~5.50]{BH11:NonPosCurvature}.

An \textbf{ultrafilter} on a set $X$ is a collection $\omega$ of subsets of $X$ such that:
\begin{itemize}
    \item $\emptyset \notin \omega$ and $X \in \omega$;
    \item if $A,B \in \omega$, then $A \cap B \in \omega$;
    \item if $A \in \omega$ and $A \subseteq B \subseteq X$, then $B \in \omega$;
    \item for every subset $A \subseteq X$, either $A \in \omega$ or
    $X \setminus A \in \omega$, but not both.
\end{itemize}
An ultrafilter $\omega$ is \textbf{principal} if there exists $x \in X$ such that
\[
    \omega = \{A \subseteq X : x \in A\}.
\]
Otherwise, it is \textbf{nonprincipal}.

Let $\omega$ be a nonprincipal ultrafilter on $\N$. We say $A \subset \N$ is \textbf{$\omega$-large} if $A \in \omega$. Then for any functions $f, g: \N \to \R$, we say $f = O_\omega(g)$ if there is some $C > 0$ so that the subset $\{ i \in \N \colon |f(i)|<C|g(i)| \} \in \omega$, in other words, $|f(i)|<C|g(i)|$ on some $\omega$-large set. We say $f(i)$ \textbf{$\omega$-converges} to $L$ and write $f(i) \to_\omega L \in \R \cup \{\infty\}$ if for every $\epsilon>0$, the set $\{i \colon d(f(i),L)<\epsilon\} \in \omega$. In particular, when $\omega$ is a nonprincipal ultrafilter, all convergent sequences also $\omega$-converge, since $\omega$ contains every cofinite set.

\begin{defi}
\label{def:min-cone}
Let $(X,d)$ be a metric space. Fix a nonprincipal ultrafilter $\omega$ on $\N$, and for every $i \in \N$, basepoints $X_i \in X$, and scales $R_i\to_\omega\infty$. 
A sequence of points $(Y_i)$ where $Y_i \in X$ is
\textbf{$d$-admissible} if
\[
 d(X_i,Y_i)=O_\omega(R_i).
\]
Two admissible sequences $(Y_i),(Z_i)$ are equivalent if
\[
 \lim_\omega\frac{d(Y_i,Z_i)}{R_i}=0.
\]
The asymptotic cone
\[
 \Cone_\omega(X,X_i,R_i;d)
\]
is the set of equivalence classes of admissible sequences, with metric
\begin{equation}
 d_\omega([Y_i],[Z_i])
 =\lim_\omega\frac{d(Y_i,Z_i)}{R_i}.
 \label{eq:min-cone-distance}
\end{equation}
\end{defi}

The following lemma shows that the growing $n$-cubes in the definition of geometric rank
give an $n$-dimensional (bilipschitz) cube in the cone.

\begin{lem}
\label{lem:min-cube-cone}
Let $Z$ be a metric space.  Suppose there exist fixed $K \geq 1,C \geq 0$, a sequence of radii
$R_i\to\infty$, and a family of
$(K,C)$--quasi-isometric embeddings
\[
 f_i:[0,R_i]^m\longrightarrow Z.
\]
Then, for every nonprincipal ultrafilter $\omega$, the asymptotic cone $\Cone_\omega(Z,f_i(\vec 0),R_i;d)$ contains a compact subset
bilipschitz to $[0,1]^m$.
\end{lem}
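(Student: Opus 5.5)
The plan is to define, for each $i$, the rescaled map $g_i \colon [0,1]^m \to Z$ by $g_i(\vec t) = f_i(R_i \vec t)$, and then pass to the $\omega$-limit. Every point of the cube is sent to an admissible sequence. Indeed, for $\vec t \in [0,1]^m$ we have
\[
d\bigl(f_i(\vec 0), g_i(\vec t)\bigr) \leq K R_i \sqrt{m} + C = O_\omega(R_i),
\]
so the sequence $(g_i(\vec t))_i$ defines a point of $\Cone_\omega(Z,f_i(\vec 0),R_i;d)$. This gives a well-defined map
\[
g_\omega \colon [0,1]^m \to \Cone_\omega(Z,f_i(\vec 0),R_i;d), \qquad g_\omega(\vec t) = [g_i(\vec t)].
\]

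Next I will check that $g_\omega$ is bilipschitz. For $\vec s,\vec t \in [0,1]^m$, the quasi-isometric embedding inequalities give
\[
\frac{1}{K} R_i\, d_\E(\vec s,\vec t) - C \;\leq\; d\bigl(g_i(\vec s),g_i(\vec t)\bigr) \;\leq\; K R_i\, d_\E(\vec s,\vec t) + C .
\]
I divide by $R_i$ and take the $\omega$-limit, using $C/R_i \to_\omega 0$ because $R_i \to \infty$ and $\omega$ is nonprincipal. This yields
\[
\frac{1}{K} d_\E(\vec s,\vec t) \leq d_\omega\bigl(g_\omega(\vec s),g_\omega(\vec t)\bigr) \leq K\, d_\E(\vec s,\vec t).
\]
In particular $g_\omega$ is injective, so it is a $K$-bilipschitz embedding. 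Its image is therefore homeomorphic to the compact set $[0,1]^m$ via a Lipschitz map, so the image is compact in the cone and bilipschitz to $[0,1]^m$.

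The only subtle point is the direction of the inequalities in the paper's definition of quasi-isometric embedding, which is written with the roles of the two spaces swapped. Either reading gives the same two-sided linear bounds up to the constants $K$ and $C$, and the argument is symmetric in them, so this causes no real obstacle. Everything else is routine.
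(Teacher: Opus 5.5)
Your proposal is correct and follows essentially the same argument as the paper. Both define the map $\vec x \mapsto [f_i(R_i \vec x)]$, obtain admissibility from the upper quasi-isometry bound, and then divide the two-sided bound by $R_i$ and pass to the $\omega$-limit to get a $K$-bilipschitz embedding with compact image. You work with the Euclidean norm where the paper writes $\|\cdot\|_\infty$, which is immaterial. Your remark that the paper's definition of quasi-isometric embedding has the roles of the two spaces swapped is accurate and, as you say, only changes the constants.
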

\begin{proof}
If $\mathcal Y$ denotes this asymptotic cone, define
\[
 \mathcal F:[0,1]^m\longrightarrow\mathcal Y,
 \qquad \mathcal F(x)=[f_i(R_i\vec x)].
\]

We first check that the sequence $(f_i(R_i \vec x))$ is admissible.
The quasi-isometry inequalities are given by
\begin{equation}
\frac1K\|\vec u-\vec v\|_\infty-C
\leq d_Z\left(f_i(\vec u),f_i(\vec v)\right)
\leq K\|\vec u-\vec v\|_\infty+C
\label{eq:qi_admissible}
\end{equation}
for all $\vec u, \vec v \in [0,R]^m$ with uniform constants $K,C$ for every $i \in \mathbb{N}$.

For $\vec x \in [0,1]^m$, the point $R_i \vec x$ belongs to $[0,R_i]^m$. Applying~\eqref{eq:qi_admissible} with $\vec u=R_i\vec  x$ and $\vec v=\vec 0$ gives
\[
d_Z\left(f_i(R_i \vec x),f_i(\vec 0)\right) \leq
KR_i  \|\vec x\|_\infty + C.
\]
Since $\|\vec x\|_\infty \leq 1$, it follows that
\[
d_Z\left(f_i(R_i \vec x),f_i(\vec 0)\right) \leq KR_i + C =O(R_i).
\]
Thus $(f_i(R_i\vec x))$ represents a point of the asymptotic cone and so $F$ is well-defined.

Next, applying~\eqref{eq:qi_admissible} with $\vec u=R_i \vec x$ and $\vec v=R_i \vec y$, we get
\[
\frac{R_i}{K} \|\vec x-\vec y\|_\infty -C \leq d_Z\left(f_i(R_i \vec x),f_i(R_i \vec y)\right)
\leq KR_i \|\vec x-\vec y\|_\infty + C.
\]
Dividing by $R_i$, and taking the $\omega$-limit, $C/R_i$ goes to $0$ and thus
\begin{equation}
 \frac1K\|\vec x-\vec y\|_\infty
 \le d_\omega(\mathcal F(\vec x),\mathcal F(\vec y))
 \le K\|\vec x-\vec y\|_\infty.
 \label{eq:min-cube-bilip}
\end{equation}
The upper inequality shows $F$ is Lipschitz, and the lower inequality that it is injective and the inverse on its image is Lipschitz, so $F$ is a Lipschitz embedding.
Thus $\mathcal F([0,1]^m)$ is the required compact subset.
\end{proof}

\subsubsection{Asymptotic dimension}
\label{subsec:adim}
Asymptotic dimension is a coarse metric analog of topological dimension. It was originally introduced by Gromov in~\cite[Section~1.E]{Gromov1993Asymptotic}, although in a slightly different form from its modern version, which can be found in~\cite[Section~3.1]{BellDranishnikov2011}.
Informally, a metric space $X$ has \textbf{asymptotic dimension} at least $n$ if, at every chosen scale, $X$ can be covered by clusters of $n+1$ colors, so that the pieces of every monochromatic cluster are coarsely disjoint relative to the scale, and have bounded diameter.

\begin{defi} 
\label{defi:adim}
Let $(X,d)$ be a metric space. Let $r, D > 0$. Let $\mathcal U$ be a cover of $X$ by open sets  so that every $U \in \mathcal{U}$ has diameter at most $D$. Then we say $\mathcal U$ is \textbf{$D$-bounded}. Suppose further that $\mathcal{U}$ decomposes into $n+1$ subfamilies (colors) $\mathcal U = \mathcal U^0 \cup \cdots \cup \mathcal U^n$ so that, for each $i=0,\cdots,n$, any $U_i, V_i \in \mathcal U^i$ satisfy $d(U_i, V_i)\geq r$. Then we say $\mathcal U$ is an \textbf{$r$-separated} cover of \textbf{multiplicity} $n+1$.

We say that the \textbf{asymptotic dimension} of $X$ is at most $n$ if for each $r> 0$ there exists a $D > 0$ and a $D$-bounded and $r$-separated cover $\mathcal U$ of multiplicity $n+1$.
We define 
\[\adim(X) = n\]
if $n$ is the smallest non-negative integer satisfying the above. If no such $n$ exists, we say $X$ has infinite asymptotic dimension, or $\adim(X)=\infty$.
\end{defi}

This definition is the coarse-metric analog of a version of topological dimension, defined as follows. 

\begin{defi}
\label{defi:lebesgue_dim}
    An open cover $\mathcal{V}$ of a topological space $X$ is a \textbf{refinement} of another open cover $\mathcal{U}$ if for every $V \in \mathcal V$ there exist $U \in \mathcal U$ so that $V \subset U$. The \textbf{Lebesgue topological covering dimension} of $X$ is the smallest non-negative integer $n$ with the property that each open cover of $X$ has a refinement in which no point is included
in more than $n+1$ elements of the cover.  We say such a refined cover has \textbf{multiplicity} $n+1$. We will also refer to this dimension as \textbf{topological dimension} for short, and denote it by $\dim(X)$.
\end{defi}

\begin{example}
We list the asymptotic dimensions of a few standard spaces. For detailed references, we commend the reader to~\cite{Bell2017} as well as~\cite[Section~1.E]{Gromov1993Asymptotic}, \cite{BellDranishnikov2011}.
\begin{itemize}
\item If $X$ is bounded, then $\adim(X)=0$. However, we warn the reader that the converse is not true: consider $X=\{ 10^n \colon n \in \N \}$, with the restricted Euclidean metric on $\R$. This set is unbounded but $\adim(X)=0$ (details are left to the reader).
\item If $X$ is a real tree, then $\adim(X)=1$.
\item Gromov hyperbolic groups or spaces with a geometric action of a Gromov hyperbolic group have finite asymptotic dimension.
\item  For all $n \in \mathbb N$, $\adim(\mathbb R^n)=\adim(\mathbb H^n)=n$ where $\R^n$ is the $n$-Euclidean space, and $\H^n$ is the $n$-hyperbolic space.
\end{itemize}

However, we emphasize that in general topological dimension and asymptotic dimension have little to no relation to each other: 
\begin{itemize} \item $\dim(\mathbb{Z}^n)=0$ and $\adim(\mathbb{Z}^n)=n$
\item while $\dim([0,1]^n)=n$ and $\adim([0,1]^n)=0$ for every $n \in \mathbb N$.
\item There are examples of geodesic and uniformly contractible smooth Riemannian metric spaces homeomorphic to $\mathbb{R}^n$ with infinite asymptotic dimension~\cite{DFW03,Dra00}.
\end{itemize}
\end{example}

Asymptotic dimension is a quasi-isometric invariant~ and it behaves monotonically under quasi-isometric embeddings.
\begin{lem}[{\cite[Propositions~22 and 23]{BellDranishnikov2011}}]
\label{lem:adim_qi}
Let $X,Y$ be metric spaces, and let $f \colon X \to Y$ be a map.
\begin{itemize}
\item If $f$ is a quasi-isometric embedding, then $\adim(X) \leq \adim(Y)$.
\item If $f$ is a quasi-isometry, then $\adim(X)=\adim(Y)$.
\end{itemize}
\end{lem}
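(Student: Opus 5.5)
The statement is the standard monotonicity and invariance of asymptotic dimension, and I would prove it directly from Definition~\ref{defi:adim} by pulling back covers. Suppose $f\colon X\to Y$ is a $(K,C)$-quasi-isometric embedding and $\adim(Y)\le n$. Fix $r>0$. I choose an auxiliary separation scale $r' = Kr + KC$ in $Y$, so that points of $X$ at distance at least $r$ have images at distance at least $r/K - C$; in the other direction, images at distance less than $r'$ come from points at distance less than $Kr'+KC$. The definition of $\adim(Y)\le n$ gives a $D'$-bounded, $r'$-separated cover $\mathcal U = \mathcal U^0\cup\dots\cup\mathcal U^n$ of $Y$. I then set $\mathcal V^i = \{f^{-1}(U) : U\in \mathcal U^i\}$.

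The first check is boundedness. If $x,x'\in f^{-1}(U)$, the lower quasi-isometry inequality (reading the paper's convention with the roles arranged so that $d_X\le K d_Y + C$) gives $d_X(x,x')\le K D' + C =: D$. The second check is separation. If $x\in f^{-1}(U)$ and $x'\in f^{-1}(U')$ with $U\neq U'$ in the same color, then $d_Y(f(x),f(x'))\ge r'$. The upper inequality $d_Y(f(x),f(x'))\le K d_X(x,x')+C$ then gives $d_X(x,x')\ge (r'-C)/K$, and I pick $r'=Kr+C$ so that this is at least $r$. The preimages cover $X$, so this yields a $D$-bounded, $r$-separated cover of multiplicity $n+1$, and hence $\adim(X)\le\adim(Y)$. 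Openness of the cover sets is a minor technicality, since $f$ need not be continuous. I would handle it by replacing each $f^{-1}(U)$ by its open $1$-neighborhood in $X$. This increases the diameter bound by $2$, and I enlarge $r$ by $2$ beforehand so that separation still holds.

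For the second item, a quasi-isometry $f$ has a quasi-inverse $g\colon Y\to X$, defined by sending $y$ to some $x$ with $d(f(x),y)<D_0$. The map $g$ is itself a quasi-isometric embedding, with constants depending on $K$, $C$ and $D_0$. Applying the first item to both $f$ and $g$ gives equality.

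The only delicate point is bookkeeping of constants, together with the open-set requirement in the definition. Neither is a real obstacle, and the result is exactly \cite[Propositions~22 and 23]{BellDranishnikov2011}.
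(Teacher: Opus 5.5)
Your proof is correct and follows the standard argument behind the Bell--Dranishnikov propositions, which the paper cites without reproducing: pull back a uniformly bounded, separated cover of $Y$ color by color, thicken to open $1$-neighborhoods to meet the openness requirement, and use a quasi-inverse for the second item. The only bookkeeping point is that under the paper's literal convention $\frac1K d_Y - C\le d_X\le Kd_Y+C$ the upper bound reads $d_Y\le Kd_X+KC$. So keep your first choice $r'=Kr+KC$ (enlarged to $K(r+2)+KC$ for the thickening) rather than $r'=Kr+C$.
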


From \Cref{lem:adim_qi}, the following result follows immediately.

\begin{lem}
\label{lem:flat_rank_lower}
Let $X,Y$ be metric spaces and $f \colon X \to Y$ a quasi-isometric embedding.
Then,
\[
\rankF(X) \leq \adim(Y).
\]
\end{lem}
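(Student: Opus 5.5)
The plan is to compose the given quasi-isometric embedding with a flat realizing the flat rank of $X$, and then use the two monotonicity properties of asymptotic dimension from \Cref{lem:adim_qi} together with the known value $\adim(\R^n_+)=n$. Set $n=\rankF(X)$. If $n=0$ there is nothing to prove, since asymptotic dimension is a non-negative integer (or $\infty$). Otherwise, by Definition~\ref{def:fat_rank} there is a quasi-isometric embedding $g\colon(\R^n_+,d_\E)\to X$.

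The first step is to observe that the composition $f\circ g\colon \R^n_+\to Y$ is again a quasi-isometric embedding. If $g$ is $(K_1,C_1)$ and $f$ is $(K_2,C_2)$, then $f\circ g$ is $(K_1K_2,\,K_2C_1+C_2)$; this is a routine chaining of the defining inequalities. Applying the first item of \Cref{lem:adim_qi} to $f\circ g$ then gives
\[
\adim(\R^n_+)\leq \adim(Y).
\]

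The remaining step, and the only one with any content, is the identity $\adim(\R^n_+)=n$. The upper bound $\adim(\R^n_+)\le n$ follows from the inclusion $\R^n_+\hookrightarrow\R^n$, which is an isometric embedding, together with $\adim(\R^n)=n$ and \Cref{lem:adim_qi}. For the lower bound I would show that $\R^n_+$ contains isometric copies of arbitrarily large balls of $\R^n$, namely the translates $B(R\vec 1,R)$. Any $D$-bounded, $r$-separated cover of $\R^n_+$ of multiplicity $m$ restricts to one of each such ball, and the standard Lebesgue-lemma argument that shows $\adim(\R^n)\ge n$ uses only a single large cube. Hence $\adim(\R^n_+)\ge n$.

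An alternative for the lower bound is to note that $\R^n_+$ is coarsely equivalent to $\R^n$ through a piecewise folding map. However, that map is not a quasi-isometric embedding, so I would instead cite the large-balls argument, or the standard fact from \cite{BellDranishnikov2011} that asymptotic dimension is determined by arbitrarily large balls of a geodesic space of bounded geometry. I expect making this lower bound precise from the cited references to be the only mild obstacle. Combining the steps gives $n\le\adim(Y)$, that is, $\rankF(X)\le\adim(Y)$.
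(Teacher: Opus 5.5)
Your proposal is correct and is essentially the paper's argument: the paper just says the lemma follows immediately from \Cref{lem:adim_qi}, tacitly using $\adim(\R^n_+)=n$. Your large-cube justification of the only inequality actually needed, $\adim(\R^n_+)\geq n$, is the same Lebesgue-covering argument the paper gives in \Cref{lem:rank_adim}. The upper bound $\adim(\R^n_+)\le n$ is unnecessary, and the case $\rankF(X)=\infty$ follows by running your argument for every $n$.

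Do drop the folding-map aside: $\R^n_+$ is not coarsely equivalent to $\R^n$ (already for $n=1$ the number of ends differs). Nothing in your proof depends on it.
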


Asymptotic dimension, similar to geometric rank (Proposition~\ref{prop:rank_union}), obeys a union rule under finite unions.

\begin{lem}[{\cite[Corollary~26]{BellDranishnikov2011}}]
\label{lem:finite-union-adim}
Let $X_1,\cdots,X_n$ be subspaces of a metric space $(X,d)$.
Then
\[
\adim(\cup_{i=1}^n X_i) \leq \max_{i=1,\cdots,n} \{ \adim(X_i) \}.
\]
\end{lem}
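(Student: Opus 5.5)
This follows directly from the definition of asymptotic dimension, by refining a cover color by color; we argue by induction on $n$. The case $n=1$ is trivial, and the inductive step reduces to $n=2$ applied to $X_1 \cup \cdots \cup X_{n-1}$ and $X_n$. So it suffices to take two subspaces $A=X_1$ and $B=X_2$ with $\adim(A)\le m$ and $\adim(B)\le m$, and to show $\adim(A\cup B)\le m$. We work with the equivalent formulation in which the cover elements need not be open, since thickening by a small amount does not affect boundedness or $r$-separation at large scale.

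Fix $r>0$. First, apply $\adim(B)\le m$ at scale $r$ to get a $D_B$-bounded, $r$-separated cover $\mathcal V=\mathcal V^0\cup\cdots\cup\mathcal V^m$ of $B$. Next, apply $\adim(A)\le m$ at the larger scale $r' = 2D_B+3r$ to get a $D_A$-bounded, $r'$-separated cover $\mathcal U=\mathcal U^0\cup\cdots\cup\mathcal U^m$ of $A$.

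For each color $i$ we merge. Set $N_r(U)$ to be the $r$-neighborhood of $U\in\mathcal U^i$ in $A\cup B$. Each $V\in\mathcal V^i$ meets $N_r(U)$ for at most one $U\in\mathcal U^i$: if $V$ met two of them, their distance would be at most $2r+D_B<r'$. Define
\[
W_U = U\cup\bigcup\{V\in\mathcal V^i : V\cap N_r(U)\ne\varnothing\},
\]
and keep as separate members those $V\in\mathcal V^i$ that meet no $N_r(U)$. The family $\mathcal W^i$ of these sets covers everything covered by $\mathcal U^i\cup\mathcal V^i$. Its members are bounded by $D_A+2D_B+2r$.

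We then check $r$-separation within $\mathcal W^i$, in three cases. Two sets $W_U$ and $W_{U'}$ are at distance at least $r'-2(D_B+r)\ge r$. A set $W_U$ and an unattached $V$ are at distance at least $r$: $V$ is $r$-far from $U$ by construction, and $r$-far from the attached $V'$ by the separation of $\mathcal V^i$. Two unattached sets $V$ are $r$-separated by hypothesis.

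Thus $\mathcal W=\bigcup_i\mathcal W^i$ is a uniformly bounded, $r$-separated cover of $A\cup B$ with $m+1$ colors, so $\adim(A\cup B)\le m$. The only delicate point is choosing $r'$ large relative to $D_B$ so that the separation estimate for pairs $W_U, W_{U'}$ survives; everything else is bookkeeping.
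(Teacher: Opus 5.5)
Your argument is correct, and it is essentially the standard saturated-union proof of the finite union theorem in Bell--Dranishnikov. The paper does not prove this lemma itself; it simply cites Corollary~26 of that reference. The key steps match that proof: you cover $B$ first, then cover $A$ at a scale $r'$ that is large compared with $D_B$, and then absorb each $V$ into the unique nearby $U$ of the same color.

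Your passage to non-open covers is also harmless. Take a non-open cover at scale $2r$ and thicken each element by its open $r/2$-neighborhood; the result is an open, $r$-separated cover whose diameters grow by at most $r$.
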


Finally, geometric rank provides a lower bound to asymptotic dimension, a result that we have not found written in the literature.

\begin{lem}
\label{lem:rank_adim}
Let $X$ be a metric space.
Then
\[
\rank(X) \leq \adim(X).
\]
\end{lem}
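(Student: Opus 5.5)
The plan is to show that if $\rank(X)\geq n$ then $\adim(X)\geq n$. By monotonicity of asymptotic dimension under quasi-isometric embeddings (\Cref{lem:adim_qi}, applied with care to families rather than a single map), it suffices to show that the disjoint family of cubes $\{[0,R]^n\}_R$, uniformly quasi-isometrically embedded, forces $\adim(X)\geq n$. Concretely, I will argue by contradiction. Suppose $\adim(X)\leq n-1$. Fix the constants $(K,C)$ from Definition~\ref{def:geo_rank} and a large separation $r$. We get a $D$-bounded, $r$-separated cover $\mathcal U=\mathcal U^0\cup\cdots\cup\mathcal U^{n-1}$ of $X$ with $n$ colors. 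Pull this cover back along $f_R\colon[0,R]^n\to X$ for $R\gg D$.

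The pulled-back sets $f_R^{-1}(U)$ have Euclidean diameter at most $K(D+C)$. Sets of the same color are at Euclidean distance at least $(r-C)/K$, which is large once $r$ is chosen large relative to $K,C$. Hence we obtain a cover of $[0,R]^n$ by $n$ colors. Within each color the pieces are uniformly bounded, with diameter at most $D'=K(D+C)$, and are $r'$-separated with $r'=(r-C)/K\geq 1$, say. The bound $D'$ is independent of $R$ once $r$ is fixed.

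The key step is then a Lebesgue covering argument on the cube. First, thicken each piece to its open $r'/3$-neighbourhood. This gives an open cover of $[0,R]^n$ in which sets of the same color remain disjoint. Consequently every point lies in at most $n$ sets, so the multiplicity is at most $n$. Next, rescale by $1/R$. This gives an open cover of $[0,1]^n$ by sets of diameter at most $(D'+r')/R$, which is arbitrarily small, with multiplicity at most $n$. The Lebesgue covering theorem says that any cover of $[0,1]^n$ by closed (or open) sets of diameter less than $1$ contains a point lying in at least $n+1$ of them. This contradicts the multiplicity bound, so $\adim(X)\geq n$.

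The main obstacle is the bookkeeping of constants: $D$ depends on $r$, while $R$ must be chosen after $D$. The order of choices will be: first $r$ (large relative to $K$ and $C$), then $D$ from the assumed asymptotic dimension bound, and finally $R>D'+r'$. Since rank allows arbitrary $R$ with fixed $(K,C)$, this order is legitimate. I would also verify that Definition~\ref{defi:adim}'s "multiplicity" indeed means a coloring into $n$ disjoint-within-color families, which is what the thickening step uses.
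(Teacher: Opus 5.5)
Your proposal is correct and follows essentially the same route as the paper: pull back a $D$-bounded, $r$-separated cover along the uniformly quasi-isometric cube embeddings, thicken the preimages to open neighbourhoods so that same-coloured pieces stay disjoint (the paper uses $1$-neighbourhoods with $r>2\lambda+K$; you use $r'/3$-neighbourhoods), rescale by $1/R$, and contradict the fact that $[0,1]^n$ has covering dimension $n$. One small caveat: the closed-set form of the Lebesgue covering theorem needs a finite cover (singletons already give a closed cover of multiplicity one), but your thickened cover is open, so you can pass to a finite subcover by compactness and the argument goes through.
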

\begin{proof}
Suppose $f_i \colon [0,R_i]^n \to X$ is a sequence of $(\lambda,K)$-quasi-isometric embeddings, with $R_i \to \infty$ as $i \to \infty$.
Suppose $\adim(X)=m$. For $r>2\lambda + K$ choose $D = D(r) > 0$, and let $\mathcal U$ be a $D$-bounded $r$-separated cover of multiplicity $m+1$ of $X$.
For each $i \in \mathbb{N}$, and $U \in \mathcal U$, define an open subset of the cube $[0,R_i]^n$ by
\[
V_{i,U} \coloneqq N_1(f_i^{-1}(U)) \subset [0,R_i]^n
\]
where $N_1$ denotes the open 1-neighborhood in the Euclidean cube.
The sets $(V_{i,U})_{U \in \mathcal U}$ form an open cover $[0,R_i]^n$, since the sets $U \in \mathcal U$ cover $X$.
Furthermore, for fixed $i,j$, consider the family
\[
\{ V_{i,U} \colon U \in \mathcal U^j \}.
\]
Fix $i$. We claim for every $j=0,\cdots,m$, given $V_{i, U}, V_{i, U'}$ with $U, U' \in \mathcal U^j$ and $U \neq U'$, then $V_{i,U} \cap V_{i,U'} =\emptyset$.
Indeed, suppose $\vec z \in V_{i,U} \cap V_{i,U'}$, for $U \neq U'$. Then there exist $\vec x, \vec x' \in [0,R_i]^n$ so that $d_{\mathbb E}(\vec z, \vec x), d_{\mathbb E}(\vec z,\vec x') < 1$, and $f_i(\vec x) \in U$, $f_i(\vec x') \in U'$.
Hence $d_{\mathbb E}(\vec x, \vec x') <2$. So, since $f_i$ is a $(\lambda,K)$-quasi-isometry, we get
\[
d_X(f_i(\vec x),f_i(\vec x')) \leq 2\lambda + K < r,
\]
contradicting that $U, U' \in \mathcal U^j$ are $r$-separated.
Hence, the full open over $\{ V_{i, U} \colon U \in U\}$ of $[0,R_i]^n$ has multiplicity at most $m+1$ (in the sense of \Cref{defi:lebesgue_dim}).
Furthermore, we claim the diameters of $V_{i,U}$ are uniformly bounded independent of $i$.
Indeed, if $\vec z,\vec z' \in V_{i,U}$, choose $\vec x,\vec x' \in f_i^{-1}(U)$ with $d_{\mathbb E}(\vec z,\vec x)<1$ and $d_{\mathbb E}(\vec z',\vec x')<1$.
Since $f_i(\vec x),f_i(\vec x')\in U$, and $U$ has diameter bounded by $D$, it follows that
\[
d_X(f_i(\vec x),f_i(\vec x'))\leq D.
\]
By $f_i$ being a quasi-isometric embedding, we get
\[
\frac{1}{\lambda} d_{\mathbb E}(\vec x, \vec x')-K \leq D,
\]
and thus
\[
d_{\mathbb E}(\vec z, \vec  z') \leq M \coloneqq 2 + \lambda (D+K)
\]
as desired.
For each $i \in \mathbb{N}$, rescale by $R_i$  the cover $\{ V_{i,U} \colon U \in \mathcal{U}\}$ of $[0,R_i]^n$, to obtain covers $\mathcal V_i$ of $[0,1]^n$ with multiplicity $m+1$ and diameter bounded by $D_i \coloneqq M/R_i$.
Since $D_i \to 0$ as $i \to \infty$, 
it follows that $[0,1]^n$ admits arbitrarily fine open covers of multiplicity at most $m+1$ and thus, by the definition of Lebesgue topological covering dimension (\Cref{defi:lebesgue_dim}), the topological dimension of $[0,1]^n$ is at most $m$. So, we get
\[
\rank(X)=n = \dim \left([0,1]^n \right) \leq m=\adim(X).\qedhere
\]
\end{proof}

From \Cref{lem:flat_rank_lower} and \Cref{lem:rank_adim}, we have the following chain of inequalities.

\begin{lem} \label{lem:rankF<rank<asdim}
For any metric space $(X,d)$, we have
\[
\rankF(X) \leq \rank(X) \leq \adim(X).
\]
\end{lem}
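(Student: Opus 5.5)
The chain splits into two inequalities, each already established earlier in this section, so the plan is simply to combine them.

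For the first inequality $\rankF(X) \leq \rank(X)$, I will invoke \Cref{lem:compare_rank}. Its argument is direct. Suppose $n=\rankF(X)$ is realized by a quasi-isometric embedding $f\colon(\mathbb{R}^n_+,d_\E)\to X$ with constants $(K,C)$. For every $R>0$ the cube $[0,R]^n$ sits isometrically inside the orthant, so the restriction $f|_{[0,R]^n}$ is again a $(K,C)$-quasi-isometric embedding. The constants do not depend on $R$, so \Cref{def:geo_rank} is satisfied and $\rank(X)\geq n$.

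For the second inequality $\rank(X)\leq\adim(X)$, I will invoke \Cref{lem:rank_adim}. Recall how that proof goes. Given $\adim(X)=m$ and uniformly quasi-isometrically embedded cubes $f_i\colon[0,R_i]^n\to X$, take an $r$-separated, $D$-bounded cover of $X$ of multiplicity $m+1$, with $r$ larger than $2\lambda+K$. Pull it back along $f_i$ and thicken each piece by $1$. This gives open covers of $[0,R_i]^n$ of multiplicity at most $m+1$, with mesh bounded independently of $i$. Rescaling by $R_i$ then yields arbitrarily fine covers of $[0,1]^n$ of multiplicity at most $m+1$, which forces $n=\dim[0,1]^n\leq m$.

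Concatenating the two inequalities gives the claimed chain. The edge cases also need checking. If $\adim(X)=\infty$, the second inequality is trivially true. If $\rank(X)=\infty$, then for every $n$ there are uniform cubes of dimension $n$, so \Cref{lem:rank_adim} applied to each $n$ gives $\adim(X)\geq n$ for all $n$, hence $\adim(X)=\infty$. If $\rankF(X)=\infty$, the same restriction argument applied to each $n$ gives $\rank(X)=\infty$. Nothing here is a real obstacle; the only thing to watch is that the inequalities are read in $\mathbb{N}\cup\{\infty\}$.
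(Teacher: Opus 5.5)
Your proposal is correct and matches the paper's proof, which likewise just concatenates two earlier results: \Cref{lem:rank_adim} for $\rank(X)\le\adim(X)$ and a previously proved comparison for $\rankF(X)\le\rank(X)$. Your choice of \Cref{lem:compare_rank} for the first inequality, with $n=\rankF(X)$ as you set it, is the right reference; the paper's one-line proof cites \Cref{lem:flat_rank_lower} instead, whose statement bounds $\rankF$ by $\adim$ and does not by itself give $\rankF(X)\le\rank(X)$, and your explicit treatment of the infinite cases is a useful addition.
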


\begin{rem}
These inequalities are strict in most cases:
\begin{itemize}
\item To see the first inequality, consider the real tree obtained as a wedge of finite intervals of increasing lengths $X_1\coloneqq\bigvee_{i=1}^{\infty} [0,n]$ each equipped with the usual Euclidean metric, for which $\rankF(X_1)=0$ and $\rankF(X_1)=\adim(X_1)=1$. 
\item For the second inequality, take $(X_2,d)$ with $X_2\coloneqq\mathbb{H}^n$ the hyperbolic $n$-space for $n\geq 2$, and $d$ the standard hyperbolic metric, for which $\rankF(X_2)=\rank(X_2)=1$ and $\adim(X_2) =n \geq 2$ for all $n \in \N$.
\end{itemize}
More sophisticated examples give the two strict inequalities at once.  We leave the details to the reader.
\end{rem}
\section{Simplices of geodesic currents}
\label{sec:simplices_currents}
Each sequence $\vec \mu = (\mu_1, \dots, \mu_n)$ of geodesic currents spans a \textit{simplex} $\PSimplex(\vec \mu)$, that we formally define below. We then define what it means for a sequence of currents to be \textit{increasing} and \textit{strictly increasing}, and establish some basic properties of such sequences. We will use these special types of sequences to bound the geometric rank of $\PSimplex(\vec \mu)$ for all $\vec \mu$.

\subsection{Simplices of geodesic currents}
We say $\vec \mu=(\mu_0,\cdots,\mu_n)$ is a \textbf{jointly filling} sequence of geodesic currents if $\sum_{i=0}^n\mu_i \in \C_f(S)$. Note that the individual geodesic currents do not themselves need to be filling but any positive linear combination of all of them will be.

Given a vector $\vec x = (x_0, \dots, x_n) \in \R^{n+1}_+$, where
\[
\R^{n+1}_+ \coloneqq \{ (x_0,\cdots,x_n) \mid x_i > 0,\ i=0,\cdots,n\},
\]
we use the notation $\vec x \cdot \vec \mu$ to denote
\begin{equation}
\vec x \cdot \vec \mu = \sum_{i=0}^n x_i \mu_i.
\label{eq:scalar_product}
\end{equation}

\begin{defi}
\label{def:simplex}
Let $\mu_0, \dots, \mu_n \in \C(S)$. We define the \textbf{simplex} $\PSimplex(\vec\mu) \subset \PC(S)$ by
\[
\PSimplex(\vec\mu)  = \{ [\vec x \cdot \vec \mu] \ | \ \vec x \in \R^{n+1}_+\}.
\]
If we want to consider the \textbf{cone} generated by $\vec\mu$ in the space of geodesic currents we will  write $\Simplex(\vec \mu) := \{\vec x \cdot \vec \mu \ | \ \vec x \in \R^{n+1}_+\}$.
\end{defi}

Geodesic currents $\mu_0,\cdots,\mu_n$ are said to be \textbf{linearly independent} if they are linearly independent vectors in the vector space of \textbf{signed geodesic currents}  $\C^s(S)\coloneqq \C(S) - \C(S)$. Intersection number extends linearly to signed geodesic currents~\cite[Appendix~A.1]{JMG26:CurrentsNegative}.

We refer to $\PSimplex(\vec\mu)$ as a  \emph{simplex} even if the
generating geodesic currents are not known to be linearly independent: if they are linearly dependent, $\PSimplex(\vec \mu)$ is an actual simplex, but of topological dimension strictly lower than $n$ (equivalently, the affine dimension of the cone $\Simplex(n)$ is less than $n+1$). In particular, its linear basis will \emph{not} be $\vec \mu$. Thus, in general, $\PSimplex(\vec\mu)$ is the image of the standard
coefficient simplex under the map from $\R^{n+1}_+ \longrightarrow \PC_f(S)$ defined by
\[
\vec x\longmapsto
[\vec x \cdot \vec \mu]
\]
and this map need not be injective. When the geodesic currents
$\mu_0,\ldots,\mu_n$ are linearly independent as signed geodesic currents, $\PSimplex(\vec \mu)$ is an actual
\textbf{$n$-simplex} of the vector space of signed geodesic currents,
that is, the polytope obtained by taking the convex hull of the $n+1$ linearly independent vectors $\mu_0,\ldots,\mu_n$.

 If $\vec \mu$ is jointly filling, then $\Simplex(\vec \mu)$ lies inside $\Cf(S)$, and thus $\PSimplex(\vec \mu) \subset \PCf(S)$. In this case, we say that the simple $\PSimplex(\vec \mu)$ is \textbf{filling}.

\subsection{Increasing and strictly increasing simplices}
We now define increasing and strictly increasing sequences of sequences of geodesic currents. We will later show that any filling simplex decomposes, up to quasi-isometry, into a union of filling simplices on increasing sequences of geodesic currents (\Cref{cor:decomp_increasing_seq}). Moreover, if a jointly filling sequence is strictly increasing, then its simplex has geometric rank equal to its dimension (\Cref{lem:qi_to_Rn}).

\begin{defi}
 Let $\mu, \mu' \in \C(S)$. We say $\mu$ is \textbf{less than} $\mu'$, and we write $\mu \inc \mu'$, if:
 \begin{align*}
\Dil(\mu', \mu)  & < \infty,  \text{ but, }\\
  \Dil(\mu, \mu')  & = \infty.
\end{align*}
Moreover, we say $\mu$ is \textbf{strictly less than} $\mu'$, and write $\mu \sinc \mu'$, if:
 \begin{align*}
\forall \gamma \in \C(S), \quad & i(\mu, \gamma)  \leq i(\mu', \gamma),  \text{ and, }\\
\exists \gamma_0 \in \C(S), \quad & i(\mu, \gamma_0) = 0,  \text{ but } i(\mu', \gamma_0) \neq 0.
\end{align*}
We say a sequence $\vec \mu = (\mu_0, \dots, \mu_n)$ is \textbf{increasing} if $\mu_{i-1} \inc \mu_i$ for all $i = 1, \dots, n$, and \textbf{strictly increasing} if $\mu_{i-1} \sinc \mu_i$ for all $i = 1, \dots, n$.
\label{def:increasing_sequence}
\end{defi}

The following lemma follows immediately from Definition~\ref{def:increasing_sequence}.

\begin{lem}
\begin{enumerate}
\item If $\mu \sinc \mu'$ then $\mu \inc \mu'$.
\item If a sequence $\vec \mu = (\mu_0, \dots, \mu_n)$ is increasing, and for each $i=1,\cdots,n$, there is some $\gamma_i$ so that $i(\mu_{i-1}, \gamma_i) = 0$ but $i(\mu_i, \gamma_i) \neq 0$, then we can find positive scalars $d_1, \dots, d_n$ so that $(\mu_0, d_1 \mu_1, \dots ,d_n \mu_n)$ is strictly increasing.
\end{enumerate}
\label{lem:increasing_properties}
\end{lem}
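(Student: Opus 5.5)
Both parts follow by unwinding \Cref{def:increasing_sequence}; the only real content is in the choice of scalars in part (2).

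For part (1), assume $\mu \sinc \mu'$. The first condition gives $i(\mu,\gamma) \le i(\mu',\gamma)$ for every closed curve $\gamma$. So for any $\gamma$ with $i(\mu',\gamma) \neq 0$, the ratio $i(\mu,\gamma)/i(\mu',\gamma)$ is at most $1$, and taking the supremum gives $\Dil(\mu',\mu) \le 1 < \infty$. For the other dilatation, take the witness $\gamma_0$ with $i(\mu,\gamma_0)=0$ and $i(\mu',\gamma_0)\neq 0$. If $\gamma_0$ is a closed curve, then in $\Dil(\mu,\mu')$ the ratio $i(\mu',\gamma_0)/i(\mu,\gamma_0)$ has vanishing denominator and positive numerator, which we read as $+\infty$.

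The subtle point in part (1) is that the supremum in \eqref{eq:dilatation} formally excludes vanishing denominators, and $\gamma_0$ is allowed to be a current rather than a curve. I would handle both issues by approximation. By density of weighted curves and continuity of $i$, pick closed curves $\gamma_n$ with $[\gamma_n] \to [\gamma_0]$ in $\PC(S)$. Then $i(\mu,\gamma_n) \to 0$ while $i(\mu',\gamma_n) \to c>0$. Perturbing slightly so that $i(\mu,\gamma_n)>0$ (possible as long as $\mu \neq 0$), the ratios $i(\mu',\gamma_n)/i(\mu,\gamma_n) \to \infty$, so $\Dil(\mu,\mu')=\infty$. I would invoke \Cref{lem:dilations}, which says the supremum may equally be taken over $\PC(S)$, to make this rigorous. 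Hence $\mu \inc \mu'$.

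For part (2), the plan is to choose the $d_i$ inductively from left to right. Set $d_0 = 1$ and suppose $d_{i-1}$ has been chosen. Since $\mu_{i-1} \inc \mu_i$, we have $D_i \coloneqq \Dil(\mu_i,\mu_{i-1}) < \infty$. This means that for every closed curve $\gamma$,
\[
i(\mu_{i-1},\gamma) \le D_i\, i(\mu_i,\gamma).
\]
By density of curves and continuity of $i$, the same inequality holds for every current $\gamma$. Now set $d_i \coloneqq d_{i-1} D_i$, or any larger positive number (this also covers $D_i = 0$). Then
\[
i(d_{i-1}\mu_{i-1},\gamma) \le i(d_i\mu_i,\gamma) \quad \text{for all } \gamma.
\]
Multiplying by positive scalars does not change whether an intersection number vanishes, so the given $\gamma_i$ still satisfies $i(d_{i-1}\mu_{i-1},\gamma_i)=0$ and $i(d_i\mu_i,\gamma_i)\neq 0$. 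This gives $d_{i-1}\mu_{i-1} \sinc d_i\mu_i$ for each $i$, so the rescaled sequence is strictly increasing.

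I expect the only real obstacle to be the approximation step in part (1): passing from a current $\gamma_0$ to closed curves so that the supremum in \eqref{eq:dilatation} is genuinely infinite. The rest is bookkeeping with the definitions.
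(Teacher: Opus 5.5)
Your proposal is correct and follows the paper's argument. In part (1) the witness $\gamma_0$ forces $\Dil(\mu,\mu')=\infty$; you also make explicit the bound $\Dil(\mu',\mu)\le 1$ and the approximation by filling closed curves, which the paper leaves implicit. In part (2) you rescale by cumulative products of dilatations, exactly as the paper does.

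Your choice $d_i=\prod_{j=1}^{i}\Dil(\mu_j,\mu_{j-1})$ is in fact the correct form of the paper's formula. As printed, the paper's formula reads $\prod_{j=1}^{i}\Dil(\mu_i,\mu_{i+1})$, and this is infinite for an increasing sequence. One small point: the inequality $i(\mu_{i-1},\gamma)\le D_i\,i(\mu_i,\gamma)$ is first obtained on filling closed curves, where $i(\mu_i,\gamma)>0$. It is then extended to all currents using the density of filling curves from \Cref{lem:dilations}.
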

\begin{proof}
For the first item, the existence of a geodesic current $\gamma_0$ so that $i(\mu, \gamma_0) = 0$ but $i(\mu', \gamma_0) \neq 0$ implies that $\Dil(\mu,\mu')  = \infty$.
The second item follows by choosing $d_i \coloneqq\prod_{j=1}^i \Dil(\mu_i, \mu_{i+1})$ for $i=1,\cdots,n$.
\end{proof}

\begin{lem}
 Both $\inc$ and $\sinc$ define a strict partial order on $\C(S)$.
\label{lem:partial_order}
 \end{lem}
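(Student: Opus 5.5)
To show that $\inc$ and $\sinc$ are strict partial orders on $\C(S)$, I will verify irreflexivity and transitivity for each; asymmetry then follows formally, since $\mu \prec \mu'$ and $\mu' \prec \mu$ would give $\mu \prec \mu$ by transitivity. The main tool is a submultiplicativity property of the dilatation,
\[
\Dil(\mu,\mu'') \leq \Dil(\mu,\mu')\,\Dil(\mu',\mu''),
\]
together with a careful treatment of the curves where denominators vanish.

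For $\inc$, irreflexivity is clear. We have $\Dil(\mu,\mu)=1$ as soon as some closed curve $\gamma$ has $i(\mu,\gamma)\neq 0$. For $\mu = 0$ the supremum is over the empty set, and I will adopt the convention that it equals $0$, so $0\inc 0$ fails because $\Dil(0,0)\neq\infty$. In either case $\Dil(\mu,\mu)=\infty$ cannot hold.

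For transitivity, suppose $\mu\inc\mu'\inc\mu''$. Set $A=\Dil(\mu',\mu)<\infty$ and $B=\Dil(\mu'',\mu')<\infty$.
\begin{itemize}
\item First, $i(\mu,\gamma)\le A\, i(\mu',\gamma)$ for every curve $\gamma$ with $i(\mu',\gamma)>0$. The key point is to also cover curves with $i(\mu',\gamma)=0$: there I claim $i(\mu,\gamma)=0$. If instead $i(\mu,\gamma)>0$ while $i(\mu',\gamma)=0$, the definition of $\Dil$ would ignore this curve. So I will use the convention, implicit in the paper, that $\Dil(\mu',\mu)<\infty$ means $i(\mu,\gamma)\le A\,i(\mu',\gamma)$ for all $\gamma$, i.e.\ that a vanishing denominator with nonvanishing numerator counts as $\infty$. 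This is consistent with the paper's remark that $\Dil(\mu,\nu)=\infty$ when $\nu$ is filling and $\mu$ is not.
\item With that convention, $i(\mu,\gamma)\le A\,i(\mu',\gamma)$ and $i(\mu',\gamma)\le B\,i(\mu'',\gamma)$ for all $\gamma$, so $i(\mu,\gamma)\le AB\,i(\mu'',\gamma)$. Hence $\Dil(\mu'',\mu)\le AB<\infty$.
\item Next, suppose for contradiction that $\Dil(\mu,\mu'')=C<\infty$. Then $i(\mu',\gamma)\le B\,i(\mu'',\gamma)\le BC\,i(\mu,\gamma)$ for all $\gamma$, so $\Dil(\mu,\mu')<\infty$, contradicting $\mu\inc\mu'$.
\end{itemize}

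For $\sinc$, irreflexivity is immediate, since the condition requires some $\gamma_0$ with $i(\mu,\gamma_0)=0\ne i(\mu,\gamma_0)$, which is impossible. For transitivity, suppose $\mu\sinc\mu'\sinc\mu''$. The inequalities $i(\mu,\gamma)\le i(\mu',\gamma)\le i(\mu'',\gamma)$ chain to give $i(\mu,\gamma)\le i(\mu'',\gamma)$ for all $\gamma$. For the witness, take $\gamma_0$ with $i(\mu,\gamma_0)=0$ and $i(\mu',\gamma_0)>0$. Then $i(\mu'',\gamma_0)\ge i(\mu',\gamma_0)>0$, so $\gamma_0$ also witnesses $\mu\sinc\mu''$.

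The only delicate step is the handling of vanishing denominators in $\Dil$ for the $\inc$ case. I expect this to be the main obstacle, and I will make the convention explicit at the start of the proof.
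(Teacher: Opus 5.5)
Your proof is correct and follows the paper's argument. For $\inc$, it uses irreflexivity plus chaining of the two dilatation bounds; the paper packages this chaining as the submultiplicativity $\Dil(\mu,\mu'')\le\Dil(\mu,\mu')\Dil(\mu',\mu'')$, and your contradiction step showing $\Dil(\mu,\mu'')=\infty$ is the same as the paper's. For $\sinc$, your witness argument is identical to the paper's.

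The vanishing-denominator point you flag need not be imposed as a convention. For $\mu'\neq 0$, if $i(\mu',\gamma_0)=0<i(\mu,\gamma_0)$, then approximating $\gamma_0$ projectively by filling curves (dense by Lemma~\ref{lem:dilations}) already forces $\Dil(\mu',\mu)=\infty$, exactly as in the proof of Lemma~\ref{lem:inc_implies_support_containment}.
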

\begin{proof}
\begin{itemize}
\item First we prove that $\inc$ is a strict partial order. Irreflexivity is clear, since $\Dil(\mu,\mu')$ is either finite or infinite. For transitivity, we have
\begin{equation}
\Dil(\mu,\mu'') \leq \Dil(\mu,\mu') \Dil(\mu',\mu'')
\label{eq:dil_inequality}
\end{equation}
for arbitrary geodesic currents $\mu, \mu', \mu''$.
Hence, if $\mu \inc \mu'$ and $\mu' \inc \mu''$, we have, respectively, $\Dil(\mu', \mu) < \infty$ and $\Dil(\mu'',\mu')< \infty$. Therefore, by Eq.~\eqref{eq:dil_inequality}, $\Dil(\mu'',\mu)<\infty$.
We also claim that $\Dil(\mu,\mu'') =\infty$. Indeed, if $\Dil(\mu,\mu'') < \infty$, then again by Eq.~\eqref{eq:dil_inequality} we would have
\[
\Dil(\mu,\mu') \leq \Dil(\mu,\mu'')\Dil(\mu'',\mu').
\]
Since we assumed that $\Dil(\mu'',\mu')<\infty$, it would follow that $\Dil(\mu,\mu')<\infty$, a contradiction.
\item Second, we prove $\sinc$ is a strict partial order. We first observe that it is irreflexive. Indeed, if
$\mu \sinc \mu$, then there would exist some geodesic current $\gamma_0$ such that
\[
i(\mu,\gamma_0)=0
\qquad\text{and}\qquad
i(\mu,\gamma_0)\neq 0,
\]
which is impossible.
We now prove transitivity. Suppose that
\[
\mu \sinc \mu'
\qquad\text{and}\qquad
\mu' \sinc \mu''.
\]
Then, for every $\gamma \in \C(S)$, we have
\[
i(\mu,\gamma)\leq i(\mu',\gamma)\leq i(\mu'',\gamma),
\]
so the first condition in the definition of $\mu\sinc\mu''$ holds.

Moreover, since $\mu\sinc \mu'$, there exists some geodesic current $\gamma_0$ such that
\[
i(\mu,\gamma_0)=0
\qquad\text{and}\qquad
i(\mu',\gamma_0)\neq 0.
\]
Since
\[
i(\mu',\gamma_0)\leq i(\mu'',\gamma_0),
\]
it follows that
\[
i(\mu'',\gamma_0)\neq 0.
\]
Therefore
\[
i(\mu,\gamma_0)=0
\qquad\text{and}\qquad
i(\mu'',\gamma_0)\neq 0,
\]
so $\mu\sinc\mu''$.\qedhere
\end{itemize}
\end{proof}

We note the following property, which implies that simplices spanned by increasing sequences are true simplicies in the vector space of signed geodesic currents. We note that the converse is not true: not every linearly independent set of currents is increasing (see Example~\ref{ex:singular_currents}).
\begin{lem}\label{lem:strictly-increasing-independent}
Every increasing sequence 
\[
    \mu_0 \inc \mu_1 \inc \cdots
    \inc \mu_n
\]
of nonzero geodesic currents is linearly independent as signed geodesic currents.
In particular, if $\vec \mu =(\mu_0,\cdots,\mu_n)$, then $\Simplex(\vec \mu)$ is a simplicial cone of linear dimension $n+1$ and thus
$\PSimplex(\vec \mu)$ has topological dimension $n$.
\end{lem}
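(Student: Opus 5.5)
We will argue by contradiction, using the triangular structure imposed by the partial order $\inc$. Suppose there is a nontrivial linear relation
\[
\sum_{i=0}^n a_i \mu_i = 0 \quad \text{in } \C^s(S).
\]
Let $k$ be the largest index with $a_k \neq 0$. If $k=0$, then $a_0\mu_0=0$, which contradicts $\mu_0\neq 0$. So $k\geq 1$, and the relation can be solved for the top current:
\[
\mu_k = \sum_{i<k} b_i \mu_i, \qquad b_i = -a_i/a_k.
\]
The aim is to show that this forces $\Dil(\mu_{k-1},\mu_k)<\infty$. That contradicts $\mu_{k-1}\inc\mu_k$, which by definition requires $\Dil(\mu_{k-1},\mu_k)=\infty$.

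The key step is to compare each lower current with $\mu_{k-1}$. By transitivity of $\inc$ (Lemma~\ref{lem:partial_order}), we have $\mu_i \inc \mu_{k-1}$ for every $i<k-1$. Hence $D_i\coloneqq\Dil(\mu_{k-1},\mu_i)<\infty$ for all $i\le k-1$, with $D_{k-1}=1$. By the definition of dilatation, for every closed curve $\gamma$ with $i(\mu_{k-1},\gamma)>0$ we get $i(\mu_i,\gamma)\le D_i\, i(\mu_{k-1},\gamma)$.

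The same bound holds when $i(\mu_{k-1},\gamma)=0$, and this is the one step that needs some care, since the supremum in $\Dil$ skips curves with vanishing denominator. The difficulty is that we need $i(\mu_i,\gamma)=0$ for such $\gamma$. If instead $i(\mu_i,\gamma)>0$ for some $\gamma$ with $i(\mu_{k-1},\gamma)=0$, then $\gamma$ is a closed curve contributing $+\infty$ to $\Dil(\mu_{k-1},\mu_i)$, which would make $D_i=\infty$. So finiteness of $D_i$ gives domination for all closed curves.

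Now use linearity of intersection number on signed currents (cited from \cite{JMG26:CurrentsNegative}) together with nonnegativity of intersections. For every closed curve $\gamma$,
\[
i(\mu_k,\gamma)=\sum_{i<k} b_i\, i(\mu_i,\gamma)\le \sum_{i<k}|b_i|\, i(\mu_i,\gamma)\le \Big(\sum_{i<k}|b_i| D_i\Big)\, i(\mu_{k-1},\gamma).
\]
Therefore $\Dil(\mu_{k-1},\mu_k)\le \sum_{i<k}|b_i|D_i<\infty$, which is the desired contradiction. The relation was arbitrary, so the $\mu_i$ are linearly independent.

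For the ``in particular'', linear independence means that $\vec x\mapsto \vec x\cdot\vec\mu$ is an injective linear map from $\R^{n+1}_+$ onto $\Simplex(\vec\mu)$. So $\Simplex(\vec\mu)$ is a simplicial cone of linear dimension $n+1$. Its projectivization is homeomorphic to the open standard $n$-simplex, and hence has topological dimension $n$.
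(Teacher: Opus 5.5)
Your proof is correct. It uses the same mechanism as the paper's proof: triangularity of $\inc$ coming from transitivity (\Cref{lem:partial_order}), set against a dilatation that is forced to be both finite and infinite. The bookkeeping differs, though.

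The paper rewrites the relation as $\mu_+=\mu_-$ and takes the largest positive index $m$ and the largest negative index $k$, with $m>k$ after a symmetry reduction. It dominates $\mu_-$ by $\mu_k$ and concludes $\Dil(\mu_-,\mu_+)\geq \frac{a_m}{D}\Dil(\mu_k,\mu_m)=\infty$, which is absurd since $\mu_+=\mu_-$.

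You instead take the overall top index $k$, solve for $\mu_k$, and dominate every term by $\mu_{k-1}$ using absolute values. Then $\Dil(\mu_{k-1},\mu_k)<\infty$ contradicts $\mu_{k-1}\inc\mu_k$ directly. Your route has three advantages:
\begin{itemize}
\item it avoids the sign split and the symmetry reduction;
\item it only needs the finite half of transitivity, namely $\Dil(\mu_{k-1},\mu_i)<\infty$, which follows at once from Eq.~\eqref{eq:dil_inequality};
\item it uses the infinite half of $\inc$ only at the single consecutive pair $(\mu_{k-1},\mu_k)$.
\end{itemize}
The paper, by contrast, needs $\Dil(\mu_k,\mu_m)=\infty$ for a possibly non-consecutive pair, which is the less trivial half of \Cref{lem:partial_order}.

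Your paragraph on curves with $i(\mu_{k-1},\gamma)=0$ is unnecessary. $\Dil(\mu_{k-1},\mu_k)$ is itself a supremum over curves with $i(\mu_{k-1},\gamma)>0$, so your chain of inequalities only has to hold for those curves. As written, that paragraph is also not a valid argument. By Eq.~\eqref{eq:dilatation}, a curve with vanishing denominator is excluded from the supremum, so it cannot ``contribute $+\infty$.'' If you wanted that claim, you would have to approximate $\gamma$ by filling curves, as in the proof of \Cref{lem:inc_implies_support_containment}. It is simplest to delete the paragraph.

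For the ``in particular'' part, the homeomorphism with the open $n$-simplex deserves one line of justification. The map $\vec x\mapsto[\vec x\cdot\vec\mu]$ is a continuous injection of the closed standard simplex into the Hausdorff space $\PC(S)$. It is injective by linear independence, and it is well defined because $\vec x\cdot\vec\mu\neq 0$ when the $\mu_i$ are nonzero measures. A continuous injection from a compact space into a Hausdorff space is an embedding.
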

\begin{proof}
Since the sequence $(\mu_0, \dots, \mu_n)$ is increasing, we have, in particular, that 
\[
\Dil(\mu_i, \mu_j) = \infty
\]
for all $i < j$.

Suppose, toward a contradiction, that the geodesic currents are linearly
dependent as signed geodesic currents. Then there exist real numbers $a_0,\ldots,a_n$, not all
zero, such that $\sum_{j=0}^{n}a_j\mu_j=0.$
Equivalently, after separating the positive and negative
coefficients, we have an equality of geodesic currents
\[
    \sum_{a_j>0}a_j\mu_j
    =
    \sum_{a_j<0}(-a_j)\mu_j.
\]
Let 
\[
\mu_+ =  \sum_{a_j>0}a_j\mu_j, \qquad \mu_- = \sum_{a_j<0}(-a_j)\mu_j.
\]
We will show that either $\Dil(\mu_+, \mu_-)$ or $\Dil(\mu_-, \mu_+)$ is infinite, contradicting the fact that $\mu_+ = \mu_-$.

Since no $\mu_i$ is equal to zero, at least one positive and one negative coefficient is non-zero. Let $m$ be the largest value so that $a_m > 0$, and $k$ be the largest value so that $a_k < 0$. Suppose without loss of generality that $m > k$.

Then we have
\begin{align*}
    \Dil(\mu_-, \mu_+) & = \Dil \left ( \sum_{a_j<0}(-a_j)\mu_j, \sum_{a_j>0}a_j\mu_j \right)\\
    & = \sup_\gamma \frac{\sum_{a_j>0}a_ji(\mu_j, \gamma)}{\sum_{a_j<0}(-a_j)i(\mu_j, \gamma)} \\
    & \geq  \sup_\gamma \frac{a_m i(\mu_m, \gamma)}{\sum_{a_j<0}(-a_j)i(\mu_j, \gamma)} \\
    & =  \sup_\gamma \frac{a_m \frac{i(\mu_m, \gamma)}{i(\mu_k, \gamma)}}{\sum_{a_j<0}(-a_j)\frac{i(\mu_j, \gamma)}{i(\mu_k, \gamma)}}. 
\end{align*}
Now, we assumed that $k \geq j$ for all $j$ so that $a_j < 0$. In particular, if $a_j < 0$, then 
\[
\sup_\gamma \frac{i(\mu_j, \gamma)}{i(\mu_k, \gamma)} < \infty
\]
as $\mu_j \inc \mu_k$ for $j < k$, and when $j = k$ we have $\Dil(\mu_k, \mu_k) = 1$. Thus, there is some $D > 0$ so that, for all $\gamma$, $\sum_{a_j<0}(-a_j)\frac{i(\mu_j, \gamma)}{i(\mu_k, \gamma)} < D$. Therefore,
\begin{align*}
 \Dil(\mu_-, \mu_+) & \geq \frac{a_m}{D} \sup_\gamma \frac{i(\mu_m, \gamma)}{i(\mu_k, \gamma)} \\
  & = \frac{a_m}{D} \Dil(\mu_k, \mu_m) \\
  & = \infty
\end{align*}
where the last equality follows from our assumption that $a_m > 0$ (in particular, $a_m \neq 0$), and $m > k$ so $\Dil(\mu_k, \mu_m) = \infty$.

 However, $\mu_+ = \mu_-$ implies that
\[
\Dil(\mu_+, \mu_-) = \Dil(\mu_-, \mu_+) =  1
\]
giving us a contradiction.
\end{proof}

\section{Introduction to Theorems \ref{mainthm:max_rank} and \ref{mainthm:criterion}}
\label{sec:outlineCD}
In this section, we give an idea of how to bound the geometric rank of a simplex. We also emphasize that the bound on geometric rank is in sharp contrast with the unboundedness of linear dimension. We highlight this disparity with an example in \Cref{sec:example_rank_dim}.
\subsection{Idea of proof of Theorems \ref{mainthm:max_rank} and \ref{mainthm:criterion}}
Let 
\[
\vec \mu = (\mu_0, \dots, \mu_n)
\]
be any jointly filling sequence of currents. We bound the geometric rank of $\PSimplex(\vec \mu)$ as follows:

\begin{enumerate}
    \item In \Cref{sec:upper_bound_on_rank_of_simplices}, we prove that the dimension of a filling simplex gives an upper bound on its geometric rank. In particular in \Cref{prop:bound_rank_by_length} we show that  
    \[
    \rank(\PSimplex(\vec \mu)) \leq n.
    \]
    In general, this upper bound is very far from sharp. See \Cref{ex:singular_currents}.
     \item In \Cref{sec:decompose_increasing_seq}, we show how to reduce the case of an arbitrary sequence to the case of increasing sequences. We show that if $\vec \mu$ is an \textit{arbitrary} jointly filling sequence of geodesic currents, then $\PSimplex(\vec \mu)$ is coarsely isometric to a finite union of simplices spanned by increasing sequences (\Cref{cor:decomp_increasing_seq}).

    \item In Sections \ref{sec:increasing_seq} and \ref{sec:bound_increasing_seq}, we show that that if $\vec \mu$ is a jointly filling and \textit{increasing} sequence (recall \Cref{def:increasing_sequence}) then 
    \[
    n \leq 8g-9
    \]
    (\Cref{lem:n_at_most_8g-0}). The proof relies on showing that any increasing sequence can be replaced by a strictly increasing sequence of the same length (\Cref{lem:replacement}), and then bounding the length of any strictly increasing sequence. The replacement technique uses a construction from \Cref{sec:maximal_sequence}, but otherwise Sections \ref{sec:increasing_seq} and \ref{sec:bound_increasing_seq} are largely self-contained. \Cref{sec:increasing_seq} is devoted to establishing the necessary technical lemmas, while \Cref{sec:bound_increasing_seq} contains the main ideas.
    
    This establishes that an \textit{arbitrary} filling simplex $\PSimplex(\vec \mu)$ is coarsely isometric to a finite union of simplices of rank at most $8g-9$.

    \item It follows by induction on \Cref{prop:rank_union} that if $X_1, \dots, X_k$ are $k$ subspaces of $\PCf(S)$ of rank at most $r$, then their union also has rank at most $r$. 
    Thus, for an \textit{arbitrary} jointly filling sequence $\vec \mu$, 
    \[
    \rank(\PSimplex(\vec \mu)) \leq 8g-9.
    \]
    This allows us to complete the proof of \Cref{mainthm:max_rank} in \Cref{sec:proof_thm_max_rank}.
    \item We also show that the bound $8g-9$ is tight. 
    
    In \Cref{sec:maximal_sequence}  we construct a jointly filling and strictly increasing sequence with $n = 8g-9$ (\Cref{lem:build_increasing}). A part of this construction is also used when we prove that any increasing sequence can be replaced by a strictly increasing sequence of the same length (\Cref{lem:replacement}).

    \item Then in \Cref{sec:strictly_increasing_seq}, we show that if $\vec \mu$ is \textit{strictly increasing}, then $\PSimplex(\vec \mu)$ is quasi-isometric to $\R^n_+$  (\Cref{lem:qi_to_Rn}), and so flat and geometric ranks, as well as asymptotic dimension, are exactly $n$. This allows us to complete the proof of \Cref{mainthm:criterion} in \Cref{sec:proof_of_str_inc_thm}.
\end{enumerate}

\subsection{Example of the disparity between dimension and rank}
\label{sec:example_rank_dim}
The uniform bound on geometric rank stands in sharp contrast to the unboundedness of the linear rank or, equivalently, topological dimension of simplices. Indeed, the cone of geodesic currents is contained in the vector space of signed geodesic currents, where one can form cones of arbitrarily large \emph{linear rank}, by an idea due to Glorieux~\cite[Theorem~1.2]{Glo19:Negatively}, which we reproduce as Example~\ref{ex:singular_currents} for completeness. By contrast, \Cref{mainthm:max_rank} shows that the flat rank and geometric rank of simplices are subject to a universal topological bound.

\begin{example}
\label{ex:singular_currents}
For any $n \in \mathbb{N}$, let $\mu_0,\cdots,\mu_n$ be any collection of $n+1$ pairwise singular geodesic currents. Then the cone $\Delta(\vec \mu)$ has linear rank $n+1$.
Indeed, suppose $\sum_{i=0}^n a_i \mu_i = 0$ for some collection of real numbers $a_i, i=0,\cdots,n$.
Since every geodesic current $\mu$ induces a geodesic flow invariant and orientation reversal invariant probability measure $\widetilde{\mu}$ on the unit tangent bundle $T^1 S$~\cite[Theorem~3.4]{ES22:GeodesicCount} (c.f.~\cite{Bonahon86:EndsHyperbolicManifolds, Bonahon88:GeodesicCurrent}), 
then there exist Borel subsets $A_i \subset T^1 S$ for $i=0,\cdots,n$ so that $\widetilde{\mu_i}(A_i)=1$ and $\widetilde{\mu_i}(A_j) = 0$ for all $i \neq j$. This forces $a_i=0$ for all $i=0,\dots,n$.
A concrete example is when the induced probability measures $\widetilde{\mu_i}$ are pairwise distinct and ergodic with respect to the geodesic flow. 
It thus follows that the topological dimension of $\PSimplex(\vec \mu)$ is $n$.
This happens, for example, if $\mu_i$ are: 
\begin{itemize}
\item Liouville currents associated to distinct points in Teichm\"uller space~\cite{Bonahon88:GeodesicCurrent};
\item more generally, Liouville currents associated to negatively curved Riemannian metrics with distinct marked length spectra~\cite{Otal90:SpectreMarqueNegative}; or
\item distinct filling multi-curves.
\end{itemize}
On the other hand, if $\mu_i \in \Cf(S)$ for $i=0,\cdots,n$ (as in the above three examples) then by \Cref{lem:dist_to_sum}, $\PSimplex(\vec \mu)$ is bounded, hence it has flat rank, geometric rank and asymptotic dimension equal to zero.
\end{example}

\section{Upper bound on rank of simplices}
\label{sec:upper_bound_on_rank_of_simplices}

We now give a general upper bound on the rank of any simplex.

\begin{prop}
   Let $\vec \mu = (\mu_0, \dots, \mu_n)$ be any jointly filling sequence of geodesic currents. Then the geometric rank and flat rank of $\PSimplex(\vec \mu)$ are at most $n$.
  \label{prop:bound_rank_by_length}
\end{prop}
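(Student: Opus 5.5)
Since $\rankF\leq\rank$ (\Cref{lem:compare_rank}), it is enough to bound the geometric rank. I would do this by showing that $\PSimplex(\vec\mu)$ is a Lipschitz image of a space of asymptotic dimension $n$. Then a quasi-isometrically embedded large $m$-cube in the simplex forces $m\le n$.

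\textbf{Step 1: a Lipschitz parametrization.} Parametrize the simplex by logarithmic coordinates. Set $\Phi\colon\R^{n+1}\to\PSimplex(\vec\mu)$, $\Phi(\vec t)=[\sum_i e^{t_i}\mu_i]$; this factors through the quotient $\R^{n+1}/\R(1,\dots,1)\cong\R^n$. The key estimate is that $\Phi$ is $2$-Lipschitz for the sup norm. Indeed, if $\|\vec t-\vec s\|_\infty\le\delta$, then for every curve $\gamma$,
\[
e^{-\delta}\,i(\textstyle\sum e^{s_i}\mu_i,\gamma)\le i(\textstyle\sum e^{t_i}\mu_i,\gamma)\le e^{\delta}\,i(\textstyle\sum e^{s_i}\mu_i,\gamma).
\]
Hence both dilatations are at most $e^\delta$, and $\dsym\le 2\delta$. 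So $\Phi$ is a surjective $2$-Lipschitz map from $(\R^n,\|\cdot\|_\infty)$ onto $\PSimplex(\vec\mu)$.

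\textbf{Step 2: pull back a cube and pass to an asymptotic cone.} Suppose $f_R\colon[0,R]^m\to\PSimplex(\vec\mu)$ are uniform $(K,C)$-quasi-isometric embeddings. Choose a coarse right inverse $g_R\colon[0,R]^m\to\R^n$ with $\Phi\circ g_R=f_R$. Since $\Phi$ is Lipschitz, $g_R$ is coarsely expanding:
\[
\|g_R(x)-g_R(y)\|\ge\tfrac12\dsym(f_R x,f_R y)\ge\tfrac1{2K}|x-y|-C/2.
\]
The difficulty is that $g_R$ need not be coarsely Lipschitz, because $\Phi$ can collapse unbounded regions. To get around this, I would instead argue by dimension in the asymptotic cone. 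By \Cref{lem:min-cube-cone}, the cone of $\PSimplex(\vec\mu)$ contains a bilipschitz copy of $[0,1]^m$. Meanwhile $\Phi$ induces a $2$-Lipschitz surjection from the cone of $\R^n$, which is $\R^n$ itself, onto the cone of the simplex, at least restricted to sequences that admit admissible lifts.

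\textbf{Step 3: the main obstacle.} Lipschitz images can raise topological dimension in general, so Step 2 alone does not conclude. What I would try first is to show that $\Phi$ is coarsely monotone in a way that makes the induced cone map factor through a quotient of $\R^n$ by collapsing coordinate faces. The facts I would use are that $\dsym$ between points of the simplex is governed by ratios $\max_i|t_i-s_i|$ restricted to the coefficients that are ``visible'' (the ones not dominated through finite dilatations), and that the collapse happens along unions of coordinate directions. Concretely, I expect a two-sided estimate
\[
\dsym(\Phi\vec t,\Phi\vec s)\asymp\max_{I}\bigl|\max_{i\in I}t_i-\max_{i\in I}s_i\bigr|,
\]
with the outer max over finitely many index sets $I$ determined by which $\mu_i$ are dominated. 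Establishing this would use $i(\sum_i x_i\mu_i,\gamma)\asymp\max_i x_i\,i(\mu_i,\gamma)$ together with finitely many test currents witnessing infinite dilatations. Each such function $\vec t\mapsto\max_{i\in I}t_i$ is piecewise linear. The image is therefore bilipschitz to a quotient of $\R^n$ by a piecewise-linear map into $\R^{N}$ whose image is a finite union of polyhedral cones of dimension at most $n$. Such a set has asymptotic dimension at most $n$, giving $\adim(\PSimplex(\vec\mu))\le n$. \Cref{lem:rank_adim} then yields $\rank\le n$, and hence also $\rankF\le n$.
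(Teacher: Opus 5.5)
There is a genuine gap. The argument rests entirely on the two-sided estimate of Step 3, which you only conjecture, and the method you sketch for it does not work in general. Comparing $i(\sum_i x_i\mu_i,\gamma)$ with $\max_i x_i\,i(\mu_i,\gamma)$ and isolating coefficients with finitely many test currents is essentially the proof of \Cref{lem:qi_to_Rn}. That proof needs each infinite dilatation to be witnessed by a single current $\gamma$ with $i(\mu_j,\gamma)=0\neq i(\mu_k,\gamma)$. But one can have $\mu_j\inc\mu_k$ with $|\mu_j|=|\mu_k|$, e.g.\ $\mu_j=\lambda_1$ and $\mu_k=\lambda_1+\lambda_2$ for two ergodic measures on a non-uniquely ergodic lamination. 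Then no such $\gamma$ exists (see the proof of \Cref{lem:sinc_implies_strict_containment}), and finitely many test currents only ever give a finite ratio. The infinite dilatation is visible only along sequences of curves as in \Cref{lem:walsh_lem64}, and your sketch gives no control on how the remaining ratios $i(\mu_i,\gamma)/i(\mu_k,\gamma)$ behave along them. So nothing forces the large-scale shape of the set of vectors $(\log i(\mu_i,\gamma))_i$, modulo the diagonal, to be cut out by coordinate directions. That is exactly what a formula $\max_I\bigl|\max_{i\in I}t_i-\max_{i\in I}s_i\bigr|$ over finitely many index sets encodes. Note also that the estimate would give $\adim(\PSimplex(\vec\mu))\le n$ for every filling simplex, which is more than the paper proves, so this step is substantive rather than routine.

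Step 2 does not rescue the argument, but for a different reason than the one you give. A Lipschitz image of a subset of $\R^n$ has Hausdorff, hence topological, dimension at most $n$, so dimension raising is not the obstacle. What is missing there is a lifting statement: the cube points $f_R(R\vec x)$ should lie within $o(R)$ of some $\Phi(\vec t)$ with $\vec t$ at distance $O(R)$ from a lift of $f_R(0)$. Since $\Phi$ collapses unbounded regions (compare \Cref{lem:tau_simplex_is_line}), this is not automatic, and you do not prove it.

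The missing idea is that neither a distance formula nor any lifting is needed, because the bound is topological. Convexity of $\PSimplex(\vec\mu)$ together with \Cref{lem:dist_to_sum} lets one replace each quasi-isometric cube by a continuous map at uniformly bounded distance, via simplicial interpolation on the integer lattice (\Cref{lem:approximation_distance}). On a round sphere of radius comparable to $R$ inside the cube, antipodal points then map to distinct points of the simplex. Now compose with the continuous injection of $\PSimplex(\vec\mu)$ into $\R^k$, $k\le n$, given by linear coordinates with respect to a maximal linearly independent subfamily; for independent $\mu_i$ this is just your $\Phi^{-1}$. This contradicts Borsuk--Ulam (\Cref{thm:Borsuk_Ulam}) as soon as the cube has dimension larger than $k$; see \Cref{thm:upper_bound_rank} and \Cref{cor:upper_bound_rank}. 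Your Step 1 can be kept; Steps 2 and 3 should be replaced by this continuity-plus-Borsuk--Ulam argument.
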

This proposition is established in \Cref{cor:upper_bound_rank}.

For what follows, we need the following lemma about distances in simplices.

\begin{lem}
\label{lem:dist_to_sum}
 Let $\mu, \nu_1, \dots, \nu_n$ be filling geodesic currents. Let $t_1, \dots, t_n \in \R_+$. Then
 \[
  \dsym\left(\mu, \sum t_i \nu_i\right) \leq 2 \max_i \dsym(\mu, \nu_i).
 \]
 \end{lem}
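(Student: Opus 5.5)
We work with representatives throughout. $\dsym$ is invariant under positive rescaling of either argument (the rescaling factors cancel in the product $\Dil\cdot\Dil$), so the claim is about the current $\nu\coloneqq\sum_i t_i\nu_i$ for the \emph{given} weights $t_1,\dots,t_n>0$. No normalization of the $\nu_i$ or the $t_i$ will be made. Set
\[
a_i\coloneqq\Dil(\mu,\nu_i),\qquad b_i\coloneqq\Dil(\nu_i,\mu).
\]
Since $\mu$ and the $\nu_i$ are all filling, each of these is finite and positive, and $\dsym(\mu,\nu_i)=\log(a_ib_i)$. The plan is to bound $\Dil(\mu,\nu)$ and $\Dil(\nu,\mu)$ separately by linear expressions in the $t_i$, and then compare the two bounds via an elementary mediant inequality. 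This will in fact give the stronger estimate $\dsym(\mu,\nu)\le\max_i\dsym(\mu,\nu_i)$. Since $\dsym\ge 0$, that implies the stated bound with the factor $2$.

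\textbf{Step 1 (upper dilatation).} Fix a closed curve $\gamma$ with $i(\mu,\gamma)>0$. By definition of $a_i$ we have $i(\nu_i,\gamma)\le a_i\, i(\mu,\gamma)$. Linearity of $i$ then gives $i(\nu,\gamma)\le\bigl(\sum_i t_ia_i\bigr)i(\mu,\gamma)$. Taking the supremum over $\gamma$ yields
\[
\Dil(\mu,\nu)\le\sum_i t_ia_i .
\]

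\textbf{Step 2 (lower dilatation).} Similarly, $i(\mu,\gamma)\le b_i\, i(\nu_i,\gamma)$, that is, $i(\nu_i,\gamma)\ge i(\mu,\gamma)/b_i$. Summing with weights gives $i(\nu,\gamma)\ge\bigl(\sum_i t_i/b_i\bigr)i(\mu,\gamma)$. Note that $i(\nu,\gamma)>0$ for every curve $\gamma$, because $\nu$ is filling; this is where the filling hypothesis is used, to keep the denominators in $\Dil(\nu,\mu)$ nonzero. Hence
\[
\Dil(\nu,\mu)\le\Bigl(\sum_i t_i/b_i\Bigr)^{-1}.
\]

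\textbf{Step 3 (combine).} Multiplying the two bounds,
\[
\Dil(\mu,\nu)\Dil(\nu,\mu)\le\frac{\sum_i t_ia_i}{\sum_i t_i b_i^{-1}}\le\max_i\frac{t_ia_i}{t_ib_i^{-1}}=\max_i a_ib_i .
\]
The middle inequality is the mediant inequality: a ratio of sums of positive terms is at most the largest of the termwise ratios. Here it holds for any positive $t_i$, because the $t_i$ cancel termwise. Taking logarithms gives $\dsym(\mu,\nu)\le\max_i\dsym(\mu,\nu_i)\le 2\max_i\dsym(\mu,\nu_i)$.

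The only real obstacle is Step 3. A naive combination, which bounds $\sum_i t_ia_i$ by $n\max_i t_ia_i$ and uses a single $b_k/t_k$ for $\Dil(\nu,\mu)$, produces an additive $\log n$ error that is not controlled by $\max_i\dsym(\mu,\nu_i)$. For example, if all $\nu_i=\mu$ the right-hand side of the claim is $0$. Keeping the full weighted sum $\sum_i t_i/b_i$ in Step 2 is exactly what makes the weights cancel and removes this error.
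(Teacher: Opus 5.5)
Your proof is correct, and it follows a different route from the paper's.

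\textbf{The paper's route.} The paper first normalizes so that $h(\nu_i)=1$ and $\sum_i t_i=1$, and writes $\mu=\sum_i t_i\mu$. It then applies the mediant inequality (\Cref{lem:ratiosum}) separately to $i(\nu,\gamma)/i(\mu,\gamma)$ and to its reciprocal. This gives $\Dil(\mu,\nu)\le\max_i\Dil(\mu,\nu_i)$ and $\Dil(\nu,\mu)\le\max_i\Dil(\nu_i,\mu)$. Two issues remain at that point. The two maxima may be attained at different indices, and the individual terms $\log\Dil(\mu,\nu_i)$ are not invariant under rescaling $\nu_i$. So the paper has to read these terms as asymmetric Thurston distances, which is what the entropy normalization is for. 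It then bounds each by $\dsym$ via \eqref{eq:asym_sym_comp}, and that is where the factor $2$ enters.

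\textbf{Your route.} You keep the pointwise comparisons $i(\nu_i,\gamma)\le a_i\,i(\mu,\gamma)$ and $i(\nu_i,\gamma)\ge b_i^{-1}i(\mu,\gamma)$ with their exact constants. You bound each dilatation of $\nu$ by a weighted sum, and apply the mediant inequality only once, to the product. There the weights $t_i$ cancel termwise, and each $a_ib_i$ is invariant under rescaling $\nu_i$, so a single index controls both directions.

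\textbf{What this gains.} You need no entropy and no asymmetric metric, in particular no appeal to $\dasym\ge 0$; only linearity of $i$ and the definition of $\Dil$. You need no normalization of the weights. You also get the sharper bound $\dsym(\mu,\sum_i t_i\nu_i)\le\max_i\dsym(\mu,\nu_i)$. This says that closed $\dsym$-balls in $\PCf(S)$ are closed under positive combinations, as for Hilbert's projective metric.

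That factor-one form is exactly what the paper invokes in \eqref{eq:bound_third} in the proof of \Cref{lem:approximation_distance}. Your argument therefore justifies the constant written there. With only the factor-$2$ statement, that constant would have to be doubled; this is harmless for the application, but inconsistent with the text.

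\textbf{One small point.} Your claim that $a_i,b_i$ are finite and positive is right. One line would make it explicit, e.g.\ via \Cref{lem:dilations}: for filling currents the supremum is a maximum of a continuous positive function on the compact space $\PC(S)$.
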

Note that the distance $\dsym$ is defined for projective filling geodesic currents. However, the above lemma is scale-invariant: scaling any of the $\nu_i$ is equivalent to choosing a different coefficient $t_i$.

\begin{proof}
Let $\mu, \nu_1, \dots, \nu_n$ be filling geodesic currents.  Let $t_1, \dots, t_n \in \R_+$, and consider the geodesic current $\nu \coloneqq \sum t_i \nu_i$. 

Let $\omega_i \coloneqq h(\nu_i)\nu_i$, where $h(\cdot)$ is entropy, as defined in Eq.~\eqref{eq:entropy}. Then if $s_i \coloneqq \frac{t_i}{h(\nu_i)}\sum_j \frac{t_j}{h(\nu_j)}$ and $\omega\coloneqq \sum_i s_i \omega_i$, we have
\begin{align*}
h(\omega_i) & = 1\\
 [\omega_i] & = [\nu_i]  \implies \dsym(\mu, \omega_i) = \dsym(\mu, \nu_i)\\
 \left [\sum s_i \omega_i\right ] & = \left [\sum t_i \nu_i\right ] \implies \dsym(\mu, \omega) = \dsym(\mu, \nu) \\
 \sum s_i & = 1.
\end{align*}
In other words, we can assume WLOG that the entropy of each $\nu_i$ is one, and that the coefficients $t_1, \dots, t_n$ sum to 1.

Note that if $\sum t_i = 1$, then
\[
 \mu = \sum t_i \mu.
\]

Now let $\gamma$ be any filling geodesic current. We now bound:
\begin{align*}
 \frac{i(\nu, \gamma)}{i(\mu, \gamma)} & =  \frac{\sum t_i i(\nu_i, \gamma)}{\sum t_i i(\mu, \gamma)} \\
  & \leq \max_i \frac{i(\nu_i, \gamma)}{i(\mu, \gamma)}
\end{align*}
using \Cref{lem:ratiosum}. Proceeding similarly for the ratio $ \frac{i(\mu, \gamma)}{i(\nu, \gamma)} $, we get, using that $\Cf(S)$ is dense in $\C(S)$, 
\begin{align*}
 \dsym(\nu, \mu) & = \log \sup_\gamma \frac{i(\nu, \gamma)}{i(\mu, \gamma)} + \log \sup_\gamma \frac{i(\mu, \gamma)}{i(\nu, \gamma)} \\
  & \leq \log \sup_\gamma \max_i \frac{i(\nu_i, \gamma)}{i(\mu, \gamma)} + \log \sup_\gamma \max_i \frac{i(\mu, \gamma)}{i(\nu_i, \gamma)} \\
  & = \max_i \log \sup_\gamma \frac{i(\nu_i, \gamma)}{i(\mu, \gamma)} + \max_i \log \sup_\gamma  \frac{i(\mu, \gamma)}{i(\nu_i, \gamma)}\\
  & = \max_i d^{asym}(\mu, \nu_i) +\max_i d^{asym}(\nu_i, \mu) \\
  & \leq 2 \max_i  \dsym(\mu, \nu_i).
\end{align*}
Here we use that the entropies of $\nu_1, \dots, \nu_n$ are all 1, and so the terms $\log \sup_\gamma \frac{i(\nu_i, \gamma)}{i(\mu, \gamma)}$ and $\log\sup_\gamma \frac{i(\mu, \gamma)}{i(\nu_j, \gamma)}$ are exactly the asymmetric Thurston distances (Eq.~\eqref{eq:asym_metric}). Moreover, the asymmetric Thurston distance is smaller than the symmetrized distance by Eq.~\eqref{eq:asym_sym_comp}.
\end{proof}

We introduce two technical definitions that will be useful in our proof of \Cref{prop:bound_rank_by_length}. The following definition says roughly that quasi-isometric embeddings of integral cubes of any size can be approximated by continuous ones in a uniform way, that is, up to a constant that only depends on the quasi-isometry constants but not on the specific embedding.
\begin{defi}
Let $(X,d)$ be a metric space, and let $\lambda \geq 1, K, C > 0$ and $n \in \N$. We say that a family of $(\lambda, K)$-quasi-isometric embeddings
\[
f_i\colon  [0,i]^n \longrightarrow X
\]
for $i \in \N$ is \textbf{$C$-continuously approximable} if there is some  family of continuous functions 
\[
g_i\colon [0,i]^n \longrightarrow X
\]
so that $g_i$ is distance at most $C$ from $f_i$ for all $i$. That is, for all $\vec x \in [0,i]^n$,
\[
d_X\left(f(\vec x),g(\vec x)\right) \leq C.
\]

We say that a metric space $(X,d)$ admits \textbf{continuous uniform approximation
of integral $n$-cubes} if for every $\lambda \geq 1, K > 0$, there is some $C > 0$ so that every such family of $(\lambda, K)$-quasi-isometries is $C$-continuously approximable.
\end{defi}
\begin{rem}
    If $f$ is a $(\lambda, K)$ quasi-isometric embedding, and $g$ is a continuous function at distance $C$ from $f$, then $g$ is a continuous, $(\lambda,K+2C)$-quasi-isometric embedding.
\end{rem}

One way to construct continuous approximations for $n$-cubes of  the subspaces of (projective) geodesic currents we are ultimately interested in will be by interpolating using the convex structure of the space of geodesic currents.

We say $\mathcal{X} \subset \PCf(S)$ is \textbf{convex} if for every pair $[\alpha], [\beta] \in \mathcal{X}$ and $a,b \geq 0$, we have $[a\alpha + b \beta]\in \mathcal{X}$.

\begin{lem}
\label{lem:approximation_distance}
Let $n \in \N$. Suppose $\mathcal{X} \subset \PCf(S)$ be convex.
Then $\mathcal{X}$ admits continuous uniform approximations of $n$-cubes. In particular, for any $m \in \N$, $\lambda \geq 1$ and $K \geq 0$, any $(\lambda, K)$-quasi-isometric embedding $f: [0,m]^n \longrightarrow \mathcal X$ is $\frac 32 \lambda \sqrt n + K$-continuously approximable.
\end{lem}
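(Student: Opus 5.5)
We will build $g$ by piecewise-linear interpolation on the integer lattice, using the convex structure of $\mathcal X$. Let $f\colon[0,m]^n\to\mathcal X$ be a $(\lambda,K)$-quasi-isometric embedding. For each integer point $\vec v\in\{0,\dots,m\}^n$, choose a representative current $\mu_{\vec v}$ with $[\mu_{\vec v}]=f(\vec v)$, normalized so that $h(\mu_{\vec v})=1$. Triangulate each unit lattice cube $\vec v+[0,1]^n$ by the standard (Freudenthal/Kuhn) triangulation into simplices whose vertices are lattice points. For $\vec x$ in a simplex with vertices $\vec v_0,\dots,\vec v_n$ and barycentric coordinates $t_0,\dots,t_n$, set
\[
g(\vec x)\coloneqq\Bigl[\textstyle\sum_j t_j\,\mu_{\vec v_j}\Bigr].
\]
Convexity of $\mathcal X$ gives $g(\vec x)\in\mathcal X$; in particular, the sum is filling, so the class lies in $\PCf(S)$.

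The first step is to check that $g$ is well defined and continuous. On shared faces, the barycentric formulas agree because they only involve the vertices of the common face. Continuity within a simplex follows because $\vec t\mapsto\sum t_j\mu_{\vec v_j}$ is weak$^*$ continuous (intersection numbers with curves depend linearly on $\vec t$). Projectivization is continuous, and the $\dsym$-topology agrees with the weak$^*$ topology on $\PCf(S)$, as recalled in the background.

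The second step is the distance estimate. Fix $\vec x$ in a simplex with vertices $\vec v_j$, and let $\vec v_0$ be the lattice vertex of that simplex nearest to $\vec x$. Any two vertices of a unit cube are at Euclidean distance at most $\sqrt n$, so $d(f(\vec v_j),f(\vec v_0))\le\lambda\sqrt n+K$. Apply \Cref{lem:dist_to_sum} with $\mu=\mu_{\vec v_0}$ and $\nu_j=\mu_{\vec v_j}$; its statement allows $\nu_0=\mu$. This gives
\[
\dsym\bigl(f(\vec v_0),g(\vec x)\bigr)\le 2\max_j\dsym(f(\vec v_0),f(\vec v_j)).
\]
To finish, bound $d(f(\vec x),g(\vec x))\le d(f(\vec x),f(\vec v_0))+d(f(\vec v_0),g(\vec x))$. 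The first term is controlled because $|\vec x-\vec v_0|\le\sqrt n/2$, so it is at most $\lambda\sqrt n/2+K$.

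The main obstacle is matching the stated constant $\tfrac32\lambda\sqrt n+K$. The crude bound above produces $2(\lambda\sqrt n+K)+\lambda\sqrt n/2+K$, which is too large. To tighten it, I would avoid the factor $2$ coming from \Cref{lem:dist_to_sum}. The asymmetric-distance proof of that lemma actually yields $\max_j\dasym+\max_j\dasym$, and with the entropy normalization each term is at most a single vertex-to-vertex distance. I would also use that $\vec x$ lies within $\sqrt n/2$ of \emph{every} vertex of its cell in the relevant sense, and compare $g(\vec x)$ directly with $f(\vec x)$ instead of routing through $f(\vec v_0)$. If the exact constant still does not come out, the argument yields some constant $C=C(\lambda,K,n)$ independent of $m$. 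That already suffices for the definition of continuous uniform approximation, which is the property used afterwards.
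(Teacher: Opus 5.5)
Your construction is essentially the paper's: a fixed lift of $f$ at each lattice point, barycentric interpolation over a lattice triangulation, convexity of $\mathcal X$ to stay inside $\PCf(S)$, and \Cref{lem:dist_to_sum} for the distance estimate. Your bound $\frac52\lambda\sqrt n+3K$ is valid and independent of $m$. It proves the first sentence of the lemma, and it is all that \Cref{lem:Sphere_embedding} uses. What your proposal does not deliver is the explicit constant $\frac32\lambda\sqrt n+K$.

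The paper's proof does not legitimately reach that constant either. In \eqref{eq:bound_third} it invokes \Cref{lem:dist_to_sum} but drops that lemma's factor $2$. Also, the sum of \eqref{eq:bound_first} and \eqref{eq:bound_third} is $\frac32\lambda\sqrt n+2K$, not $\frac32\lambda\sqrt n+K$. Read literally, the paper's argument gives exactly your crude constant.

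Your suggested repair does not work as described. Splitting $\dsym$ into the two asymmetric terms and maximizing each separately is precisely the source of the $2$, because the two maxima can be attained at different vertices; entropy normalization does not change this. The missing idea is to bound both dilatations of the combination using the \emph{same} index. For filling $\mu,\nu_j$, put $\beta_j=\Dil(\mu,\nu_j)$ and $\alpha_j=\Dil(\nu_j,\mu)^{-1}$. Then $\alpha_j\,i(\mu,\gamma)\le i(\nu_j,\gamma)\le\beta_j\,i(\mu,\gamma)$ for every closed curve $\gamma$, and $\dsym(\mu,\nu_j)=\log(\beta_j/\alpha_j)$. Summing with weights $t_j>0$ gives
\[
\Bigl(\sum_j t_j\alpha_j\Bigr)i(\mu,\gamma)\le i\Bigl(\sum_j t_j\nu_j,\gamma\Bigr)\le\Bigl(\sum_j t_j\beta_j\Bigr)i(\mu,\gamma),
\]
hence, by \Cref{lem:ratiosum},
\[
\dsym\Bigl(\mu,\sum_j t_j\nu_j\Bigr)\le\log\frac{\sum_j t_j\beta_j}{\sum_j t_j\alpha_j}\le\max_j\dsym(\mu,\nu_j),
\]
with no normalization needed.

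Now carry out your ``compare directly'' idea. Take $\mu$ to be a lift of $f(\vec x)$ itself, and $\nu_j$ the chosen lifts at the vertices of the simplex containing $\vec x$. These vertices lie in a common unit cube with $\vec x$, so they are within Euclidean distance $\sqrt n$ of it. This gives $\dsym(f(\vec x),g(\vec x))\le\lambda\sqrt n+K$, which is stronger than the stated constant.

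This also makes your unproved claim irrelevant, namely that the nearest vertex of the simplex lies within $\sqrt n/2$ of $\vec x$. That claim is true for the Kuhn triangulation, since rounding $\vec x$ coordinatewise gives a vertex of its Kuhn simplex, but you did not justify it.
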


\begin{proof}
Let $m \in \N$, and let $f \colon [0,m]^n \to \mathcal{X}$ be a $(\lambda,K)$-quasi-isometric embedding.  We will show that $f$ is $\frac 32  \lambda \sqrt n + K$-continuously approximable. Note that our constant depends on $\lambda, K$ and $n$, but not on $m$. Thus, this will show that $\mathcal X$ is uniformly, continuously approximal by $n$-cubes.

Recall that $d_\E$ denotes the standard Euclidean metric.
For each $\vec x \in [0,m]^n$ there exist $\vec z_1, \dots, \vec z_n \in \Z^n$ so that
\begin{enumerate}
 \item $d_\E(\vec x, \vec z_i) \leq \sqrt n$ for $i = 1, \dots, n$, and
 \item we can write
   \[
    \vec x = \sum t_i \vec z_i
   \]
   with $\sum t_i = 1$, $t_i \geq 0$ for each $i$.
\end{enumerate}
This is done by cutting $[0,m]^n$ into $n$-cubes with integer vertices, and then further cutting each $n$-cube into simplices. 
If $\vec x$ lies in a simplex with vertices $\vec z_1, \dots, \vec z_n$, then there is a unique way of writing it as $\vec x = \sum t_i \vec z_i$.
Note that this is true even if $\vec x$ lies in the intersection of two simplices.
Fix an auxiliary filling geodesic current $\eta \in \Cf(S)$ once and for all. Every projective geodesic current $\nu \in \PC(S)$ has a unique lift $\overline{\nu} \in \C(S)$ satisfying
\[
i(\overline{\nu},\eta)=1.
\]
The map $\PC(S) \to C(S)$, given by $\sigma_\eta \colon[\nu] \mapsto \overline{\nu}$ gives a \textbf{continuous section}, i.e., a continuous map $\sigma_\eta$ so that $p \circ \sigma_\eta = \operatorname{id}_{\PC(S)}$, and $p \colon \C(S) \to \PC(S)$ is the natural projection.

We now define our continuous approximation $g$ of $f$.  Consider the map $g \colon [0,m]^n \to \mathcal{X}$ given by
\begin{enumerate}
 \item $g(\vec z) = f(\vec z) \in \mathcal{X}$ for each $z \in \Z^n$.
 \item For $\vec x \in [0,m]^n$ with $\vec x = \sum t_i \vec z_i$ as above, define
 \[
  g(\vec x) = \left[\sum t_i \overline{f(\vec z_i)}\right] 
 \]
where $\overline{f(\vec z_i)} \in \Cf(S)$ is the representative of the projective geodesic current $f(\vec z_i) \in \mathcal{X}$ satisfying $i(\overline{f(\vec z_i)},\eta)=1$ for each $i=1,\cdots,n$. 
\end{enumerate}
Since $\mathcal{X}$ is convex, it follows that  the sum $\sum t_i \overline{f(\vec z_i)} \in \mathcal{X}$, so $g$ is well-defined. Moreover, since $\sum t_i = 1$, we have that  $\sum t_i \overline{f(\vec z_i)}$ is the unique continuous lift of the projective class $[\sum_i t_i \overline{f(\vec z_i)}]$ to $\Cf(S)$ satisfying $i\left(\sum t_i f(\vec z_i), \eta\right)=1$.

Next, we must show that $g$ is continuous. Suppose $\vec x_k \to \vec x$. Since at most finitely many closed simplices contain $\vec x$, up to passing to a subsequence, for all $k$ large enough,
 there is some fixed set $\vec z_1, \dots, \vec z_n$ so that $\vec x_k = \sum t_i^k \vec z_i$, $\vec x = \sum t_i \vec z_i \in \Z^n$, and $t_i^k \to t_i$ as $k \to \infty$. Then $g(\vec x_k) = 
\left[\sum t_i^k \overline{f(\vec z_i)} \right]$. As $k$ goes to infinity, this converges to $\left[\sum t_i \overline{f(\vec z_i)}\right] = g(\vec x)$. Therefore, $g$ is continuous.

Lastly, we prove that $g(\vec x)$ is uniformly close to $f(\vec x)$ for all $\vec x$. In fact, we will show that for $C = \frac 32 \lambda \sqrt n + K>0$, for all $\vec x \in \R^n$, we have
\[
  \dsym(f(\vec x), g(\vec x)) \leq C.
\]
We have $\vec x = \sum t_i \vec z_i$ for $\vec z_i \in \Z^n$ and $\sum t_i = 1$. Let $\vec z \in \Z^n$ be the closest vector in $\Z^n$ to $\vec x$. Then, $d_\E(\vec x, \vec z) \leq \sqrt n/2$.

By the triangle inequality:
 \[
  \dsym(f(\vec x), g(\vec x)) \leq \dsym(f(\vec x), f(\vec z)) + \dsym(f(\vec z), g(\vec z)) + \dsym(g(\vec z), g(\vec x)).
 \]
We approximate all three terms separately.

Since $f$ is a $(\lambda, K)$ quasi-isometric embedding, and $d_\E(\vec x, \vec z) \leq \frac 12 \sqrt n$, we have
\begin{equation}
  \dsym(f(\vec x), f(\vec z)) \leq \frac 12 \lambda \sqrt n + K.
   \label{eq:bound_first}
\end{equation}
Next recall that, by definition of $g$, $g(\vec z) = f(\vec z) \in \mathcal{X}$ for all $\vec z \in \Z^n$. So,
\begin{equation}
 \dsym(f(\vec z), g(\vec z)) = 0.
 \label{eq:bound_middle}
\end{equation}
 Lastly, by \Cref{lem:dist_to_sum} and the fact that $f$ is a quasi-isometric embedding, we have
\begin{equation}
\begin{aligned}
 \dsym(g(\vec z), g(\vec x)) & = \dsym(f(\vec z), \sum_i t_i f(\vec z_i)) \\
 & \leq \max_i \dsym(f(\vec z),  f(\vec z_i))\\
 & \leq \max_i \lambda d_\E(\vec z, \vec z_i) + K \\
 & \leq \lambda \sqrt n + K.
\end{aligned}
\label{eq:bound_third}
\end{equation}

The last line follows from the fact that both $\vec z$ and $\vec z_1, \dots, \vec z_n$ are in the integer simplex about $\vec x$, giving us that $d_\E(\vec z, \vec z_i) \leq \sqrt n$.

Putting Equations~\eqref{eq:bound_first}, ~\eqref{eq:bound_middle}, and ~\eqref{eq:bound_third} together, we get
\[
 \dsym(f(\vec x), g(\vec x)) \leq  \frac32 \lambda \sqrt n + K
\]
for every $\vec x \in [0,m]^n$, as we wanted.
\end{proof}

We will now argue that, if $\mathcal X$ is a convex subspace of $\PCf(S)$ that continuously embeds into $\R^m$, then if $n \geq m$, the existence of a family of uniform quasi-isometric embeddings $f_i \colon [0,i]^n \to \mathcal{X}$ for $i \in \N$ would violate the Borsuk-Ulam theorem (see \Cref{thm:upper_bound_rank}). 

We recall the statement of the Borsuk-Ulam theorem here for convenience. Let $S^m$ denote the \textbf{$m$-dimensional unit sphere}, and $S^m_R$ the \textbf{$m$-dimensional sphere of radius $R$}.

\begin{theorem}[Borsuk-Ulam {\cite{Borsuk1933}}]
\label{thm:Borsuk_Ulam}
Every continuous map $f\colon S^m\longrightarrow \R^m$ maps some pair of antipodal points to the same point. That is, there exists $\vec{p}\in S^m$ such that $f(\vec{p})=f(-\vec{p}).$
\end{theorem}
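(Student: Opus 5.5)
The statement is the classical Borsuk--Ulam theorem, which the paper cites rather than proves. I would give the standard algebraic-topology proof by contradiction, reducing it to a statement about odd maps between spheres and then to a cohomology computation on real projective spaces.

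\emph{Reduction to an odd map.} Suppose $f\colon S^m\to\R^m$ satisfies $f(\vec p)\neq f(-\vec p)$ for all $\vec p\in S^m$. Define
\[
g\colon S^m\to S^{m-1},\qquad g(\vec p)=\frac{f(\vec p)-f(-\vec p)}{\lVert f(\vec p)-f(-\vec p)\rVert}.
\]
This map is continuous and \emph{odd}: $g(-\vec p)=-g(\vec p)$. So it suffices to show that there is no odd continuous map $S^m\to S^{m-1}$. The case $m=1$ is immediate: $S^1$ is connected, $S^0=\{\pm1\}$ is discrete, so $g$ is constant, which contradicts oddness.

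\emph{Passage to projective spaces.} Since $g$ is odd, it is equivariant for the antipodal $\Z/2$-actions, so it descends to a map $\bar g\colon \R P^m\to\R P^{m-1}$. I would first show that $\bar g$ is nontrivial on $\pi_1$, equivalently on $H^1(\cdot;\Z/2)$. Take a path $\sigma$ in $S^m$ from $\vec p$ to $-\vec p$. Its image projects to the nontrivial loop in $\R P^m$. The path $g\circ\sigma$ runs from $g(\vec p)$ to $-g(\vec p)$, so it projects to a loop that does not lift to a closed loop in $S^{m-1}$. That loop is therefore nontrivial in $\pi_1(\R P^{m-1})$ (for $m-1\geq 2$ this group is $\Z/2$; for $m-1=1$, $\R P^1\cong S^1$ and the loop is an odd multiple of the generator). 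Consequently $\bar g^*a=b$, where $a\in H^1(\R P^{m-1};\Z/2)$ and $b\in H^1(\R P^m;\Z/2)$ are the generators.

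\emph{Cup product contradiction.} Using the ring structure $H^*(\R P^k;\Z/2)=\Z/2[x]/(x^{k+1})$, we get
\[
0=\bar g^*(a^m)=(\bar g^*a)^m=b^m\neq 0,
\]
a contradiction. Hence some $\vec p$ satisfies $f(\vec p)=f(-\vec p)$.

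The main obstacle is the middle step: checking that an odd map induces a nonzero map on $H^1(\cdot;\Z/2)$. I would handle it through the covering-space lifting argument above, which translates oddness directly into the statement that the image loop does not lift to a closed loop. The cohomology ring of $\R P^k$ I would take as a standard input. An alternative route avoids cup products: show by induction on dimension, via a homotopy/transfer argument, that odd maps $S^{k}\to S^{k}$ have odd degree. Restricting $g$ to the equator $S^{m-1}\subset S^m$ then gives a map of odd degree that extends over a hemisphere, hence is null-homotopic and has degree $0$, again a contradiction.
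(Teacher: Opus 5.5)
Your proof is correct. The reduction to an odd map $g\colon S^m\to S^{m-1}$ is sound, and so is the separate treatment of $m=1$. The covering-space check is also right: the descended map $\bar g\colon \R P^m\to\R P^{m-1}$ pulls back the generator of $H^1(\R P^{m-1};\Z/2)$ to the generator of $H^1(\R P^m;\Z/2)$. You correctly handle the case $\R P^1\cong S^1$, where you need an \emph{odd} multiple of the generator and not merely a nontrivial element of $\pi_1$. The cup-length contradiction $0=\bar g^*(a^m)=b^m\neq 0$ then finishes the argument, and the alternative via ``odd maps have odd degree'' is also valid.

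The paper, however, gives no proof of this statement. It quotes Borsuk's theorem and uses it only as a black box in \Cref{thm:upper_bound_rank}, applied to $\phi\circ h$ on a round sphere of radius $R$, which is the unit-sphere statement after rescaling. So the only difference is that you make the result self-contained. The price is importing the $\Z/2$-cohomology ring of $\R P^k$ (or transfer and degree theory), which is no lighter than the citation, so for the paper's purposes citing is the economical choice.

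One point is worth noting where the theorem is applied. In \Cref{lem:Sphere_embedding} the sphere placed inside the cube $[0,i]^n$ can only be $S^{n-1}_R$: an $n$-sphere does not even inject continuously into $\R^n$, by the very theorem you proved. The dimension count fed into Borsuk--Ulam is therefore $n-1\geq m$. This still follows from the hypothesis $n>m$, so the paper's conclusion is unaffected.
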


\begin{lem}
\label{lem:Sphere_embedding}
Let $\mathcal{X}\subset\PCf(S)$ be a convex subspace.
Suppose that, for some $n \in \N$, there exists a family of $(\lambda,K)$-quasi-isometric embeddings
\[
f_i\colon [0,i]^n\longrightarrow\mathcal{X}.
\]
Then, for every sufficiently large $R>0$, depending only on $n$, $\lambda$, and $K$, there exists a continuous map
\[
h\colon S^n_R\longrightarrow\mathcal{X},
\]
such that
\[
h(\vec{p})\neq h(-\vec{p})
\]
for every $\vec{p}\in S^n_R$.
\end{lem}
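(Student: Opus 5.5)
The plan is to build $h$ from the continuous approximations of \Cref{lem:approximation_distance} applied to a single large cube. The sphere $S^n_R$ is one dimension larger than the cube, so the extra dimension must come from the convex structure of $\mathcal X$. By \Cref{lem:approximation_distance}, each $f_i$ is $C$-close, with $C=\frac32\lambda\sqrt n+K$, to a continuous map $g_i\colon[0,i]^n\to\mathcal X$. By the Remark following the definition of continuous approximability, $g_i$ is a continuous $(\lambda,K+2C)$-quasi-isometric embedding. Let $\overline{\,\cdot\,}$ denote the normalized lift with $i(\overline{\nu},\eta)=1$ used in that lemma. Fix $i\geq 4R$ and let $c$ be the centre of $[0,i]^n$. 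Write $\vec p=(\vec x,p_{n+1})\in S^n_R\subset\R^{n}\times\R$.

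\textbf{Step 1 (equatorial part).} Consider $\vec p\mapsto g_i(c+\vec x)$. Antipodal points $\pm\vec p$ go to $g_i(c\pm\vec x)$. Since $d_\E(c+\vec x,c-\vec x)=2|\vec x|$, the quasi-isometry lower bound gives
\[
\dsym\bigl(g_i(c+\vec x),g_i(c-\vec x)\bigr)\geq \tfrac{2|\vec x|}{\lambda}-(K+2C).
\]
This is positive as soon as $|\vec x|\geq r_0:=\lambda(K+2C)$. So antipodal collisions can only occur in the polar caps $\{|\vec x|<r_0\}$. This is where "$R$ sufficiently large depending only on $n,\lambda,K$" enters: $R\gg r_0$, so that the caps are small relative to the sphere.

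\textbf{Step 2 (separating the poles using the extra coordinate).} Pick a point $\vec e\in[0,i]^n$ with $d_\E(\vec e,c)\geq 3R$, and set $E=\overline{g_i(\vec e)}$. Choose a continuous cutoff $\phi\colon S^n_R\to[0,1]$ that is $1$ on the caps $\{|\vec x|\le r_0\}$, vanishes for $|\vec x|\geq 2r_0$, and depends only on $|\vec x|$, so that it is even. Also choose an odd continuous function $\tau(\vec p)=p_{n+1}/R\in[-1,1]$. Define
\[
h(\vec p)=\Bigl[\,\overline{g_i(c+\vec x)}+\phi(\vec p)\bigl(1+\tau(\vec p)\bigr)E\Bigr].
\]
This lies in $\mathcal X$ by convexity and is continuous. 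Outside the caps $h=g_i(c+\vec x)$, so Step 1 applies. At the poles $h(\pm Re_{n+1})$ equal $[A+2E]$ and $[A]$ respectively, where $A=\overline{g_i(c)}$. These differ because $[A]$ and $[E]$ are at distance $\gtrsim 3R/\lambda$, so $E$ is not proportional to $A$, while $[A+2E]=[A]$ would force proportionality.

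\textbf{Main obstacle.} The hard part is the transition region $r_0\le|\vec x|\le 2r_0$ and the caps away from the exact poles. There one must rule out
\[
\bigl[A_{\vec x}+\phi(1+t)E\bigr]=\bigl[A_{-\vec x}+\phi(1-t)E\bigr],
\qquad A_{\pm\vec x}=\overline{g_i(c\pm\vec x)}.
\]
I would test against intersection numbers. An equality of projective classes means the two currents are proportional with some constant $s>0$. Pairing with a filling curve $\gamma$ nearly realizing $\Dil(A_{\pm\vec x},E)$, which is very large since $[E]$ is far from both $[A_{\pm\vec x}]$ while $d_\E(c\pm\vec x,c)\le 2r_0\ll R$, forces $s(1-t)\approx(1+t)$. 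Pairing with $\eta$, where all the normalized lifts have intersection $1$, gives $1+\phi(1+t)=s(1+\phi(1-t))$. For $\phi t\neq 0$, these two relations are incompatible with the bounded ratios between $A_{\vec x}$ and $A_{-\vec x}$ given by \Cref{lem:dist_to_sum} and the quasi-isometry upper bound. If $t=0$ or $\phi=0$, the argument reduces to Step 1 or to the injectivity of $g_i$ on antipodes near $c$ with a common $E$-term. I would also strengthen $\tau$ so that it vanishes only where Step 1 already separates, to make that last case clean. Controlling these error terms uniformly, in terms of $n,\lambda,K$ only, is where I expect the real work.
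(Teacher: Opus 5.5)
There is a genuine gap, and it cannot be fixed by controlling error terms. You read $S^n_R$ as the $n$-dimensional sphere in $\R^{n+1}$. Under that reading the statement is false, so no choice of $\phi$, $\tau$, $E$ can make Step~2 work.

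Take $\mathcal X=\PSimplex(\vec\mu)$ for a jointly filling, strictly increasing $\vec\mu=(\mu_0,\dots,\mu_n)$. For $n=1$, say $\mu_0=\alpha$ a simple closed curve and $\mu_1=\alpha+\rho$ with $\rho$ filling. This $\mathcal X$ is convex. By \Cref{lem:qi_to_Rn} it is quasi-isometric to $\R^n_+$, so uniform quasi-isometric embeddings $f_i\colon[0,i]^n\to\mathcal X$ exist. But $\mathcal X$ injects continuously into $\R^n$:
\begin{itemize}
\item Let $\gamma_1,\dots,\gamma_n$ witness the strict increases, and let $\gamma_0$ be any filling current, so $i(\nu,\gamma_0)>0$ on $\mathcal X$.
\item By domination, $i(\mu_k,\gamma_j)=0$ for $k<j$, while $i(\mu_j,\gamma_j)\neq 0$. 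So the matrix $\bigl(i(\mu_k,\gamma_j)\bigr)_{j,k}$ is triangular with nonzero diagonal.
\item Hence $[\nu]\mapsto\bigl(i(\nu,\gamma_j)/i(\nu,\gamma_0)\bigr)_{j=1}^n$ is an injective, weak$^*$-continuous map $\mathcal X\to\R^n$.
\end{itemize}
Composing any continuous $h\colon S^n_R\to\mathcal X$ with this injection and applying \Cref{thm:Borsuk_Ulam} gives $h(\vec p)=h(-\vec p)$ for some $\vec p$. So your $h$ must have an antipodal collision somewhere in the caps or the transition region. Equivalently, your version of the lemma, fed into the proof of \Cref{thm:upper_bound_rank}, would give $\rank(\PSimplex(\vec\mu))\le n-1$, contradicting \Cref{mainthm:criterion}.

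The sphere actually meant is the round sphere of radius $R$ inside $\R^n$, of dimension $n-1$, despite the convention for $S^m$ introduced before \Cref{thm:Borsuk_Ulam}. This is what the paper's proof uses when it embeds the sphere isometrically in $[0,i]^n$ for $i\geq 2R$. It is also all that \Cref{thm:upper_bound_rank} needs: $n>m$ gives $n-1\geq m$, so one can restrict to an $m$-dimensional great subsphere before invoking Borsuk--Ulam.

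For that sphere, the whole proof is your Step~1 restricted to the equator $p_{n+1}=0$. Put $h(\vec x)=g_i(c+\vec x)$ for $|\vec x|=R$. Antipodal points are at Euclidean distance $2R$, so
$\dsym\bigl(h(\vec x),h(-\vec x)\bigr)\geq 2R/\lambda-K-2C=2R/\lambda-3\lambda\sqrt n-3K>0$
once $R$ is large in terms of $n,\lambda,K$. Step~2 should simply be dropped.
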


\begin{proof}
Take a family of $(\lambda, K)$ quasi-isometric embeddings $f_i:[0,i]^n \longrightarrow \mathcal X$. Since $\mathcal X$ is convex, \Cref{lem:approximation_distance} implies that it is $\frac 32 \lambda \sqrt n + K$-continuously approximable.
For each $i\in\N$, let
\[
g_i\colon [0,i]^n\longrightarrow\mathcal{X}
\]
be this continuous uniform approximation. That is, for all $i$, $g_i$ is continuous, and for all $\vec x \in [0,i]^n$, $\dsym(f(\vec x), g(\vec x)) \leq \frac 32 \lambda \sqrt n + K$.

Choose $R>0$ sufficiently large that
\[
\frac{2R}{\lambda}-3\lambda\sqrt n-3K>0.
\]
For any $i\geq 2R$ there is an isometric embedding
\[
\iota\colon S^m_R\hookrightarrow [0,i]^m,
\]
where $S^m_R$ is equipped with the restriction of the Euclidean metric. Fix such an $i$ and define
\[
h=g_i\circ\iota\colon S^m_R\longrightarrow\mathcal{X}.
\]
Since $g_i$ is continuous, so is $h$.

We claim that $h(\vec{p})\neq h(-\vec{p})$ for every $\vec{p}\in S^m_R$. Fix $\vec{p}\in S^m_R$ and set
$\vec x=\iota(\vec{p})
\quad\text{and}\quad
\vec y=\iota(-\vec{p}).$
Since $\vec{p}$ and $-\vec{p}$ are antipodal and $\iota$ is an isometry, $d_{\E}(\vec x,\vec y)=2R$.
As $f_i$ is a $(\lambda,K)$-quasi-isometric embedding,
\[
\dsym(f_i(\vec x),f_i(\vec y))
\geq \frac{2R}{\lambda}-K.
\]
Moreover, we chose $g_i$ so that 
\[
\dsym(f_i(\vec x),g_i(\vec x))
\leq \frac{3}{2}\lambda\sqrt n+K
\]
and
\[
\dsym(f_i(\vec y),g_i(\vec y))
\leq \frac{3}{2}\lambda\sqrt n+K.
\]
Therefore, by the triangle inequality,
\[
\begin{multlined}
\dsym(g_i(\vec x),g_i(\vec y))
\geq \dsym(f_i(\vec x),f_i(\vec y))
-\dsym(f_i(\vec x),g_i(\vec x)) \
-\dsym(f_i(\vec y),g_i(\vec y)),
\end{multlined}
\]
i.e., putting the inequalities together,
\[
\begin{aligned}
\dsym(g_i(\vec x),g_i(\vec y))
&\geq \frac{2R}{\lambda}-K
-2\left(\frac{3}{2}\lambda\sqrt n+K\right) \
&= \frac{2R}{\lambda}-3\lambda\sqrt n-3K
\end{aligned}
\]

As we chose $R$ so that the right-hand side, $\frac{2R}{\lambda}-3\lambda\sqrt n-3K> 0$, we get
\[
h(\vec{p})=g_i(\vec x)\neq g_i(\vec y)=h(-\vec{p}).
\]
Since $\vec{p}\in S^m_R$ was arbitrary, the result follows.
\end{proof}

\begin{theorem}
\label{thm:upper_bound_rank}
Let $\mathcal{X} \subset \PCf(S)$ be a convex subspace equipped with $\dsym$.
Suppose furthermore that  there exists a continuous injective map $\phi \colon \mathcal{X} \to \mathbb{R}^m$.
Then, for all $n > m$, there is no family of $(\lambda,K)$-quasi-isometric embeddings $f_i\colon [0,i]^n \to \mathcal{X}$.

In particular, the geometric rank of $\mathcal X$ satisfies
\[
\rank(\mathcal X)\leq m.
\]
\end{theorem}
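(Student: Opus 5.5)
We argue by contradiction, combining \Cref{lem:Sphere_embedding} with the Borsuk--Ulam theorem (\Cref{thm:Borsuk_Ulam}). Fix $n>m$. Suppose there is a family of $(\lambda,K)$-quasi-isometric embeddings $f_i\colon[0,i]^n\to\mathcal X$ with uniform constants. Since $\mathcal X$ is convex, \Cref{lem:Sphere_embedding} applies. It gives, for $R$ large enough depending only on $n,\lambda,K$, a continuous map $h\colon S^n_R\to\mathcal X$ with $h(\vec p)\neq h(-\vec p)$ for every $\vec p$. This map is built from the continuous approximations of \Cref{lem:approximation_distance}, restricted to a round sphere of radius $R$ sitting inside a cube $[0,i]^n$ with $i\geq 2R$. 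Note that \Cref{lem:Sphere_embedding} is stated with $S^n_R$ in the conclusion, although its proof writes $S^m_R$. We use it in the $n$-dimensional form: the sphere $S^n_R$ is $n$-dimensional and lives in $\mathbb R^{n+1}$. So we should really embed it isometrically in an $(n+1)$-cube, or simply use an $(n-1)$-sphere inside $[0,i]^n$. The second option suffices, because $n-1\geq m$.

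Concretely, let $k=n-1\geq m$. The round sphere $S^{k}_R\subset\mathbb R^{n}$ embeds isometrically, for the restricted Euclidean metric, into $[0,i]^n$ once $i\geq 2R$. The argument of \Cref{lem:Sphere_embedding} then produces a continuous map $h\colon S^{k}_R\to\mathcal X$ with no antipodal coincidences. That argument uses only that antipodal points are at Euclidean distance $2R$, and that the continuous approximants $g_i$ lie within $\tfrac32\lambda\sqrt n+K$ of $f_i$.

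Now compose with $\phi$. The map $\phi\circ h\colon S^{k}_R\to\mathbb R^m$ is continuous. Since $k\geq m$, we can post-compose with the inclusion $\mathbb R^m\hookrightarrow\mathbb R^{k}$ and rescale the sphere to the unit sphere. Borsuk--Ulam then yields $\vec p$ with $\phi(h(\vec p))=\phi(h(-\vec p))$. Injectivity of $\phi$ forces $h(\vec p)=h(-\vec p)$, contradicting the construction. Hence no such family exists for $n>m$, and by \Cref{def:geo_rank} we get $\rank(\mathcal X)\leq m$.

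The main point requiring care is the dimension bookkeeping in applying \Cref{lem:Sphere_embedding}. We must make sure the sphere fed into Borsuk--Ulam has dimension at least $m$ while still embedding isometrically in the $n$-cube. Using $S^{n-1}\subset\mathbb R^n$ with $n-1\geq m$ resolves this. A second point to check is that continuity of $h$ needs only continuity of $g_i$ into $(\mathcal X,\dsym)$. The $\dsym$-topology agrees with the weak$^*$ topology, and $\phi$ is continuous for it, so $\phi\circ h$ is continuous.
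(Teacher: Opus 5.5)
Your proposal is correct and follows the paper's own argument: the sphere lemma gives a continuous map from a round sphere into $\mathcal X$ with no antipodal coincidences, you compose with $\phi$, and injectivity of $\phi$ contradicts Borsuk--Ulam. Your one change is in the dimension bookkeeping. You embed $S^{n-1}_R$ in $[0,i]^n$ (valid since $n-1\geq m$) and post-compose with $\mathbb R^m\hookrightarrow\mathbb R^{n-1}$, where the paper says it restricts $S^n_R$ to an $m$-sphere; your version cleanly repairs the off-by-one in the sphere lemma, since an $n$-sphere does not sit isometrically in an $n$-cube.
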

Note that the convexity assumption is crucial. For example, there exist non-convex subsets of $\R^2$ that are homeomorphic to $\R$, but still have rank 2 (such as, for example, a spiral.)

\begin{proof}
 Suppose, toward a contradiction, that there exists a family of
$(\lambda,K)$-quasi-isometric embeddings $f_i\colon [0,i]^n\longrightarrow\mathcal X$. 
By \Cref{lem:Sphere_embedding}, there is some $R> 0$, and an embedding $h\colon S_R^n\longrightarrow\mathcal X$ of the radius $R$ $n$-sphere that does not identify any pairs of antipodal points. As $n > m$, we can restrict $h$ to a map on an $m$-sphere, and, abusing notation slightly, get a continuous map
\[
h\colon S_R^m\longrightarrow\mathcal X
\]
such that
\[
h(\vec{p})\neq h(-\vec{p})
\]
for every antipodal pair $\vec{p},-\vec{p}\in S_R^m$.
Consider the continuous composition
\[
S_R^m\xrightarrow{h}\mathcal X\xrightarrow{\phi}\mathbb R^m.
\]
Since $\phi$ is injective, we have
\[
\phi(h(\vec{p}))\neq\phi(h(-\vec{p}))
\]
for every antipodal pair $\vec{p},-\vec{p}\in S_R^m$. This contradicts the
Borsuk--Ulam Theorem (refer to \Cref{thm:Borsuk_Ulam} for the statement).

Therefore, if $n>m$, there is no family of
$(\lambda,K)$-quasi-isometric embeddings
\[
f_i\colon [0,i]^n\longrightarrow\mathcal X
\]
with $\lambda$ and $K$ independent of $i$.\qedhere
\end{proof}

The following corollary about the rank of simplices is then immediate. 

\begin{cor}
\label{cor:upper_bound_rank}
Let $\vec \mu = (\mu_0,\cdots,\mu_m)$ be a jointly filling sequence of geodesic currents. Then  $(\PSimplex(\vec \mu),\dsym)$ has geometric rank satisfying $\rank(\PSimplex(\vec \mu)) \leq m$.
\end{cor}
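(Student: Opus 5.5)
We will deduce this directly from \Cref{thm:upper_bound_rank}, applied with $\mathcal X = \PSimplex(\vec \mu)$. Two hypotheses need to be checked: that $\PSimplex(\vec\mu)$ is a convex subspace of $\PCf(S)$, and that it admits a continuous injective map into $\mathbb{R}^m$.

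\textbf{Convexity.} Since $\vec\mu$ is jointly filling, every $\vec x\cdot\vec\mu$ with $\vec x \in \R^{m+1}_+$ is filling, so $\PSimplex(\vec\mu)\subset\PCf(S)$. Take $[\alpha]=[\vec x\cdot\vec\mu]$ and $[\beta]=[\vec y\cdot\vec\mu]$, and let $a,b\geq 0$ not both zero. Then for any representatives we have $a\alpha+b\beta=(a s\vec x+b t\vec y)\cdot\vec\mu$ for suitable $s,t>0$. The coefficient vector has all entries positive, so $[a\alpha+b\beta]\in\PSimplex(\vec\mu)$.

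\textbf{The injective map.} This is the only step needing care, because $\vec\mu$ may be linearly dependent, so the coefficient map $\vec x\mapsto[\vec x\cdot\vec\mu]$ need not be injective. We work inside the finite-dimensional real vector space $V=\operatorname{span}(\mu_0,\dots,\mu_m)\subset\C^s(S)$, which has dimension $k+1\le m+1$. Fix a filling current $\eta$ (for instance $\sum\mu_i$). By \Cref{lem:dilations} and the section $\sigma_\eta$ from the proof of \Cref{lem:approximation_distance}, every class in $\PSimplex(\vec\mu)$ has a unique representative $\overline\nu$ with $i(\overline\nu,\eta)=1$. This representative lies in the affine hyperplane $H=\{v\in V: i(v,\eta)=1\}$, using linearity of $i$ on signed currents. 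The hyperplane $H$ has dimension $k\le m$; choose an affine identification $H\cong\R^k\subset\R^m$. Define $\phi([\nu])=\overline\nu\in H$. This map is injective, since distinct projective classes have distinct normalized representatives.

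\textbf{Continuity of $\phi$.} The $\dsym$-topology agrees with the weak$^*$ topology, so it suffices to handle weak$^*$ convergence. If $[\nu_j]\to[\nu]$ in $\PSimplex(\vec\mu)$, then $\overline{\nu_j}\to\overline\nu$ weak$^*$ by continuity of $\sigma_\eta$. The remaining point is that on the finite-dimensional space $V$, weak$^*$ convergence coincides with convergence in the linear topology. To see this, pick finitely many closed curves $\gamma_1,\dots,\gamma_N$ such that the linear map $v\mapsto(i(v,\gamma_l))_l$ is injective on $V$. Such curves exist because intersection numbers with curves separate signed currents, and a finite subset suffices by dimension. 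This linear map is then a linear isomorphism onto its image, and it is weak$^*$-continuous since $\nu_j\to\nu$ implies $i(\nu_j,\gamma_l)\to i(\nu,\gamma_l)$. Hence $\phi$ is continuous.

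\textbf{Conclusion.} With both hypotheses verified, \Cref{thm:upper_bound_rank} gives $\rank(\PSimplex(\vec\mu))\le m$. The main subtlety is this continuity and separation point; the rest is routine.
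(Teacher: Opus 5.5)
Your proof is correct and follows the paper's route: check convexity of $\PSimplex(\vec\mu)$, build a continuous injection into a Euclidean space of dimension $k\le m$ from the finite-dimensional span of $\vec\mu$, and invoke \Cref{thm:upper_bound_rank}. The only differences are that you use the slice $i(\cdot,\eta)=1$ where the paper uses coordinates in a maximal linearly independent subfamily followed by projectivization, and that you justify continuity, which the paper only asserts, via finitely many curves separating $V$; that separation step rests on Otal's theorem that intersection numbers with closed curves determine a current.
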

\begin{proof}
The hypothesis of convexity for $\mathcal{X} \coloneqq \PSimplex(\vec \mu)$ is easily verified.
Let $k+1$ denote the affine dimension of the cone $\Simplex(\vec \mu)$, which satisfies $k+1 \leq m+1$. In fact, up to reordering, suppose $\mu_0, \dots, \mu_k$ is a maximal linearly independent subset of $\mu_0, \dots, \mu_n$. Then every $\nu \in \Delta(\vec \mu)$ can be written uniquely as $\nu = \sum_{i=0}^k x_i \mu_i$, and the map 
\begin{align*}
    \iota : \Delta(\vec \mu) & \longrightarrow \R^{k+1}\\
     \sum_{i=0}^k x_i \mu_i & \mapsto (x_0, \dots, x_k)
\end{align*}
is a continuous embedding that commutes with the projection operation on both sides. Thus, $\mathcal{X}$ admits a continuous injection to $\mathbb{R}^k$, for $k \leq m$.
Hence, by Theorem~\ref{thm:upper_bound_rank}, the conclusion follows.
\end{proof}

\section{Decomposition into increasing sequences}
\label{sec:decompose_increasing_seq}

We now show that any simplex can be decomposed up to quasi-isometry into a union of simplices on increasing sequences (\Cref{cor:decomp_increasing_seq}).

    Let $\mu_0, \dots, \mu_n$ be a set of geodesic currents. If, for all $i$, and for all $\gamma \in \Curves(S)$, we have
    \begin{equation}
     i(\mu_i, \gamma) \leq i(\mu_{i+1}, \gamma)
    \label{eq:dominated}\end{equation}
    then we say that $\mu_0, \dots, \mu_n$ is a \textbf{dominated} sequence of geodesic currents. Note that dominated sequences satisfy the first condition for both increasing and strictly increasing sequences (Definition~\ref{def:increasing_sequence}).

First, we show that any simplex $\Simplex(\vec \mu)$ can be written as a union of simplices on dominated sequences of geodesic currents. Note that we do not need to assume $\vec \mu$ is jointly filling.
\begin{lem}\label{lem:increasing}
Let $\vec\alpha=(\alpha_0,\ldots,\alpha_n)$ be a 
sequence of geodesic currents. Then there exist finitely many  dominated sequences
$\vec\mu^{\,1},\ldots,\vec\mu^{\,N}$ such that
\[
    \Delta(\vec\alpha)
    =
    \bigcup_{j=1}^{N}\Delta(\vec\mu^{\,j}).
\]
\end{lem}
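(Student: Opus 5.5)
The plan is to use the standard ``staircase'' (Abel summation) decomposition of the positive orthant according to the relative order of the coordinates. Domination then comes for free from monotonicity of intersection numbers under adding geodesic currents.

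\textbf{Step 1: Set-up.} Fix $\vec x\in\R^{n+1}_+$ and consider $\nu=\vec x\cdot\vec\alpha$. Group the indices by equal coefficient values. This gives an \emph{ordered set partition} $(B_0,\dots,B_r)$ of $\{0,\dots,n\}$ and values $0<c_0<c_1<\cdots<c_r$ with $x_i=c_k$ for all $i\in B_k$. For each ordered partition $P=(B_0,\dots,B_r)$, define the tail sums
\[
\beta^P_k \coloneqq \sum_{l\ge k}\ \sum_{i\in B_l}\alpha_i ,\qquad k=0,\dots,r,
\]
and the reversed sequence $\vec\mu^{P}\coloneqq(\beta^P_r,\beta^P_{r-1},\dots,\beta^P_0)$. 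There are only finitely many ordered set partitions of $\{0,\dots,n\}$, so this gives finitely many sequences $\vec\mu^{\,1},\dots,\vec\mu^{\,N}$.

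\textbf{Step 2: Each $\vec\mu^P$ is dominated.} Consecutive terms differ by adding a geodesic current: $\beta^P_{k}=\beta^P_{k+1}+\sum_{i\in B_k}\alpha_i$. Intersection number is bilinear and nonnegative on $\C(S)$. Hence $i(\beta^P_{k+1},\gamma)\le i(\beta^P_k,\gamma)$ for every closed curve $\gamma$, which is exactly condition \eqref{eq:dominated} for the reversed sequence.

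\textbf{Step 3: $\Delta(\vec\alpha)\subset\bigcup_P\Delta(\vec\mu^P)$.} Put $c_{-1}=0$. Summation by parts gives
\[
\nu=\sum_{k=0}^r c_k\sum_{i\in B_k}\alpha_i=\sum_{k=0}^{r}(c_k-c_{k-1})\,\beta^P_k .
\]
All coefficients $c_k-c_{k-1}$ are \emph{strictly} positive, since the $c_k$ are distinct and $c_0>0$. Therefore $\nu\in\Delta(\vec\mu^P)$.

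This is the one point needing care. If one used only permutations instead of ordered partitions, ties among the $x_i$ would produce zero coefficients, and $\Delta$ is defined using strictly positive coefficients. Allowing blocks handles this.

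\textbf{Step 4: $\Delta(\vec\mu^P)\subset\Delta(\vec\alpha)$.} Take $\sum_k y_k\beta^P_k$ with all $y_k>0$. Expanding, $\alpha_i$ for $i\in B_l$ receives coefficient $\sum_{k\le l}y_k>0$. Every index lies in some block, so every $\alpha_i$ gets a positive coefficient, and the point lies in $\Delta(\vec\alpha)$.

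Steps 3 and 4 together give the claimed equality. I do not expect a real obstacle beyond the strict-positivity issue handled in Step 3.
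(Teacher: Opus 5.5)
Your proof is correct and is essentially the paper's argument. The paper also writes $\nu$ as a telescoping combination of the partial sums $\alpha_{\sigma(0)}+\cdots+\alpha_{\sigma(k)}$, ordered by decreasing coefficient. It handles ties by passing to the subsequence indexed by $I=\{k : y_k>0\}$, which always contains $n$; a permutation together with such a set $I$ is the same data as your ordered set partition, and both inclusions are checked the same way.
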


\begin{proof}
For each permutation $\sigma$ of $\{0,\ldots,n\}$, define
$\mu_k^\sigma=\alpha_{\sigma(0)}+\cdots+\alpha_{\sigma(k)}$ for
$k=0,\ldots,n$. For every geodesic current $\gamma$, we have
$i(\mu_k^\sigma,\gamma)\leq i(\mu_{k+1}^\sigma,\gamma)$, so
$\mu_0^\sigma,\ldots,\mu_n^\sigma$ is a dominated sequence. 

For each subset $I\subseteq\{0,\ldots,n\}$ containing $n$, let
$\vec\mu^{\,\sigma,I}=(\mu_k^\sigma)_{k\in I}$, with the indices
written in increasing order.  We claim that
\[
    \Delta(\vec\alpha)
    =
    \bigcup_{\sigma}
    \ \bigcup_{\substack{I\subseteq\{0,\ldots,n\}\\ n\in I}}
    \Delta(\vec\mu^{\,\sigma,I}).
\]

Let $\nu\in\Delta(\vec\alpha)$. Choose a positive coefficient
representation $\nu=\sum_{j=0}^n x_j\alpha_j$, with $x_j>0$. Choose
a permutation $\sigma$ such that
$x_{\sigma(0)}\geq x_{\sigma(1)}\geq\cdots\geq x_{\sigma(n)}>0$, and
define
\[
    y_k=x_{\sigma(k)}-x_{\sigma(k+1)}
    \quad (0\leq k<n),
    \qquad
    y_n=x_{\sigma(n)}.
\]
Then $y_k\geq0$, $y_n>0$, and a telescoping calculation gives
$\nu=\sum_{k=0}^n y_k\mu_k^\sigma$. If
$I=\{k:y_k>0\}$, then $n\in I$ and
$\nu\in\Delta(\vec\mu^{\,\sigma,I})$.

Conversely, fix $\sigma$ and $I\ni n$, and suppose
$\nu=\sum_{k\in I}y_k\mu_k^\sigma$, where $y_k>0$. Expanding the
partial sums gives
\[
    \nu
    =
    \sum_{j=0}^{n}
    \left(
        \sum_{\substack{k\in I\\k\geq j}}y_k
    \right)
    \alpha_{\sigma(j)}.
\]
Since $n\in I$, every coefficient in parentheses is at least
$y_n>0$. Thus $\nu\in\Delta(\vec\alpha)$.

The union is finite because there are only finitely many permutations
$\sigma$ and subsets $I\subseteq\{0,\ldots,n\}$.
\end{proof}

In this lemma, we give a condition for when we can remove an element of a sequence $\vec \mu$ while keeping the simplex the same up to quasi-isometry. Let $d_H$ denote the \textbf{Hausdorff distance} between closed subsets of $(\PCf(S),\dsym)$.
\begin{lem}\label{lem:Reducing_curves}
Let $\vec \mu \coloneqq \mu_0,\ldots,\mu_n$ be a jointly filling sequence on $S$. Suppose that there is a constant $C\geq1$ such that, for
every geodesic current $\gamma$,
\[
    \frac{1}{C}i(\mu_n,\gamma)
    \leq i(\mu_{n-1},\gamma)
    \leq C i(\mu_n,\gamma).
\]
Then the Hausdorff distance with respect to $\dsym$,
between $\PSimplex(\mu_0,\ldots,\mu_{n-1},\mu_n)$ and \\ $\PSimplex(\mu_0,\ldots,\mu_{n-1})$ is at most
$2\log C$. Consequently, there exists a
$(1,4\log C)$--quasi-isometry
\[
    \mathbb P\Delta(\mu_0,\ldots,\mu_{n-1},\mu_n)
    \longrightarrow
    \mathbb P\Delta(\mu_0,\ldots,\mu_{n-1})
\]
whose image is $4\log C$--dense.
\end{lem}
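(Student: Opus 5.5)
The plan is to bound the Hausdorff distance by mapping each point of the larger simplex to an explicit point of the smaller one, and then to read off the quasi-isometry from that bound. Set $\PSimplex_n \coloneqq \PSimplex(\mu_0,\ldots,\mu_n)$ and $\PSimplex_{n-1} \coloneqq \PSimplex(\mu_0,\ldots,\mu_{n-1})$. A first observation is that the comparison hypothesis shows $\mu_0+\cdots+\mu_{n-1}$ is filling. Indeed, its intersection with any nonzero $\gamma$ dominates, up to the factor $\max(1,1/C)$, that of $\mu_0+\cdots+\mu_n$. By the characterization of filling currents via $i(\mu,\nu)>0$ for all nonzero $\nu$, it follows that $\PSimplex_{n-1}\subset\PCf(S)$, so $\dsym$ is defined on both simplices.

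\emph{Map from the larger simplex to the smaller one.} Given $[\nu]\in\PSimplex_n$, write $\nu=\sum_{i=0}^n x_i\mu_i$ with all $x_i>0$. The candidate nearby point is
\[
\nu' \coloneqq \sum_{i=0}^{n-2}x_i\mu_i + (x_{n-1}+x_n)\mu_{n-1},
\]
which lies in $\PSimplex_{n-1}$. For any closed curve $\gamma$, the hypothesis $\frac1C i(\mu_n,\gamma)\le i(\mu_{n-1},\gamma)\le C i(\mu_n,\gamma)$ gives termwise
\[
\tfrac1C\, i(\nu,\gamma)\le i(\nu',\gamma)\le C\, i(\nu,\gamma).
\]
For this we use $C\ge1$ to absorb the common terms, and we replace $x_n i(\mu_n,\gamma)$ by $x_n i(\mu_{n-1},\gamma)$ within a factor of $C$. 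Consequently $\Dil(\nu,\nu')\le C$ and $\Dil(\nu',\nu)\le C$, so $\dsym([\nu],[\nu'])\le 2\log C$.

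\emph{Map in the other direction.} Given $[\nu']=[\sum_{i<n}x_i\mu_i]\in\PSimplex_{n-1}$, pick a small $\epsilon>0$ and set
\[
\nu=\sum_{i<n-1}x_i\mu_i+(x_{n-1}-\epsilon)\mu_{n-1}+\epsilon\mu_n\in\Simplex(\vec\mu).
\]
The same termwise estimate gives a distance of at most $2\log C$ from $[\nu']$; in fact, letting $\epsilon\to0$ makes the distance tend to $0$ by continuity, but the bound $2\log C$ is all that is needed. Together with the previous step, this yields $d_H\le 2\log C$.

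\emph{Quasi-isometry.} Define $\Phi([\nu])=[\nu']$ as above, after choosing a coefficient representation of $[\nu]$ (any choice works, since the estimate is uniform). For $p,q\in\PSimplex_n$, the triangle inequality gives $|\dsym(\Phi p,\Phi q)-\dsym(p,q)|\le 4\log C$, so $\Phi$ is a $(1,4\log C)$-quasi-isometric embedding. Its image is $2\log C$-dense, hence $4\log C$-dense, by the reverse inclusion established above.

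The only mildly delicate point is bookkeeping rather than a genuine obstacle. The coefficient representation need not be unique when the currents are linearly dependent, so $\Phi$ must be defined via a choice of representation; this is harmless because the estimate holds for every representation. One must also check that the hypothesis, stated for closed curves (or currents), suffices to bound the dilatation, which is a supremum over closed curves by definition.
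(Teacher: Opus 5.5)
Your proof is correct and takes essentially the same approach as the paper. You merge the last two coefficients, $\nu\mapsto\nu'$, and compare termwise; you split the $\mu_{n-1}$ coefficient for the reverse direction (the paper uses $y_{n-1}/2$ and $y_{n-1}/2$, you use $x_{n-1}-\epsilon$ and $\epsilon$); and you apply the triangle inequality to get the $(1,4\log C)$-quasi-isometry.

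Your extra check that $\mu_0+\cdots+\mu_{n-1}$ is filling is worth having, since the paper leaves it implicit, but the constant is off. The correct domination is $i(\mu_0+\cdots+\mu_n,\gamma)\leq (1+C)\,i(\mu_0+\cdots+\mu_{n-1},\gamma)$, not a factor $\max(1,1/C)$. Likewise, the reverse bound plus the triangle inequality gives density of the image with constant $4\log C$ directly, not $2\log C$; only $4\log C$ is needed, so the conclusion stands.
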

\begin{proof}
Set
\[
    X=\mathbb P\Delta(\mu_0,\ldots,\mu_{n-1},\mu_n)
    \quad\text{and}\quad
    Y=\mathbb P\Delta(\mu_0,\ldots,\mu_{n-1}).
\]
We first show that every point of $X$ lies within $2\log C$ of $Y$.

Let $[\nu]\in X$. Choose any positive coefficient representation
$\nu=\sum_{j=0}^n x_j\mu_j$, and define
\[
    \widehat\nu
    =
    \sum_{j=0}^{n-2}x_j\mu_j
    +(x_{n-1}+x_n)\mu_{n-1}.
\]
Then $[\widehat\nu]\in Y$. 

The hypothesis implies that,
for every geodesic current $\gamma$,
\begin{equation}
    \frac{1}{C}i(\widehat\nu,\gamma)
    \leq i(\nu,\gamma)
    \leq C i(\widehat\nu,\gamma).
    \label{eq:coarsely_equal}
\end{equation}
Indeed, write $\rho=\sum_{j=0}^{n-2}x_j\mu_j$. Then $\nu = \rho + x_{n-1} \mu_{n-1} + x_n \mu_n$, while $\hat \nu = \rho + (x_{n-1} + x_n) \mu_{n-1}$. The common term $i(\rho,\gamma)$  in (\eqref{eq:coarsely_equal}) satisfies the same
inequalities because $C\geq1$, and the remaining inequalities follow
from the comparison between $\mu_{n-1}$ and $\mu_n$. Consequently,
\[
    \operatorname{Dil}(\nu,\widehat\nu)\leq C
    \quad\text{and}\quad
    \operatorname{Dil}(\widehat\nu,\nu)\leq C.
\]
It follows that
$\dsym([\nu],[\widehat\nu])\leq2\log C$.

Conversely, let $[\eta]\in Y$ and choose a positive representation
$\eta=\sum_{j=0}^{n-1}y_j\mu_j$. Define
\[
    \widetilde\eta
    =
    \sum_{j=0}^{n-2}y_j\mu_j
    +\frac{y_{n-1}}{2}\mu_{n-1}
    +\frac{y_{n-1}}{2}\mu_n.
\]
All the coefficients are positive, so $[\widetilde\eta]\in X$.
Replacing the last two coefficients of $\widetilde\eta$ by their sum
recovers $\eta$. The same comparison as above gives
$\dsym([\eta],[\widetilde\eta])\leq2\log C$.

Thus every point of $X$ is within $2\log C$ of $Y$, and every point
of $Y$ is within $2\log C$ of $X$. Therefore
$d_H(X,Y)\leq2\log C$.

For each $x\in X$, choose $F(x)\in Y$ such that
$d_{\mathrm{Lip}}(x,F(x))\leq2\log C$. Then, for all $x,x'\in X$,
\[
    \left|
      d_{\mathrm{Lip}}(F(x),F(x'))
      -
      d_{\mathrm{Lip}}(x,x')
    \right|
    \leq4\log C.
\]
Hence $F$ is a $(1,4\log C)$--quasi-isometric embedding. Moreover,
if $y\in Y$, choose $x\in X$ with
$d_{\mathrm{Lip}}(x,y)\leq2\log C$. Then
$d_{\mathrm{Lip}}(F(x),y)\leq4\log C$, so the image of $F$ is
$4\log C$--dense in $Y$. Thus $F$ is a
$(1,4\log C)$--quasi-isometry.
\end{proof}

Now, we show that simplices on jointly filling and dominated sequences are quasi-isometric to simplices on (perhaps shorter) increasing sequences.

\begin{lem}
Let $\vec\mu=(\mu_0,\ldots,\mu_n)$ be a jointly filling dominated
sequence of geodesic currents. Then there is an increasing subsequence
$\vec\mu'=(\mu_{i_0},\ldots,\mu_{i_m})$ such that
$\PSimplex(\vec \mu)$ and $\PSimplex(\vec\mu')$ are at
finite Hausdorff distance. In particular, they are quasi-isometric.
\label{lem:strictly_increasing}
\end{lem}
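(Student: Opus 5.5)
The plan is to prune the dominated sequence greedily, using \Cref{lem:Reducing_curves} each time two consecutive currents are comparable, and to check that the pruned sequence is increasing.

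First, observe what domination gives. Since $\vec\mu$ is dominated, for $j<k$ we have $i(\mu_j,\gamma)\leq i(\mu_k,\gamma)$ for every closed curve $\gamma$, hence $\Dil(\mu_k,\mu_j)\leq 1<\infty$. The density statements of \Cref{lem:dilations} allow suprema over curves or over currents interchangeably. Consequently, for consecutive terms of any subsequence, $\mu_{i_{s-1}}\inc\mu_{i_s}$ fails only if $\Dil(\mu_{i_{s-1}},\mu_{i_s})<\infty$. In that case the two currents are coarsely equal:
\[
i(\mu_{i_s},\gamma)\leq C\, i(\mu_{i_{s-1}},\gamma)\leq C\, i(\mu_{i_s},\gamma)
\]
for every $\gamma$, with $C=\max(1,\Dil(\mu_{i_{s-1}},\mu_{i_s}))$.

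Next, prune one term at a time. Induct on $n$. If $\vec\mu$ is already increasing, we are done. Otherwise pick an index $k$ with $\Dil(\mu_{k-1},\mu_k)=C<\infty$. I would remove $\mu_{k-1}$ or $\mu_k$ by applying \Cref{lem:Reducing_curves}. That lemma is stated for the last two entries, so I need to note that it applies to any pair of entries, since $\PSimplex(\vec\mu)$ is independent of the ordering of $\vec\mu$. Its proof only merges the coefficients of the two comparable currents, which works for any pair. The lemma gives Hausdorff distance at most $2\log C$ between $\PSimplex(\vec\mu)$ and the simplex with one term deleted. Two facts must be checked for the shortened sequence:
\begin{itemize}
\item it is still dominated, since it is a subsequence of a dominated sequence;
\item it is still jointly filling. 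Removing $\mu_{k-1}$ leaves $\mu_k$, which dominates it, and $\sum_j \mu_j$ is filling, so the remaining sum $\rho$ satisfies $i(\rho,\nu)\geq \frac{1}{C'}\, i\bigl(\sum_j\mu_j,\nu\bigr)>0$ for all nonzero $\nu$, and is filling by \cite[Theorem~1.5]{BIPP21:Currents}.
\end{itemize}
Since $n$ strictly decreases, the process terminates after at most $n$ steps at a dominated subsequence in which every consecutive pair has $\Dil=\infty$, that is, an increasing subsequence $\vec\mu'$.

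Finally, conclude. Hausdorff distance is additive under composition of finitely many steps, so $d_H(\PSimplex(\vec\mu),\PSimplex(\vec\mu'))$ is bounded by the sum of the $2\log C$'s. Two subsets at finite Hausdorff distance are quasi-isometric, by the same nearest-point construction used at the end of \Cref{lem:Reducing_curves}.

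The expected main subtlety is making sure \Cref{lem:Reducing_curves} applies to an arbitrary comparable pair, not just the final two entries, and that joint filling survives each deletion. The first I would handle by the reordering remark; the second by keeping the dominating member of the pair. A further point is that $\mu_0$ might be zero, and $\Dil$ is not meaningful in that case. If this occurs, I would drop zero currents first, which does not change $\Simplex(\vec\mu)$.
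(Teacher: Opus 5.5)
Your proof is correct and follows the paper's argument. You repeatedly delete one member of a consecutive pair with finite dilatation using \Cref{lem:Reducing_curves}, after a reordering that does not change the simplex, and note that the shortened sequence is still dominated; after finitely many steps the surviving subsequence is increasing, and the Hausdorff distances add up.

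The only differences are cosmetic. You delete the dominated member $\mu_{k-1}$, whereas the paper deletes the dominating member $\mu_k$; either works because the hypothesis of \Cref{lem:Reducing_curves} is symmetric. You also explicitly check that joint filling survives each deletion and that zero currents can be discarded, which the paper leaves implicit.
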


\begin{proof}
If $\vec\mu$ is increasing, there is nothing to prove. Otherwise,
there is an index $i\in\{1,\ldots,n\}$ for which
$\operatorname{Dil}(\mu_{i-1},\mu_i)<\infty$. Thus there is a
constant $C_i\geq1$ such that
$i(\mu_i,\gamma)\leq C_i i(\mu_{i-1},\gamma)$ for every geodesic current
 $\gamma$. Since the sequence is dominated, we also have
$i(\mu_{i-1},\gamma)\leq i(\mu_i,\gamma)$. Therefore
\[
    \frac{1}{C_i}i(\mu_i,\gamma)
    \leq i(\mu_{i-1},\gamma)
    \leq C_i i(\mu_i,\gamma).
\]
Reorder the generators so that $\mu_{i-1}$ and $\mu_i$ are the final
two generators. Reordering does not change the simplex, so \Cref{lem:Reducing_curves}
shows that deleting $\mu_i$ changes the projectivized simplex by
finite Hausdorff distance.

The remaining sequence, written in its original order, is still
dominated. Repeating the argument finitely many times produces a
subsequence $\vec\mu'=(\mu_{i_0},\ldots,\mu_{i_m})$ for which
\[
    \operatorname{Dil}(\mu_{i_{j-1}},\mu_{i_j})=\infty
\]
for every $j=1,\ldots,m$. Since the subsequence remains dominated,
$\operatorname{Dil}(\mu_{i_j},\mu_{i_{j-1}})<\infty$. Hence
$\mu_{i_{j-1}}\prec\mu_{i_j}$, and $\vec\mu'$ is increasing.

At each step the two projectivized simplices are at finite Hausdorff
distance. Since only finitely many generators are removed, the
triangle inequality for Hausdorff distance shows that
$\mathbb P\Delta(\vec\mu)$ and
$\mathbb P\Delta(\vec\mu')$ are at finite Hausdorff distance.
\end{proof}
Combining Lemmas \ref{lem:increasing} and \ref{lem:strictly_increasing}, we immediately get that any filling simplex is coarsely isometric to a finite union of filling simplices for increasing geodesic currents.

\begin{cor}
\label{cor:decomp_increasing_seq}
Let $\vec\alpha=(\alpha_0,\ldots,\alpha_n)$ be a jointly filling
sequence of geodesic currents. Then there exist finitely many jointly
filling, increasing sequences
$\vec\mu^{\,1},\ldots,\vec\mu^{\,N}$ and a constant $D\geq0$ such
that 
\[
    d_H\left(
        \mathbb P\Delta(\vec\alpha),
        \bigcup_{j=1}^N\mathbb P\Delta(\vec\mu^{\,j})
    \right)
    \leq D.
\]
Moreover, there is a $(1,2D)$--quasi-isometry
\[
    F:\mathbb P\Delta(\vec\alpha)
    \longrightarrow
    \bigcup_{j=1}^N\mathbb P\Delta(\vec\mu^{\,j})
\]
whose image is $2D$--dense.
\end{cor}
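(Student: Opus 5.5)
We combine \Cref{lem:increasing} and \Cref{lem:strictly_increasing}, then assemble the pieces with a simple Hausdorff-distance argument. First, \Cref{lem:increasing} writes $\Delta(\vec\alpha)=\bigcup_{\sigma,I}\Delta(\vec\mu^{\,\sigma,I})$, a finite union over permutations $\sigma$ and subsets $I\ni n$, where each $\vec\mu^{\,\sigma,I}$ is dominated. Projectivizing gives the set-theoretic equality
\[
\PSimplex(\vec\alpha)=\bigcup_{\sigma,I}\PSimplex(\vec\mu^{\,\sigma,I}).
\]
Each $\vec\mu^{\,\sigma,I}$ is jointly filling, because it contains the top partial sum $\mu^\sigma_n=\sum_j\alpha_j$, which is filling by hypothesis. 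Hence every sum of its members dominates a filling current and is therefore filling: if $i(\mu_n^\sigma,\gamma)>0$ for all nonzero $\gamma$, the same holds for any larger current.

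Next, apply \Cref{lem:strictly_increasing} to each jointly filling dominated sequence $\vec\mu^{\,\sigma,I}$. This produces an increasing subsequence $\vec\mu^{\,j}$ with $d_H(\PSimplex(\vec\mu^{\,\sigma,I}),\PSimplex(\vec\mu^{\,j}))\le D_j<\infty$. Each $\vec\mu^{\,j}$ is again jointly filling. The reason is that the deletion procedure in \Cref{lem:strictly_increasing} never removes the largest element of the sequence. When an index $i$ with $\Dil(\mu_{i-1},\mu_i)<\infty$ is found, one may delete either $\mu_i$ or $\mu_{i-1}$, since \Cref{lem:Reducing_curves} is symmetric in the comparison hypothesis. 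So if $i$ is the last index, we delete $\mu_{i-1}$ instead. If one prefers to keep the lemma exactly as stated, note that $\mu_{n}$ and $\mu_{n-1}$ are then mutually comparable, so $\mu_{n-1}$ is filling as well, and joint filling is still preserved.

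Set $D=\max_j D_j$; this is finite since there are finitely many $j$. We then use the elementary fact that if $d_H(A_j,B_j)\le D$ for each $j$, then $d_H(\bigcup A_j,\bigcup B_j)\le D$: any point of $A_j$ lies within $D$ of $B_j\subset\bigcup B_k$, and symmetrically. This gives the Hausdorff estimate. The quasi-isometry $F$ is then built exactly as at the end of \Cref{lem:Reducing_curves}: send each $x$ to a point $F(x)$ of the union within distance $D$ (a near-minimizer, adding an arbitrarily small slack). By the triangle inequality, $F$ distorts distances by at most $2D$, and its image is $2D$-dense.

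The only delicate step is verifying that joint filling is preserved under the reductions. This matters because $\dsym$ is only defined on $\PCf(S)$. The argument above shows that, at worst, the filling top element, or one comparable to it, always survives.
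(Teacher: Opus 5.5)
Your proof is correct and follows the same route as the paper: split $\PSimplex(\vec\alpha)$ into finitely many dominated simplices via \Cref{lem:increasing}, replace each by an increasing subsequence via \Cref{lem:strictly_increasing}, take $D=\max_j D_j$, and define $F$ by choosing nearby points and applying the triangle inequality. You also check explicitly that joint filling survives the reductions: the top partial sum is filling, and if the last element is deleted, then $\mu_{n-1}$ is comparable to $\mu_n$ and hence filling. The paper leaves this step implicit, and your argument for it is sound.
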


\begin{proof}
By \Cref{lem:increasing}, there are finitely many jointly filling, dominated
sequences $\vec\nu^{\,1},\ldots,\vec\nu^{\,N}$ such that
\[
    \mathbb P\Delta(\vec\alpha)
    =
    \bigcup_{j=1}^N\mathbb P\Delta(\vec\nu^{\,j}).
\]
For each $j$, \Cref{lem:strictly_increasing} gives a jointly filling increasing
subsequence $\vec\mu^{\,j}$ of $\vec\nu^{\,j}$ such that
$\mathbb P\Delta(\vec\nu^{\,j})$ and
$\mathbb P\Delta(\vec\mu^{\,j})$ are at finite Hausdorff distance
with respect to $\dsym$. Choose $D_j\geq0$ such that every point of
either of these two spaces is within $D_j$ of the other, and set
$D=\max_{1\leq j\leq N}D_j$.

Let $x\in\mathbb P\Delta(\vec\alpha)$. By \Cref{lem:increasing}, there is some
$j$ such that $x\in\mathbb P\Delta(\vec\nu^{\,j})$. Hence $x$ is
within $D_j\leq D$ of
$\mathbb P\Delta(\vec\mu^{\,j})$, and therefore within $D$ of
$\bigcup_{k=1}^N\mathbb P\Delta(\vec\mu^{\,k})$.

Conversely, suppose that
$x\in\mathbb P\Delta(\vec\mu^{\,j})$ for some $j$. Then $x$ is
within $D_j\leq D$ of $\mathbb P\Delta(\vec\nu^{\,j})$. Since
$\mathbb P\Delta(\vec\nu^{\,j})$ is contained in
$\mathbb P\Delta(\vec\alpha)$, the point $x$ is within $D$ of
$\mathbb P\Delta(\vec\alpha)$. Thus
\[
    d_H\left(
        \mathbb P\Delta(\vec\alpha),
        \bigcup_{j=1}^N\mathbb P\Delta(\vec\mu^{\,j})
    \right)
    \leq D.
\]

For each $x\in\mathbb P\Delta(\vec\alpha)$, choose
$F(x)\in
    \bigcup_{j=1}^N\mathbb P\Delta(\vec\mu^{\,j})$ 
such that $\dsym(x,F(x))\leq D$. For any
$x,y\in\mathbb P\Delta(\vec\alpha)$, the triangle inequality gives
$\bigl|
        \dsym(F(x),F(y))-\dsym(x,y)
    \bigr|
    \leq 2D$.
Therefore $F$ is a $(1,2D)$--quasi-isometric embedding.

Finally, let
$z\in\bigcup_{j=1}^N\mathbb P\Delta(\vec\mu^{\,j})$. By the
Hausdorff-distance bound, there is
$x\in\mathbb P\Delta(\vec\alpha)$ such that $\dsym(z,x)\leq D$.
Consequently,
$\dsym(z,F(x))\leq\dsym(z,x)+\dsym(x,F(x))\leq2D$.
Thus the image of $F$ is $2D$--dense, and $F$ is a
$(1,2D)$--quasi-isometry.
\end{proof}

\section{Maximal strictly increasing sequence}
\label{sec:maximal_sequence}
\begin{figure}[h!]
     \centering
\includegraphics[width=130mm]{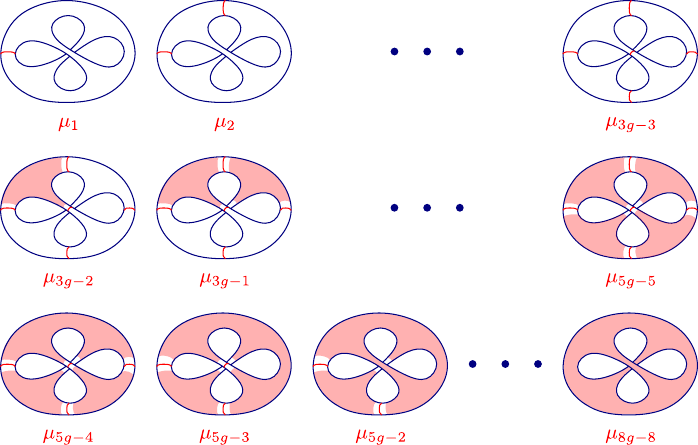}
    \caption{Maximal family of strictly increasing geodesic currents.}
    \label{fig:max_rank}
\end{figure} 

In the following Lemma and its proof, compare Figure~\ref{fig:max_rank}.

\begin{lem}
\label{lem:build_increasing}
Let $S$ be a connected surface of genus $g$ with $b$ boundary
components. Then there exists a jointly filling and strictly increasing sequence
$\vec\mu=(\mu_1,\ldots,\mu_{8g-8+3b})$ such that the final geodesic current
$\mu_{8g-8+3b}$ fills $S$. Moreover, the geodesic currents
$\mu_1,\ldots,\mu_{3g-3+b}$ are simple closed multicurves with pairwise intersection zero.
\end{lem}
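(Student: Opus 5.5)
Fix a pants decomposition $P=\{\alpha_1,\ldots,\alpha_{3g-3+b}\}$ of the interior curves of $S$. There are $3g-3+b$ interior curves and $2g-2+b$ pairs of pants. The count to hit is $(3g-3+b)+(2g-2+b)+(3g-3+b)=8g-8+3b$. This splits into three phases: first the annuli enter through the lamination component, then the pants enter through the filling component, and finally the annuli enter through the filling component.

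\textbf{Phase 1.} Set $\mu_k=\alpha_1+\cdots+\alpha_k$ for $k\le 3g-3+b$. These are simple multicurves with pairwise intersection zero. Since $i(\mu_k,\gamma)\le i(\mu_{k+1},\gamma)$ for every $\gamma$, the sequence is dominated. Take $\gamma_0$ to be a curve meeting $\alpha_{k+1}$ but disjoint from $\alpha_1,\ldots,\alpha_k$, for instance a curve supported in the one or two pants adjacent to $\alpha_{k+1}$ and crossing it. Then $i(\mu_k,\gamma_0)=0\neq i(\mu_{k+1},\gamma_0)$, which gives $\mu_k\sinc\mu_{k+1}$.

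\textbf{Phase 2.} Order the pants $P_1,\ldots,P_{2g-2+b}$ arbitrarily. Let $\beta_j$ be a closed curve filling $P_j$, say a figure-eight-type curve meeting every seam, which is non-simple. Set $\mu_{3g-3+b+j}=\mu_{3g-3+b}+\beta_1+\cdots+\beta_j$. Domination again follows from additivity. For strictness, $\gamma_0=\beta_{j+1}$ works, or any curve inside $P_{j+1}$ disjoint from $P$. Such a curve has $i(\mu_{\text{prev}},\gamma_0)=0$, because it lives in $P_{j+1}$ and is disjoint from $P$ and from the $\beta_i$ in other pants. Meanwhile $i(\beta_{j+1},\gamma_0)>0$, because $\beta_{j+1}$ fills $P_{j+1}$. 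One caveat: curves inside a pair of pants are either peripheral or self-intersecting. So I take $\gamma_0$ to be a non-simple curve in $P_{j+1}$, for example $\beta_{j+1}$ itself, which has positive self-intersection.

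\textbf{Phase 3.} For each $\alpha_k$ in turn, add a curve $\delta_k$ crossing $\alpha_k$ and contained in the union of the (one or two) pants adjacent to it, for example the standard dual curve of $\alpha_k$. Set $\mu_{5g-5+2b+k}=\mu_{5g-5+2b}+\delta_1+\cdots+\delta_k$. For strictness I take $\gamma_0=\alpha_{k+1}$. It is disjoint from $\alpha_i$, from all $\beta_j$ (which live inside pants), and from $\delta_i$ for $i\le k$ (which cross only $\alpha_i$). On the other hand $i(\delta_{k+1},\alpha_{k+1})>0$. The choice of $\delta_i$ needs care: it must be disjoint from every other $\alpha$, which holds for the standard dual curve inside the adjacent pants.

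\textbf{Filling.} After all steps, the final current fills $S$. Any curve either crosses some $\alpha_k$, and hence has positive intersection with the first block, or is contained in a pants. In the latter case it is either non-peripheral there, so it meets $\beta_j$ (which fills the pants, in the sense that every non-peripheral geodesic of $P_j$ crosses it), or it is some $\alpha_k$, which meets $\delta_k$. Boundary components of $S$ are peripheral and are excluded, which justifies "filling'' in the surface-with-boundary sense. Joint filling then follows since the sum dominates the last term.

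\textbf{Main obstacle.} The delicate point is Phase 2. There one must check that a filling curve of $P_j$ has zero intersection with everything added earlier. One must also check that "fills $P_j$'' really means positive intersection with all non-peripheral geodesics of $P_j$. This I would verify using the current decomposition \eqref{eq:current_decomposition}, or by a direct surgery argument in pants.
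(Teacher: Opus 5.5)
Your proof is correct and follows the paper's three-block construction: first the pants curves, then a figure-eight in each pair of pants witnessed by itself, then one step per pants curve witnessed by $\alpha_k$. The only difference is that in the last block you add a single dual curve $\delta_k$ crossing only $\alpha_k$, where the paper adds a current $\nu_i$ filling the growing subsurface $F_i$. Your choice makes the strictness check immediate, but you lose the paper's automatic conclusion that the final current fills (since $F_p=S$), so your separate filling argument is genuinely needed. To pass from positive intersection with every closed curve to filling as a current on a closed surface, note that intersection numbers with an integral multicurve are integers, so $\sys(\mu_{8g-8+3b})\geq 1$, and the systole criterion applies.
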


\begin{proof}
Set $p=3g-3+b$ and $q=2g-2+b$. Thus $p$ is the number of curves in
a pants decomposition of $S$, and $q$ is the number of complementary
pairs of pants.

Choose a pants decomposition
$\alpha_1,\ldots,\alpha_p$.

For $1\leq i\leq p$, define
$\mu_i=\alpha_1+\cdots+\alpha_i$. These geodesic currents form a dominated
sequence because $\mu_i=\mu_{i-1}+\alpha_i$.

For each $i$, choose a closed curve $\gamma_i$ which crosses
$\alpha_i$ and is disjoint from every $\alpha_j$ with $j\neq i$.
Such a curve can be chosen in the union of the one or two pairs of
pants adjacent to $\alpha_i$. We then have
$i(\mu_{i-1},\gamma_i)=0$ and
$i(\mu_i,\gamma_i)=i(\alpha_i,\gamma_i)>0$. Hence
\[
    \mu_1\sinc\mu_2\sinc\cdots
    \sinc\mu_p.
\]

Let $P_1,\ldots,P_q$ be the open pairs of pants complementary to the
pants decomposition. For each $r$, choose a self-intersecting closed
curve $\eta_r$ contained in $P_r$ and filling $P_r$; for example,
choose a figure-eight curve. In particular,
$i(\eta_r,\eta_r)>0$. Define
$\mu_{p+r}=\mu_p+\eta_1+\cdots+\eta_r$ for $1\leq r\leq q$.

Again, each geodesic current is obtained from the preceding one by adding a
geodesic current. Hence this portion of the sequence is dominated.
Since the pairs of pants are disjoint and each $\eta_r$ is contained
in the interior of $P_r$, we have
$i(\mu_{p+r-1},\eta_r)=0$, whereas
$i(\mu_{p+r},\eta_r)\geq i(\eta_r,\eta_r)>0$. Thus
\[
    \mu_p\sinc\mu_{p+1}\sinc\cdots
    \sinc\mu_{p+q}.
\]

We now define the final block. For $1\leq i\leq p$, let $F_i$ be the
subsurface obtained by taking annular neighborhoods of
$\alpha_1,\ldots,\alpha_i$ together with every pair of pants incident
to at least one of these curves. Note that $F_p=S$.

An unchosen pants curve $\alpha_k$, with $k>i$, is either disjoint
from $F_i$ or parallel to a boundary component of $F_i$. This remains
true when both pairs of pants adjacent to $\alpha_k$ have already
entered $F_i$: because the annulus about $\alpha_k$ has not yet been
added, its two sides remain boundary components of $F_i$. On the
other hand, $\alpha_i$ is essential and nonperipheral in $F_i$.

For each $i$, choose a geodesic current $\nu_i$ supported in $F_i$
and filling $F_i$. Define
\[
    \mu_{p+q+i}
    =
    \mu_{p+q}+\nu_1+\cdots+\nu_i,
    \qquad 1\leq i\leq p.
\]
This block is dominated because each geodesic current is obtained from the
preceding one by adding $\nu_i$.

We claim that $\alpha_i$ witnesses strictness at the $i$th step of
this block. The geodesic currents $\alpha_1,\ldots,\alpha_p$ have zero
intersection with $\alpha_i$, and every $\eta_r$ is supported in the
interior of a complementary pair of pants, so
$i(\mu_{p+q},\alpha_i)=0$. If $j<i$, then $\alpha_i$ is disjoint from
$F_j$ or peripheral in $F_j$, and hence
$i(\nu_j,\alpha_i)=0$. Therefore
$i(\mu_{p+q+i-1},\alpha_i)=0$.

Since $\nu_i$ fills $F_i$ and $\alpha_i$ is essential and
nonperipheral in $F_i$, we have $i(\nu_i,\alpha_i)>0$. Consequently,
$i(\mu_{p+q+i},\alpha_i)>0$, and hence
\[
    \mu_{p+q}\sinc\mu_{p+q+1}
    \sinc\cdots
    \sinc\mu_{2p+q}.
\]

Combining the three blocks gives a strictly increasing sequence with
$2p+q=8g-8+3b$ geodesic currents. Finally, $F_p=S$ and $\nu_p$ fills $S$, so
the final geodesic current $\mu_{2p+q}$ fills $S$. Since
$2p+q=8g-8+3b$, this is the required sequence.
\end{proof}

\section{Structure of increasing sequences}
\label{sec:increasing_seq}
In this section, we give some technical lemmas about increasing and strictly increasing sequences of currents. They will allow us to prove that every increasing sequence $\vec \mu = (\mu_0, \dots, \mu_n)$ has $n \leq 8g-9$ in \Cref{sec:bound_increasing_seq}. Of potentially independent interest is a characterization of when an increasing sequence $\vec \mu$ is in fact strictly increasing (up to scaling) in \Cref{lem:increasing_implies_strictly}.

\subsection{Relating inequalities on currents to containment of their support}
Given a measured lamination $\lambda$, let \textbf{$|\lambda|$} denote its
support, and let \textbf{$\Sigma(\lambda)$} denote the smallest essential
subsurface of $S$ containing $|\lambda|$, which can be disconnected. If $\lambda$ is supported
on a simple closed curve, we identify $\Sigma(\lambda)$ with an annular neighborhood of that
curve.

Let $F\subseteq S$ be a subsurface, possibly disconnected, and let
$\lambda$ be a geodesic lamination contained in the complement of
the interior of $F$. We assume that no component of $\lambda$ is a
boundary component of $F$.

A geodesic current $\nu$ supported in $F$ is said to \textbf{fill $F$} if its
support intersects transversely every geodesic projecting to the
interior of $F$. For such a geodesic current, define the \textbf{systole of $\nu$ on $F$} to be
\[
    \sys_F(\nu)
    \coloneqq
    \inf_{\gamma\subset \Curves(F)}
    i(\nu,\gamma),
\]
where we recall that the infimum is taken over all essential and non-peripheral closed curves
contained in of $F$.

Let $\mathcal C_{F,\lambda}$ be the set of geodesic currents of the form
$\mu=\mu_F+\mu_\lambda$, where $\mu_F$ fills $F$ and
$\mu_\lambda$ is a measured lamination with
$|\mu_\lambda|=\lambda$. We allow $F=\varnothing$ or
$\lambda=\varnothing$, in which case the corresponding component is
omitted.
Recall that by Eq.~\eqref{eq:current_decomposition}, every nonzero geodesic current $\mu$ admits a unique decomposition as 
\[
\mu = \mu_F + \mu_\lambda,
\]
where $F$ is a subsurface of $S$, possibly disconnected, $\mu_F$ fills $F$ (if $\mu_F \neq 0$), and $\mu_\lambda$ is a measured lamination supported in the complement of the closure of $F$. 
In particular, no component of $|\mu_\lambda|$ is a boundary component of $F$.

If $\mu \in \C(S)$, and  $\mu=\mu_F+\mu_\lambda$ is its canonical decomposition, we write
$|\mu|=F\cup|\mu_\lambda|$. This notation records the filled
subsurface and the lamination in the canonical decomposition. Then more generally, if $\mu \in \C_{F,\lambda}$, we write
\[
 |\mu| = F \cup \lambda.
\]
Moreover, we write $|\mu| \subseteq F \cup \lambda$ to mean containment as sets. That is, we mean that $\mu \in \C_{F', \lambda'}$ where $F' \subseteq F$ and $\lambda' \subseteq F \cup \lambda$. We say $F$ is the \textbf{subsurface in the support} of $\mu$, and $\lambda$ is the \textbf{lamination in the support} of $\mu$. If $\mu = \mu_F + \mu_\lambda$, where $\mu_F$ fills $F$, and $\mu_\lambda$ is the measured lamination supported on $\lambda$, then we say $\mu_F$ is the \textbf{component that fills $F$}, and $\mu_\lambda$ is the \textbf{measured lamination component} of $\mu$.

 A subsurface $F' \subset F$ is a \textbf{boundary-parallel annulus} if it is an annulus homotopic to a boundary component of $F$.

We will now relate our ordering on geodesic currents to containment of their supports.

\begin{lem}
\label{lem:inc_implies_support_containment}
    Let $\mu, \nu \in \C(S)$. If $\mu \inc \nu$, then $|\mu| \subseteq |\nu|$.
\end{lem}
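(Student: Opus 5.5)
The only information we will use from $\mu\inc\nu$ is that $\Dil(\nu,\mu)<\infty$. By definition of dilatation there is then a constant $C<\infty$ with
\[
i(\mu,\gamma)\leq C\, i(\nu,\gamma)\quad\text{for every closed curve }\gamma,
\]
at least whenever $i(\nu,\gamma)\neq 0$. By density of weighted curves and continuity of $i$ (as in \Cref{lem:dilations}), this extends to every geodesic current $\gamma$. The consequence we need is
\[
i(\nu,\gamma)=0\implies i(\mu,\gamma)=0 .
\]
(If $i(\nu,\gamma)=0$ while $i(\mu,\gamma)>0$, then $\Dil(\nu,\mu)$ would be infinite.) So the plan is to read off the support $|\mu|$ from the set of curves and currents that have zero intersection with $\mu$, and then use monotonicity of these zero sets.

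Write the canonical decompositions $\mu=\mu_F+\mu_\lambda$ and $\nu=\nu_G+\nu_\kappa$ from Eq.~\eqref{eq:current_decomposition}. We must show $F\subseteq G$ and $|\mu_\lambda|\subseteq G\cup\kappa$.

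\textbf{Step 1: $F\subseteq G$ up to isotopy.} Suppose some essential non-peripheral curve $\gamma$ of a component of $F$ is not contained in $G$ up to isotopy. We want such a $\gamma$ that moreover misses $G$ and $\kappa$. Since $\mu_F$ fills $F$, we will have $i(\mu,\gamma)>0$, while $i(\nu,\gamma)=0$, which is a contradiction. Producing such a $\gamma$ is not completely formal, so instead we use the complementary zero set. Let $Z(\nu)$ be the set of currents with $i(\nu,\cdot)=0$; these are exactly the currents supported in $S\setminus G$ whose intersection with $\kappa$ vanishes. We use the characterization (from \cite{BIPP21:Currents}, in the same spirit as the filling criterion) that $G$ is the complement of the maximal region in which curves can avoid $\nu$, apart from the leaves of $\kappa$ themselves. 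If $F\not\subseteq G$, then $F$ meets the complementary region $S\setminus(G\cup\Sigma(\kappa))$ essentially, or it meets $\Sigma(\kappa)$.

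In the first case, some essential curve of $F$ lies in $S\setminus\overline{G}$ and is disjoint from $\kappa$. Its intersection with $\nu$ is $0$, but its intersection with $\mu$ is positive because $\mu_F$ fills $F$.

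In the second case, take a component $\kappa_0$ of $\kappa$ meeting the interior of $F$. Then $\kappa_0$, viewed as a measured lamination, satisfies $i(\nu,\kappa_0)=0$, since a lamination has zero self-intersection and is disjoint from $G$. On the other hand $i(\mu_F,\kappa_0)>0$ because $\mu_F$ fills $F$. Again this is a contradiction.

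\textbf{Step 2: $|\mu_\lambda|\subseteq G\cup\kappa$.} Let $\lambda_0$ be a minimal component of $|\mu_\lambda|$ not contained in $G$. Then $\lambda_0$ lies in the complement of $G$. If $\lambda_0$ is not a sublamination of $\kappa$, then by minimality it either crosses $\kappa$ transversally or is disjoint from it. In the crossing case, pick a transverse measure on the relevant component of $\kappa$. Its $\nu$-intersection is $0$, while its $\mu$-intersection is positive because it crosses $\lambda_0$. In the disjoint case, pick a curve in $\Sigma(\lambda_0)$ crossing $\lambda_0$ and avoiding $G\cup\kappa$. Such a curve exists unless $\lambda_0$ is a curve; in that case take a curve crossing it inside the complementary subsurface, which meets neither $G$ nor $\kappa$ essentially. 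In either case we get zero intersection with $\nu$ and positive intersection with $\mu$, contradicting the implication above.

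The main obstacle is the bookkeeping with peripheral and boundary-parallel curves. These include a component of $\lambda$ or $\kappa$ that is parallel to a boundary of $F$ or $G$, and annular components. The witness curve must genuinely cross the relevant support while still avoiding $\nu$. I would handle this by working with the minimal essential subsurfaces $\Sigma(\cdot)$ and treating the annulus case separately, using a curve that crosses the core inside the adjacent non-annular region.
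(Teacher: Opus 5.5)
Your route is the paper's. You deduce $i(\nu,\eta)=0\Rightarrow i(\mu,\eta)=0$ from $\Dil(\nu,\mu)<\infty$, then refute non-containment with a witness $\eta$ satisfying $i(\nu,\eta)=0<i(\mu,\eta)$. The gap is that this zero-set monotonicity is strictly weaker than what you need, because zero sets do not determine supports.

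Let $\kappa$ be a minimal measured lamination that is not a closed curve, and $c$ a boundary curve of $\Sigma(\kappa)$. Any closed geodesic crossing $c$ contains an essential arc of $\Sigma(\kappa)$, and that arc must cross $\kappa$. Hence $i(\kappa,\eta)=0$ forces $i(c,\eta)=0$, for curves and in fact for all currents. So $\kappa+c$ and $\kappa$ have the same zero set, yet $|\kappa+c|\not\subseteq|\kappa|$.

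This is exactly where your Step 2 breaks. Suppose $\lambda_0$ is a simple closed curve isotopic to a boundary component of $\Sigma(\kappa_j)$, for a minimal component $\kappa_j$ of $\kappa$ that is not a closed curve, and $\lambda_0$ is neither a component of $\kappa$ nor a boundary curve of $G$. Then every curve crossing $\lambda_0$ enters $\Sigma(\kappa_j)$ and meets $\kappa_j$ with positive measure. So the curve ``crossing it inside the complementary subsurface'' does not exist. Your proposed remedy (cross the core inside an adjacent non-annular region) also fails, because one adjacent region is $\Sigma(\kappa_j)$ itself. The paper's written argument has the same blind spot: its claim that every essential closed geodesic in $\Sigma(\lambda_0)$ meets $\mu_{\lambda_0}$ is false when $\Sigma(\lambda_0)$ is an annulus.

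The missing idea is to use $\Dil(\nu,\mu)<\infty$ quantitatively in this case. Build closed curves $\gamma_k$ that cross $\lambda_0$ exactly twice, stay off $G$, and satisfy $i(\nu,\gamma_k)\to 0$. On the $\Sigma(\kappa_j)$ side, the arc leaves $\lambda_0$ into a crown region and runs alongside a leaf of $\kappa_j$. It then returns after crossing a transversal of $\kappa_j$-measure less than $1/k$; this is possible because the transverse measure of a minimal lamination that is not a closed curve has no atoms. On the other side, use an essential arc of the complementary region, or the same construction if that side is also a lamination subsurface. Then $i(\mu,\gamma_k)\geq 2w$, where $w>0$ is the weight of $\lambda_0$ in $\mu$, while $i(\nu,\gamma_k)\to 0$. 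Hence $\Dil(\nu,\mu)=\infty$, a contradiction.

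Two smaller claims also need patching. Both are fixed by using $\partial G$ as a test curve, since $i(\nu,\partial G)=0$:
\begin{itemize}
\item In Step 2, you assert without justification that a minimal $\lambda_0$ not contained in $G$ lies in $S\setminus G$ rather than crossing $\partial G$.
\item In Step 1, you assert that $F\not\subseteq G$ yields a closed curve of $F$ outside $\overline{G}$. This fails when $\partial G$ cuts $F$ into pieces containing no such curve.
\end{itemize}
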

\begin{proof}
Since $\mu \inc \nu$, we first deduce that, for all $\eta \in \C(S)$
\begin{equation}
\label{eq:zero_intersection_condition}
i(\nu, \eta) = 0 \implies i(\mu, \eta) = 0.
\end{equation}
To see this, suppose there is some $\eta$ so that $i(\nu, \eta) = 0$ but $i(\mu, \eta) \neq 0$. Take any sequence of filling geodesic currents $\eta_n \to \eta$  (so that $i(\nu, \eta_n) \neq 0$). Then $\lim \frac{i(\mu, \eta_n)}{i(\nu, \eta_n)} = \infty$. This implies $\Dil(\nu, \mu) = \infty$. But by definition, $\mu \inc \nu$ means that this dilatation should be finite, giving us a contradiction.

    Write the canonical decompositions as
\[
\mu=\mu_F+\mu_\lambda,
\qquad
\nu=\nu_{F'}+\nu_{\lambda'}.
\]
Let $\lambda_0$ be a minimal component of $\lambda$. Since
$i(\nu,\nu_{\lambda'})=0$, Eq.~\eqref{eq:zero_intersection_condition} gives
$i(\mu,\nu_{\lambda'})=0$. Hence $\lambda_0$ either is contained in
$\lambda'$ or is disjoint from $\lambda'$.

Suppose that $\lambda_0$ is not contained in $\lambda'$. Every essential
closed geodesic in $\Sigma(\lambda_0)$ intersects $\mu_{\lambda_0}$.
If $\Sigma(\lambda_0)$ were not contained in $F'$, one could choose an
essential closed geodesic $\alpha\subset\Sigma(\lambda_0)$ disjoint from
$F'\cup\lambda'$. This would give $i(\nu,\alpha)=0$ but
$i(\mu,\alpha)>0$, a contradiction. Thus
$\lambda_0\subset F'\cup\lambda'$. This gives us 
\[
\lambda \subset F' \cup \lambda'.
\]

Now we show $F \subset F'$. First, we note that $\lambda'$ is disjoint
from the interior of $F$.  Indeed, otherwise, $i(\mu, \nu_{\lambda'}) \neq 0$, while $i(\nu, \nu_{\lambda'}) = 0$. Thus, $F$ lies in the complement of $\Sigma(\lambda')$. So, if $F$ were not contained in $F'$, one could
choose an essential, non-peripheral
closed geodesic $\alpha\subset F$ disjoint from
$F'\cup\lambda'$. Since $\mu_F$ fills $F$, this would give $i(\mu,\alpha)>0$ while
$i(\nu,\alpha)=0$, again a contradiction.
Therefore $F\subseteq F'$.

It follows that $|\mu|\subseteq|\nu|$.
\end{proof}

We get a stronger statement when $\mu \sinc \nu$. 
\begin{lem}
\label{lem:sinc_implies_strict_containment}
    Let $\mu, \nu \in \C(S)$. Then $|\mu| \subsetneq |\nu|$ and $\Dil(\nu, \mu) < \infty$ if and only if $\mu \sinc d \nu$ for some $d > 0$.
\end{lem}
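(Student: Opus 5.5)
We prove the two implications separately. The reverse one is essentially formal; the forward one needs a witness curve built from the support geometry.

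\emph{($\Leftarrow$).} Suppose $\mu \sinc d\nu$ for some $d>0$.
\begin{itemize}
\item By the first condition in Definition~\ref{def:increasing_sequence}, $i(\mu,\gamma)\le d\, i(\nu,\gamma)$ for every $\gamma$. Hence $\Dil(\nu,\mu)\le d<\infty$.
\item Since $\sinc$ implies $\inc$ (Lemma~\ref{lem:increasing_properties}), we have $\mu\inc d\nu$. Lemma~\ref{lem:inc_implies_support_containment} then gives $|\mu|\subseteq|d\nu|=|\nu|$.
\item The containment is proper. The second condition gives $\gamma_0$ with $i(\mu,\gamma_0)=0\ne i(\nu,\gamma_0)$. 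If $|\mu|=|\nu|$, the canonical decompositions would have the same filled subsurface $F$ and the same lamination $\lambda$.
\item Vanishing of $i(\cdot,\gamma_0)$ depends only on the support. Indeed, $i(\mu,\gamma_0)=0$ iff $\gamma_0$ meets neither the interior of $F$ transversely (since $\mu_F$ fills $F$) nor $\lambda$ transversely (since $\mu_\lambda$ has full support $\lambda$).
\item So $i(\nu,\gamma_0)=0$ as well, a contradiction. I would state this support-only criterion for zero intersection as a small preliminary claim, proved by decomposing $\gamma_0$ via the canonical decomposition of currents.
\end{itemize}

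\emph{($\Rightarrow$).} Suppose $|\mu|\subsetneq|\nu|$ and $\Dil(\nu,\mu)<\infty$.
\begin{itemize}
\item Set $d=\Dil(\nu,\mu)$. Then $i(\mu,\gamma)\le d\,i(\nu,\gamma)$ for all closed curves $\gamma$. This extends to all currents $\gamma$ by density of weighted curves (Lemma~\ref{lem:dilations}) and continuity of $i$, so the first condition holds.
\item For the second condition we need a current $\gamma_0$ with $i(\mu,\gamma_0)=0$ but $i(\nu,\gamma_0)>0$. Write $|\mu|=F\cup\lambda$ and $|\nu|=F'\cup\lambda'$. By proper containment, either $F\subsetneq F'$, or $F=F'$ and $\lambda\subsetneq\lambda'$.
\item \emph{Case $F=F'$, $\lambda\subsetneq\lambda'$.} Take $\gamma_0$ to be the measured lamination on a minimal component $\lambda_0'$ of $\lambda'$ not contained in $\lambda$. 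Then $i(\nu,\gamma_0)=0$, so this candidate fails. Instead take $\gamma_0$ to be a closed curve in $\Sigma(\lambda_0')$ crossing $\lambda_0'$. It must avoid $F$ and $\lambda$, and it can, because $\lambda_0'$ is disjoint from $F$ and from the other components of $\lambda'$.
\item \emph{Case $F\subsetneq F'$.} Some component of $F'$ is not a component of $F$. That component contains either a nonperipheral curve $\gamma_0$ disjoint from $F\cup\lambda$, or a curve crossing a boundary annulus of $F$ that lies inside $F'$. Since $\nu_{F'}$ fills $F'$, any such curve has $i(\nu,\gamma_0)>0$.
\end{itemize}

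The main obstacle is this last step: producing $\gamma_0$ in the configuration where $F'\setminus F$ consists only of annuli around boundary curves of $F$, or where $\lambda\subset F'$. The plan there is to take $\gamma_0$ to be the boundary curve $\partial F$ itself, or a component of $\lambda$ lying inside $F'$. Such a curve has zero intersection with $\mu$ (it is peripheral or a leaf) but is nonperipheral in $F'$, so $i(\nu_{F'},\gamma_0)>0$. If $\lambda$ is not a closed curve, use the transverse measure $\mu_{\lambda_0}$ on that component as $\gamma_0$; this is why $\gamma_0$ is allowed to range over currents rather than curves.
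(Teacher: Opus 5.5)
\textbf{There is a genuine gap in your forward implication, and in that configuration the statement itself is false.}

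Your reverse implication is the paper's argument, with the support-only criterion for $i(\cdot,\gamma_0)=0$ made explicit, and it is fine.

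The gap is in the case $F=F'$, $\lambda\subsetneq\lambda'$, when the new minimal component $\lambda_0'$ is a simple closed curve $\alpha$. Then $\Sigma(\lambda_0')$ is an annulus, and an annulus contains no closed curve crossing its core. A witness must therefore leave the annulus, and it can avoid $F\cup\lambda$ only if $\alpha$ is nonperipheral in its component of $S\setminus(F\cup\Sigma(\lambda))$. If instead $\alpha$ is a boundary curve of $\Sigma(\lambda_j)$ for a non-closed minimal component $\lambda_j$ of $\lambda$, no witness exists and the implication fails.

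Here is a concrete counterexample. Let $Y\subset S$ be a one-holed torus and $\lambda$ an arational measured lamination of $Y$. Set $\mu=\mu_\lambda$ and $\nu=\mu_\lambda+\delta_{\partial Y}$.
\begin{itemize}
\item Then $|\mu|=\lambda\subsetneq\lambda\cup\partial Y=|\nu|$ and $\Dil(\nu,\mu)\le 1$, so the hypotheses hold.
\item Suppose $\gamma_0\in\C(S)$ has $i(\partial Y,\gamma_0)>0$. By Poincar\'e recurrence, its support contains a recurrent geodesic crossing $\partial Y$.
\item Between two crossings, that geodesic traverses an essential arc of $Y$ with endpoints on $\partial Y$. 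The complementary regions of $\lambda$ in $Y$ are ideal polygons and crowned annuli, so such an arc crosses $\lambda$, giving $i(\mu,\gamma_0)>0$.
\item Hence $i(\mu,\gamma_0)=0$ forces $i(\nu,\gamma_0)=0$, and $\mu\sinc d\nu$ fails for every $d>0$.
\end{itemize}
This is not a degenerate case. Closing up essential arcs of $Y$ of arbitrarily small $\mu_\lambda$-measure by arcs outside $Y$ shows $\Dil(\mu,\nu)=\infty$. So this pair is even increasing, with $\lambda$ uniquely ergodic.

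The paper's proof has the same problem. It simply asserts that $|\mu|\subsetneq|\nu|$ produces a measured lamination $\eta$ with $i(\mu,\eta)=0\neq i(\nu,\eta)$, which is exactly the step that fails here. Your case analysis can only establish the forward implication under a hypothesis that excludes this configuration. For instance, it works if $F\subsetneq F'$. It also works if some component of $\lambda'$ not in $\lambda$ is either non-closed, where your curve in $\Sigma(\lambda_0')$ does work, or nonperipheral in the complement of $F\cup\Sigma(\lambda)$.

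In the case $F\subsetneq F'$, your fallback witnesses are the right ones: a boundary curve of $F$ that is nonperipheral in $F'$, or the measure $\mu_{\lambda_0}$ on a component $\lambda_0$ of $\lambda$ lying in the interior of $F'$. Two corrections are needed there:
\begin{itemize}
\item Discard the ``curve crossing a boundary annulus of $F$''. It enters $F$ and meets $\mu_F$, so it is not a witness.
\item A pair-of-pants component $P$ of $F'$ disjoint from $F$ contains no nonperipheral curve. There you must take $\gamma_0=\nu_P$, the part of $\nu$ filling $P$, which satisfies $i(\nu_P,\nu_P)>0$.
\end{itemize}
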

In particular, if $\mu \sinc \nu$, then the support of $\mu$ is \textit{strictly} contained in the support of $\nu$.
\begin{proof}
    First, suppose $|\mu| \subsetneq |\nu|$  and $\Dil(\nu, \mu) < \infty$. Then, $|\mu| \subsetneq |\nu|$ means that there is some measured lamination $\lambda$ (which is possibly a simple closed curve, but not necessarily) so that $i(\mu, \lambda) = 0$ but $i(\nu, \lambda) \neq 0$. Then by Property 2 in \Cref{lem:increasing_properties}, there is some constant $d> 0$ so that $\mu \sinc d \nu$.

    Now suppose $\mu \sinc d \nu$ for some $d > 0$. Then by Property 1 in \Cref{lem:increasing_properties}, $\mu \inc d\nu$.  So by \Cref{lem:inc_implies_support_containment}, $|\mu| \subseteq |d \nu|$. As support is scale-invariant, this gives $|\mu| \subseteq |\nu|$. We must just show that the supports cannot be equal. But if $|\mu| = |\nu|$, then for all geodesic currents $\eta$, 
    \[
    i(\nu, \eta) = 0 \iff i(\mu, \eta) = 0
    \]
    In particular, there cannot be any $\eta \in \C(S)$ so that $i(d\nu, \eta) \neq 0$ but $i(\mu, \eta) =0$, contradicting the fact that $\mu \sinc d \nu$. So we conclude 
    \[
    |\mu| \subsetneq |\nu|.\qedhere
    \]
\end{proof}

It turns out that $|\mu| \subsetneq |\nu|$ does not imply that $\Dil(\nu, \mu) < \infty$, due to obstructions discussed in \Cref{sec:NUE}. However, if $|\nu|$ has no lamination components, then this does hold (\Cref{lem:bounded_sup}). Proving this will take a bit of work.

First, we will show that if $|\mu| \subseteq F$ for some subsurface $F$, then we can estimate $i(\mu,\gamma)$ in terms of the intersection of $\gamma$ with any filling collection of curves in $F$ (\Cref{lem:supported-current-upper-bound}). To do this, we need the following setup.

Given a geodesic current $\nu$ on $S$, it induces a pseudometric~\cite{BIPP21:Currents,DRMG23:Duals} $d_\nu$ on $\wt{S}$, the universal cover of $S$ (isometrically identified with $\mathbb{H}^2$) via
\[
d_\nu(x,y)\coloneqq \frac12 \nu(G[x,y)) + \frac12 \nu(G(x,y]) \qquad \text{ for every } x,y \in \wt{S}
\]
where $G[x,y)$ denotes the set of all complete geodesics in $G(S)$ intersecting the hyperbolic geodesic segment $[x,y)$ transversely.
By~\cite[Section~4]{BIPP21:Currents}, this induces, for every continuous path $c : [a,b] \to \wt{S}$, with $-\infty <a \leq b < \infty$ a notion of \textbf{$\nu$-length} $L_\nu(c)$.
If $\gamma$ is a continuous arc on $S$, we let $L_\nu(\gamma)$ denote the length of $L_\nu(\wt{\gamma})$, where $\wt{\gamma}$ denotes any arc lift to the universal cover $\wt{S}$.
The following is proven in~\cite{BIPP21:Currents}.

\begin{lem}[{\cite[Lemmas~4.3, 4.4]{BIPP21:Currents}}]
\label{lem:nu-length}
Let $\nu$ be a geodesic current.
The following properties are satisfied by $L_\nu$:
\begin{itemize}
    \item \label{item:length_larger_distance}
    For any continuous path $c \colon [a,b] \to \wt{S}$, $L_\nu(c) \geq d_\nu(c(a),c(b))$. Furthermore, if $c$ parametrizes a hyperbolic geodesic path in $\wt{S}$, equality holds. 
    \item \label{item:composition} If $c_1 \circ c_2$ denotes a concatenation of two continuous paths $c_1,c_2$, then $L_\nu(c_1 \circ c_2)=L_\nu(c_1)+L_\nu(c_2)$. 
    \item \label{item:upperbound_intersection} If $\gamma'$ is a continuous path in $S$ in the free homotopy class of a hyperbolic geodesic $\gamma$ on $S$, we have $L_\nu(\gamma') \geq i(\nu,\gamma)$. Furthermore, if $\gamma'$ is the hyperbolic geodesic, then equality holds. 
\end{itemize} 
\end{lem}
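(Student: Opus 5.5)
We would follow \cite[Section~4]{BIPP21:Currents} and take as the definition of $\nu$-length the variation of the path with respect to the pseudometric $d_\nu$:
\[
L_\nu(c)\coloneqq\sup\Bigl\{\sum_{k=1}^{N} d_\nu\bigl(c(t_{k-1}),c(t_k)\bigr)\ :\ a=t_0<t_1<\dots<t_N=b\Bigr\}.
\]
We would verify the three items in the order stated. The first task, which underlies everything, is to check that $d_\nu$ is a $\pi_1(S)$-invariant pseudometric that is additive along hyperbolic geodesics.
\begin{itemize}
\item Invariance is inherited from the invariance of $\nu$.
\item For additivity, take $x,y,z$ in this order on a hyperbolic geodesic. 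Then $G[x,z)$ is the disjoint union of $G[x,y)$ and $G[y,z)$, and similarly $G(x,z]$ is the disjoint union of $G(x,y]$ and $G(y,z]$. Averaging gives $d_\nu(x,z)=d_\nu(x,y)+d_\nu(y,z)$. The half-open convention is exactly what prevents geodesics through the middle point $y$ from being double counted, which matters when $\nu$ has atoms.
\item The triangle inequality for arbitrary $x,y,z$ comes from the fact that a geodesic crossing $[x,z]$ transversely must cross $[x,y]\cup[y,z]$, applied to the geodesic triangle. One needs a little care with geodesics through vertices, which the symmetrized average handles.
\end{itemize}

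Given this, the first item is immediate. The trivial partition $\{a,b\}$ gives $L_\nu(c)\ge d_\nu(c(a),c(b))$. If $c$ parametrizes a geodesic segment, every partition sum telescopes by additivity to $d_\nu(c(a),c(b))$, so equality holds.

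The second item is the standard additivity of variation. Adding the junction point to a partition can only increase the sum, by the triangle inequality. So the supremum over partitions of $c_1\circ c_2$ equals the supremum over partitions containing the junction point, which splits as $L_\nu(c_1)+L_\nu(c_2)$.

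For the third item, let $g\in\pi_1(S)$ represent $\gamma$, with axis $A_g$, and let $p\in A_g$.
\begin{itemize}
\item We first show $d_\nu(p,gp)=i(\nu,\gamma)$. Geodesics crossing the fundamental segment $[p,gp)$ transversely are in bijection with the lifts crossing $A_g$ modulo $\langle g\rangle$, and their $\nu$-mass is by definition $i(\nu,\gamma)$. The two half-open averages agree by $g$-invariance. Geodesics in the support of $\nu$ that coincide with $A_g$ are not transverse, so they contribute to neither side.
\item Combined with the equality case of the first item, this gives the equality statement for the geodesic representative, since $L_\nu(\gamma)=d_\nu(p,gp)$.
\item For an arbitrary representative $\gamma'$, lift its $n$-fold concatenation to a path from $x$ to $g^nx$. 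By the second item and invariance, $nL_\nu(\gamma')\ge d_\nu(x,g^nx)$.
\item The triangle inequality gives $d_\nu(x,g^nx)\ge d_\nu(p,g^np)-2d_\nu(x,p)$, and additivity along the axis gives $d_\nu(p,g^np)=n\,i(\nu,\gamma)$.
\item Dividing by $n$ and letting $n\to\infty$ yields $L_\nu(\gamma')\ge i(\nu,\gamma)$.
\end{itemize}

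We expect the main obstacle to be the bookkeeping of non-generic geodesics in the first step. These are atoms of $\nu$ passing through partition points or endpoints, and leaves of the support that are tangent to or equal to the path. They are what force the half-open, symmetrized definition of $d_\nu$, and they must be checked in both the additivity and the identification $d_\nu(p,gp)=i(\nu,\gamma)$. The remaining arguments are formal.
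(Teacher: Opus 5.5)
Your proof is correct. The paper gives no argument for this lemma; it imports it from \cite[Section~4]{BIPP21:Currents}. So the only thing to check against the paper is the definition of $L_\nu$. Your choice, the length induced by $d_\nu$ (the supremum of $\sum_k d_\nu(c(t_{k-1}),c(t_k))$ over partitions), is the reading consistent with the word ``induces'' and with how the lemma is used in \Cref{lem:supported-current-upper-bound}.

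With that definition your three steps form a complete, self-contained proof: the pseudometric properties of $d_\nu$, the formal properties of variation, and the stable-length estimate
\[
n\,L_\nu(\gamma')\ \ge\ d_\nu(x,g^nx)\ \ge\ n\,i(\nu,\gamma)-2d_\nu(x,p).
\]
Taking the limit in $n$ is a sensible choice. The more direct route would show $d_\nu(x,gx)\ge i(\nu,\gamma)$ for every $x$, by noting that each geodesic transverse to $A_g$ must cross the $g$-invariant broken line $\bigcup_k g^k[x,gx]$. That route requires exactly the vertex and non-transversality case analysis your argument avoids.

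A few points of precision:
\begin{enumerate}
\item The triangle inequality does not use the symmetrization. For a nondegenerate triangle one has $G[x,z)\subseteq G[x,y)\cup G[y,z)$, since a geodesic through $y$ crossing $(x,z)$ lies in $G[y,z)$. Symmetrically, $G(x,z]\subseteq G(x,y]\cup G(y,z]$. The averaging only makes $d_\nu$ symmetric. The half-open convention is what lets the triangle inequality and additivity along geodesics hold simultaneously when $\nu$ has atoms.
\item The equality case of the first item needs the geodesic segment to be parametrized monotonically. Your telescoping silently uses this.
\item If $\gamma=\beta^k$ with $\beta$ primitive and $g=h^k$, the stabilizer of $A_g$ is $\langle h\rangle$, not $\langle g\rangle$. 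The identity $d_\nu(p,gp)=i(\nu,\gamma)$ still holds, because $d_\nu(p,h^kp)=k\,d_\nu(p,hp)$ and $\delta_\gamma=k\,\delta_\beta$ by the paper's convention.
\item Choose the lift of $\gamma'$ so that its endpoints differ by $g$ itself rather than by a conjugate. Alternatively, replace $A_g$ by the axis of that conjugate.
\end{enumerate}
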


If $\Gamma$ is a finite union of arcs and curves on $S$, we define 
\begin{equation}
\label{eq:nu-length-union}
L_\nu(\Gamma) \coloneqq \sum_{\gamma \in \Gamma} L_\nu(\gamma).
\end{equation}

The following result extends a technique in the proof of ~\cite[Lemma~4.7]{Minsky1993} to upper bound the intersection number of a geodesic current with any non-simple curve in terms of the intersection with a filling collection of simple closed curves. 

\begin{lem}\label{lem:supported-current-upper-bound}
Let $F\subseteq S$ be a subsurface, possibly disconnected, and let
$\Gamma$ be a finite collection of simple closed geodesics cutting each
component of $F$ into disks and boundary-parallel annuli. If
$\nu\in\mathcal C(S)$ satisfies $|\nu|\subseteq F$, then there is a
constant $A=A(\nu,\Gamma)>0$ such that
\[
    i(\nu,\gamma)\leq A i(\Gamma,\gamma)
\]
for every closed curve $\gamma$ on $S$.
\end{lem}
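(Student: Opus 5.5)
We follow Minsky's argument: realize $\gamma$ by a path whose $\nu$-length is controlled by the number of times it crosses $\Gamma$, and then use the third item of \Cref{lem:nu-length}, $i(\nu,\gamma)\leq L_\nu(\gamma')$ for any representative $\gamma'$ of $\gamma$.

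First we dispose of the parts of $\gamma$ outside $F$. The closed curve $\gamma$ is in minimal position with respect to $\partial F$, so it decomposes into arcs inside $F$ and arcs in $S\setminus F$. Since $|\nu|\subseteq F$, no geodesic in the support of $\nu$ crosses into the interior of $S\setminus F$. Hence arcs of $\gamma$ in $S\setminus F$ (pushed slightly off $\partial F$) have $\nu$-length zero. A lamination component of $\nu$ sitting inside $F$ is handled as part of $F$. If $\gamma$ is disjoint from $\Gamma$, then $\gamma$ lies in a disk or boundary-parallel annulus of $F\setminus\Gamma$, or outside $F$. In each of these cases $\gamma$ is inessential, peripheral, or disjoint from $|\nu|$, so $i(\nu,\gamma)=0$ and the inequality holds. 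We may therefore assume $i(\Gamma,\gamma)>0$.

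Next we straighten $\gamma$ inside $F$. Put $\gamma$ in minimal position with $\Gamma\cup\partial F$ and cut it at its $i(\Gamma,\gamma)$ intersection points with $\Gamma$. Each resulting segment lies in a single complementary piece of $F\setminus\Gamma$ (a disk or a boundary-parallel annulus), possibly with excursions outside $F$. Replace each segment by a homotopic path, rel endpoints, built as follows. If the piece is a disk $D$, use a path in $D$ running along $\partial D\subset\Gamma$, or a geodesic chord of $D$. If the piece is a boundary-parallel annulus, push the segment to the $\Gamma$-side of the annulus, adding a sub-arc of $\Gamma$ and a sub-arc of $\partial F$ (which has $\nu$-length $0$); any winding around $\partial F$ costs nothing. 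Since $\Gamma$ and the pieces are finitely many compact objects, each replacement has $\nu$-length at most some $B$, where $B$ is the sum of $L_\nu(\Gamma)$ (finite by \eqref{eq:nu-length-union}) and the maximal $\nu$-length of a chord across each disk. The additivity in \Cref{lem:nu-length} then gives a representative $\gamma'$ with $L_\nu(\gamma')\leq B\, i(\Gamma,\gamma)$, so $A=B$ works.

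The main obstacle is bounding the $\nu$-length of the annular segments uniformly. In such a segment $\gamma$ may twist many times around the boundary curve. We must check that this twisting can be absorbed into $\partial F$, which $\nu$ does not cross transversally because no component of its lamination part is a boundary component of $F$ and $\mu_F$ fills only the interior. We must also check that $\nu$ puts no mass on geodesics crossing the collar between $\partial F$ and $\Gamma$ beyond a bounded amount, which is fine because that collar is compact. This is the step we would write out carefully, using the pseudometric $d_\nu$ on $\wt S$ and the inequality $L_\nu\geq d_\nu$.
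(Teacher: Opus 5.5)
Your proposal is correct and follows essentially the same route as the paper, which realizes $\gamma$ by a path through the fixed graph $M=\Gamma\cup\Omega\cup\partial F$ ($\Omega$ being one fixed crossing arc per boundary-parallel annulus) and absorbs annular twisting into a path parallel to $\partial F$, pushed just outside $F$, where $L_\nu$ vanishes; this gives $i(\nu,\gamma)\le 4L_\nu(M)\,i(\Gamma,\gamma)$. One claim needs tidying: a segment between consecutive $\Gamma$-crossings need not stay in a single piece, since it can leave $F$ through one annular piece and re-enter through another, so the paper instead counts at most $2i(\Gamma,\gamma)$ components of $\gamma\cap F$, which only doubles your constant.
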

\begin{proof}
In each boundary-parallel annular component $a \in \mathcal A$ of
$F\setminus\Gamma$, choose a properly embedded geodesic arc $\alpha_a$ joining its two
boundary components. Let $\Omega = \{ \eta_a \colon a \in \mathcal A\}$ be the collection of these arcs,
and set 
\[
M=\Gamma\cup\Omega\cup\partial F.
\]

Let $\gamma$ be a closed geodesic. Only components of
$\gamma\cap F$ can contribute to $L_\nu(\gamma)$, since
$|\nu|\subseteq F$.
Cut $\gamma \cap F$ at its intersections with $\Gamma$, obtaining arc components $\gamma_i$ so that $\gamma=\gamma_1 \circ \dots \circ \gamma_k$. 

Every component of $\gamma\cap F$ lies in a disk or a
boundary-parallel annulus of $F\setminus\Gamma$. A component $\gamma_i^d$ lying
in a disk can be replaced, relative to its endpoints, by a path ${\gamma_i^d}'$ in
the boundary of that disk which traverses each point in $M$ at most twice.
A component $\gamma_j^a$ lying in a boundary-parallel annulus can similarly be
replaced by a concatenation of paths $\eta_j \circ {\gamma_j^a}' \circ \eta_k$ in $M$, where $\eta_j, \eta_k$ are arcs in $\Gamma \cup \Omega$, and the path ${\gamma_j^a}'$ is parallel to a boundary component of $F$, but pushed outside of $F$. (We allow any of these arcs to be empty). In particular, $L_\nu({\gamma_j^a}')=0$. 
Regluing the paths components $\eta_j \circ {\gamma_j^a
}' \circ \eta_k$ and ${\gamma_j^d
}'$, we obtain a closed continuous path $\gamma'$ in the free homotopy class of $\gamma$, and hence, in the homotopy class of the hyperbolic geodesic representative of $\gamma$.
By Equation~\eqref{eq:nu-length-union}, $L_\nu(\gamma')$ is the sum of the $\nu$-lengths of each of these arcs. Trivially, each arc component has length at
most $2L_\nu(M)$. Since the number of components of
$\gamma\cap F$ is at most $2i(\Gamma,\gamma)$, by Lemma~\ref{lem:nu-length}, it follows that
\[
    i(\nu,\gamma)
    \leq
    4L_\nu(M)i(\Gamma,\gamma).
\]
Thus the result holds with $A=4L_\nu(M)$.
\end{proof}

We now show that if $\nu$ has no lamination component in its support, then $|\mu| \subseteq |\nu|$ does imply that the dilatation $\Dil(\nu, \mu)$ is bounded.

\begin{lem}
\label{lem:bounded_sup}
 Let $\mu, \nu \in \C(S)$, and let $F \subseteq S$ be a subsurface. Suppose $|\nu| = F$, and $|\mu| \subseteq F$. Then
 \[
  \Dil(\nu,\mu) < \infty.
 \]

\end{lem}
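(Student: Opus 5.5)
We need to bound $\sup_\gamma i(\mu,\gamma)/i(\nu,\gamma)$ over closed curves $\gamma$ with $i(\nu,\gamma)\neq 0$. The plan is to sandwich $i(\mu,\gamma)$ and $i(\nu,\gamma)$ against a common combinatorial quantity, namely the intersection with a fixed filling system of curves in $F$.

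\textbf{Choice of $\Gamma$.} Fix a finite collection $\Gamma$ of simple closed geodesics, each essential and non-peripheral in $F$, cutting every component of $F$ into disks and boundary-parallel annuli. For instance, take a pants decomposition of each component together with curves crossing each pants curve and each pair of pants.

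\textbf{Upper bound.} Since $|\mu|\subseteq F$, \Cref{lem:supported-current-upper-bound} gives a constant $A=A(\mu,\Gamma)$ with
\[
i(\mu,\gamma)\leq A\, i(\Gamma,\gamma)
\]
for every closed curve $\gamma$.

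\textbf{Lower bound.} This is the main obstacle. We need a reverse inequality
\[
i(\nu,\gamma)\geq c\, i(\Gamma,\gamma)
\]
with $c>0$, for all $\gamma$. We would use that $|\nu|=F$ means $\nu=\nu_F$ fills $F$ and has no lamination part, so $\sys_F(\nu)>0$. This is the analogue of \cite[Theorem~1.5]{BIPP21:Currents} for the subsurface, applied to $\nu$ viewed as a filling current on $F$.

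The first approach to try is a compactness argument. Consider the ratio $i(\nu,\eta)/i(\Gamma,\eta)$ over projective currents $\eta$ with $i(\Gamma,\eta)>0$. If the infimum were $0$, take normalized $\eta_n$ with $i(\Gamma,\eta_n)=1$ and $i(\nu,\eta_n)\to 0$, and pass to a limit $\eta$ in the compact space $\PC(S)$. Continuity of intersection gives $i(\Gamma,\eta)=1$ and $i(\nu,\eta)=0$. Since $i(\Gamma,\eta)\neq 0$, the support of $\eta$ contains a geodesic crossing some curve of $\Gamma$, and hence entering the interior of $F$ transversely. Because $\nu$ fills $F$, every geodesic meeting the interior of $F$ essentially crosses $|\nu|$ transversely, so $i(\nu,\eta)>0$, a contradiction.

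The delicate point is the normalization: the limit must stay nonzero and keep $i(\Gamma,\eta)=1$. Normalizing by $i(\Gamma,\cdot)$ is not proper on $\PC(S)$, so we would instead normalize by a filling current $\sigma$ on $S$, with $i(\sigma,\eta_n)=1$. If then $i(\Gamma,\eta_n)\to 0$ along with $i(\nu,\eta_n)$, we must rule out that both ratios degenerate together. To handle this, we would argue with the ratio $i(\nu,\eta)/i(\Gamma,\eta)$ directly, both to be continuous on the region $i(\Gamma,\eta)>0$ and near its boundary $\{\eta : i(\Gamma,\eta)=0\}$. Such $\eta$ have support disjoint from the interior of $F$, and we would use a local estimate near that boundary. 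Alternatively, we could prove the lower bound combinatorially: each arc of $\gamma\cap F$ between consecutive crossings of $\Gamma$ that crosses a pants curve essentially picks up $\nu$-mass bounded below by a uniform constant, mimicking the proof of \Cref{lem:supported-current-upper-bound} with inequalities reversed.

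\textbf{Conclusion.} Combining the two bounds, every $\gamma$ with $i(\nu,\gamma)>0$ satisfies
\[
\frac{i(\mu,\gamma)}{i(\nu,\gamma)}\leq \frac{A}{c}.
\]
If $i(\nu,\gamma)=0$, the curve is excluded from the supremum. Hence $\Dil(\nu,\mu)\leq A/c<\infty$.
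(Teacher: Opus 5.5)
Your overall structure is the paper's: compare both intersection numbers with $i(\Gamma,\gamma)$, and get the upper bound $i(\mu,\gamma)\le A\,i(\Gamma,\gamma)$ from \Cref{lem:supported-current-upper-bound}. The gap is the lower bound $i(\nu,\gamma)\ge c\,i(\Gamma,\gamma)$, which you do not prove.

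This inequality is exactly $\Dil(\nu,\Gamma)<\infty$, i.e.\ the lemma itself in the special case $\mu=\Gamma$. So your upper-bound step only reduces the statement to that case, and all of the content lies in the step you leave open. Neither of your two suggestions closes it.

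The compactness argument fails in exactly the degenerate situation you identify. For curves that spend most of their length outside $F$, normalizing by a filling $\sigma$ sends both $i(\nu,\eta_n)$ and $i(\Gamma,\eta_n)$ to $0$. The limit then satisfies $i(\Gamma,\eta)=i(\nu,\eta)=0$, and no contradiction appears. The ``local estimate near that boundary'' you mention is the whole difficulty, and it is not supplied.

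The combinatorial alternative is false as stated. A single arc of $\gamma$ between consecutive crossings of $\Gamma$ can carry arbitrarily small, even zero, $\nu$-mass. For instance, take $\nu$ a filling multicurve on $F$ and an arc that cuts off a corner of a polygon of $F\setminus\Gamma$ which $|\nu|$ avoids. So there is no uniform per-arc lower bound.

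What is needed is a quantitative use of $\sys_F(\nu)>0$ that groups many arcs together. The paper takes it from \cite[Lemma~3.3]{Sap23:LengthComparison}, applied in each component of $F$:
\[
\sys_F(\nu)^2\, i(\Gamma,\gamma)\le 12\, i(\nu,\Gamma)\, i(\nu,\gamma),
\]
which gives $c=\sys_F(\nu)^2/(12\, i(\nu,\Gamma))>0$.

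The mechanism behind estimates of this type is a pigeonhole-and-closing argument, not compactness. Subdivide $\gamma$ into roughly $i(\nu,\gamma)/\sys_F(\nu)$ pieces of small $\nu$-length. If one piece crossed a component of $\Gamma$ many times, two of those crossings would be $\nu$-close along that component. Joining them would then give a short closed curve, contradicting the systole bound. This bounds the crossings per piece by a multiple of $i(\nu,\Gamma)/\sys_F(\nu)$, and multiplying the two counts gives the inequality. With that input, your conclusion $\Dil(\nu,\mu)\le A/c$ goes through as written.
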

\begin{proof}
Choose a finite collection of simple closed geodesics $\Gamma$ that cut $F$ into disks and boundary-parallel annuli. Let $\gamma$ be any closed geodesic in $S$. By
\cite[Lemma~3.3]{Sap23:LengthComparison}, applied in each component of $F$, we have
\[
    \sys_F(\nu)^2 i(\Gamma,\gamma)
    \leq
    12i(\nu,\Gamma)i(\nu,\gamma)
\]
Hence
$i(\nu,\gamma)\geq c_\nu i(\Gamma,\gamma)$, where
$c_\nu=\sys_F(\nu)^2/(12i(\nu,\Gamma))>0$.

By \Cref{lem:supported-current-upper-bound}, there is
$A_{\mu}>0$ such that
$i(\mu,\gamma)\leq A_{\mu}i(\Gamma,\gamma)$. Therefore
\[
    i(\mu,\gamma)
    \leq
    \frac{A_{\mu}}{c_\nu}i(\nu,\gamma)
\]
for every closed curve $\gamma$. It follows that
$\operatorname{Dil}(\nu,\mu)\leq A_{\mu}/c_\nu<\infty$.
\end{proof}

\subsection{Non-uniquely ergodic components of geodesic currents}
\label{sec:NUE}
The reason that lamination components in the support of a geodesic current $\nu$ can cause unbounded dilatation is exactly because of so-called non-uniquely ergodic laminations. We give an overview of these here. We then show in \Cref{lem:increasing_implies_strictly} that non-uniquely ergodic laminations are exactly the obstruction that keep an increasing sequence of currents from being strictly increasing.

We say that a measured lamination $\lambda$ is \textbf{ergodic} if its induced geodesic flow invariant measure $\widetilde \lambda$ is ergodic on the unit tangent bundle $T^1 X$, or, equivalently, if $\lambda=a \lambda_1 + b \lambda_2$ for some non-zero measured laminations $\lambda_1,\lambda_2$ and $a, b \geq 0$, then either $a=0$ or $b=0$.
Recall that every nonzero measured lamination $\mu$ admits an ergodic decomposition
\[
\mu=\mu_1+\cdots+\mu_k,
\]
where the $\mu_j$ are nonzero, ergodic, pairwise nonproportional measured laminations satisfying $i(\mu_j,\mu_\ell)=0$ for all $j,\ell$. Here the weights are included in the components $\mu_j$. This decomposition is unique up to reordering.

If $\nu$ is a measured lamination, we write $\nu\ll\mu$ if $\nu$ is a nonnegative linear combination of the ergodic components of $\mu$. Thus, if $\mu=\sum_{j=1}^k\mu_j$, then $\nu\ll\mu$ means that
\[
\nu=\sum_{j=1}^k c_j\mu_j
\]
for uniquely determined coefficients $c_j\geq 0$. Otherwise, we say $\nu\not\ll\mu$.

We have the following two results, essentially contained in~\cite{Walsh2014Horoboundary}.

\begin{lem}[{\cite[Lemma~6.3]{Walsh2014Horoboundary}}]\label{lem:walsh_lem63}
Let $\mu_0,\ldots,\mu_n$ be pairwise nonproportional ergodic measured laminations such that $i(\mu_j,\mu_k)=0$ for all $j,k$. For every $C>0$, there exists a closed geodesic $\alpha\subset\Sigma(\mu_0)$ such that
\[
i(\mu_0,\alpha)>C\,i(\mu_j,\alpha)
\]
for every $j=1,\ldots,n$.
\end{lem}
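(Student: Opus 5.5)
The plan is to reduce the statement to a comparison of transverse measures on a single short transversal, and then to build $\alpha$ by a ``follow the leaf and close up'' construction.

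First I would organize the laminations by support. Since $i(\mu_j,\mu_k)=0$ for all $j,k$ and each $\mu_j$ is ergodic, the support $|\mu_j|$ of each $\mu_j$ is a single minimal component. That component is either equal to the minimal component $\lambda_0=|\mu_0|$ (the non-uniquely ergodic situation) or disjoint from it. For the indices $j$ with $|\mu_j|\cap\lambda_0=\varnothing$, I expect the argument to be purely topological. In that case $|\mu_j|$ misses the interior of $\Sigma(\mu_0)$, or is a boundary curve of it, so any closed geodesic in $\Sigma(\mu_0)$ has $i(\mu_j,\alpha)=0$. The degenerate case where $\mu_0$ is a weighted simple closed curve must be read with the paper's annular convention for $\Sigma(\mu_0)$. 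There I would take $\alpha$ to cross the core curve while avoiding the finitely many other (nonproportional, hence distinct and disjoint) supports, which is possible inside the one or two complementary pieces adjacent to $|\mu_0|$. So the real content is the case where several $\mu_j$ are distinct ergodic measures on the same minimal lamination $\lambda_0$.

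In that case I would fix a short geodesic arc $\tau$ transverse to $\lambda_0$, contained in $\Sigma(\mu_0)$ and meeting every leaf. The measures $\mu_0$ and $\mu_j$ induce finite Borel transverse measures on $\tau$. Because they are distinct ergodic invariant measures, they are mutually singular. Using regularity of finite Borel measures on an interval together with this singularity, I would find a subarc $I\subset\tau$ with endpoints in gaps of $\lambda_0\cap\tau$ such that $\mu_0(I)>2C\,\mu_j(I)$ for all $j\ge 1$ and $\mu_0(I)>0$. Concretely, I would take a compact set of full $\mu_0$-mass and $\mu_j$-mass $<\epsilon$ for every $j$, cover it by finitely many subarcs, and pick the subarc with the best ratio.

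Next I would construct $\alpha$. Start at an endpoint of $I$ (in a gap) and traverse $I$. Then follow a nearby path parallel to a leaf of $\lambda_0$ through the adjacent gap until it first returns to the region above the starting endpoint; the return exists by minimality/recurrence. Finally close up with a tiny arc that crosses $\lambda_0$ in a set of $\mu_k$-measure at most $\delta$ for every $k$, with $\delta$ chosen small relative to $\mu_0(I)$. The leaf-parallel portion crosses no leaves. Thus the transverse measure of the closed curve $\alpha$ with respect to $\mu_k$ is $\mu_k(I)+O(\delta)$. The curve lies in a neighbourhood of $\lambda_0$, hence in $\Sigma(\mu_0)$, and it is essential because it crosses $\lambda_0$ with positive $\mu_0$-measure.

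The main obstacle is showing that these transverse measures equal the intersection numbers $i(\mu_k,\alpha)$, i.e.\ that passing to the geodesic representative does not cancel crossings. I would argue that $\alpha$ is quasi-transverse to $\lambda_0$: all its crossings have the same orientation along $\tau$, and the leaf-parallel pieces create no bigons with leaves. Then a bigon between a lift of $\alpha$ and a leaf would force a leaf to cross $I$ twice with opposite orientation, which cannot happen. If this becomes delicate, I would fall back on a train track carrying $\lambda_0$ and realize $\alpha$ as a curve transverse to the track, for which the no-bigon property is standard. With that in hand, $i(\mu_0,\alpha)\ge\mu_0(I)-\delta>C(\mu_j(I)+\delta)\ge C\,i(\mu_j,\alpha)$ for $\delta$ small.
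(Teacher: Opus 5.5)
The paper does not reprove this statement. It quotes Walsh's Lemma~6.3 and only observes that the conclusion holds for all $j$ simultaneously. Your self-contained route is the natural one: mutual singularity of the distinct ergodic transverse measures on a transversal $\tau$, a subarc $I$ with large $\mu_0$-to-$\mu_j$ ratio, then a leaf-following path closed up by a short arc. The measure-theoretic part is fine, but the step you flag as the main obstacle is not justified.

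The upper bounds $i(\mu_j,\alpha)\le\mu_j(I)+\delta$ are automatic, since the transverse measure along any representative bounds the intersection number from above. What needs minimal position is the lower bound $i(\mu_0,\alpha)\ge\mu_0(I)-\delta$, and with it the essentiality of $\alpha$. Your argument for it fails in two places.

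First, when $\lambda_0$ is non-orientable, every leaf crosses $I$ infinitely often with both orientations. So the configuration you say \emph{cannot happen} does happen.

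Second, \emph{follow the leaf until it first returns} does not control the side from which the leaf-parallel path comes back to $\tau$. Suppose it comes back on the same side it left from. Then in $\widetilde S$ the next lift of $\tau$ exits the complementary region through the same lift $\tilde b$ of the followed boundary leaf $b$ through which the previous lift entered. So $\tilde\alpha$ backtracks in the dual tree, and part of $\mu_0(I)$ can cancel. The train-track fallback does not help as stated, because your $\alpha$ runs along the track rather than transversely to it.

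What is missing is control of this cancellation, and it can be supplied as follows. Choose the return point $y$ of $b$ just below the lower endpoint of $I$. Then all crossings of $\alpha$ with $\lambda_0$ lie on a single geodesic subarc $\sigma=[y',q]$ of $\tau$. Compute $i(\mu_0,\alpha)$ as a translation length in the dual $\mathbb{R}$-tree of $\mu_0$. This gives $i(\mu_0,\alpha)\ge\mu_0(\sigma)-2\Delta$, where $\Delta$ is the $\mu_0$-mass of leaf lifts crossed by two consecutive lifts of $\sigma$.
\begin{itemize}
\item If the path returns from the opposite side, the two lifts meet the boundary of the region in distinct leaves, and $\Delta=0$.
\item Otherwise every doubly crossed leaf lies beyond $\tilde b$ and meets both lifts. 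So it fellow-travels $\tilde b$ along the whole return path. Hence $\Delta\to 0$ as the return length grows, by non-atomicity of $\mu_0$.
\end{itemize}

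Separately, your closed-curve case is wrong. Let $\mu_0$ be carried by a simple closed curve $\gamma$, and let some $\mu_j$ be a minimal lamination filling a component $Y$ of $S\setminus\gamma$ adjacent to $\gamma$. Every closed geodesic crossing $\gamma$ contains an essential arc of $Y$. Such an arc must meet $|\mu_j|$, because an arc of $Y$ missing $|\mu_j|$ stays inside one crowned annulus and is inessential. So the other supports cannot be avoided. With the annular convention this case of the statement is degenerate anyway, and the paper only needs the lemma for minimal components that are not closed curves. If you want to keep the case, you must make the arcs of $\alpha$ in the adjacent pieces cross $|\mu_j|$ with small rather than zero measure.
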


\begin{proof}
Notice that the conclusion of \cite[Lemma~6.3]{Walsh2014Horoboundary} holds simultaneously for all $j=1,\ldots,n$.
\end{proof}

\begin{lem}[{\cite[Lemma~6.4]{Walsh2014Horoboundary}}]
Let $\mu$ and $\nu$ be measured laminations, with $\mu\neq 0$, and write $\mu=\sum_{j=1}^k\mu_j$ as its ergodic decomposition.

If $\nu\ll\mu$ and $\nu=\sum_{j=1}^k c_j\mu_j$, then
\begin{equation}
\label{eq:sups_walsh}
\sup_{\substack{\eta\in\Curves(S)\\ i(\mu,\eta)>0}}
\frac{i(\nu,\eta)}{i(\mu,\eta)}=\sup_{\substack{\eta\in\mathcal{ML}(S)\\ i(\mu,\eta)>0}}
\frac{i(\nu,\eta)}{i(\mu,\eta)}
=\max_{1\leq j\leq k}c_j.
\end{equation}
If $\nu\not\ll\mu$, then the same supremum is $+\infty$.

Furthermore, the same supremum is obtained if it is taken over closed
geodesics. More precisely, there exists a sequence of closed geodesics
$\alpha_k$ such that
\[
\frac{i(\nu,\alpha_k)}{i(\mu,\alpha_k)}
\longrightarrow
\sup_{\substack{\eta\in\mathcal{ML}(S)\\i(\mu,\eta)>0}}
\frac{i(\nu,\eta)}{i(\mu,\eta)}.
\]
When $\nu\ll\mu$, the curves $\alpha_k$ may be chosen in the supporting
subsurface of an ergodic component $\mu_j$ for which $c_j=\max_\ell c_\ell$.
\label{lem:walsh_lem64}
\end{lem}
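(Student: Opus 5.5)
The statement is a restatement of Walsh's Lemma~6.4, so the plan is to reduce to the ergodic decomposition and handle the three assertions in turn: the formula when $\nu\ll\mu$, infinity when $\nu\not\ll\mu$, and realization of the supremum by closed geodesics.

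\emph{Upper bound when $\nu\ll\mu$.} Write $\nu=\sum_j c_j\mu_j$. For any $\eta$ with $i(\mu,\eta)>0$, bilinearity gives
\[
\frac{i(\nu,\eta)}{i(\mu,\eta)}=\frac{\sum_j c_j\, i(\mu_j,\eta)}{\sum_j i(\mu_j,\eta)}\le \max_j c_j,
\]
since this is a weighted average of the $c_j$ (the analogue of \Cref{lem:ratiosum}). This bounds both suprema in \eqref{eq:sups_walsh} from above. The supremum over $\Curves(S)$ is at most the supremum over $\mathcal{ML}(S)$ after identifying simple closed curves with measured laminations, and otherwise arguing with currents as in \Cref{lem:dilations}.

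\emph{Lower bound and realization by curves.} Fix $j_0$ with $c_{j_0}=\max_\ell c_\ell$ and any $C>0$. Apply \Cref{lem:walsh_lem63} to $\mu_{j_0}$, listed first, together with the remaining ergodic components. This yields a closed geodesic $\alpha\subset\Sigma(\mu_{j_0})$ with $i(\mu_{j_0},\alpha)>C\,i(\mu_j,\alpha)$ for all $j\ne j_0$. The ratio for $\alpha$ is then a weighted average of the $c_j$ in which $c_{j_0}$ has weight at least $1/(1+(k-1)/C)$. Letting $C\to\infty$ gives a sequence $\alpha_k$ in the supporting subsurface of $\mu_{j_0}$ whose ratios converge to $\max_j c_j$. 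This yields the reverse inequality, the equality of the two suprema, and the final sentence of the statement.

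\emph{The case $\nu\not\ll\mu$.} This is the main obstacle. Decompose $\nu$ ergodically as $\nu=\sum_m\nu_m$. Some component $\nu_{m_0}$ is not proportional to any $\mu_j$. There are two subcases.

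If $i(\nu_{m_0},\mu)>0$, I would choose simple closed curves $\eta_k$ converging projectively to $\mu$, for instance weighted curves approximating $\mu$ in $\mathcal{ML}(S)$. Then $i(\mu,\eta_k)\to i(\mu,\mu)=0$ after normalization, while $i(\nu,\eta_k)$ stays bounded below. To keep the denominator positive, I would perturb each $\eta_k$ slightly so that $i(\mu,\eta_k)>0$ is small but nonzero. The ratio then tends to $\infty$.

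If $i(\nu_{m_0},\mu)=0$, then $\nu_{m_0}$ and all the $\mu_j$ are pairwise disjoint ergodic laminations, and $\nu_{m_0}$ is non-proportional to each $\mu_j$. Apply \Cref{lem:walsh_lem63} with $\nu_{m_0}$ in the role of $\mu_0$. This gives curves $\alpha\subset\Sigma(\nu_{m_0})$ with $i(\nu_{m_0},\alpha)>C\,i(\mu_j,\alpha)$ for every $j$, hence $i(\nu,\alpha)/i(\mu,\alpha)\ge C/k$ whenever $i(\mu,\alpha)>0$.

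The delicate point is the case $i(\mu,\alpha)=0$. Then $\alpha$ misses $\mu$ while meeting $\nu$. I would handle it by adding a small multiple of a curve meeting $\mu$, then taking a Dehn twist power or curve surgery to restore positivity of the denominator while keeping the ratio large. I expect this bookkeeping to be the hardest part. Failing that, I would simply invoke Walsh's proof directly, as the statement attributes it to \cite{Walsh2014Horoboundary}.
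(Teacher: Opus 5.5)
Your outline is correct, and the one step you leave open is easily closed (see the second paragraph). It is also more self-contained than the paper's proof.

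The paper proves only the upper bound $\le\max_j c_j$ for the $\Curves(S)$-supremum. It does this via \Cref{lem:ratiosum} over filling curves, so that every $i(\mu_j,\eta)>0$, and then passes to all curves with $i(\mu,\eta)>0$ via \Cref{lem:dilations}. The lower bound, the case $\nu\not\ll\mu$, and the realization by closed geodesics are all delegated to Walsh.

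Your upper bound is the same weighted-average inequality, but you apply it directly to any $\eta\in\C(S)$ with $i(\mu,\eta)>0$. It only needs $\sum_j i(\mu_j,\eta)>0$, so the detour through filling curves is unnecessary. Your sentence comparing the $\Curves(S)$- and $\mathcal{ML}(S)$-suprema can be dropped; non-simple curves are not in $\mathcal{ML}(S)$ in any case.

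Your lower bound via \Cref{lem:walsh_lem63} is in substance Walsh's argument. For the $\mathcal{ML}(S)$-supremum it needs the curves produced by that lemma to be simple. Walsh's version provides this, although the paper's restatement only says ``closed geodesic''.

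The step you single out as the main obstacle is not delicate. The situation $i(\mu,\alpha)=0<i(\nu,\alpha)$ is the same one you already handle in your first subcase, where $\eta_0=\mu$ satisfies $i(\mu,\eta_0)=0<i(\nu,\eta_0)$. Any such witness $\eta_0\in\mathcal{ML}(S)$ makes both restricted suprema infinite.

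For $\Curves(S)$, approximate $\eta_0$ projectively by filling closed curves $\gamma_n$, which are dense in $\PC(S)$ by the proof of \Cref{lem:dilations}. Then $i(\mu,\gamma_n)>0$, and after normalization the denominators tend to $0$ while the numerators tend to $i(\nu,\eta_0)>0$.

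For $\mathcal{ML}(S)$, first approximate $\eta_0$ by weighted simple closed curves. Either infinitely many approximants already have $i(\mu,\cdot)>0$, and the ratios blow up, or one of them is a simple witness $\alpha$. In the latter case, take a simple closed curve $\beta$ meeting both $\alpha$ and $\mu$. Since $T_\alpha$ fixes $\mu$, one has $i(\mu,T_\alpha^n\beta)=i(\mu,\beta)$, whereas $i(\nu,T_\alpha^n\beta)\ge n\,i(\alpha,\beta)\,i(\alpha,\nu)-i(\beta,\nu)\to\infty$.

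This lets you organize the case $\nu\not\ll\mu$ as a clean dichotomy.
\begin{enumerate}
\item If a witness exists, the supremum is $+\infty$ by the argument above.
\item Otherwise $i(\mu,\cdot)=0$ forces $i(\nu,\cdot)=0$ on $\mathcal{ML}(S)$. In particular $i(\nu,\mu)=0$, so \Cref{lem:walsh_lem63} applies to $\nu_{m_0}$ together with the $\mu_j$. The curve $\alpha$ it yields satisfies $i(\nu,\alpha)>0$, hence automatically $i(\mu,\alpha)>0$, and its ratio exceeds $C/k$.
\end{enumerate}
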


\begin{proof}
The proof is the same as~\cite[Lemma 6.4]{Walsh2014Horoboundary}, the only addition being the first equality in Equation~\eqref{eq:sups_walsh}, which is obtained from the same argument as in Walsh's result. Indeed, if we assume $\nu=\sum_{j=1}^k c_j\mu_j$, for every $\eta \in \Curves_f(S)$  
\[
\sup_{\substack{\eta\in\Curves_f(S)}}
\frac{i(\nu,\eta)}{i(\mu,\eta)} = \sup_{\substack{\eta\in\Curves_f(S)}} \frac{\sum_{j=1}^k c_j i(\mu_j,\eta)}{\sum_{j=1}^k i(\mu_j,\eta)} \leq \max_{j=1}^k c_j
\]
where in the last equality we used Lemma~\ref{lem:ratiosum}, as $i(\mu_j,\eta)>0$, since $\eta$ is filling.
Finally, by Lemma~\ref{lem:dilations}, the same inequality follows for $\eta$ over $\Curves(S)$ with $i(\mu, \eta)>0$.
\end{proof}

Recall that any measured lamination $\lambda$ can be decomposed into a disjoint sum of measured laminations $\lambda_1 + \cdots + \lambda_k$ where the $|\lambda_i|$ are pairwise disjoint and of two types: either a simple closed curve, or a geodesic lamination so that every leaf is dense in its support~\cite{CEG06:NotesThurston}. We call any of the $\lambda_i$ a \emph{minimal} component of $\lambda$.

\begin{lem}\label{lem:non-uniquely_ergodic}
Suppose that $\mu,\mu'\in\C(S)$ satisfy $\mu\prec\mu'$ and
\[
|\mu|=|\mu'|=F\cup\lambda.
\]
Write their canonical decompositions as $\mu=\mu_F+\mu_\lambda$ and $\mu'=\mu'_F+\mu'_\lambda$. Then $\lambda$ contains a non-uniquely ergodic minimal component and
\[
\mu'_\lambda\not\ll\mu_\lambda.
\]
\end{lem}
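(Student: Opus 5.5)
We argue by contradiction, using the fact that $\mu\prec\mu'$ forces $\Dil(\mu,\mu')=\infty$. So we must locate where this infinite dilatation comes from. We will show it cannot come from the filling parts on $F$. Then, restricted to the lamination parts, it can only arise if $\mu'_\lambda\not\ll\mu_\lambda$. Finally, $\not\ll$ can only happen when some minimal component of $\lambda$ is non-uniquely ergodic.

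\emph{Step 1: the $F$-part has bounded dilatation.} Since $|\mu_F|=|\mu'_F|=F$, \Cref{lem:bounded_sup} applied to the filling currents on $F$ gives a constant $A$ with $i(\mu'_F,\gamma)\le A\,i(\mu_F,\gamma)$ for every closed curve $\gamma$; here we use $|\mu'_F|\subseteq F=|\mu_F|$. If $F=\varnothing$ this step is vacuous.

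\emph{Step 2: the lamination part has infinite dilatation.} Suppose instead $\mu'_\lambda\ll\mu_\lambda$. Then \Cref{lem:walsh_lem64} gives $i(\mu'_\lambda,\eta)\le c\,i(\mu_\lambda,\eta)$ with $c=\max_j c_j$. This holds for all curves with $i(\mu_\lambda,\eta)>0$. It also holds trivially when $i(\mu_\lambda,\eta)=0$, since $\mu'_\lambda$ is then a combination of the ergodic components and so also has zero intersection. Adding this to the bound from Step 1, we get $i(\mu',\gamma)\le\max(A,c)\,i(\mu,\gamma)$ for all $\gamma$, that is, $\Dil(\mu,\mu')<\infty$. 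This contradicts $\mu\prec\mu'$, so $\mu'_\lambda\not\ll\mu_\lambda$.

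\emph{Step 3: some minimal component is non-uniquely ergodic.} Decompose $\lambda$ into minimal components $\lambda_1,\dots,\lambda_k$. Assume every $\lambda_i$ is uniquely ergodic; note that simple closed curves are automatically so. Then each ergodic component of $\mu_\lambda$ is a positive multiple of the unique measure $m_i$ on $\lambda_i$. Every minimal component carries positive $\mu_\lambda$-mass, because $|\mu_\lambda|=\lambda$. Likewise $\mu'_\lambda=\sum_i b_i m_i$. Hence $\mu'_\lambda=\sum_i (b_i/a_i)\,a_i m_i\ll\mu_\lambda$, contradicting Step 2. Therefore some minimal component of $\lambda$ is non-uniquely ergodic.

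The main obstacle is Step 1, specifically the mixing between the two parts. A curve $\gamma$ may cross both $F$ and $\lambda$, and the bounds for the filling part and the lamination part must be combined termwise. This works because intersection number is additive: $i(\mu,\gamma)=i(\mu_F,\gamma)+i(\mu_\lambda,\gamma)$. One must also check that \Cref{lem:bounded_sup} applies to $\mu_F,\mu'_F$ as currents with support exactly $F$, including when $F$ is disconnected; this follows from the canonical decomposition. A further subtlety in Step 2 is the case of curves disjoint from $\mu_\lambda$, which we handled via the $\ll$ relation.
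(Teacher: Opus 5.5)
Your proof is correct and follows essentially the same argument as the paper: bound the $F$-parts via \Cref{lem:bounded_sup}, bound the lamination parts via \Cref{lem:walsh_lem64} when $\mu'_\lambda\ll\mu_\lambda$, add termwise to get $\Dil(\mu,\mu')<\infty$ (contradicting $\mu\prec\mu'$), and observe that unique ergodicity of every minimal component forces $\mu'_\lambda\ll\mu_\lambda$. The only differences are minor. You derive $\mu'_\lambda\not\ll\mu_\lambda$ first and deduce non-unique ergodicity from it, whereas the paper runs the argument twice. You also treat explicitly the curves with $i(\mu_\lambda,\eta)=0$, which the paper leaves implicit.
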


\begin{proof}
Write $\lambda=\lambda_1\cup\cdots\cup\lambda_m$ as a union of its minimal components, and decompose
\[
\mu_\lambda=\sum_{r=1}^m\mu_r,
\qquad
\mu'_\lambda=\sum_{r=1}^m\mu'_r,
\]
where $|\mu_r|=|\mu'_r|=\lambda_r$.

Suppose first that every $\lambda_r$ is uniquely ergodic. Since $\mu_r$ and $\mu'_r$ have the same support, there is a constant $c_r>0$ such that $\mu'_r=c_r\mu_r$. In particular, $\mu'_\lambda\ll\mu_\lambda$. \Cref{lem:walsh_lem64} therefore gives
\[
\operatorname{Dil}(\mu_\lambda,\mu'_\lambda)
=\max_{1\leq r\leq m}c_r<\infty.
\]
Moreover, because $\mu_F$ fills $F$ and $\mu'_F$ is supported in $F$, \Cref{lem:bounded_sup} gives $\operatorname{Dil}(\mu_F,\mu'_F)<\infty$.

For every closed geodesic $\gamma$, set $a=i(\mu_F,\gamma)$, $b=i(\mu_\lambda,\gamma)$, $a'=i(\mu'_F,\gamma)$, and $b'=i(\mu'_\lambda,\gamma)$. If $a'\leq A a$ and $b'\leq B b$, then
\[
a'+b'\leq\max\{A,B\}(a+b).
\]
Consequently,
\[
\operatorname{Dil}(\mu,\mu')
\leq
\max\bigl\{
\operatorname{Dil}(\mu_F,\mu'_F),
\operatorname{Dil}(\mu_\lambda,\mu'_\lambda)
\bigr\}
<\infty,
\]
contrary to $\mu\prec\mu'$. Thus $\lambda$ must contain a non-uniquely ergodic minimal component.

The same argument shows directly that $\mu'_\lambda\not\ll\mu_\lambda$. Indeed, if $\mu'_\lambda\ll\mu_\lambda$, then \Cref{lem:walsh_lem63} would again imply $\operatorname{Dil}(\mu_\lambda,\mu'_\lambda)<\infty$, and the preceding estimate would give $\operatorname{Dil}(\mu,\mu')<\infty$, contradicting $\mu\prec\mu'$.
\end{proof}

\begin{lem}\label{lem:increasing_implies_strictly}
Let $\vec\mu=(\mu_0,\ldots,\mu_n)$ be an increasing sequence of nonzero geodesic currents. Suppose that $|\mu_i|$ contains no non-uniquely ergodic minimal lamination for every $i$. Then there exist constants $d_0,\ldots,d_n>0$ such that
\[
(d_0\mu_0,\ldots,d_n\mu_n)
\]
is strictly increasing.
\end{lem}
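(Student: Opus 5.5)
The plan is to reduce to Property 2 of \Cref{lem:increasing_properties}. That result says: if $\vec\mu$ is increasing and, for each $i=1,\ldots,n$, there is some $\gamma_i$ with $i(\mu_{i-1},\gamma_i)=0$ but $i(\mu_i,\gamma_i)\neq 0$, then suitable rescalings make the sequence strictly increasing. So it suffices to produce such witnesses $\gamma_i$ for every step of the sequence.

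Each step $\mu_{i-1}\inc\mu_i$ gives $\Dil(\mu_i,\mu_{i-1})<\infty$ and $|\mu_{i-1}|\subseteq|\mu_i|$ by \Cref{lem:inc_implies_support_containment}. I first rule out equality of supports. If $|\mu_{i-1}|=|\mu_i|$, then \Cref{lem:non-uniquely_ergodic} says the common lamination $\lambda$ contains a non-uniquely ergodic minimal component. This contradicts the hypothesis, so in fact $|\mu_{i-1}|\subsetneq|\mu_i|$.

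With strict containment and $\Dil(\mu_i,\mu_{i-1})<\infty$ in hand, \Cref{lem:sinc_implies_strict_containment} produces $d>0$ with $\mu_{i-1}\sinc d\mu_i$. Its proof exhibits the required witness: a measured lamination (or curve) $\gamma_i$ with $i(\mu_{i-1},\gamma_i)=0$ and $i(\mu_i,\gamma_i)\neq 0$. If only a closed curve is allowed as the witness, I would take a curve in the part of $|\mu_i|$ not contained in $|\mu_{i-1}|$. That part is either a subsurface where $\mu_i$ fills, or a new minimal lamination component whose supporting subsurface $\Sigma$ contains curves crossing it. A curve there avoiding $|\mu_{i-1}|$ exists by the same case analysis as in the proof of \Cref{lem:inc_implies_support_containment}.

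Applying Property 2 with these $\gamma_i$ gives $d_1,\ldots,d_n$ (and $d_0=1$). I would double-check that the rescaling is done cumulatively: multiply $\mu_i$ by $\prod_{j\le i}\Dil(\mu_j,\mu_{j-1})$, so that the domination inequalities $i(d_{i-1}\mu_{i-1},\cdot)\le i(d_i\mu_i,\cdot)$ hold at every step. Rescaling does not affect the zero/nonzero witness conditions.

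The main obstacle is the witness step: making sure that strict support containment really yields a curve (or current) with zero intersection with $\mu_{i-1}$ and positive intersection with $\mu_i$. The delicate case is when the new part of $|\mu_i|$ is a lamination component lying inside $F_{i-1}$'s complement but sharing boundary behaviour with $F_{i-1}$. I would handle it by taking the new lamination component itself as $\gamma_i$, which is allowed since the definition of $\sinc$ quantifies over all of $\C(S)$. Then $i(\mu_{i-1},\gamma_i)=0$ follows from disjointness, and $i(\mu_i,\gamma_i)>0$ fails only if $\gamma_i$ is a lamination component of $\mu_i$. In that case I would instead use a curve in $\Sigma(\gamma_i)$ crossing it, which is disjoint from $|\mu_{i-1}|$ because $\Sigma(\gamma_i)\cap F_{i-1}$ is at most peripheral.
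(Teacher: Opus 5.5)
Your route is the paper's: containment of supports from \Cref{lem:inc_implies_support_containment}, strictness from \Cref{lem:non-uniquely_ergodic}, then \Cref{lem:sinc_implies_strict_containment} and a cumulative rescaling. Passing through Property~2 of \Cref{lem:increasing_properties} is equivalent to chaining the constants $c_i$, and your factor $\prod_{j\le i}\Dil(\mu_j,\mu_{j-1})$ is the right one.

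If that lemma is granted, your final paragraph is superfluous, because $\mu_{i-1}\sinc d\mu_i$ by definition already comes with a witness in $\C(S)$. As an independent justification of the witness, however, it does not work. The new lamination component of $\mu_i$ always has zero intersection with $\mu_i$, so your first choice of $\gamma_i$ never succeeds. If that component is a simple closed curve $c$, then $\Sigma(c)$ is an annulus containing no closed curve that crosses $c$, and a curve crossing $c$ may be forced to cross $|\mu_{i-1}|$.

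This is exactly where there is a genuine gap, both in your argument and in the paper's proof, which cites \Cref{lem:sinc_implies_strict_containment} at this point. Strict containment of supports together with $\Dil(\mu_i,\mu_{i-1})<\infty$ does not produce a witness. Here is an example:
\begin{itemize}
\item Let $c$ be nonseparating, $\lambda$ a minimal measured lamination filling $S\setminus c$, and set $\mu=\lambda$, $\nu=\lambda+c$. Then $|\mu|\subsetneq|\nu|$ and $\Dil(\nu,\mu)\le 1$.
\item A geodesic arc in $S\setminus c$ between consecutive crossings with $c$ is essential, and therefore crosses $\lambda$.
\item By Poincar\'e recurrence, every $\eta\in\C(S)$ with $i(c,\eta)>0$ then has $i(\lambda,\eta)>0$. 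So $\mu\sinc d\nu$ fails for every $d>0$, contradicting the forward direction of \Cref{lem:sinc_implies_strict_containment}.
\end{itemize}
This pair is not increasing: each such arc carries $\lambda$-mass bounded below, so $\Dil(\mu,\nu)<\infty$ as well. The statement itself therefore survives.

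What is missing is a use of $\Dil(\mu_{i-1},\mu_i)=\infty$ at the witness step. One way to supply it:
\begin{itemize}
\item A strict enlargement $F_{i-1}\subsetneq F_i$, or a new non-annular component of $\lambda_i$, always yields a witness.
\item So if no witness exists, every new piece of $|\mu_i|$ is a simple closed curve $c$, one of whose sides is filled by $\mu_{F_{i-1}}$ or by a non-annular component of $\lambda_{i-1}$.
\item A uniform lower bound on the $\mu_{i-1}$-mass of essential arcs in that region gives $i(c,\gamma)\lesssim i(\mu_{i-1},\gamma)$.
\item Combined with unique ergodicity and \Cref{lem:bounded_sup}, this forces $\Dil(\mu_{i-1},\mu_i)<\infty$, a contradiction.
\end{itemize}
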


\begin{proof}
By \Cref{lem:inc_implies_support_containment}, $\mu_i \inc \mu_{i+1}$ implies that
$|\mu_i|\subseteq|\mu_{i+1}|$ for every $i$. This containment must be
strict. Indeed, if $|\mu_i|=|\mu_{i+1}|$, then
\Cref{lem:non-uniquely_ergodic} would imply that their common
support contains a non-uniquely ergodic minimal lamination, contrary to
the hypothesis.

Since $\mu_i \inc \mu_{i+1}$, we have that $\Dil(\mu_{i+1}, \mu_i) < \infty$ for all $i$. By \Cref{lem:sinc_implies_strict_containment}, this, together with $|\mu_i| \subsetneq |\mu_{i+1}|$ gives, for each
$i=0,\ldots,n-1$, a constant $c_i>0$ such that
\[
\mu_i\sinc c_i\mu_{i+1}.
\]
Set $d_0=1$ and define recursively
\[
d_{i+1}=d_i c_i.
\]
The relation $\mu_i\sinc c_i\mu_{i+1}$ remains true if both sides are scaled by $d_i$, and so
\[
d_i\mu_i\sinc d_ic_i\mu_{i+1}
=d_{i+1}\mu_{i+1}.
\]
Thus $(d_0\mu_0,\ldots,d_n\mu_n)$ is strictly increasing.
\end{proof}

The following lemma says that in some cases, if $\mu \inc \mu'$ then we can find a third geodesic current lying between them. If $|\mu| = F \cup \lambda \cup \delta$ for geodesic laminations $\lambda$ and $\delta$ disjoint from each other and from $F$, then we note that
\[
 \mu = \mu_F + \mu_\lambda + \mu_\delta
\]
where $\mu_F$ fills $F$, and $\mu_\lambda$ and $\mu_\delta$ are disjoint measured laminations supported on $\lambda$ and $\delta$. We say the \textbf{component of $\mu$ supported on $F \cup \lambda$} is $\mu' = \mu_F + \mu_\lambda$.

\begin{lem}\label{lem:middle_curve}
Let $\mu,\nu\in\mathcal C(S)$ satisfy
$\operatorname{Dil}(\mu,\nu)<\infty$. Suppose that there are a subsurface
$F\subseteq S$ and geodesic laminations $\lambda$ and $\delta$, pairwise
disjoint, such that
\[
|\nu|\subseteq F\cup\lambda
\qquad\text{and}\qquad
|\mu|=F\cup\lambda\cup\delta.
\]
Write
\[
\mu=\mu_F+\mu_\lambda+\mu_\delta
\]
according to these components, and set
$\mu'=\mu_F+\mu_\lambda$. Then
\[
\operatorname{Dil}(\mu',\nu)<\infty.
\]
\end{lem}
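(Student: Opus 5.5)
We need to show that the ratio $i(\nu,\gamma)/i(\mu',\gamma)$ is bounded over closed curves $\gamma$ with $i(\mu',\gamma)>0$. By \Cref{lem:dilations} this ratio is controlled for all currents once it is controlled for curves. The plan is to split into two cases. In the first, $i(\mu_\delta,\gamma)$ is dominated by $i(\mu',\gamma)$, and the hypothesis $\Dil(\mu,\nu)=D<\infty$ transfers directly. In the second, $\gamma$ spends a lot of intersection with $\delta$, and we replace $\gamma$ by a curve that avoids $\delta$ but has comparable intersection with $\nu$ and $\mu'$.

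\textbf{Step 1 (surgery away from $\delta$).} Let $N$ be a small regular neighborhood of $\Sigma(\delta)$, which is disjoint from $F\cup\lambda$ after slight shrinking. Since $|\nu|\subseteq F\cup\lambda$ and $|\mu'|=F\cup\lambda$, neither $\nu$ nor $\mu'$ charges any geodesic contained in $N$. Given a closed geodesic $\gamma$, cut it along $\partial N$. Each arc of $\gamma\cap N$ is replaced by an arc running along $\partial N$, in the spirit of the proof of \Cref{lem:supported-current-upper-bound}. The $\nu$-length and $\mu'$-length of such a boundary arc are zero, or are bounded by a constant times the number of arcs. This produces a closed path $\gamma'$, possibly a multicurve after tightening, that is disjoint from $\delta$.

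\textbf{Step 2 (comparing intersections).} Using \Cref{lem:nu-length}, we aim to show
\[
i(\nu,\gamma)\le L_\nu(\gamma\setminus N)+c\,\#(\gamma\cap\partial N)\quad\text{and}\quad i(\nu,\gamma')\le L_\nu(\gamma\setminus N),
\]
and the analogous bounds for $\mu'$. Here $i(\mu',\gamma)=L_{\mu'}(\gamma)\ge L_{\mu'}(\gamma\setminus N)$.

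The cleaner route is to work in the universal cover with the pseudometrics $d_\nu$ and $d_{\mu'}$. Geodesics in the supports of $\nu$ and $\mu'$ never enter $N$, so each crossing of $N$ by $\gamma$ contributes nothing to either $\nu$ or $\mu'$. Indeed, every geodesic of $|\nu|$ crossing $\gamma$ crosses it outside $N$. Hence $i(\nu,\gamma)$ equals the $\nu$-mass seen along $\gamma\setminus N$, and the same holds for $\mu'$.

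Since $\gamma'$ agrees with $\gamma$ outside $N$ and its arcs in $\partial N$ are uncharged by $\nu$ and $\mu'$ (the boundary of $N$ being disjoint from $F\cup\lambda$), we get $i(\nu,\gamma')\le i(\nu,\gamma)$ and $i(\mu',\gamma')\le i(\mu',\gamma)$. The main obstacle is the reverse inequality $i(\nu,\gamma)\le C\, i(\nu,\gamma')$: tightening $\gamma'$ to a geodesic can lose intersection, so this needs care. What I would try first is to bound
\[
i(\nu,\gamma)\le L_\nu(\gamma')
\]
directly instead of passing through $i(\nu,\gamma')$. Then compare $L_\nu(\gamma')$ with $L_\mu(\gamma')$, using that $\mu$ and $\mu'$ agree away from $N$, so $L_\mu(\gamma')=L_{\mu'}(\gamma')=L_{\mu'}(\gamma)=i(\mu',\gamma)$ up to the zero-cost boundary arcs.

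\textbf{Step 3 (transferring the dilatation bound).} We still need $L_\nu(\gamma')\le D\,L_\mu(\gamma')$ for the path $\gamma'$, which is not a geodesic. I would decompose $\gamma'$ into geodesic segments $\sigma$ of $\gamma$ lying outside $N$, together with boundary arcs. On each segment, the inequality $d_\nu\le D\,d_\mu$ should follow from $\Dil(\mu,\nu)\le D$ by an approximation argument: closing up long segments into closed curves whose intersections are dominated by the segment contributions. I expect this approximation to be the genuinely delicate part, since the ratio bound for closed curves must be converted into a bound for pseudometric lengths of segments. Granting it, we obtain
\[
i(\nu,\gamma)\le L_\nu(\gamma')\le D\,L_\mu(\gamma')= D\, i(\mu',\gamma),
\]
so $\Dil(\mu',\nu)\le D<\infty$.
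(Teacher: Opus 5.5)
Your proposal has a genuine gap, at the step you yourself flag as delicate. Everything is reduced to a segment-wise comparison $d_\nu(\sigma)\le D\,d_\mu(\sigma)$, which you then grant. The principle behind it is false in general: a ratio bound over closed curves does not turn into a multiplicative bound on $d_\nu/d_\mu$ along geodesic segments.

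Here is a counterexample. Take $\lambda=\varnothing$, let $\mu_F$ be a filling multicurve of $F$, and let $\nu$ be a closed curve $c\subset F$ that is not one of its components. Then $\Dil(\mu,\nu)<\infty$ by \Cref{lem:bounded_sup}. Yet a short segment crossing a lift of $c$, and no lift of the multicurve or of $\delta$, has $d_\mu=0<d_\nu$.

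For the particular segments you use (maximal subarcs of $\gamma$ outside $N$), a bound of this type with some constant is plausible. It does not follow by ``closing up'' segments, though. Closing up typically forces the curve through $\delta$; for instance, if $\delta$ fills the rest of the surface, every arc leaving $F$ must cross it. Closing up can also add mass you cannot control. For the lamination part, a bound on arcs running along a non-uniquely ergodic minimal component of $\lambda$ amounts to absolute continuity of $\nu_\lambda$ with respect to the ergodic components of $\mu_\lambda$, and that is the actual content of the lemma.

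Separately, the surgery in Step 1 is not well defined. If $\delta$ is a simple closed curve and $N$ an annulus, every arc of $\gamma\cap N$ joins the two boundary components and cannot be pushed into $\partial N$ rel endpoints. The surgery is also unnecessary, since the geodesic $\gamma$ already has zero $\nu$-mass and zero $\mu'$-mass inside $N$.

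The missing idea is to compare componentwise rather than along $\gamma$. Write $\nu=\nu_F+\nu_\lambda$.

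The filling part needs no hypothesis at all. Since $\mu_F$ fills $F$ and $|\nu_F|\subseteq F$, \Cref{lem:bounded_sup} gives $A=\Dil(\mu_F,\nu_F)<\infty$.

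The hypothesis $\Dil(\mu,\nu)<\infty$ is needed only for the lamination part. It is used only on closed curves $\alpha$ inside the supporting subsurface of an ergodic component of $\nu_\lambda$. These curves miss $F$ and $\delta$, so $i(\nu,\alpha)/i(\mu,\alpha)=i(\nu_\lambda,\alpha)/i(\mu_\lambda,\alpha)$. By \Cref{lem:walsh_lem63} and \Cref{lem:walsh_lem64}, this ratio stays bounded only if $\nu_\lambda\ll\mu_\lambda$, which gives $B=\Dil(\mu_\lambda,\nu_\lambda)<\infty$.

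Combining the two, $i(\nu,\gamma)\le A\,i(\mu_F,\gamma)+B\,i(\mu_\lambda,\gamma)\le\max\{A,B\}\,i(\mu',\gamma)$ for every $\gamma$. The constant this yields is $\max\{A,B\}$. Your final claim $\Dil(\mu',\nu)\le D$, with the original constant, is stronger than the lemma needs, and nothing in your outline justifies it.
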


\begin{proof}
Write $\nu=\nu_F+\nu_\lambda$, where $|\nu_F|\subseteq F$ and
$|\nu_\lambda|\subseteq\lambda$. Since $\mu_F$ fills $F$,
\Cref{lem:bounded_sup} gives
\[
A:=\Dil(\mu_F,\nu_F)<\infty.
\]
We claim that
$B\coloneqq \Dil(\mu_\lambda,\nu_\lambda)<\infty$ as well. There
is nothing to prove if $\nu_\lambda=0$, so suppose otherwise.

Assume for contradiction that
$\Dil(\mu_\lambda,\nu_\lambda)=\infty$. By
\Cref{lem:walsh_lem64}, this means that
$\nu_\lambda\not\ll\mu_\lambda$. Moreover, there are closed geodesics
$\alpha_k$ contained in the supporting subsurface of an ergodic component
of $\nu_\lambda$ such that
\[
\frac{i(\nu_\lambda,\alpha_k)}
     {i(\mu_\lambda,\alpha_k)}
\longrightarrow+\infty.
\]
Since $F$ and $\delta$ are disjoint from $\lambda$, these geodesics may
be chosen so that
\[
i(\mu_F,\alpha_k)=i(\mu_\delta,\alpha_k)=i(\nu_F,\alpha_k)=0.
\]
Consequently,
\[
\frac{i(\nu,\alpha_k)}{i(\mu,\alpha_k)}
=
\frac{i(\nu_\lambda,\alpha_k)}
     {i(\mu_\lambda,\alpha_k)}
\longrightarrow+\infty,
\]
contradicting the hypothesis
$\operatorname{Dil}(\mu,\nu)<\infty$. Therefore $B<\infty$.

For every closed geodesic $\gamma$, we now have
\[
i(\nu,\gamma)
=i(\nu_F,\gamma)+i(\nu_\lambda,\gamma)
\leq A\,i(\mu_F,\gamma)+B\,i(\mu_\lambda,\gamma)
\leq\max\{A,B\}\,i(\mu',\gamma).
\]
Thus
$\operatorname{Dil}(\mu',\nu)\leq\max\{A,B\}<\infty$.
\end{proof}

\section{Bounding the lengths of increasing sequences}
\label{sec:bound_increasing_seq}

The purpose of this section is to reduce the length bound for increasing
sequences to a topological count. The obstruction to such a count is
that consecutive geodesic currents may have the same canonical support while
carrying different transverse measures on some non-uniquely ergodic
minimal lamination. We remove this obstruction in three steps. First,
we isolate one non-uniquely ergodic minimal component and record how
its ergodic measures occur along the sequence. Second, we replace
these ergodic measures by a strictly nested sequence of multicurves,
in such a way that the resulting sequence remains increasing. We
iterate this substitution until no non-uniquely ergodic minimal
component remains, and then rescale the resulting sequence to make it
strictly increasing. The desired bound then follows by counting the
successive enlargements of the canonical supports.

\begin{lem}\label{lem:minimal-component-interval}
Let $(\mu_0,\ldots,\mu_n)$ be an increasing sequence, with
$\mu_k=\mu_{F_k}+\mu_{\lambda_k}$ for $k = 1, \dots, n$. Let $\delta$ be a minimal
component of some $\lambda_k$. Then there is a maximal interval
$I = \{i,\ldots,i+j\}$ so that $\delta$ occurs as a component of
$\lambda_k$ if and only if $k \in I$. If $i+j<n$, then $\delta\subset F_{i+j+1}$.
\end{lem}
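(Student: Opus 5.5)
The plan is to exploit the monotonicity of supports along an increasing sequence, \Cref{lem:inc_implies_support_containment}: for each $k$ the relation $\mu_k\inc\mu_{k+1}$ gives $|\mu_k|\subseteq|\mu_{k+1}|$, that is,
\[
F_k\subseteq F_{k+1},\qquad \lambda_k\subseteq F_{k+1}\cup\lambda_{k+1}.
\]
Fix the minimal lamination $\delta$, and let $J=\{k:\delta \text{ is a minimal component of }\lambda_k\}$, which is nonempty by hypothesis. The whole statement reduces to two claims. First, if $k\in J$ and $k+1\notin J$, then $\delta\subset F_{k+1}$. Second, once $\delta\subset F_m$ for some $m$, it never again appears as a lamination component at any later index. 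Together these show that $J$ has no gaps, so it is an interval $\{i,\dots,i+j\}$, with $i=\min J$ and $i+j=\max J$. Maximality of the interval is then automatic. The final assertion is exactly the first claim applied at $k=i+j$, whenever $i+j<n$.

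For the first claim, take $k\in J$. The inclusion $\lambda_k\subseteq F_{k+1}\cup\lambda_{k+1}$ puts $\delta$ inside $F_{k+1}\cup\lambda_{k+1}$. Because $\delta$ is minimal, every leaf is dense in $\delta$ (or $\delta$ is a single closed curve), and $\delta$ is connected. So $\delta$ cannot be split between the open subsurface $F_{k+1}$ and the lamination $\lambda_{k+1}$, which is disjoint from $F_{k+1}$ and from its closure. Hence either $\delta\subset F_{k+1}$, or $\delta\subset\lambda_{k+1}$. In the second case $\delta$, being a minimal sublamination of $\lambda_{k+1}$, is one of its minimal components, so $k+1\in J$.

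For the second claim, if $\delta\subset F_m$ then $\delta\subset F_{m'}$ for all $m'\ge m$, since the surfaces $F_k$ are nested. Moreover $\lambda_{m'}$ is disjoint from $F_{m'}$ by the canonical decomposition \eqref{eq:current_decomposition}, so $\delta\not\subset\lambda_{m'}$ and $m'\notin J$. Combining this with the first claim: if $k\in J$ and $k+1\notin J$, then $\delta\subset F_{k+1}$, and so $J\cap\{k+1,\dots,n\}=\varnothing$. This is exactly the no-gaps property.

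The main obstacle is the dichotomy step. One must be precise about what ``$\delta\subset F_{k+1}$'' means when $\delta$ is a simple closed curve that might be a boundary curve of $F_{k+1}$. By the convention of the canonical decomposition, no component of $\lambda_{k+1}$ is peripheral in $F_{k+1}$. So a closed curve in $F_{k+1}\cup\lambda_{k+1}$ that is not a component of $\lambda_{k+1}$ lies in $F_{k+1}$, possibly parallel to its boundary. In that case I would interpret $\delta\subset F_{k+1}$ up to isotopy into $F_{k+1}$, and note that the second claim still holds, because such a curve still cannot be a component of a later $\lambda_{m'}$.
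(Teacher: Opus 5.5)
Your proposal is correct and follows the paper's proof. Both use \Cref{lem:inc_implies_support_containment} to get $\lambda_k\subseteq F_{k+1}\cup\lambda_{k+1}$, use minimality of $\delta$ for the dichotomy $\delta\subseteq\lambda_{k+1}$ or $\delta\subseteq F_{k+1}$, and use the nesting $F_{k+1}\subseteq F_\ell$ to rule out $\delta$ reappearing later. Your version also spells out what the paper leaves implicit: the connectedness argument behind the dichotomy (which relies on $\lambda_{k+1}$ avoiding $\overline{F_{k+1}}$), the deduction that $J$ has no gaps, and the convention for peripheral curves.
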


\begin{proof}
By \Cref{lem:inc_implies_support_containment},
$\lambda_k\subseteq F_{k+1}\cup\lambda_{k+1}$ for each $k$. Since $\delta$ is
minimal, either $\delta\subseteq\lambda_{k+1}$ or
$\delta\subseteq F_{k+1}$. In the latter case,
$F_{k+1}\subseteq F_\ell$ for every $\ell>k+1$, so $\delta$ cannot
subsequently occur in any $\lambda_\ell$.
\end{proof}

\begin{lem}\label{lem:ergodic-coefficient-persistence}
Let $(\mu_0,\ldots,\mu_n)$ be an increasing sequence, with
$\mu_k=\mu_{F_k}+\mu_{\lambda_k}$ for $k = 1, \dots, n$. Let $\delta$ be a minimal
component of some $\lambda_k$.
Suppose that $\delta_1,\ldots,\delta_r$ are the projectively distinct
ergodic measures supported on $\delta$, which occur in the sequence. Let $I$ be the active interval for $\delta$. For each $k \in I$, write
\[
\mu_k=\widehat\mu_k+\sum_{\ell=1}^r c^k_\ell\delta_\ell,
\]
where $\widehat\mu_k$ has no component supported on $\delta$. Then for each $\ell = 1, \dots, r$,
$c^{k+1}_\ell=0$ implies $c^k_\ell=0$.
\end{lem}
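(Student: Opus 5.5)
The plan is to argue by contraposition, using only that $\mu_k\inc\mu_{k+1}$ forces $\Dil(\mu_{k+1},\mu_k)<\infty$. Fix $\ell$ and $k,k+1\in I$, and suppose $c^{k+1}_\ell=0$ but $c^k_\ell>0$. We will produce closed geodesics $\alpha_m$ with
\[
\frac{i(\mu_k,\alpha_m)}{i(\mu_{k+1},\alpha_m)}\longrightarrow\infty,
\]
which contradicts $\Dil(\mu_{k+1},\mu_k)<\infty$.

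To construct the $\alpha_m$, we work inside $\Sigma(\delta)$. Since $k,k+1\in I$, the lamination $\delta$ is a minimal component of both $\lambda_k$ and $\lambda_{k+1}$. Hence $\delta$ is disjoint from $F_k$ and $F_{k+1}$ and from the other components of $\lambda_k,\lambda_{k+1}$, and it is not a boundary curve of these. We may therefore take $\Sigma(\delta)$ to be disjoint from $F_k\cup F_{k+1}$ and from the remaining lamination components. One should check that curves in the interior of $\Sigma(\delta)$ then meet $\widehat\mu_k$ and $\widehat\mu_{k+1}$ trivially; this is the same disjointness argument used in the proof of \Cref{lem:middle_curve}.

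Next, the measures $\delta_1,\dots,\delta_r$ are pairwise nonproportional ergodic measured laminations with mutual intersection zero. So \Cref{lem:walsh_lem63}, applied with $\delta_\ell$ in the role of $\mu_0$, gives for every $C>0$ a closed geodesic $\alpha\subset\Sigma(\delta_\ell)=\Sigma(\delta)$ with $i(\delta_\ell,\alpha)>C\,i(\delta_{\ell'},\alpha)$ for all $\ell'\neq\ell$.

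For such $\alpha$ we compute:
\[
i(\mu_{k+1},\alpha)=\sum_{\ell'\neq\ell}c^{k+1}_{\ell'}\,i(\delta_{\ell'},\alpha)<\frac{\max_{\ell'}c^{k+1}_{\ell'}}{C}\,r\,i(\delta_\ell,\alpha),
\]
while $i(\mu_k,\alpha)\geq c^k_\ell\, i(\delta_\ell,\alpha)$. We need $i(\delta_\ell,\alpha)>0$, which follows from the strict inequality in \Cref{lem:walsh_lem63} (or because $\alpha$ is essential in $\Sigma(\delta)$ and $\delta$ fills it). Thus the ratio is at least $c^k_\ell C/(r\max c^{k+1})$, which tends to $\infty$ as $C\to\infty$.

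There is one degenerate case: if $i(\mu_{k+1},\alpha)=0$ while $i(\mu_k,\alpha)>0$, then $\Dil(\mu_{k+1},\mu_k)=\infty$ already, via approximation by filling curves as in the proof of \Cref{lem:inc_implies_support_containment}. Either way we contradict $\mu_k\inc\mu_{k+1}$.

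The main obstacle is justifying that curves produced inside $\Sigma(\delta)$ see only the $\delta$-part of $\mu_k$ and $\mu_{k+1}$. Concretely, these curves must not cross $F_k$, $F_{k+1}$ or other lamination components. The delicate case is when $\delta$ is a simple closed curve. Then $\Sigma(\delta)$ is an annulus, $r=1$, and the statement is trivial, because a component supported on $\delta$ with coefficient $c^{k+1}_1=0$ would mean $\delta\not\subset\lambda_{k+1}$, contradicting $k+1\in I$. So we handle the annular case separately, and for minimal non-closed $\delta$ we use that $\Sigma(\delta)$ is a subsurface whose boundary curves are disjoint from all supports, so curves in its interior avoid $F_k$, $F_{k+1}$ and the other components.
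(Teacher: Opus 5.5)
Your proposal is correct and follows essentially the same route as the paper: assume $c^{k+1}_\ell=0<c^k_\ell$, use \Cref{lem:walsh_lem63} to find closed geodesics in $\Sigma(\delta)$ on which $\delta_\ell$ dominates the other ergodic measures, and contradict $\Dil(\mu_{k+1},\mu_k)<\infty$. You also spell out three points that the paper's short proof leaves implicit: the explicit ratio estimate, why curves in $\Sigma(\delta)$ meet $\widehat\mu_{k+1}$ trivially, and the zero-denominator case.
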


\begin{proof}
Since $\mu_k\inc\mu_{k+1}$, we have
$\Dil(\mu_{k+1},\mu_k)<\infty$. Thus there is a constant $D_k<\infty$
such that $i(\mu_k,\gamma)\leq D_k i(\mu_{k+1},\gamma)$ for every
geodesic current $\gamma$. 

If $c^{k+1}_\ell=0$ but $c^k_\ell>0$, Lemmas~\ref{lem:walsh_lem63}
and~\ref{lem:walsh_lem64} give a sequence of  closed geodesics
$\alpha_m\subset\Sigma(\delta)$ such that
\[
\frac{i(\mu_k,\alpha_m)}
     {i(\mu_{k+1},\alpha_m)}
\longrightarrow+\infty,
\]
contradicting $\Dil(\mu_{k+1},\mu_k)<\infty$. Therefore
$c^{k+1}_\ell=0$ implies $c^k_\ell=0$.

It follows that once $\delta_\ell$ occurs, it continues to occur for as
long as $\delta$ remains in the lamination component. 
\end{proof}

We record the following two facts, before arguing that non-uniquely ergodic components in an increasing sequence can be removed without changing the length of the sequence.
\begin{lem}\label{lem:replacement-multicurves}
Let $\Sigma$ be a connected subsurface admitting a pants decomposition
with $\kappa$ curves. For every $r\leq\kappa$, there exist nonzero
multicurves $\nu_1,\ldots,\nu_r\subset\Sigma$ such that
\[
\nu_1\sinc\nu_2\sinc\cdots\sinc\nu_r
\]
and $i(\nu_\ell,\nu_m)=0$ for all $\ell,m$.
\end{lem}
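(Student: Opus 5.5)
The plan is to mimic the first block of the construction in \Cref{lem:build_increasing}, restricted to $\Sigma$. Choose a pants decomposition $\alpha_1,\ldots,\alpha_\kappa$ of $\Sigma$ consisting of simple closed geodesics that are essential and non-peripheral in $\Sigma$. For $1\leq \ell\leq r$, define the partial multicurves
\[
\nu_\ell=\alpha_1+\cdots+\alpha_\ell .
\]
Each $\nu_\ell$ is a nonzero multicurve contained in $\Sigma$. Since all the $\alpha_j$ are pairwise disjoint simple closed curves, bilinearity of intersection gives $i(\nu_\ell,\nu_m)=\sum_{j\leq\ell,\,k\leq m} i(\alpha_j,\alpha_k)=0$ for all $\ell,m$.

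Next we check the relation $\nu_{\ell-1}\sinc\nu_\ell$ directly from \Cref{def:increasing_sequence}. The domination condition is immediate: $\nu_\ell=\nu_{\ell-1}+\alpha_\ell$, so for every $\gamma\in\C(S)$ we have
\[
i(\nu_{\ell-1},\gamma)\leq i(\nu_{\ell-1},\gamma)+i(\alpha_\ell,\gamma)=i(\nu_\ell,\gamma).
\]
For the witness, we choose a closed curve $\gamma_\ell$ that crosses $\alpha_\ell$ essentially and is disjoint from every $\alpha_j$ with $j\neq\ell$. Such a curve is found inside the union of the one or two pairs of pants of the decomposition of $\Sigma$ adjacent to $\alpha_\ell$. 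That union is a one-holed torus or a four-holed sphere, in which $\alpha_\ell$ is non-peripheral, and there one can take the standard dual curve. Then $i(\nu_{\ell-1},\gamma_\ell)=0$ while $i(\nu_\ell,\gamma_\ell)=i(\alpha_\ell,\gamma_\ell)>0$, which gives strictness.

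The only point needing care is the existence of the dual curve $\gamma_\ell$, and specifically that the pants adjacent to $\alpha_\ell$ lie in $\Sigma$. This holds because $\alpha_\ell$ is an interior pants curve of $\Sigma$, and it is exactly the argument already used in \Cref{lem:build_increasing}. If $\kappa=0$, then $r=0$ and the statement is vacuous. Otherwise $r\leq\kappa$ guarantees there are enough pants curves to carry out the construction.
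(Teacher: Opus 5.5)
Your proposal is correct and is the paper's argument: the paper proves this lemma by pointing to the first block of \Cref{lem:build_increasing}, i.e.\ the partial sums $\alpha_1+\cdots+\alpha_\ell$ of a pants decomposition of $\Sigma$, with strictness witnessed by dual curves lying in the one or two pairs of pants adjacent to $\alpha_\ell$. Your remarks that the $\alpha_\ell$ are interior (non-peripheral) curves of $\Sigma$, and that the case $\kappa=0$ is vacuous, only make explicit details the paper leaves implicit.
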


\begin{proof}
It follows readily from~\Cref{lem:build_increasing}.
\end{proof}

Next, we record the following fact:
\begin{lem}
    Let $(\mu_0, \dots,\mu_n)$ be an increasing sequence. Then at most finitely many distinct minimal, non-uniquely ergodic laminations occur in the support of $\mu_0, \dots, \mu_n$.
\end{lem}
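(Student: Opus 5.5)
The claim is essentially a finiteness statement about canonical decompositions, and the proof is a direct count with no asymptotic geometry involved. For each $k=0,\dots,n$ we write the canonical decomposition from Eq.~\eqref{eq:current_decomposition}, $\mu_k=\mu_{F_k}+\mu_{\lambda_k}$. The lamination $\lambda_k=|\mu_{\lambda_k}|$ is the support of a measured lamination on the closed surface $S$, so it decomposes into pairwise disjoint minimal components, as recalled before \Cref{lem:non-uniquely_ergodic}. The key point is that each $\lambda_k$ has only finitely many minimal components. Pairwise disjoint minimal components have pairwise disjoint supporting subsurfaces $\Sigma(\cdot)$, with the convention that a simple closed curve gives an annulus. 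There are at most $3g-3$ disjoint, pairwise non-isotopic essential simple closed curves. Likewise there are at most $3g-3$ disjoint non-annular essential subsurfaces, since each contains a pants curve or pants of its own. Hence the number of minimal components of any single $\lambda_k$ is bounded by a constant depending only on $g$, and we may take $6g-6$.

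Every minimal non-uniquely ergodic lamination occurring in the support of some $\mu_k$ is, by definition, a minimal component of $\lambda_k$ for that $k$. The set of such laminations is therefore contained in $\bigcup_{k=0}^n\{\text{minimal components of }\lambda_k\}$. This is a union of $n+1$ finite sets, so it is finite, with cardinality at most $(n+1)(6g-6)$. Notice that this argument never uses the increasing hypothesis.

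Using $\mu_k\inc\mu_{k+1}$ gives the sharper uniform bound one wants for the later iteration. By \Cref{lem:minimal-component-interval}, each minimal component $\delta$ is active on a single interval $I_\delta$. After that interval ends, $\delta$ is absorbed into $F_{k+1}$, and the $F_k$ are nested by \Cref{lem:inc_implies_support_containment}. Two minimal components whose active intervals overlap are disjoint, since they are components of the same $\lambda_k$. I would assign to each $\delta$ its last active index and its supporting subsurface $\Sigma(\delta)$, which is not an annulus since $\delta$ is not a closed curve. At an index where some $\delta$ ceases to be active, $\Sigma(\delta)\subseteq F_{k+1}$, so the filled subsurface strictly grows, or at least gains the region $\Sigma(\delta)$. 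Since the $F_k$ form an increasing chain of essential subsurfaces, they can only change finitely many times, boundedly in $g$. This bounds the number of distinct non-uniquely ergodic components independently of $n$.

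The main obstacle is this counting step: making precise that a nested chain of subsurfaces grows only boundedly often, and that each absorption of a non-annular $\delta$ forces genuine growth. For the statement as posed, however, the crude bound $(n+1)(6g-6)$ already suffices, so I would present that argument as the proof and leave the uniform version as a remark.
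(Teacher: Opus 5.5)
Your proposal is correct and is the same argument as the paper's: write $|\mu_k| = F_k \cup \lambda_k$, observe that each $\lambda_k$ has only finitely many minimal components, and take the union over the $n+1$ indices. Neither the explicit constant (your non-annular count of $3g-3$ is loose, since $2g-2$ suffices, but this is harmless) nor the bound uniform in $n$ is needed, because the later replacement lemma uses only finiteness.
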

\begin{proof}
    For each $i$, $|\mu_i| = F_i \cup \lambda_i$ for some lamination $\lambda_i$. Each $\lambda_i$ has finitely many minimal components, and so finitely many minimal laminations occur in the support of $\mu_0, \dots, \mu_n$. In particular, finitely many minimal laminations in the support of $\mu_0, \dots, \mu_n$ are non-uniquely ergodic.
\end{proof}

The following lemma will allow us to replace any increasing sequence with a strictly increasing sequence of the same length.
\begin{lem}\label{lem:remove-nue-component}
Let $(\mu_0, \dots, \mu_n)$ be an increasing sequence. Then there exists an increasing sequence $(\mu_0', \dots, \mu_n')$ of the same length, with strictly fewer non-uniquely ergodic laminations appearing the in the support of $\mu_0', \dots, \mu_n'$ than in the support of $\mu_0, \dots, \mu_n$.
\end{lem}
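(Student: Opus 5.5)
Fix one non-uniquely ergodic minimal lamination $\delta$ occurring in the supports; by the finiteness lemma above there are only finitely many to choose from. By \Cref{lem:minimal-component-interval}, $\delta$ appears as a lamination component exactly on an active interval $I=\{i,\ldots,i+j\}$, and if $i+j<n$ then $\delta\subset F_{i+j+1}$. Let $\delta_1,\ldots,\delta_r$ be the projectively distinct ergodic measures on $\delta$ that occur in the sequence. Since these are pairwise nonproportional ergodic measures on one minimal lamination, we have $r\leq\kappa$, where $\kappa$ is the number of curves in a pants decomposition of $\Sigma(\delta)$ (the classical bound on the number of ergodic measures of a minimal lamination). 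By \Cref{lem:ergodic-coefficient-persistence}, for $k\in I$ the sets $E_k=\{\ell : c^k_\ell>0\}$ are nested increasing in $k$. After reordering, I will assume the $\delta_\ell$ are ordered by first appearance, so that each $E_k=\{1,\ldots,e_k\}$ is an initial segment up to ties, and $e_k$ is nondecreasing.

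Next I apply \Cref{lem:replacement-multicurves} in $\Sigma=\Sigma(\delta)$ to get disjoint multicurves $\nu_1\sinc\cdots\sinc\nu_r$. I define
\[
\mu'_k=\widehat\mu_k+\nu_{e_k}\quad (k\in I),\qquad \mu'_k=\mu_k\quad (k\notin I).
\]
More robustly, I could take $\mu'_k=\widehat\mu_k+\sum_{\ell\in E_k}c^k_\ell\,\nu'_\ell$, where the $\nu'_\ell$ are disjoint multicurves whose partial sums give the strictly nested family. The new lamination components are multicurves, which are uniquely ergodic, so $\delta$ disappears from every support. No new non-uniquely ergodic lamination is introduced, and the other components are untouched. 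The count of non-uniquely ergodic laminations therefore drops by at least one.

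It remains to check that $\mu'_{k}\inc\mu'_{k+1}$ for each $k$, in three cases.
\begin{enumerate}
\item \emph{Both indices outside $I$.} The relation is unchanged.
\item \emph{Both indices in $I$.} Finiteness of $\Dil(\mu'_{k+1},\mu'_k)$ splits into the $\widehat\mu$ parts and the $\Sigma(\delta)$ parts, using the max estimate from the proof of \Cref{lem:non-uniquely_ergodic}. For the $\widehat\mu$ parts I need $\Dil(\widehat\mu_{k+1},\widehat\mu_k)<\infty$, which I expect to obtain by testing against curves disjoint from $\Sigma(\delta)$ together with \Cref{lem:middle_curve}. For the $\Sigma(\delta)$ parts, finiteness follows from $E_k\subseteq E_{k+1}$. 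Infinite dilatation in the other direction comes from one of two sources. If $E_k\subsetneq E_{k+1}$, the strictness witness of $\nu_{e_{k+1}}$ supplies a curve $\gamma$ with $i(\mu'_k,\gamma)=0$ but $i(\mu'_{k+1},\gamma)\neq0$. Otherwise the original $\Dil(\mu_k,\mu_{k+1})=\infty$ must be caused by the $\widehat\mu$ part or by a change in the coefficients $c^k_\ell$, and I will have to show it transfers.
\item \emph{Entering or leaving $I$.} At $k=i+j$ with $i+j<n$, we have $\delta\subset F_{i+j+1}$, so the multicurves lie in $\Sigma(\delta)\subseteq F_{i+j+1}$. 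Then \Cref{lem:bounded_sup} gives the finite direction, and $\mu'_{k+1}=\mu_{k+1}$ still fills where $\mu'_k$ did not. The entry at $k=i-1$ is handled symmetrically.
\end{enumerate}

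The main obstacle is the second case: when $E_k=E_{k+1}$ and the increase from $\mu_k$ to $\mu_{k+1}$ was caused only by a change in the ratios of the $c_\ell$ on $\delta$. In that situation the replacement with fixed multicurves loses the strict increase. My first attempt is to make the replacement depend on the pattern of maximal coefficient ratios (\Cref{lem:walsh_lem64}) rather than on the supports $E_k$ alone. That is, I would assign the next multicurve in the nested chain to each step at which $\mu'_{k+1}\not\ll\mu'_k$ on $\delta$, so that every such step is still witnessed by a curve in $\Sigma(\delta)$, and then bound the total number of such steps by $\kappa$. If that bound fails, I would fall back on using \Cref{lem:walsh_lem63} to choose witness curves for each step and build the nested multicurves from them.
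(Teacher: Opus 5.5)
Your construction is the paper's: order the ergodic components $\delta_\ell$ by first appearance, replace them by nested disjoint multicurves in $\Sigma(\delta)$ while keeping the coefficients $c^k_\ell$, and then check the entrance, interior and exit comparisons. The genuine gap is in your Case 2 when $E_k=E_{k+1}$, and it rests on a misdiagnosis.

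The situation you call the main obstacle cannot occur: an increase from $\mu_k$ to $\mu_{k+1}$ caused only by a change in the ratios of the $c_\ell$. Suppose $E_k=E_{k+1}=E$, and write $D_k=\sum_{\ell\in E}c^k_\ell\delta_\ell$ and $D_{k+1}=\sum_{\ell\in E}c^{k+1}_\ell\delta_\ell$. Then $D_{k+1}\ll D_k$ and $D_k\ll D_{k+1}$. By \Cref{lem:walsh_lem64}, both dilatations between them are therefore finite; they equal the maximal coefficient ratios. The componentwise estimate $a'+b'\le\max\{A,B\}(a+b)$ then shows that $\Dil(\mu_k,\mu_{k+1})=\infty$ forces $\Dil(\widehat\mu_k,\widehat\mu_{k+1})=\infty$. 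Your proposed fallback, adding a new multicurve at each step where the $\delta$-part fails to be $\ll$ the previous one, adds nothing. Given \Cref{lem:ergodic-coefficient-persistence}, that failure happens exactly when $E_k\subsetneq E_{k+1}$, which you already handle.

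What is actually missing is the step you only announce: transferring $\Dil(\widehat\mu_k,\widehat\mu_{k+1})=\infty$ to $\Dil(\mu'_k,\mu'_{k+1})=\infty$. This is not automatic. Curves along which $i(\widehat\mu_{k+1},\cdot)/i(\widehat\mu_k,\cdot)$ blows up may cross $\Sigma(\delta)$. For such curves the new denominator term $i(\sum_\ell c^k_\ell\nu_\ell,\gamma)$ bears no relation to the old term $i(D_k,\gamma)$, so it can dominate. You need witnesses disjoint from $\Sigma(\delta)$, and the paper finds them in two cases.
\begin{itemize}
\item If $|\widehat\mu_k|\subsetneq|\widehat\mu_{k+1}|$, take a curve in the new part of the support; it avoids $\Sigma(\delta)$.
\item If $|\widehat\mu_k|=|\widehat\mu_{k+1}|$, first note that $\widehat\mu_k\inc\widehat\mu_{k+1}$: the finite direction comes from \Cref{lem:middle_curve}, which you already invoke. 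Then \Cref{lem:non-uniquely_ergodic} produces a different non-uniquely ergodic minimal component $\lambda\neq\delta$. Lemmas~\ref{lem:walsh_lem63} and~\ref{lem:walsh_lem64} give curves $\alpha_m\subset\Sigma(\lambda)$, hence disjoint from all the $\nu_\ell$, along which the ratio tends to $\infty$.
\end{itemize}
With this step added, your argument goes through.

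A smaller point: the entrance comparison is not symmetric to the exit. For $\Dil(\mu'_i,\mu_{i-1})<\infty$ it uses \Cref{lem:middle_curve}, since $|\mu_{i-1}|\subseteq|\widehat\mu_i|$. For the infinite direction it uses a curve filling $\Sigma(\delta)$.
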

\begin{proof}
Let $\delta$ be a minimal, non-uniquely ergodic lamination that occurs in the support of at least one $\mu_0, \dots, \mu_n$. Let $\{i,\ldots,i+j\}$ be the maximal interval for $\delta$ supplied by
\Cref{lem:minimal-component-interval}. Let
$\delta_1,\ldots,\delta_r$ be the projectively distinct ergodic measures
supported on $\delta$ that occur on this interval. By
\cite[Page~3]{LenzhenMasur2010}, we have $r\leq\kappa$, where $\kappa = \xi(\Sigma(\delta))$
is the complexity of
$\Sigma(\delta)$.

Write
\[
\mu_k=\widehat\mu_k+\sum_{\ell=1}^r c^k_\ell\delta_\ell
\]
for $k=i,\ldots,i+j$, where $\widehat\mu_k$ has no component supported
on $\delta$. Reindex the $\delta_\ell$ according to their first
occurrence in the sequence $\mu_i, \dots, \mu_{i+j}$. Measured laminations that first occur at the same index may be ordered
arbitrarily within that group.
By \Cref{lem:replacement-multicurves}, we can choose an increasing sequence of disjoint, simple closed multicurves
$\nu_1,\ldots,\nu_r$.
Then we define
\[
\mu'_k=\widehat\mu_k+\sum_{\ell=1}^r c^k_\ell\nu_\ell
\]
for $k=i,\ldots,i+j$. For $k$ outside this interval, set $\mu'_k=\mu_k$. We
claim that $(\mu'_0,\ldots,\mu'_n)$ is increasing.

First consider the entrance comparison between $\mu_{i-1}'$ and $\mu_i'$. By assumption, $\mu_{i-1} \inc \mu_i$. Thus, $|\mu_{i-1}| \subseteq |\mu_i|$. Moreover,$\delta$ does not occur
in $\mu_{i-1}$, so we have
$|\mu_{i-1}|\subseteq|\widehat\mu_i|$. Since $\mu_{i-1} \inc \mu_i$ and $|\mu_{i-1}|\subseteq|\widehat\mu_i|$, \Cref{lem:middle_curve}
therefore gives $\Dil(\widehat\mu_i,\mu_{i-1})<\infty$. Since
$\widehat\mu_i$ is a component of $\mu'_i$, it follows that
$\Dil(\mu'_i,\mu_{i-1})<\infty$.

On the other hand, the multicurves $\nu_1, \dots, \nu_r$ lie in $\Sigma(\delta)$,
which is disjoint from $|\mu_{i-1}|$. Any closed curve that fills (and is contained in) $\Sigma(\delta)$ therefore intersects $\mu'_i$ but not
$\mu_{i-1}$. Hence $\Dil(\mu_{i-1},\mu'_i)=\infty$, and consequently
\[\mu_{i-1}\inc\mu'_i.\]

Now fix $k$ with $i<k\leq i+j$. Then $\mu_{i-1} \inc \mu_k$ by assumption. Thus, $\Dil(\mu_k, \mu_{k-1}) < \infty$. As $\widehat \mu_{k-1}$ is dominated by $\mu_{k-1}$, we still have $\Dil(\mu_k, \widehat{\mu_{k-1}}) < \infty$. But the support of $\widehat{\mu_{k-1}}$ does not contain $\delta$, so $|\widehat{\mu_{k-1}}| \subseteq |\widehat{\mu_k}|$. So, \Cref{lem:middle_curve} gives
$\Dil(\widehat\mu_k,\widehat\mu_{k-1})<\infty$. Moreover,
\Cref{lem:ergodic-coefficient-persistence} and the relations
$\nu_1\sinc\cdots\sinc\nu_r$ imply
\[
\Dil\left(
 \sum_{\ell=1}^r c^k_\ell\nu_\ell,
 \sum_{\ell=1}^r c^{k-1}_\ell\nu_\ell
\right)<\infty.
\]
Using the componentwise maximum estimate, we obtain
$\Dil(\mu'_k,\mu'_{k-1})<\infty$.

It remains to prove $\Dil(\mu'_{k-1},\mu'_k)=\infty$. Suppose first
that a new ergodic component $\delta_\ell$ appears at index $k$. By the
ordering of the components, only
$\nu_1,\ldots,\nu_{\ell-1}$ occur in $\mu'_{k-1}$, whereas $\nu_\ell$
occurs in $\mu'_k$. The relation
$\nu_{\ell-1}\sinc\nu_\ell$ supplies a geodesic current $\gamma$ in $\Sigma(\delta)$ that intersects $\nu_\ell$ but not $\nu_1, \dots, \nu_{\ell-1}$. Thus, $\gamma$ intersects
$\mu'_k$ but not $\mu'_{k-1}$. Therefore
$\Dil(\mu'_{k-1},\mu'_k)=\infty$.

Suppose instead that no new ergodic component supported on $\delta$
appears at index $k$. The two components supported on $\delta$ are then
mutually absolutely continuous, so their dilatation is finite by
\Cref{lem:walsh_lem64}. Since
$\Dil(\mu_{k-1},\mu_k)=\infty$, the parts away from $\delta$ must
satisfy
\[
\Dil(\widehat\mu_{k-1},\widehat\mu_k)=\infty.
\]
If $|\widehat\mu_{k-1}|\subsetneq|\widehat\mu_k|$, then there is some geodesic current $\gamma$ with support in $|\widehat\mu_k| \setminus |\widehat\mu_{k-1}|$. Then $\gamma$ intersects $\widehat\mu_k$ but not $\widehat\mu_{k-1}$. Since the support of $\widehat\mu_k$ is disjoint from
$\delta$, we also have that $\gamma$ does not intersect $\delta$. Thus, $i(\gamma, \mu_{k-1}')=0$ while $i(\gamma, \mu_k') \neq 0$. This gives $\Dil(\mu_{k-1}',\mu_k')=\infty$ in this case.

If $|\widehat\mu_{k-1}|=|\widehat\mu_k|$, \Cref{lem:non-uniquely_ergodic} implies that $|\widehat \mu_k|$ has some other non-uniquely ergodic lamination $\lambda$ in its support. Then \cite[Lemmas 6.3 and 6.4]{Walsh2014Horoboundary}, recorded in Lemmas \ref{lem:walsh_lem63},
and~\ref{lem:walsh_lem64}, give a sequence of closed geodesics
$\alpha_m$, contained in $\Sigma(\lambda)$, such that
\[
\frac{i(\widehat\mu_k,\alpha_m)}
     {i(\widehat\mu_{k-1},\alpha_m)}
\longrightarrow+\infty.
\]
Since $\Sigma(\lambda) \subseteq |\widehat \mu_k|$, the curves $\alpha_m$ are all disjoint from $\Sigma(\delta)$, and are thus disjoint from $\nu_1, \dots, \nu_r$. This implies that $\frac{i(\mu_k',\alpha_m)}
     {i(\mu_{k-1}',\alpha_m)}
\to+\infty$, and so $\Dil(\mu'_{k-1},\mu'_k)=\infty$. Thus, in all cases,
\[\mu'_{k-1}\inc\mu'_k.\]

Finally, suppose $i+j<n$. We must then show $\mu_{i+j}' \inc \mu_{i+j+1}'$. Since $\mu_{i+1} \inc \mu_{i+j+1}$ and $\widehat \mu_{i+1}$ is dominated by $\mu_{i+1}$, we have $\Dil(\mu_{i+j+1}, \widehat \mu_{i+1}) < \infty$. Suppose $\mu_{i+j+1} = \mu_{F_{i+j+1}} \cup \mu_{\lambda_{i+j+1}}$ is the canonical decomposition. By
\Cref{lem:minimal-component-interval}, we have
$\delta\subset F_{i+j+1}$, and so $\Sigma(\delta) \subseteq F_{i+j+1}$. 
And by \Cref{lem:bounded_sup}, since every $\nu_\ell$ is contained in
$\Sigma(\delta)$ for each $\ell = 1, \dots, r$, we have $\Dil(\mu_{F_{i+j+1}}, \sum_{\ell=1}^r c^k_\ell\nu_\ell) < \infty$. Since $\mu_{F_{i+j+1}}$ is dominated by $\mu_{i+j+1}$, we then get $\Dil(\mu_{i+j+1}, \sum_{\ell=1}^r c^k_\ell\nu_\ell) < \infty$. Therefore,
$\Dil(\mu_{i+j+1},\mu'_{i+j})<\infty$.

Every $\nu_\ell$ occurring in $\mu'_{i+j}$ is an essential
non-peripheral simple multicurve in $\Sigma(\delta)$. Moreover, the $\nu_1, \dots, \nu_r$ are pairwise disjoint by construction, so in particular $i(\nu_\ell,\nu_1)=0$ for all $\ell$. Since
$\widehat\mu_{i+j}$ is disjoint from $\Sigma(\delta)$, we have
$i(\mu'_{i+j},\nu_1)=0$. On the other hand,
$\Sigma(\delta)\subset F_{i+j+1}$ and $\nu_1$ is essential and
non-peripheral in $\Sigma(\delta)$. Since $\mu_{F_{i+j+1}}$ fills
$F_{i+j+1}$, we have $i(\mu_{i+j+1},\alpha)>0$. Therefore $\Dil(\mu'_{i+j},\mu_{i+j+1})=\infty$, and hence
\[\mu'_{i+j}\inc\mu_{i+j+1}.\]

We have produced an increasing sequence $(\mu_0', \dots, \mu_n')$ of the same length as our original sequence in which
$\delta$ no longer occurs. Since all components supported on $\delta$ were replaced by weighted multicurves, no new non-uniquely ergodic minimal component has been
introduced. Therefore, the new sequence has one fewer minimal non-uniquely ergodic component than the original.
\end{proof}

\begin{lem}\label{lem:replacement}
Let $\vec\mu=(\mu_0,\ldots,\mu_n)$ be an increasing sequence of nonzero
geodesic currents. Then there exists a strictly increasing sequence
$\vec\mu'=(\mu'_0,\ldots,\mu'_n)$ of the same length.
\end{lem}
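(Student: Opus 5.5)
The plan is to combine the three preceding results in two steps: first remove all non-uniquely ergodic laminations, then rescale. Since the sequence has finitely many minimal non-uniquely ergodic laminations in the supports of $\mu_0,\ldots,\mu_n$ (by the finiteness lemma just before \Cref{lem:remove-nue-component}), call this number $N(\vec\mu)$ and argue by induction on $N(\vec\mu)$.

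If $N(\vec\mu)>0$, apply \Cref{lem:remove-nue-component} to obtain an increasing sequence $(\mu'_0,\ldots,\mu'_n)$ of the same length with $N(\vec\mu')<N(\vec\mu)$. Iterate this finitely many times to reach an increasing sequence $\vec\mu''=(\mu''_0,\ldots,\mu''_n)$, still of length $n+1$, in which no $|\mu''_i|$ contains a non-uniquely ergodic minimal lamination. Along the way I will check that the terms remain nonzero. The replacement only substitutes the components $\sum_\ell c^k_\ell\delta_\ell$ by $\sum_\ell c^k_\ell\nu_\ell$ with the same coefficients, and each $\nu_\ell$ is a nonzero multicurve. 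Hence a term that was nonzero stays nonzero, because either $\widehat\mu_k\neq 0$ or some $c^k_\ell>0$.

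With no non-uniquely ergodic components left, \Cref{lem:increasing_implies_strictly} supplies constants $d_0,\ldots,d_n>0$ such that $(d_0\mu''_0,\ldots,d_n\mu''_n)$ is strictly increasing. This rescaled sequence is the desired $\vec\mu'$.

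The main point needing care is that \Cref{lem:remove-nue-component} genuinely decreases the count, i.e.\ that it introduces no new non-uniquely ergodic lamination. This holds because the substituted pieces are simple multicurves, which are uniquely ergodic. Once that is accepted, the induction terminates and the remaining steps are direct citations. If joint filling is also needed downstream, I would note that it is not required by the statement; even so, supports only change inside $\Sigma(\delta)$, so the last term keeps its support.
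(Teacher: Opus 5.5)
Your proposal is correct and follows the paper's proof essentially verbatim: there are only finitely many non-uniquely ergodic minimal laminations, you apply \Cref{lem:remove-nue-component} successively (it introduces only multicurves, so the count drops), and then rescale via \Cref{lem:increasing_implies_strictly}. Your extra check that the terms stay nonzero is a useful addition, since the paper leaves it implicit and \Cref{lem:increasing_implies_strictly} requires it.
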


\begin{proof}
Only finitely many minimal non-uniquely ergodic laminations occur among
the lamination components of $\mu_0,\ldots,\mu_n$: there are finitely
many geodesic currents, and each of their lamination components has finitely
many minimal components.

Apply \Cref{lem:remove-nue-component} successively to these
minimal non-uniquely ergodic components. Each application removes one
such component and introduces only multicurves, so the procedure
terminates after finitely many steps.

The resulting sequence is increasing and none of its supports contains
a non-uniquely ergodic minimal lamination. Lemma
\ref{lem:increasing_implies_strictly} therefore provides positive
rescaling constants that turn it into a strictly increasing sequence
of the same length.
\end{proof}

Given a closed orientable surface $S$ of genus $g \geq 2$, a set of pairwise disjoint simple closed geodesics $\alpha_1, \dots, \alpha_{3g-3}$ defines a \textbf{pants and annuli decomposition} $\calP \cup \A$ where
\begin{align*}
 \calP & = \{P_1, \dots, P_{2g-2}\}\\
 \A & = \{\alpha_1, \dots, \alpha_{3g-3}\}
\end{align*}
where $P_1, \dots, P_{2g-2}$ are the (open) pairs of pants in the complementary regions of $\alpha_1, \dots, \alpha_{3g-3}$. Because the pairs of pants are open, all elements of
$\mathcal P\cup\mathcal A$ are pairwise disjoint.
\begin{rem}
 Clearly, a pants and annuli decomposition is equivalent to a maximal collection of pairwise disjoint, pairwise nonisotopic simple
closed curves $\alpha_1, \dots, \alpha_{3g-3}$. However, we will want to work with the open pairs of pants cut out by these curves separately from the curves themselves. Hence, the definition.
\end{rem}

Write a geodesic lamination $\lambda$ as a union of its minimal
components, $\lambda=\delta_1\cup\cdots\cup\delta_r$. Recall
$\Sigma(\lambda)=\Sigma(\delta_1)\cup\cdots\cup\Sigma(\delta_r)$.
Thus $\Sigma(\lambda)$ is allowed to be disconnected. If $\delta_j$ is
a simple closed geodesic, then $\Sigma(\delta_j)$ denotes an annular
neighborhood of $\delta_j$.

\begin{lem}\label{lem:support-boundary-compatibility}
Let $(\mu_0,\ldots,\mu_n)$ be an increasing sequence, and write
$\mu_i=\mu_{F_i}+\mu_{\lambda_i}$. Then the boundary components of the subsurfaces
$F_0,\ldots,F_n,\Sigma(\lambda_0),\ldots,\Sigma(\lambda_n)$ have pairwise zero intersection.
\end{lem}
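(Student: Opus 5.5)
The plan is to derive the pairwise disjointness of all these boundary curves from the zero-intersection property
\[
i(\mu_k,\eta)=0 \implies i(\mu_{k'},\eta)=0 \qquad (k'\le k),
\]
which is Eq.~\eqref{eq:zero_intersection_condition} in the proof of \Cref{lem:inc_implies_support_containment}, made transitive by \Cref{lem:partial_order}. We combine it with the support containments $|\mu_k|\subseteq|\mu_{k'}|$ for $k\le k'$ from \Cref{lem:inc_implies_support_containment}. First I would record that boundary curves of a single current's pieces are compatible. For fixed $k$, the canonical decomposition has $|\mu_{\lambda_k}|$ disjoint from $F_k$. Hence $\partial F_k$ and $\partial\Sigma(\lambda_k)$ are disjoint simple closed curves; the components of $\Sigma(\lambda_k)$ are pairwise disjoint, and the boundary curves of each subsurface are disjoint from each other. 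It remains to handle pairs coming from indices $k<k'$.

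The key observation is the following. A boundary curve $\beta$ of $F_k$ or of $\Sigma(\lambda_k)$ satisfies $i(\mu_k,\beta)=0$. Indeed, $\beta$ is peripheral in $F_k$ and disjoint from it, so $i(\mu_{F_k},\beta)=0$. It is disjoint from or peripheral to the lamination's subsurface and does not cross $F_k$, so $i(\mu_{\lambda_k},\beta)=0$. Minor care is needed when $\Sigma(\delta)$ is an annulus about a curve component $\delta$: there $\beta$ is parallel to $\delta$, and again $i=0$.

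Now take $k<k'$ and boundary curves $\beta$ (from index $k$) and $\beta'$ (from index $k'$). From $i(\mu_{k'},\beta')=0$, the zero-intersection property gives $i(\mu_k,\beta')=0$. So $\beta'$ is a simple closed curve missing the support $F_k\cup\lambda_k$: it can cross neither a filled piece, since $\mu_{F_k}$ fills $F_k$ so $\beta'$ is outside or peripheral in $F_k$, nor a minimal lamination component. For a minimal arational lamination $\delta$, a simple curve disjoint from $\delta$ cannot enter $\Sigma(\delta)$ essentially, because $\delta$ fills $\Sigma(\delta)$; hence it is disjoint from or peripheral to $\Sigma(\delta)$. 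For a curve component, it is disjoint from or equal to it. In every case $\beta'$ can be isotoped off the subsurfaces $F_k$ and $\Sigma(\lambda_k)$ or onto their boundary, so $i(\beta',\beta)=0$ for every boundary curve $\beta$ of these subsurfaces.

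The main obstacle is the step "$i(\mu_k,\beta')=0$ implies $\beta'$ misses $\Sigma(\delta)$ essentially" for non-orientable or non-uniquely ergodic minimal components. I would argue it through the standard fact that a minimal lamination filling $\Sigma(\delta)$ meets every essential non-peripheral curve of $\Sigma(\delta)$ positively for any transverse measure of full support on $\delta$. Disconnected $\Sigma(\lambda)$ is handled componentwise.
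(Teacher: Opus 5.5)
Your argument is correct, but it takes a different route from the paper's.

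The paper derives the lemma from the nesting in \Cref{lem:inc_implies_support_containment}. For $i<j$ it uses $F_i\subseteq F_j$ and $\Sigma(\lambda_i)\subseteq F_j\cup\Sigma(\lambda_j)$, together with the disjointness of $F_j$ and $\Sigma(\lambda_j)$, and then notes that boundaries of nested or disjoint subsurfaces cannot cross.

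You bypass that lemma and use only the implication \eqref{eq:zero_intersection_condition}, which is the first step of its proof, applied directly to boundary curves. A boundary curve $\beta'$ at index $k'$ has $i(\mu_{k'},\beta')=0$. Hence $i(\mu_k,\beta')=0$ for every $k\le k'$, so $\beta'$ cannot enter $F_k$ or any $\Sigma(\delta)$ essentially, and therefore misses every boundary curve at index $k$.

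What your route buys:
\begin{itemize}
\item It never needs the containment $\Sigma(\lambda_i)\subseteq F_j\cup\Sigma(\lambda_j)$. Its derivation needs extra care when a minimal component is a simple closed curve: the only essential curve of its annulus is the core, which has zero intersection with the lamination part, so the step ``every essential closed geodesic in $\Sigma(\lambda_0)$ intersects $\mu_{\lambda_0}$'' does not apply.
\item It treats all index pairs uniformly. This includes the pair $\partial F_i$, $\partial\Sigma(\lambda_j)$ with $i<j$, which the paper's write-up leaves implicit. Taking $k=k'$ also recovers the same-index case.
\end{itemize}
The paper's route, in exchange, is a two-line consequence of a lemma already proved. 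It also records the nesting that the count in \Cref{lem:n_at_most_8g-0} relies on.

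The obstacle you flag is not a real one. In the canonical decomposition the lamination part has full support, $|\mu_\lambda|=\lambda$, so every transverse crossing of a leaf carries positive measure. Thus $i(\mu_\lambda,\beta')=0$ forces $\beta'$ to be disjoint from $\lambda$, independently of unique ergodicity or orientability.
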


\begin{proof}
Let $i < j$. By \Cref{lem:inc_implies_support_containment}, the fact that $(\mu_0, \dots, \mu_n)$ is an increasing sequence gives that $F_i\subseteq F_j$. Thus, we immediately get that $i(\partial F_i, \partial F_j) = 0$. 

Moreover, \Cref{lem:inc_implies_support_containment} also gives $\lambda_i\subseteq F_j\cup\lambda_j$ for all $i<j$. Thus, 
\[
\Sigma(\lambda_i) \subseteq F_j \cup \Sigma(\lambda_j).
\]
Since $F_j$ and $\Sigma(\lambda_j)$ are disjoint by assumption, this immediately gives 
\[
i(\partial \Sigma(\lambda_i), \partial F_i \cup \partial \Sigma(\lambda_j)) = 0.
\]
Therefore, the boundaries of $F_0,\ldots,F_n,\Sigma(\lambda_0),\ldots,\Sigma(\lambda_n)$ all have pairwise zero intersection.
\end{proof}

\begin{lem}\label{lem:pants_annuli}
Let $\vec\mu=(\mu_0,\ldots,\mu_n)$ be an increasing sequence of
geodesic currents. Then there is a pants-and-annuli decomposition
$\mathcal P\cup\mathcal A$ of $S$ such that, whenever
$\mu_i\in\mathcal C_{F_i,\lambda_i}$, both $F_i$ and
$\Sigma(\lambda_i)$ and their boundary components are unions of elements of
$\mathcal P\cup\mathcal A$.
\end{lem}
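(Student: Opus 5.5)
We will build the decomposition from a single maximal collection of pairwise disjoint simple closed geodesics that contains every boundary curve arising along the sequence. Let $\mathcal B$ be the set of all boundary components of the subsurfaces $F_0,\ldots,F_n,\Sigma(\lambda_0),\ldots,\Sigma(\lambda_n)$, taken as simple closed geodesics up to isotopy, discarding inessential curves and identifying parallel ones. This set is finite, since there are finitely many subsurfaces, each with finitely many boundary components. By \Cref{lem:support-boundary-compatibility}, the curves in $\mathcal B$ have pairwise zero intersection. Hence $\mathcal B$ is a collection of pairwise disjoint, pairwise nonisotopic essential simple closed curves, and has at most $3g-3$ elements.

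We also add the core curves of all annular pieces. When $\delta_j$ is a simple closed geodesic, $\Sigma(\delta_j)$ is an annulus whose core $\delta_j$ is isotopic to its boundary curves. So the core is already in $\mathcal B$ up to isotopy, but we record it explicitly. We then extend $\mathcal B$ to a maximal collection $\alpha_1,\ldots,\alpha_{3g-3}$ of disjoint simple closed geodesics, which is possible for any multicurve. Let $\mathcal P\cup\mathcal A$ be the resulting pants-and-annuli decomposition.

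Next we check that each $F_i$ and each $\Sigma(\lambda_i)$ is a union of elements of $\mathcal P\cup\mathcal A$. For a non-annular component $Y$ of one of these subsurfaces, the boundary $\partial Y$ is contained in $\{\alpha_k\}$. Every $\alpha_k$ is either a boundary curve of $Y$ or disjoint from $\partial Y$, so each $\alpha_k$ lies either inside $Y$ or outside it. Thus $Y$ is cut by the $\alpha_k$ lying in its interior, and the pieces are exactly pairs of pants $P_j$ together with the interior annuli $\alpha_k$.

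Here we must fix the convention, consistent with the construction in \Cref{lem:build_increasing}, that the subsurface includes annular neighborhoods of its interior pants curves but not of its boundary curves. Under this convention $Y$ is the union of the $P_j$ it contains and the $\alpha_k$ in its interior, and its boundary curves are themselves elements of $\mathcal A$. For an annular component, namely $\Sigma(\delta_j)$ with $\delta_j$ a closed curve, the component is identified with the element $\delta_j=\alpha_k\in\mathcal A$.

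The main obstacle is the bookkeeping with annuli and peripheral curves. Two things need care. First, two boundary curves of the same subsurface, or of different ones, may be isotopic; identifying them is what keeps the collection a genuine multicurve. Second, a curve component of some $\lambda_i$ that is parallel to $\partial F_j$ for $j\neq i$ must be handled correctly, and it is handled because it is already in $\mathcal B$. We expect no other difficulty, since the disjointness furnished by \Cref{lem:support-boundary-compatibility} is exactly what makes the extension to a pants decomposition possible.
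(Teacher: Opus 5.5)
Your proposal is correct and follows the paper's proof: you collect the boundary curves of all $F_i$ and $\Sigma(\lambda_i)$, use Lemma~\ref{lem:support-boundary-compatibility} to see that they form a multicurve, and extend it to a pants decomposition. Your extra check, that a subsurface with boundary in $\mathcal A$ is the union of the pairs of pants it contains and the curves in its interior, just spells out the paper's final ``it follows'' step; separately adding the annular core curves is harmless but redundant.
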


\begin{proof}
By \Cref{lem:support-boundary-compatibility}, the boundary
components of all the subsurfaces $F_i$ and
$\Sigma(\lambda_i)$ may be realized simultaneously as a simple multicurve.
Let $\mathcal B$ be the multicurve obtained by taking one representative
from each isotopy class appearing among these boundary components.

Extend $\mathcal B$ to a pants decomposition $\mathcal A$ of $S$, and
let $\mathcal P$ be the collection of open pairs of pants complementary
to $\mathcal A$. Every boundary component of every $F_i$ and
$\Sigma(\lambda_i)$ belongs to $\mathcal A$. It follows that each
$F_i$ and each $\Sigma(\lambda_i)$ and their boundary components is a union of elements of
$\mathcal P\cup\mathcal A$.
\end{proof}

Given an increasing sequence $\vec\mu$, we call any pants-and-annuli
decomposition satisfying the conclusion of \Cref{lem:pants_annuli} a
\textbf{pants-and-annuli decomposition suited to} $\vec\mu$.

\begin{lem}\label{lem:n_at_most_8g-0}
Let $\vec\mu=(\mu_0,\ldots,\mu_n)$ be an increasing sequence of nonzero
geodesic currents on a closed surface $S$ of genus $g$. Then
\[
n\leq 8g-9.
\]
Equivalently, an increasing sequence contains at most $8g-8$ nonzero
currents.
\end{lem}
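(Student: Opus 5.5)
First, by \Cref{lem:replacement}, replace $\vec\mu$ by a strictly increasing sequence $(\mu'_0,\ldots,\mu'_n)$ of the same length. This reduction costs nothing, and it gives strict support growth. Indeed, \Cref{lem:sinc_implies_strict_containment} yields
\[
|\mu'_0|\subsetneq|\mu'_1|\subsetneq\cdots\subsetneq|\mu'_n|.
\]
Since $\mu'_0\neq 0$, its support $|\mu'_0|$ is also nonempty. Next fix a pants-and-annuli decomposition $\mathcal P\cup\mathcal A$ suited to $\vec\mu'$, using \Cref{lem:pants_annuli}. Then every $F_i$ and every $\Sigma(\lambda_i)$ is a union of elements of $\mathcal P\cup\mathcal A$.

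The plan is a potential-function argument. To each $i$ assign
\[
\Phi(i)=\#\{P\in\mathcal P: P\subseteq F_i\}+\#\{\alpha\in\mathcal A:\alpha\subseteq \Sigma(\lambda_i)\}+2\,\#\{\alpha\in\mathcal A:\alpha\ \text{interior to}\ F_i\}.
\]
Here an annulus interior to $F_i$ means one not parallel to $\partial F_i$. Equivalently, each annulus has three states: absent, in the lamination part, or in the filling part, worth $0,1,2$. Each pair of pants has two states, absent or in $F_i$, worth $0,1$. The maximum value is therefore $(2g-2)+2(3g-3)=8g-8$, and $\Phi(0)\geq 1$ because $|\mu'_0|\neq\emptyset$. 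Note that a minimal lamination component filling a subsurface $\Sigma$ larger than an annulus would occupy pants of $\Sigma$ without putting them in $F_i$. So I would refine $\Phi$ to count elements of $\mathcal P\cup\mathcal A$ contained in $\Sigma(\lambda_i)$ with weight $1$, and annuli and pants in $F_i$ with weights $2$ and $1$ respectively. To keep the maximum at $8g-8$, I would bound a pants in $\Sigma(\lambda_i)$ by $1$, the same as a pants in $F_i$. This is consistent because a pants can never move from the lamination part to the filling part and gain anything.

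The core step is to show $\Phi(i+1)\geq\Phi(i)+1$. For monotonicity, \Cref{lem:inc_implies_support_containment} gives $F_i\subseteq F_{i+1}$ and $\lambda_i\subseteq F_{i+1}\cup\lambda_{i+1}$. Every element of $\mathcal P\cup\mathcal A$ that lies in $\Sigma(\lambda_i)$ therefore lies either in $\Sigma(\lambda_{i+1})$ or in $F_{i+1}$, and in either case its weight does not decrease. For strictness, $|\mu'_i|\subsetneq|\mu'_{i+1}|$ produces a new element: either a pants or annulus enters for the first time, or an annulus moves from the lamination part into the interior of $F_{i+1}$, or $F_{i+1}$ strictly enlarges. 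Each of these raises $\Phi$ by at least $1$.

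The main obstacle is a boundary case: supports can differ only because a minimal lamination component is replaced by a different minimal lamination with the same $\Sigma$, or because the measured lamination changes while the topological supports agree. Strict increase of the sequence rules out equal supports, but I need $\Sigma$-level strictness, not just support-level strictness. I would handle this via the witness $\gamma_0$ in the definition of $\sinc$, which has $i(\mu'_i,\gamma_0)=0$ but $i(\mu'_{i+1},\gamma_0)>0$. If no element of $\mathcal P\cup\mathcal A$ changed state, then $\mu'_{i+1}$ would be supported in $F_i\cup\Sigma(\lambda_i)$ with the same pieces. Any lamination inside $\Sigma(\lambda_i)$ disjoint from $\lambda_i$ must equal $\lambda_i$ when $\lambda_i$ is minimal and fills $\Sigma$. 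So the supports would coincide, a contradiction. Summing the increments, $n\leq\Phi(n)-\Phi(0)\leq(8g-8)-1=8g-9$.
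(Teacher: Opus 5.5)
Your strategy is the paper's count in different packaging: a strictly increasing replacement, a suited pants-and-annuli decomposition, and $2g-2$ pants plus two slots per annulus. However, turning the count into a state function $\Phi$ creates a genuine gap, because $\Phi$ is not monotone.

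The step where an element of $\Sigma(\lambda_i)$ that lands in $F_{i+1}$ ``does not lose weight'' fails when a closed-curve component $\beta$ of $\lambda_i$ becomes a \emph{boundary} curve of $F_{i+1}$. Its annulus then lies in $F_{i+1}$ but is not interior to it, so its weight drops from $1$ to $0$. This does occur:
\begin{enumerate}
\item Let $P\in\mathcal P$ have $\beta$ as a boundary curve, and let $c$ be a figure-eight curve filling $P$.
\item Since $P\setminus c$ consists of boundary-parallel annuli, every arc of a closed geodesic inside $P$ crosses $c$. Hence $i(\beta,\gamma)\le 2\,i(c,\gamma)$ for all $\gamma$, while $i(\beta,c)=0<i(2c,c)$.
\item So $(\beta,2c)$ is strictly increasing. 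Yet $\Phi(0)=1$ (the annulus about $\beta$) and $\Phi(1)=1$: the pants $P$ counts $1$, it has no interior annulus, and the annulus about $\beta$ is now peripheral in $F_1=P$.
\end{enumerate}
Thus $\Phi(1)\ge\Phi(0)+1$ fails.

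Reading ``annuli in $F_i$'' in your refined version to include boundary-parallel ones with weight $2$ does not help. Then merging two components of $F_i$ across a common boundary curve is a strict increase that leaves $\Phi$ unchanged. For example, in genus $2$ take $F_i$ to be two one-holed tori glued along $\beta$ and $\mu_{i+1}=\mu_i+\rho$ with $\rho$ filling; the witness is $\beta$.

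The paper avoids this by counting cumulative first-occurrence events rather than current states:
\begin{enumerate}
\item $F_i$ \emph{chooses} a pants if it lies in $F_i$ but not in $F_{i-1}$;
\item $F_i$ chooses an annulus if its core is essential and nonperipheral in $F_i$ but not in $F_{i-1}$;
\item $\lambda_i$ chooses an annulus if it lies in $\Sigma(\lambda_i)$ but not in $\Sigma(\lambda_{i-1})$.
\end{enumerate}
A choice, once made, is never undone, so nothing needs to be monotone. One only checks two things. First, every index $0,\ldots,n$ makes a new choice: index $0$ because $\mu_0\neq0$, and later indices by strict support containment. Second, each of the $8g-8$ possible choices is made at most once; an annulus absorbed into some $F_j$ cannot return to a later lamination part, since $F_j\subseteq F_{j+1}$. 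In the example above, the annulus about $\beta$ spends its lamination choice at index $0$, and $P$ is a fresh pants choice at index $1$.

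If you want to keep a potential function, you need a fourth state for annuli, for instance weight $1$ for an annulus peripheral in $F_i$. Then lamination $\to$ peripheral is $1\to1$ and peripheral $\to$ interior is $1\to2$, but you must redo the strictness case analysis with this extra state.
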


\begin{proof}
By \Cref{lem:replacement}, we may replace $\vec\mu$ by a strictly
increasing sequence of the same length. Let
$\mathcal P\cup\mathcal A$ be a pants-and-annuli decomposition suited
to this sequence. Thus $|\mathcal P|=2g-2$ and
$|\mathcal A|=3g-3$.

Write $\mu_i=\mu_{F_i}+\mu_{\lambda_i}$. To include the initial geodesic current
in the count, set $F_{-1}=\varnothing$ and
$\Sigma(\lambda_{-1})=\varnothing$. For each $i=0,\ldots,n$, make the
following definitions.

We say that $F_i$ \textbf{chooses a pair of pants} $P\in\mathcal P$ if
$P\subseteq F_i$ but $P\not\subseteq F_{i-1}$. We say that $F_i$
\textbf{chooses an annulus} $A\in\mathcal A$ if its core curve is essential and
nonperipheral in $F_i$, but not in $F_{i-1}$. Finally, we say that
$\lambda_i$ \textbf{chooses an annulus} $A\in\mathcal A$ if
$A\subseteq\Sigma(\lambda_i)$ but
$A\not\subseteq\Sigma(\lambda_{i-1})$. In this last definition, the
core curve of $A$ is allowed to be peripheral in
$\Sigma(\lambda_i)$; in particular, $\lambda_i$ itself may be that
curve.

We claim that at every index $i=0,\ldots,n$, at least one choice is
made. For $i=0$, this follows from $\mu_0\neq0$. Indeed, either
$F_0\neq\varnothing$, in which case $F_0$ contains a pair of pants or
an essential annulus, or $\lambda_0\neq0$, in which case
$\Sigma(\lambda_0)$ contains an annulus from $\mathcal A$.

For $i\geq1$, the fact that we have a strictly increasing sequence allows us to apply
\Cref{lem:sinc_implies_strict_containment}, which states
$|\mu_{i-1}|\subsetneq|\mu_i|$. If
$F_{i-1}\subsetneq F_i$, then $F_i$ chooses either a pair of pants or
an annulus. If $F_{i-1}=F_i$, then the strict containment occurs in the
lamination part. Hence some component of
$\Sigma(\lambda_i)$ was not contained in
$\Sigma(\lambda_{i-1})$, and $\lambda_i$ chooses an annulus.

Each pair of pants can be chosen by the subsurface part at most once,
because $F_i\subseteq F_{i+1}$ for every $i$. Similarly, each annulus
can be chosen by the subsurface part at most once.

Each annulus can also be chosen by the lamination part at most once.
Indeed, \Cref{lem:sinc_implies_strict_containment} gives
\[
\Sigma(\lambda_i)\subseteq
F_{i+1}\cup\Sigma(\lambda_{i+1}).
\]
Thus, after an annulus has appeared in the lamination part, it either
remains there or is absorbed into the subsurface part. Once it is
absorbed into $F_j$, the inclusions $F_j\subseteq F_{j+1}$ prevent it
from returning to a later lamination part.

There are therefore at most $2g-2$ choices of pairs of pants,
$3g-3$ choices of annuli by the subsurface parts, and $3g-3$ choices
of annuli by the lamination parts. The total number of available
choices is
\[
(2g-2)+(3g-3)+(3g-3)=8g-8.
\]
Since at least one new choice not made at any earlier index is made at each of the $n+1$ indices
$0,\ldots,n$, we obtain $n+1\leq8g-8$, and hence
$n\leq8g-9$.
\end{proof}

\subsection{Proof of \Cref{mainthm:max_rank}}
\label{sec:proof_thm_max_rank}
We are now ready to prove \Cref{mainthm:max_rank}. We recall its statement here.
\MaxRank*
\begin{proof}
By \Cref{cor:decomp_increasing_seq},
\[
\PSimplex(\vec \mu) \simeq\cup_{j=1}^N \PSimplex(\vec \mu^j)
\]
where $\vec \mu^j$ are increasing sequences for each $j=1,\cdots,N$.

By   \Cref{cor:upper_bound_rank} and \Cref{lem:n_at_most_8g-0}, we have
\[
\rank(\PSimplex(\vec \mu^j))\leq 8g-9,
\]
for each $j=1,\cdots, N$,
and thus by induction on \Cref{prop:rank_union} we get 
\[
\rank(\PSimplex(\vec \mu)) \leq 8g-9.\qedhere
\]
\end{proof}
\section{Simplices on strictly increasing sequences}
\label{sec:strictly_increasing_seq}

In this section, we show that when a sequence $\vec \mu$ is \textit{strictly increasing}, then the geometry of $\PSimplex(\vec \mu)$ is particularly simple. 
In fact, we will show that if $\vec \mu = (\mu_0, \dots, \mu_n)$ is a strictly increasing sequence, then not only does its simplex have rank exactly $n$, but in fact, it is coarsely isometric to $\R^n_+$ (\Cref{lem:qi_to_Rn}). This will allow us to deduce that its flat and geometric ranks, as well as asymptotic dimension, are exactly $n$. Together with \Cref{lem:build_increasing} this will settle \Cref{mainthm:criterion}.

\begin{lem}
\label{lem:qi_to_Rn}
    Let $\vec \mu = (\mu_0, \dots, \mu_n)$ be a jointly filling and strictly increasing sequence. Then \[
    \PSimplex(\vec \mu)\simeq_{2,C} \R^n_+,\] where $\R^n_+$ is equipped with the Euclidean metric, and the additive constant $C$ in the quasi-isometry depends on data associated to $\vec \mu$, and $n$.
\end{lem}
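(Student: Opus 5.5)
Since $\vec\mu$ is strictly increasing, it is linearly independent (\Cref{lem:strictly-increasing-independent}). Every point of $\PSimplex(\vec \mu)$ therefore has a unique representative $\nu=\sum_{i=0}^n x_i\mu_i$ with $x_0=1$. I will use the coordinates
\[
\Phi([\nu])=(\log x_1,\dots,\log x_n)\in\R^n,
\]
and compare $\dsym$ with the $\ell^\infty$-type quantity
\[
\max_i (u_i-v_i)^+ + \max_i (v_i-u_i)^+ .
\]
This quantity is bilipschitz (with constant about $2$) to the Euclidean metric on $\R^n$. A cheap reparametrization is also available: replace $x_i$ by the ``cumulative'' coordinates $y_i=\max_{j\le i}x_j$, or work directly on the region where $x_1\le x_2\le\cdots$ up to constants. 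The target $\R^n_+$ arises because, after domination, only the region $1\lesssim x_1\lesssim\cdots\lesssim x_n$ is coarsely relevant. Everything else is within bounded distance of it, by the same absorption trick as in \Cref{lem:Reducing_curves}: if $x_i$ is large and $x_j\le x_i$ with $j<i$, then $x_j\mu_j$ is dominated by $x_j\mu_i\le x_i\mu_i$, so changing $x_j$ in $[0,x_i]$ moves $\nu$ by a bounded multiplicative factor.

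\textbf{Upper bound.} For two points $\nu=\sum x_i\mu_i$ and $\nu'=\sum x'_i\mu_i$, termwise comparison (as in \Cref{lem:ratiosum}) gives
\[
\Dil(\nu,\nu')\le\max_i x'_i/x_i .
\]
Hence
\[
\dsym\le \max_i\log(x'_i/x_i)^+ +\max_i\log(x_i/x'_i)^+\le 2\|\Phi(\nu)-\Phi(\nu')\|_\infty,
\]
with no additive constant.

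\textbf{Lower bound.} This is where strictness is used. For each $i\ge1$, let $\gamma_i$ be the witness curve with $i(\mu_{i-1},\gamma_i)=0$ and $i(\mu_i,\gamma_i)>0$. By domination, $i(\mu_j,\gamma_i)=0$ for all $j<i$. So
\[
i(\nu,\gamma_i)=\sum_{j\ge i}x_j\, i(\mu_j,\gamma_i),
\]
and on the monotone region this is comparable to $x_i$ up to constants depending on the finitely many numbers $i(\mu_j,\gamma_i)$ (taking $\gamma_i$ filling within the relevant support where needed). Taking ratios at $\gamma_i$, and at a fixed filling curve $\gamma_0$ for which $i(\nu,\gamma_0)\asymp x_n$ in the monotone region, gives
\[
\Dil(\nu,\nu')\ge c\max_i x'_i/x_i\quad\text{up to a multiplicative constant}.
\]
This yields
\[
\dsym\ge \|\Phi(\nu)-\Phi(\nu')\|_\infty-C .
\]

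\textbf{Surjectivity and the main obstacle.} Coarse surjectivity onto the orthant $\{0\le u_1\le\cdots\le u_n\}$ is clear. That orthant is linearly isometric to $\R^n_+$ up to bilipschitz constant, giving $\simeq_{2,C}$. The main obstacle I expect is the lower-bound step. The witnesses $\gamma_i$ only see the coefficients $x_j$ with $j\ge i$, so they detect the largest of these rather than $x_i$ itself. That is exactly why one must restrict to the monotone region, or reparametrize by $y_i=\max_{j\ge i}x_j$, and check carefully that the Hausdorff-distance reduction to that region costs only a constant depending on $\Dil(\mu_{i},\mu_{i-1})$ and the intersection numbers $i(\mu_j,\gamma_i)$.
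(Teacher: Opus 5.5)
There is a genuine gap: you have the orientation of the coarsely relevant region backwards. With your normalization $x_0=1$, take any point with $x_j\le x_n$ for all $j$; this includes your whole region $1\lesssim x_1\lesssim\cdots\lesssim x_n$. Domination gives $i(\mu_j,\gamma)\le i(\mu_n,\gamma)$ for every $j$, hence
\[
x_n\, i(\mu_n,\gamma)\le i(\nu,\gamma)\le (n+1)\,x_n\, i(\mu_n,\gamma)
\]
for all $\gamma$, and so $\dsym([\nu],[\mu_n])\le\log(n+1)$.

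So the orthant $\{0\le u_1\le\cdots\le u_n\}$ you use maps onto a \emph{bounded} subset of $\PSimplex(\vec\mu)$. The claim that everything else is within bounded distance of it is false. Already for $n=1$, the points $[\mu_0+x_1\mu_1]$ with $x_1\to 0$ go to infinity, as the witness $\gamma_1$ shows, while all points with $x_1\ge 1$ stay within $\log 2$ of $[\mu_1]$.

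Your absorption trick actually proves the opposite of what you use it for. It shows that a coefficient $x_j$ with $j<i$ and $x_j\le x_i$ is coarsely invisible. The region that survives is therefore the one where coefficients \emph{decrease} with the index. The same reversal breaks your lower bound. On an increasing region, $i(\nu,\gamma_i)=\sum_{j\ge i}x_j\, i(\mu_j,\gamma_i)\asymp x_n$ for every $i$, because $i(\mu_n,\gamma_i)\ge i(\mu_i,\gamma_i)>0$. The witnesses then record only $x_n$, not $x_i$. This is exactly the issue you flag at the end: $y_i=\max_{j\ge i}x_j$ is the correct, non-increasing reparametrization, and it contradicts both your earlier $y_i=\max_{j\le i}x_j$ and the orthant you wrote down.

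Once you flip the orientation, the argument goes through and becomes the paper's proof. In your coordinates the correct cone is $\{0\ge u_1\ge\cdots\ge u_n\}$ up to additive constants. The paper instead normalizes $x_n=1$ and works on $\{x_i>2x_{i+1}\}$ with the metric $\max_i|\log(x_i/y_i)|$. The three steps are then as follows.
\begin{itemize}
\item The upper bound $\dsym\le 2\max_i|\log(x_i/y_i)|$ is your termwise ratio estimate.
\item The lower bound uses the witnesses $\gamma_i$. The decreasing condition gives $x_i\, i(\mu_i,\gamma_i)\le i(\nu,\gamma_i)\le 2\max_{j,k} i(\mu_j,\gamma_k)\, x_i$, via a geometric series.
\item Coarse density uses absorption in the correct direction. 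A coefficient with $x_i<2x_{i+1}$ is raised to $2x_{i+1}$, and the earlier coefficients are doubled. Since $\mu_i$ is dominated by $\mu_{i+1}$, each such move costs at most $\log 3$, and at most $n$ moves are needed.
\end{itemize}
In log coordinates the resulting region is $\{X_0\ge\cdots\ge X_n=0\}$ with the $\ell^\infty$ metric. This is a simplicial cone, linearly bilipschitz to $\R^n_+$.
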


\begin{proof} 
 Consider the region
 \[
  \mathcal R \coloneqq \{(x_0, \dots, x_n) \in \R^{n+1}_+\ | \ x_n = 1, x_i > 2x_{i+1}, \forall i\}
 \]
 with the metric $d_L(\vec x, \vec y) = \max_i \{|\log \frac{x_i}{y_i}|\}$.

\subsection*{1. $\mathcal{R}$ is quasi-isometric to $\R^n_+$} First, we note that $\mathcal R$ is isometric to the set $\mathcal S = \{(X_0, \dots, X_n)\ | \ X_n = 0; X_i \geq X_{i+1} + \log 2\}$ with the $\ell^\infty$-metric $ d_\infty(\vec X, \vec Y) \coloneqq \max_i\{|X_i - Y_i|\}$ via the isometry
 \begin{align*}
  I : \mathcal S &\to \mathcal R \\
    (X_0, \dots, X_n) &\mapsto (e^{X_0}, \dots, e^{X_n})
 \end{align*}
as the map is clearly a bijection, and
\begin{align*}
 d_\infty(\vec X, \vec Y) & = \max_i\{|X_i - Y_i|\} \\
  & = \max_i\{|\log e^{X_i} - \log e^{Y_i}|\} \\
  & = \max_i\{|\log \frac{e^{X_i}}{e^{Y_i}}|\} \\
  & = d_L(I(\vec X), I(\vec Y)).
\end{align*}
Moreover, $\mathcal S$ is isometric to the region $\{(X_0, \dots, X_n) \ | \ X_0 \geq X_1 \geq \dots \geq X_n = 0\} $, which is the positive convex hull of the vectors $\vec v_0 = \vec e_0, \vec v_1 = \vec e_0 + \vec e_1, \dots, \vec v_{n-1} = \vec e_0 + \dots + \vec e_{n-1}$, where $\vec e_0, \dots, \vec e_n$ are the standard basis vectors. Finally, \Cref{lem:sectors_QI} gives us that $\mathcal S$ is quasi-isometric to $\R^n_+$.

\subsection*{2. $\mathcal{R}$ is quasi-isometric to $\PSimplex(\vec \mu)$}

 On the other hand, we claim that the natural map:
\begin{align*}
 \phi: \mathcal R & \to \PSimplex(\vec \mu) \\
 \vec x  &\mapsto [\vec x \cdot \vec \mu]
\end{align*}
where $\vec x \cdot \vec \mu$ is as in Eq.~\eqref{eq:scalar_product}, is a quasi-isometry.

First, note that $\phi$ is injective. By \Cref{lem:strictly-increasing-independent}, the currents
$\mu_0,\ldots,\mu_n$ are linearly independent. Consequently, if
$\vec x,\vec y\in \mathcal R$ satisfy
\[
    [\vec x\cdot\vec\mu]=[\vec y\cdot\vec\mu],
\]
then there is $c>0$ such that $\vec x=c\vec y$. Since vectors in
$\mathcal R$ satisfy $x_n=y_n=1$, we have $c=1$, and hence
$\vec x=\vec y$.

In fact, let
\[
 \mathcal R \cdot \vec \mu = \{\vec x \cdot \vec \mu \ | \ \vec x \in \mathcal R\}
\]
 We take $\mathcal R \cdot \vec \mu$ as our preferred lift of the image of $\phi$ to $\Cf(S)$. 

\textbf{First, we prove $\phi$ is a quasi-isometric embedding.} For the upper bound, take any $\vec x, \vec y \in \R^{n+1}_+$, and any $\gamma \in \Cf(S)$. Then,
\begin{align*}
\frac{i(\vec x \cdot \vec \mu, \gamma)}{i(\vec y \cdot \vec \mu, \gamma)}
 & = \frac{\sum_i x_i\;  i(\mu_i, \gamma)}{\sum_i y_i\;  i(\mu_i, \gamma)}\\
 & \leq \max_i  \frac{x_i\;  i(\mu_i, \gamma)}{y_i \; i( \mu_i, \gamma)}\\
 & \leq \max_i  \frac{x_i}{y_i}
\end{align*}
where we use \Cref{lem:ratiosum} in the second line.

Therefore,
\begin{align*}
 \dsym(\vec x \cdot \mu, \vec y \cdot \mu)
 & = \log \sup_\gamma \frac{i(\vec x \cdot \vec \mu, \gamma)}{i(\vec y \cdot \vec \mu, \gamma)} +
 \log \sup_\gamma \frac{i(\vec y \cdot \vec \mu, \gamma)}{i(\vec x \cdot \vec \mu, \gamma)} \\
 &\leq \log \max_i  \frac{x_i}{y_i} + \log \max_i  \frac{y_i}{x_i} \\
 &\leq 2 \max_i \left|\log  \frac{x_i}{y_i} \right|\\
 & =2 d_L(\vec x, \vec y).
\end{align*}

For the lower bound, take any $\vec x \in \mathcal R$. Because $\vec \mu$ is strictly increasing, we know that, for all $i$, there is some geodesic current $\gamma_i$ so that
\[
 i(\mu_i, \gamma_i) \neq 0, \text{ but } i(\mu_{i-1}, \gamma_i) = 0.
\]
As $i(\mu_{i-1}, \gamma) \leq i(\mu_i, \gamma)$ for geodesic currents $\gamma$, we have in fact that, for all $j \leq i-1$ and $k \geq i$,
\[
 i(\mu_j, \gamma_i) = 0, \text{ but } i(\mu_k, \gamma_i) \neq 0.
\]

Let
\[
 C = \max_{i,j} i(\mu_i, \gamma_j)
\]
and note that this constant depends only on the sequence $\vec \mu$ and choices of $\gamma_i$, but not on $\vec x \in \mathcal R$. Then for each $i$,
\begin{align*}
 i(\vec x \cdot \vec \mu, \gamma_i) & = \sum_j x_j i(\mu_j ,\gamma_i) \\
  & = \sum_{j \geq i} x_j i(\mu_j, \gamma_i) \\
  & \leq \sum_{j \geq i} \frac{1}{2^{j-i}}x_i i(\mu_j, \gamma_i) \\
  & \leq 2 C x_i
\end{align*}
where we used the fact that $\vec x \in \mathcal R$, and so $x_i \geq 2 x_{i+1}$, and therefore, $x_j < \frac{1}{2^{j-i}} x_i$ for each $j$. In particular, we have
\[
 x_i i(\mu_i, \gamma_i) \leq i(\vec x \cdot \vec \mu, \gamma_i) \leq 2C x_i
\]
for each $i$. Thus, for $\vec x, \vec y \in \mathcal{R}$, and each $i$,
\begin{align*}
\sup_\gamma \frac{i(\vec x \cdot \vec \mu, \gamma)}{i(\vec y \cdot \vec \mu, \gamma)}
 & \geq \frac{i(\vec x \cdot \vec \mu, \gamma_i)}{i(\vec y \cdot \vec \mu, \gamma_i)}\\
 & \geq \frac{ x_i i(\mu, \gamma_i)}{2C y_i}
\end{align*}
where $C_i : = \frac{i(\mu_i, \gamma_i)}{2C}$ is a constant depending only on $\vec \mu$ and the choice of $\gamma_0, \dots, \gamma_n$. And so,
\[
 \sup_\gamma \frac{i(\vec x \cdot \vec \mu, \gamma)}{i(\vec y \cdot \vec \mu, \gamma)}  \geq \max C_i \frac{x_i}{y_i}.
\]

Setting $D = \min C_i$, we get
\begin{align*}
 \dsym (\vec x \cdot \mu, \vec y \cdot \mu) & = \log \sup_\gamma \frac{i(\vec x \cdot \vec \mu, \gamma)}{i(\vec y \cdot \vec \mu, \gamma)} +
 \log \sup_\gamma \frac{i(\vec y \cdot \vec \mu, \gamma)}{i(\vec x \cdot \vec \mu, \gamma)} \\
 & \geq \log D \max_i \frac{x_i}{y_i} +
 \log D \max_i \frac {y_i}{x_i}\\
 & \geq \log \max_i \left|\frac{x_i}{y_i}\right| + 2 \log D\\
 & = d_L(\vec x, \vec y) + 2 \log D.
\end{align*}
We note that $D < 1$ means that the $\log D$ term is negative.

Putting the two bounds together, we get
\[
 d_L(\vec x, \vec y) + 2 \log D \leq \dsym(\vec x \cdot \vec \mu, \vec y \cdot \vec \mu) \leq 2 d_L(\vec x, \vec y).
\]
In particular, $\phi: \mathcal R \to \PSimplex(\vec \mu)$ is a $(2, -2\log D)$-quasi-isometric embedding.

\textbf{Lastly, we prove that the image of $\phi$ is coarsely dense in $\PSimplex(\vec \mu)$.} Set 
\[
\mathcal R' \coloneqq \{(x_0, \dots, x_n) \in \R^{n+1}_+ \ | \ x_n = 1\}.
\]
Then $\mathcal R \subset \mathcal R'$. Defining $\mathcal R' \cdot \vec \mu$ analogously to $\mathcal R \cdot \vec \mu$ above, we see $\mathcal R' \cdot \vec \mu$ is a lift of $\PSimplex(\vec \mu)$ to $\C(S)$. 

Let $\vec x = (x_0, \dots, x_n) \in \mathcal R' \setminus \mathcal R$. Then there is some $i$ so that $x_i < 2x_{i+1}$. Let 
\[
\vec y = (2x_0, \dots, 2x_{i-1}, 2x_{i+1}, x_{i+1}, x_{i+2}, \dots, x_n).
\]
Note that if $x_j > 2 x_{j+1}$ for some $j$, then $y_j > 2 y_{j+1}$ for $y_j$ the $j^{th}$ entry of $\vec y$. Thus, after at most $n$ such replacements, we will arrive at an element of $\mathcal R$.

We will prove that the distance between $\vec x \cdot \vec \mu$ and $\vec y \cdot \vec \mu$ is at most $\log 3$. First, we note that since we have not decreased the coefficients of $\vec x$ to get $\vec y$, then for all $\gamma \in \Curves(S)$, 
\[
i(\vec x \cdot \vec \mu, \gamma) \leq i(\vec y \cdot \vec \mu, \gamma).
\]

On the other hand, because $\vec \mu$ is strictly increasing, we know that for every geodesic current $\gamma$, $i(\mu_i, \gamma) \leq i(\mu_{i+1}, \gamma)$. Thus,
\begin{align*}
    i(\vec y \cdot \vec \mu, \gamma) & = 
    2x_{i+1} i(\mu_i, \gamma) + x_{i+1} i(\mu_{i+1}, \gamma) + 2\sum_{j=1}^{i-1} x_j i(\mu_j, \gamma) + \sum_{j = i+2}^n x_j i(\mu_j, \gamma) \\
    & \leq 3 x_{i+1} i(\mu_{i+1}, \gamma) + 2\sum_{j=1}^{i-1} x_j i(\mu_j, \gamma) + \sum_{j = i+2}^n x_j i(\mu_j, \gamma) \\
    & \leq 3 \sum_{j=1}^n x_j i(\mu_j, \gamma) \\
    & = 3 i(\vec x \cdot \vec \mu,\gamma).
\end{align*}

Therefore, we see that 
\[
\dsym(\vec x \cdot \vec \mu, \vec y \cdot \vec \mu) \leq \log 3.
\]
We can do at most $n$ such replacements until we arrive at a vector $\vec y \in \mathcal R$. Thus, for any $\vec x \in \mathcal R'$, 
\[
\dsym(\vec x \cdot \vec \mu, R \cdot \vec \mu) \leq n \log 3.
\]
Therefore, the image of $\mathcal R$ is coarsely dense in $\PSimplex(\vec \mu)$. As $\mathcal R$ is quasi-isometric to $\R^n_+$, we get our result.
\end{proof}
\begin{rem}
    The quasi-isometry constants in \Cref{lem:qi_to_Rn} depend on the length $n$ of $\vec \mu$ and data related to the intersection numbers of $\vec \mu$. We show later in \Cref{lem:n_at_most_8g-0} that $n \leq 8g-8$, but the quasi-isometry constants still depend on all the geodesic currents $\mu_0, \dots, \mu_n$, and, in particular, on optimal choices of geodesic currents $\gamma_i$ for which $i(\mu_i, \gamma_i) = 0$ but $i(\mu_{i+1}, \gamma_{i+1}) \neq 0$.
\end{rem}

\subsection{Proof of \Cref{mainthm:criterion}}
\label{sec:proof_of_str_inc_thm}
We are now ready to prove \Cref{mainthm:criterion}. We recall its statement here.

\StrIncr*
\begin{proof}
    By \Cref{lem:qi_to_Rn}, $\PSimplex(\vec \mu)$ is quasi-isometric to $\R^n_+$. Flat rank and geometric rank are clearly quasi-isometry invariants, and so is asymptotic dimension (\Cref{lem:adim_qi}). Moreover, in \Cref{lem:build_increasing}, we show there is a strictly increasing sequence where $n = 8g-9$. The first statement follows immediately. The second part of the statement is \Cref{lem:build_increasing}.
\end{proof}

\section{Geometric rank of $\PCf(S)$ is infinite}
\label{sec:infinite}
In this section we give a proof of Theorem~\ref{mainthm:infinite_geo_rank}. We restate it here.

\InfiniteRank*

\begin{proof}
For the moment, let $\rho$ be an arbitrary filling geodesic current. For each $\vec x = (x_1, \dots, x_d)$, we define $e^{\vec x} = (e^{x_1}, \dots, e^{x_d})$.  For each $d, R > 0$, we will find a sequence $\vec \mu \coloneqq \vec \mu(d,R)$ of $d$ geodesic currents that depend on $R$, so that the map 
\begin{align*}
    f_R^d : [0,R]^d & \to \PCf(S) \\
     \vec x & \mapsto [\rho + e^{\vec x} \cdot \vec \mu]
\end{align*}
is a $(2,2\log(5))$ quasi-isometric embedding, for $[0,R]^d$ with the $\ell_\infty$ metric, and $\PCf(S)$ with the Lipschitz metric. Thus, for each $d$, we will have exhibited a family of uniform quasi-isometric embeddings of arbitrarily large boxes into $\PCf(S)$. This will prove that the geometric rank is unbounded.

For now, we fix $d \in \N$ and $R > \log d$. Note that if $f_R^d$ is defined for some $R$, then for any $r < R$ we can simply define $f_r^d$ by restricting the domain of $f_R^d$ appropriately. Thus, it is enough to define our families of functions $f_R^d$ for $d, R$ as above.

We first construct the sequence of currents $\vec \mu$. To define these, compare Figure~\ref{fig:twist}.
Let $P \subset S$ be an embedded pair of pants, with standard generators of the fundamental group $a,b$. For each $n \in \N$, let $\tau_n \coloneqq [a^nb] \subset P$ denote the $n$-times self-intersecting curve obtained by wrapping around $a$ for $n$-times and then following $b$. To simplify notation in what follows, we extend this definition to all $x \in \R$ with $x \geq 1$ by 
\[
\tau_x \coloneqq \tau_{\lfloor x \rfloor}.
\]

\begin{figure}[H]
   
     \centering
\includegraphics[width=50mm]{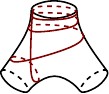}
    \caption{The curve $\tau_3$ on a pair of pants}
     \label{fig:twist}
\end{figure}
It is easy to see that, for all $n,k \in \N$~\footnote{Note that the geometric intersection number $i(c,c)$ of a \emph{primitive} curve $c$ (such as $\tau_n$) with itself differs from its self-intersection number $\operatorname{SI}(c)$ (i.e., minimal number of transverse double points of $c$ among all homotopy representatives) by a factor of 2, i.e., $i(c,c)=2\operatorname{SI}(c)$.}
\begin{equation}
\label{eq:tau_pattern}
i(\tau_n, \tau_k) = 2 \min \{n, k\} .
\end{equation}
See the left-hand graph of \Cref{fig:intersection_graph}.

Next, we choose a filling current $\rho$ for which
\begin{equation}
\label{eq:lower_n}
i(\rho, \tau_n) \leq n
\end{equation}
for all $n \in \N$, as follows.
Note that $\lim_n \tau_n/n = a$. Hence, for any filling geodesic current $\rho$, we have $\lim_n \frac{i\left(\rho,\tau_n/n\right)}{i(\rho,a)}=1$.
Thus, there exists $n_0 \in \mathbb N$ so that for all $n \geq n_0$, 
\[
i\left(\rho,\tau_n\right) \leq c_1 \cdot n
\]
where $c_1 \coloneqq (3/2)i(\rho,a)$.
Let $c_2\coloneqq \max_{k=1,\cdots,n_0} i\left(\rho,\tau_k\right)$.
Then, 
\[
i(\rho,\tau_n) \leq c_1 n + c_2.
\]

Rescaling $\rho$ by a large number we can guarantee Equation~\eqref{eq:lower_n} holds.

For every $k\in \mathbb{N}$, define
\begin{align*}
    n_k \coloneqq  e^{6kR}.
\end{align*}

We let
\[
\vec \mu \coloneqq \vec \mu(d,R) =  e^{4dR}\left(\frac{1}{\sqrt{n_1}} \tau_{n_1}, \dots, \frac{1}{\sqrt{n_d}} \tau_{n_d}\right)
\]
where we recall that we set $\tau_x \coloneqq \tau_{\lfloor x \rfloor }$.
As we see on the right-hand side of \Cref{fig:intersection_graph}, scaling each $\tau_{n_j}$ by $\frac{1}{\sqrt{n_j}}$ allows us to isolate the $k^{th}$ term by intersecting with $\tau_{n_k}$.

We then formally define $f \coloneqq f_R^d : [0,R]^d \to \PCf(S)$ by
\begin{equation}
\begin{array}{rcl}
f(\vec x)  & \coloneqq & \rho + e^{\vec x} \cdot \vec \mu  \\[1em]
& = & \rho + e^{4dR}\sum_{j=1}^d  \frac{e^{x_j}}{\sqrt{n_j}}  \tau_{n_j}.
\end{array}
\label{eq:candidate_current}
\end{equation}

\begin{figure}
\centering 
\includegraphics[scale=.25]{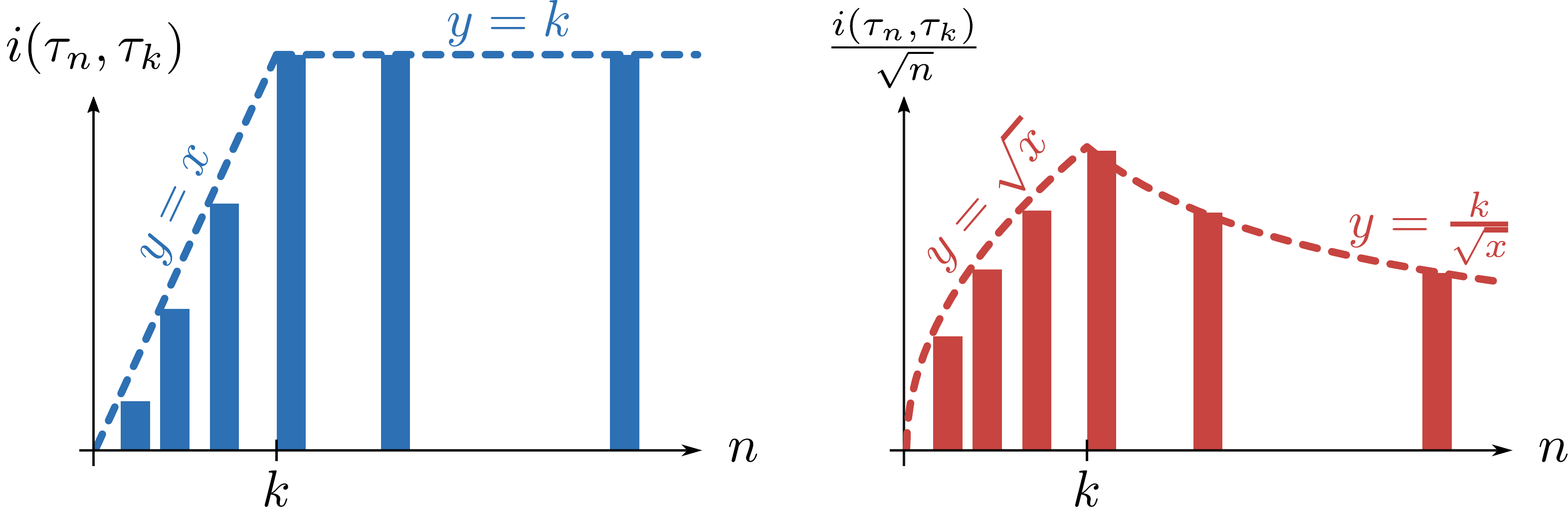}
\caption{The effect of intersecting $\tau_n$ with $\tau_k$, as a function of $n$. On the left, we give the unnormalized intersection function. On the right, we see that scaling each $\tau_n$ by $\frac{1}{\sqrt n}$ allows us to pick out the $k$-th term. }
\label{fig:intersection_graph}
\end{figure}
We wish to estimate $\Dil(f(\vec x), f(\vec y))$. First, we note that
\begin{equation}
\label{eq:Dil_at_least_1}
    \Dil(f(\vec x), f(\vec y)) \geq 1
\end{equation}
for all $\vec x, \vec y \in [0,R]^d$. Indeed, recall that $P$ was the pair of pants that contains $\tau_n$ for all $n \in \mathbb N$. Let $\alpha$ be any closed curve that does not intersect the interior of $P$ (for example, a boundary curve of $P$). Then $i(f(\vec x), \alpha) = i(\rho, \alpha)$ for all $\vec x$, giving us the desired lower bound.

Now, we show the following lower bound in terms of $\vec x$ and $\vec y$:
\[
d_{Lip}(f(\vec x), f(\vec y)) \geq ||\vec x - \vec y||_\infty  -2\log(5).
\]
In fact, choose any $k$ with $1 \leq k \leq d$. We will show that for all $\vec x \in [0,R]^d$,
\[
e^{x_k} e^{(4d + 3k)R} \leq  i(f(\vec x), \tau_{n_k}) \leq 5e^{x_k} e^{(4d + 3k)R}.
\]
 This will allow us to bound the log of the ratio of such terms from below.

We now compute:
\begin{align*}
    i(f(\vec x), \tau_{n_k}) & = i(\rho + e^{\vec x} \cdot \vec \mu, \tau_{n_k}) \\
    & = i(\rho, \tau_{n_k}) + e^{4dR} \sum_{j=1}^d \frac{e^{x_j}}{\sqrt{n_j}}i(\tau_{n_j}, \tau_{n_k}).
\end{align*}
A priori, $n_j, n_k \not \in \N$. So we first establish an estimate on $i(\tau_{n_j}, \tau_{n_k})$ as follows. Using that $\tau_x \coloneqq \tau_{\lfloor x \rfloor}$ and \Cref{eq:tau_pattern}, we have
\[
i(\tau_{n_j}, \tau_{n_k}) = 2\min \{\lfloor n_j\rfloor,\lfloor n_k\rfloor\}.
\]
For all $x \geq 1$, we have $\frac 12 x \leq \lfloor x \rfloor \leq x$, and so this gives
\[
\min\{n_j, n_k\} \leq i(\tau_{n_j}, \tau_{n_k}) \leq 2\min\{n_j, n_k\}.
\]

To bound $i(f(\vec x), \tau_{n_k})$ from below, we only need to consider the $k^{th}$ term of the sum. This gives
\begin{align*}
     i(f(\vec x), \tau_{n_k}) & \geq e^{4dR} e^{x_k} \frac{i(\tau_{n_k},\tau_{n_k})}{\sqrt{n_k}} \\
     & \geq e^{4dR} e^{x_k} \frac{n_k}{\sqrt{n_k}}\\
     & = e^{4dR} e^{x_k} \frac{e^{6kR}}{e^{3kR}}.
\end{align*}
Therefore, we have
\begin{equation}
\label{eq:int_tau_k_lower_bound}
    i(f(\vec x), \tau_{n_k}) \geq  e^{x_k}e^{(4d + 3k)R}.
\end{equation}
Now we proceed to bound $i(f(\vec x), \tau_{n_k})$ from above. Note that, since $i(\rho, \tau_n) < n$ for all $n \in \N$, we also have $i(\rho, \tau_x) < x$ for all $x \in \R_{\geq 1}$. So, continuing the estimate on $i(f(\vec x), \tau_{n_k})$ above, we have
\begin{align*}
  i(f(\vec x), \tau_{n_k}) & = i(\rho, \tau_{n_k}) + e^{4dR} \sum_{j=1}^d \frac{e^{x_j}}{\sqrt{n_j}}i(\tau_{n_j}, \tau_{n_k})\\
  & \leq n_k + 2 e^{4dR}\sum_{j=1}^d \frac{e^{x_j}}{\sqrt{n_j}} \min\{n_j,n_k\}  \\
    & = n_k + 2e^{4dR}\left (\sum_{j=1}^{k-1} e^{x_j} \sqrt{n_j} +  \sum_{j=k}^{d} \frac{e^{x_j}}{\sqrt{n_j}} n_k \right ).
\end{align*}

We will bound each term separately. First, using the fact that $k \leq d$ and $x_k \geq 0$, we have
\begin{align}
     \label{eq:bound_n_k}
    n_k & = e^{6kR} \notag \\
     & \leq e^{x_k}e^{(4d+3k)R}.
\end{align}

Next, consider the terms for $j \leq k-1$. We have $\sqrt{n_j} = e^{3jR} \leq e^{(3k-3)R}$. Moreover, $x_j, x_k \in [0,R]$, so $x_j \leq x_k + R$. Thus,
\begin{align}
\label{eq:bound_j<k}
    2e^{4dR} e^{x_j} \sqrt{n_j} & \leq 2e^{x_j} e^{(4d+3k -3)R} \notag \\
    & = 2e^{-3R}\cdot e^{x_j}e^{(4d+3k)R} \notag \\
    & \leq 2e^{-2R}\cdot e^{x_k}e^{(4d+3k)R}.
\end{align}
Now, we look at the terms for $j \geq k$. We have $\frac{n_k}{\sqrt n_j} = e^{(6k-3j)R}$. When $j =k $, this gives
\begin{equation}
\label{eq:bound_j=k}
    2e^{4dR}e^{x_k}\frac{n_k}{\sqrt{n_k}} = 2e^{x_k}e^{(4d+3k)R}
\end{equation}
and when $j \geq k+1$, it gives
\begin{align}
\label{eq:bound_j>k}
    2e^{4dR}e^{x_j}\frac{n_k}{\sqrt{n_j}} & \leq 2e^{x_j}e^{(4d + 3k-3)R} \notag \\
    & = 2e^{-3R} \cdot e^{x_j}e^{(4d+3k)R}\notag \\
    & \leq 2e^{-2R}\cdot e^{x_k}e^{(4d+3k)R}.
\end{align}

Inequalities~\eqref{eq:bound_n_k} and~\eqref{eq:bound_j=k} give two terms bounded above in terms of $e^{x_k}e^{(4d+3k)R}$, and inequalities~\eqref{eq:bound_j<k} and~\eqref{eq:bound_j>k} give $d-1$ terms bounded above by $2e^{-2R}\cdot e^{x_k}e^{(4d+3k)R}$. Combining these gives,
\begin{align*}
    i(f(\vec x), \tau_{n_k}) & \leq 3e^{x_k}e^{(4d+3k)R} + 2(d-1)e^{-2R}e^{x_k}e^{(4d+3k)R}.
\end{align*}
We chose $R > \log d$, so in particular, we have $(d-1)e^{-2R }< 1$. Thus,
\begin{equation}
\label{eq:int_tau_k_upper_bound}
i(f(\vec x), \tau_{n_k})  \leq 5 e^{x_k}e^{(4d+3k)R}.
\end{equation}

Combining~\eqref{eq:int_tau_k_lower_bound} with~\eqref{eq:int_tau_k_upper_bound} gives, for all $k$ with $1 \leq k \leq d$,
\[
\Dil(f(\vec y), f(\vec x)) \geq \max\left\{\frac 15 e^{y_k - x_k}, 1\right\}
\]
where the lower bound of 1 was established in~\eqref{eq:Dil_at_least_1}.

Thus,
\begin{align*}
d_{Lip}(f(\vec x), f(\vec y))&= \log \Dil(f(\vec y), f(\vec x)) + \log \Dil(f(\vec x), f(\vec y)) \\
& \geq \max_k\{y_k - x_k, 0\} + \max\{ y_k - x_k,0\} - 2\log(5)\\
& \geq ||\vec x - \vec y||_\infty - 2\log(5).
\end{align*}

Lastly, we need to bound $d_{Lip}(f(\vec x), f(\vec y))$ from above. So let $\gamma$ be any closed curve. Let $\vec x, \vec y \in [0,R]^d$. Then, using the fact that $x_j \leq ||\vec x - \vec y||_\infty +  y_j$ for all $j$, we get
\begin{align*}
    i(f(\vec x), \gamma) & = i(\rho, \gamma) + e^{4dR}\sum_{j=1} ^d e^{x_j} i(\tau_{n_j}, \gamma) \\
    & \leq  i(\rho, \gamma) + e^{4dR}\sum_{j=1} ^d e^{||x_j - y_j||}e^{y_j} i(\tau_{n_j}, \gamma) \\
    & \leq e^{||\vec x - \vec y||_\infty} \left(
i(\rho, \gamma) + e^{4dR}\sum_{j=1} ^d e^{y_j} i(\tau_{n_j}, \gamma)
    \right)\\
    & = e^{||\vec x - \vec y||_\infty} i(f(\vec y), \gamma).
\end{align*}
Therefore,
\[
\Dil(f(\vec x), f(\vec y)) \leq e^{||\vec x - \vec y||_\infty}.
\]
Thus,
\begin{align*}
    d_{Lip}(f(\vec x), f(\vec y)) & = \log \Dil(f(\vec y), f(\vec x)) + \log \Dil(f(\vec x), f(\vec y)) \\
    & \leq 2||\vec x - \vec y||_\infty.
\end{align*}
In particular, for all $d \in \N$ and all $R > \log d$, the function $f_R^d=f: [0,R]^d \to \PCf(S)$ is a $(2,2\log(5))$-quasi-isometry. Therefore, the geometric rank of $\PCf(S)$ is infinity.
\end{proof}

To prove infinite geometric rank above, we gave families of maps $f_R^d: [0,R]^d \to \PC_f(S)$ whose images are contained in the filling simplex
\[ \PSimplex(\rho, \tau_{n_1},\cdots,\tau_{n_d})
\]
for the closed curves $\tau_{n_1}, \dots, \tau_{n_d}$ defined above.
We prove that this simplex is always quasi-isometric to a line. Thus, the method of the above proof cannot prove that \textit{flat rank} of $\PCf(S)$ is infinite, since it works with simplices of finite (in fact, uniformly bounded) rank.

\begin{lem}
\label{lem:tau_simplex_is_line}
Let $\rho$ be a filling geodesic current in $S$, and let $P \subset S$ be an embedded pair of pants, with standard fundamental group generators $a,b$, and associated $\tau_n=[a^nb]$ curves. Then, for any sequence $(n_i)_{i=1,\cdots,m}$, the filling simplex $\PSimplex(\rho, \tau_{n_1},\cdots,\tau_{n_m})$ is quasi-isometric to $\mathbb{R}$.
\end{lem}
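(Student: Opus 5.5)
The plan is to collapse the simplex in two stages. First, use \Cref{lem:Reducing_curves} to delete all but one of the curves $\tau_{n_i}$. Then identify the remaining two-vertex simplex, via \Cref{lem:qi_to_Rn}, with a quasi-ray.

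\textbf{Step 1: the $\tau_{n_i}$ are mutually comparable.} Each $\tau_n=[a^nb]$, for $n\geq 1$, is a non-simple closed curve contained in $P$. I will check that it fills $P$: it meets every essential non-peripheral geodesic in $P$, just like the figure-eight $\tau_1$. Granting this, its canonical decomposition is $\tau_n=(\tau_n)_P$, with $|\tau_n|=P$ and no lamination part. Applying \Cref{lem:bounded_sup} with $F=P$ in both directions gives
\[
\Dil(\tau_{n_i},\tau_{n_j})<\infty \quad\text{for all } i,j.
\]
Hence for each pair there is $C\geq 1$ with
\[
\tfrac1C\, i(\tau_{n_j},\gamma)\leq i(\tau_{n_i},\gamma)\leq C\, i(\tau_{n_j},\gamma)
\]
for every geodesic current $\gamma$.

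\textbf{Step 2: reduce to one curve.} Reordering the generators does not change the simplex. So I place $\tau_{n_{m-1}},\tau_{n_m}$ last and apply \Cref{lem:Reducing_curves}. This shows $\PSimplex(\rho,\tau_{n_1},\dots,\tau_{n_m})$ is at Hausdorff distance at most $2\log C$ from $\PSimplex(\rho,\tau_{n_1},\dots,\tau_{n_{m-1}})$. The sequence stays jointly filling because $\rho$ is filling. Iterating $m-1$ times, the simplex is at finite Hausdorff distance, hence quasi-isometric, to $\PSimplex(\tau_{n_1},\rho)$.

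\textbf{Step 3: identify $\PSimplex(\tau,\rho)$, where $\tau=\tau_{n_1}$.} Since $\rho$ is filling, \Cref{lem:bounded_sup} with $F=S$ gives $\Dil(\rho,\tau)<\infty$. Let $\beta$ be a boundary curve of $P$. Then $i(\tau,\beta)=0$ while $i(\rho,\beta)>0$, so $\Dil(\tau,\rho)=\infty$. Thus $\tau\inc\rho$, and $\beta$ is a witness. By \Cref{lem:increasing_properties}(2), after rescaling $\rho$ (which does not change the simplex), $(\tau,\rho)$ is strictly increasing and jointly filling. \Cref{lem:qi_to_Rn} with $n=1$ then shows $\PSimplex(\tau,\rho)$ is quasi-isometric to $\R_+$.

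Geometrically this is clear. Points $[t\rho+\tau]$ with $t\to\infty$ stay within bounded distance of $[\rho]$. Points with $t\to 0$ escape to infinity, with distance $\asymp|\log t|$, detected by $\beta$ together with the upper bound $2d_L$.

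\textbf{Main obstacle.} The conclusion is naturally a half-line. I will read ``quasi-isometric to $\mathbb R$'' as quasi-isometric to a (half-)line, which is what the rank-one statement needs: $\rank=\rankF=\adim=1$. I would flag this point explicitly.

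The only genuinely geometric input is the claim in Step 1 that every $\tau_n$ fills $P$. For this I would lift to the universal cover of $P$, or use that $a^nb$ crosses both the $a$-, $b$- and $ab$-type arcs of $P$. Everything else is a direct application of earlier lemmas.
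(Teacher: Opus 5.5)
Your proposal is correct and is essentially the paper's argument. The paper also uses \Cref{lem:bounded_sup} to make the $\tau_{n_j}$ uniformly comparable, but it collapses them in one step onto $\lambda=\sum_j \tau_{n_j}$, showing every point lies within $2\log C$ of $\PSimplex(\rho,\lambda)$, instead of iterating \Cref{lem:Reducing_curves}; it then obtains strictness of $(\lambda,d\rho)$ from \Cref{lem:increasing_implies_strictly} rather than from your explicit witness $\beta$, before invoking \Cref{lem:qi_to_Rn}.

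Your flag is also right. \Cref{lem:qi_to_Rn} with $n=1$ gives $\mathbb{R}_+$, a half-line, and that is what the paper's proof actually establishes, since one end of the simplex stays at bounded distance from $[\rho]$. So ``quasi-isometric to $\mathbb{R}$'' should be read as ``quasi-isometric to a ray''.
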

\begin{proof}
Let $\PSimplex \coloneqq \PSimplex (\rho, \tau_{n_1},\cdots,\tau_{n_m})$ and $\lambda \coloneqq \sum_{j=1}^m \tau_{n_j}$.
Since each $\tau_{n_j}$ fills $P$ and there are finitely many of them in the simplex, by \Cref{lem:bounded_sup}, there exists a constant $C>0$, so that, for every $j$,
\begin{equation}
\label{eq:C_lipschitz}
1/C i(\lambda,\gamma) \leq i(\tau_{n_j},\gamma) \leq C i(\lambda, \gamma)
\end{equation}
for every closed curve $\gamma$.
For every 
\[
\mu = t \rho + \sum_{j=1}^m a_j \tau_{n_j} \in \PSimplex
\]
we let $A \coloneqq \sum_{j=1}^m a_j$. Then, by~Eq.~\eqref{eq:C_lipschitz}, we get
\[
ti(\rho,\gamma) + \frac{1}{C}Ai(\lambda,\gamma) \leq i(\mu,\gamma) \leq t i(\rho,\gamma) + CAi(\lambda,\gamma)
\]
and thus
\[
\dsym(\mu, t\rho + A \lambda)<2\log(C).
\]
Hence, every $\mu \in \PSimplex$ is at uniform finite distance (independent of $\mu$) from the simplex $\PSimplex(\rho, \lambda)$, and since $\PSimplex(\rho, \lambda) \subset \PSimplex$, it follows that that $\PSimplex$ is quasi-isometric to $\PSimplex(\rho, \lambda)$.
But since $\lambda$ fills $P$  while $\rho$ fills all of $S$, it follows that $(\lambda, \rho)$ is an increasing sequence (By \Cref{lem:bounded_sup}, $\Dil(\rho, \lambda) < \infty$, and the fact that $P$ is properly contained in $S$ implies that the other dilatation is infinite.) Moreover, by \Cref{lem:increasing_implies_strictly}, the fact that $\rho$ is filling, and so has no lamination components gives that $(\lambda, d \rho)$ is strictly increasing for some $d > 0$. As $\PSimplex(\rho, \lambda) = \PSimplex(d \rho, \lambda)$,
\Cref{lem:qi_to_Rn} implies that $\PSimplex$ is quasi-isometric to $\mathbb{R}$.
\end{proof}

\section{Rank of Teichm\"uller space with the Lipschitz metric}
\label{app:dsym-rank-minimal}

In this section we will prove Theorem~\ref{mainthm:teichrank}, in the slightly more general setting of finite-type possibly punctured surfaces. Let $S=S_{g,n}$ be a connected orientable surface of finite type, without
boundary, genus $g$, $n$ punctures, and with \textbf{Euler characteristic} $\chi(S)=2-2g-n<0$.  Denote the \textbf{complexity of $S$} by
\[
 \xi(S)=3g-3+n
\]
and assume $\xi(S)>0$, in which case it coincides with the number of curves of a pair of pants decomposition of $S$.

Theorem~\ref{mainthm:teichrank} follows from Theorem~\ref{thm:min-rank} below with $n=0$, since in this case $\xi(S)=3g-3$.

Write $d_T$ for the Teichm\"uller metric.  For $X,Y\in\Teich(S)$, we will use the following version of the symmetrized Thurston metric
\[
 d_L(X,Y)\coloneqq\max\{d_{\Th}(X,Y),d_{\Th}(Y,X)\}.
\]
This version is used, for instance, in~\cite{CR07:Comparison}. Clearly, $d_L\le \dsym\le 2d_L$.

The main result of this section is the computation of the geometric and flat rank of $\Teich(S)$ with respect to $\dsym$ or, equivalently since they are bilipschitz equivalent, to $d_L$. 

\begin{theorem}
\label{thm:min-rank}
Let $g \geq 1, n \geq 0$. Let $S = S_{g,n}$ be a connected, orientable surface of genus $g$ and $n$ punctures, with negative Euler characteristic. Then,
\[
\rank\bigl(\Teich(S),\dsym\bigr)=\rankF\bigl(\Teich(S),\dsym\bigr)=\xi(S).
\]
\end{theorem}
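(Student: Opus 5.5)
Since $d_L\le\dsym\le 2d_L$, the identity map is bilipschitz, so it suffices to prove the statement for $d_L$. I will prove the lower bound $\rankF\geq\xi(S)$ and the upper bound $\rank\leq\xi(S)$. Together with \Cref{lem:compare_rank}, these give the chain $\xi(S)\le\rankF\le\rank\le\xi(S)$.

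\textbf{Lower bound.} Fix a pants decomposition $\alpha_1,\dots,\alpha_{\xi(S)}$ and a basepoint $X_0$ in the thick part. For $\vec t\in\R^{\xi(S)}_+$, let $X(\vec t)$ be the surface obtained from $X_0$ by twisting along $\alpha_i$ by $t_i$ times a suitable scale (equivalently, Fenchel--Nielsen twist coordinates $t_i$), keeping all $\ell(\alpha_i)$ fixed and bounded. This stays in a fixed thick part. By Choi--Rafi~\cite[Theorem~C]{CR07:Comparison}, $d_L$ and $d_T$ agree up to additive error in the thick part. I would instead use their product-region estimate for $d_L$: for short curves the twist contributes as in a horoball, while for curves of bounded length twisting contributes $\log$ of the twist. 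So a more efficient choice is to pinch: set $\ell_{\alpha_i}(X(\vec t))=\epsilon e^{-t_i}$ with zero twist. Choi--Rafi's formula gives
\[
d_L(X(\vec s),X(\vec t))\asymp \max_i|\log(\ell_{\alpha_i}(X(\vec s))/\ell_{\alpha_i}(X(\vec t)))|=\|\vec s-\vec t\|_\infty,
\]
up to additive error, since the thick complementary pants are unchanged. This gives a quasi-flat orthant $\R^{\xi(S)}_+\to\Teich(S)$, which is exactly the lower bound the introduction attributes to~\cite[Theorem~C]{CR07:Comparison}.

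\textbf{Upper bound.} By \Cref{lem:min-cube-cone}, a family of uniform $(K,C)$-quasi-isometric cubes $[0,R_i]^m$ yields a bilipschitz $[0,1]^m$ inside an asymptotic cone $\Cone_\omega(\Teich(S),X_i,R_i;d_L)$. Hence it suffices to show that every compact subset of such a cone has topological dimension at most $\xi(S)$ (\Cref{lem:min-dimension}).

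To do this, I would compare with the Teichm\"uller cone through the global comparison \Cref{lem:min-global}. By Minsky's product regions theorem~\cite{Minsky1996ProductRegions} and Choi--Rafi, for an admissible sequence one can, $\omega$-almost surely, isolate a multicurve $\sigma$ of ``very short'' curves (those whose length goes to zero at a rate comparable to $e^{-R_i}$ or faster). On the complementary pieces, $d_L$ and $d_T$ differ by bounded multiplicative and additive error, so their cone factors are bilipschitz. On each annular factor, the $d_T$ factor is a horoball in $\mathbb H^2$ (twist and log-length), whereas the $d_L$ factor is $\max$ of $|\log$ length ratio$|$ and a log-twist term; after rescaling by $R_i$ this becomes a real tree. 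The cone of $(\Teich,d_L)$ thus locally looks like a cone of the Teichm\"uller metric in which some $\mathbb H^2$ factors are replaced by $\R$-trees. Compact subsets of $\R$-trees have dimension $\le1$, and the Bowditch/Durham results~\cite{Bow16,Durham} bound the dimension of compact subsets of Teichm\"uller cones by $\xi(S)$ using the hierarchical/median structure, with each annular factor contributing one. Replacing hyperbolic-plane cones (whose compact subsets are already trees of dimension 1) by trees keeps the count at $\xi(S)$. A bilipschitz $[0,1]^m$ then forces $m\le\xi(S)$.

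\textbf{Main obstacle.} The hard step is making the Bowditch/Durham dimension bound apply to the $d_L$ cone rather than the $d_T$ cone. Since the two cones are not bilipschitz, I would build a Lipschitz, finite-to-one (or dimension-nonincreasing) map from compact subsets of the $d_L$ cone into a product of cone pieces on which the dimension bound is known. Concretely, I would first try to show that the identity on admissible sequences induces a map from the $d_T$ cone to the $d_L$ cone that is Lipschitz and surjective on compacta, with fibers that are only collapsing twist directions in very short annuli. I would then use the cone's coarse-median structure~\cite{Bow16} directly for $d_L$, checking the median axioms via the product-region comparison.
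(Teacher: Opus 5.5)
Your reduction to $d_L$, your lower bound (pinching a pants decomposition with zero twist and reading off $\|\vec s-\vec t\|_\infty$ from the Choi--Rafi product formula), and your reduction of the upper bound to a dimension bound on compact subsets of asymptotic cones via \Cref{lem:min-cube-cone} all match the paper. The gap is in that dimension bound. You flag it as the main obstacle and do not close it, and your sketch of it has two incorrect ingredients.

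\textbf{The threshold for ``very short''.} It must be \emph{superlinear} depth, $\log(\epsilon/\ell_{X_i}(\gamma))/R_i\to_\omega\infty$, not length $\lesssim e^{-cR_i}$. Depth is $1$-Lipschitz in $d_L$, so only superlinear depth survives moves of size $O_\omega(R_i)$. This is what makes a single multicurve $\Gamma$ (after a level-dependent change of marking) short along \emph{every} admissible sequence. That uniformity is what lets the Choi--Rafi product formula apply to every pair of cone points, and hence gives an isometric embedding of the whole cone into $\prod_r\mathcal X_r\times\prod_{\gamma\in\Gamma}\mathcal A_\gamma$ (\Cref{lem:min-deep-persistence}, \Cref{lem:min-cone-product}). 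With a linear threshold, a curve of depth $cR_i$ stops being short at cone points at distance more than about $c$, and there is no global product structure. Choosing the multicurve ``for an admissible sequence'' is therefore not enough.

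\textbf{The complementary pieces.} There, $d_L$ and $d_T$ do \emph{not} differ by bounded error. Curves in the complement can still have depth of order $R_i$, and along them the twist terms differ by about $\log\log(1/\ell)$ (cf.\ \eqref{eq:min-ann-error}). What actually holds is $d_T\le Ad_L+B\log P_\epsilon+B$ (\Cref{lem:min-global}) with $\log P_\epsilon=O(\log R_i)=o(R_i)$, because every remaining curve is shallow. That sublinearity is why the complementary cones are bilipschitz, and it again depends on the superlinear threshold.

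\textbf{Your proposed fix for the dimension count.} A map from the $d_T$-cone to the $d_L$-cone that is Lipschitz and surjective on compacta gives no upper bound on the dimension of the target. Topological dimension can go up under Lipschitz surjections: binary expansion maps $\{0,1\}^{\N}$, with the metric $2^{-\min\{n:x_n\ne y_n\}}$, $2$-Lipschitz onto $[0,1]$. Fibres that ``collapse twist directions'' are infinite, so Hurewicz-type bounds do not help either. Likewise, Bowditch's and Durham's theorems cannot be modified by ``replacing $\mathbb{H}^2$ factors with trees''; they must be applied as black boxes to genuine $d_T$-cones. No coarse median structure for $d_L$ is needed. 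Your first instinct, a map \emph{out of} the $d_L$-cone into a product of pieces of known dimension, was the right one.

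\textbf{The missing step.} Given the isometric embedding $\Psi$ above, a compact $K$ in the $d_L$-cone is a closed subset of the product of its compact projections $K_r\subset\mathcal X_r$ and $K_\gamma\subset\mathcal A_\gamma$. Then:
\begin{itemize}
\item Each $K_r$ is bilipschitz to a compact subset of the $d_T$-cone of $\Teich(Q_r)$, so $\dim K_r\le\xi(Q_r)$ by Bowditch/Durham.
\item Each $\mathcal A_\gamma$ is an $\mathbb{R}$-tree because $(t,y)\mapsto(t,y\log y)$ is a quasi-isometry from $(U_\epsilon,d_{L(\gamma)})$ onto a horoball (\Cref{lem:min-annular-cones}), so $\dim K_\gamma\le 1$. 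Your assertion that the annular factor ``becomes a real tree'' after rescaling needs an argument of this kind.
\item The product theorem for dimension, together with $\sum_r\xi(Q_r)+|\Gamma|=\xi(S)$, gives $\dim K\le\xi(S)$.
\end{itemize}
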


\subsection{Proof idea}

The proof uses Minsky's~\cite{Minsky1996ProductRegions} and Choi-Rafi's~\cite{CR07:Comparison} thick-thin decomposition theorems to express $d_L$ and $d_T$, up to uniform additive error, as a maximum of metrics on components of a thick-thin decomposition,
in such a way that the only difference between the two metrics appears in the annular terms. In fact, at the asymptotic cone level, the difference in the annular terms emanates only from core curves with collar widths growing 
super-linearly with respect to the scale, which we shall call ``deep''. We call the complementary components ``shallow'' (i.e., components that are thick, as well as thin components whose collar
widths do not grow super-linearly with the scale).  
  Product coordinates then separate the deep annular factors from shallow complementary factors, and $d_L$-- and
$d_T$--cones are bilipschitz in the latter. Work of Bowditch~\cite[Theorem~1.8]{Bow16}, or, alternatively, Durham ~\cite[Theorem~1.1]{Durham} bounds the dimension of compact subsets of the $d_T$-asymptotic cone. We use this to bound the dimension of compact subsets of the $d_L$ cone. 
Finally, growing quasi-cubes pass to an asymptotic $d_L$-cone, and give an upper bound of $\xi(S)$ to the geometric rank. The lower bound of $\xi(S)$ on the flat rank follows easily from~\cite{CR07:Comparison}, giving the result.

\subsection{Product-region formulas}
\label{subsec:min-inputs}
Let $c_0> 0$ denote the Margulis constant defined as follows. We can decompose any hyperbolic surface $X$ into a $c_0$-\textit{thin part}, where each point has injectivity radius at most $c_0$, and its complementary \textit{thick part}. The Margulis constant $c_0$ satisfies the property that the thin part is a disjoint union of collars about simple closed geodesics, and neighborhoods of the cusps (see, eg,~\cite[Section~2.4]{Minsky1996ProductRegions}). Then there is an $\epsilon_0$, called the \textbf{collar cutoff}, which depends on the choice of $c_0$, so that the boundaries of each collar and cusp neighborhood have length at most $\epsilon_0$. 
Lastly, let $\epsilon_1 > 0$ be small enough so that the product region theorems~\cite{Minsky1996ProductRegions,CR07:Comparison} hold.  We call $\epsilon_1$ the \textbf{product region cutoff}.

Note that these constants can be chosen independently of $X \in \Teich(S)$. In particular, consider the annular components of the thin part of $X$. Let $\Gamma$ be the collection of their core curves. Then there are finitely many topological types of multicurves $\Gamma$, as $\Gamma$ ranges over all topological types of $k$-components simple multi-curves, for $1 \leq k \leq \xi(S)$ \cite[Lemma~2.16]{ES22:GeodesicCount}. 

As we can always choose our constants to be smaller, we fix $0 < \epsilon_1 < c_0 < \epsilon_0$ so that 
\[\frac{\epsilon_0}{\epsilon_1}>2.\]
Furthermore, fix $\epsilon>0$ with \[\epsilon<\min \{e^{-1}, \epsilon_1\}.\]

We will refer to $\epsilon$ as simply the \textbf{cutoff}, since it is the smallest cutoff constant.
For a multicurve $\Gamma$,
put
\[
 \Thin_\eps(S,\Gamma)
 =\{X:\ell_X(\gamma)<\eps\text{ for every }\gamma\in\Gamma\}.
\]
Let
\[
 U_\eps\coloneqq\{(t,y)\in\R\times(0,\infty):y>1/\eps\},
\]
Following \cite{Minsky1996ProductRegions,CR07:Comparison}, we define a bijective map
\[
\Pi_\Gamma : \Thin_\eps(S,\Gamma) \to \Teich(S \setminus \Gamma) \times \prod_{\gamma \in \Gamma} U_\epsilon
\]
where for each $X \in \Thin_\eps(S,\Gamma)$, we say the \textbf{product coordinates} of $X$ are
\begin{equation}
 \Pi_\Gamma(X)
 =\bigl(X_0,(z_\gamma(X))_{\gamma\in\Gamma}\bigr),
 \qquad
 z_\gamma(X)=\left(t_\gamma(X),\frac1{\ell_X(\gamma)}\right).
 \label{eq:min-product-coordinates}
\end{equation}
Here $X_0\in\Teich(S\setminus\Gamma)$ denotes the pinched surface obtained by setting boundary length Fenchel-Nielsen coordinates to zero and retaining all the other coordinates.
For a disconnected complement it is a tuple, equipped with the
sup metric.  For each $\gamma \in \Gamma$, the \textbf{twist} $t_\gamma(X)$, is defined as the signed relative $X$-displacement along $\gamma$ between the seams of the two pants glued across it, normalized by the \textbf{length} $\ell_\gamma(X)$, ie, the $X$-hyperbolic length of $\gamma$, so that one positive Dehn
twist adds $1$ to $t_\gamma$ \cite[\S3.4]{Minsky1996ProductRegions}.

For each $\gamma \in \Gamma$, Choi-Rafi define a modified annular metric $d_{L(\gamma)}$ on $U_\epsilon$ \cite[Section~3.3]{CR07:Comparison}. Its definition is quite technical, so rather than state it formally, we will cite its properties as proven in \cite{CR07:Comparison}.

 Choi--Rafi's product theorem
\cite[Theorem~3.5]{CR07:Comparison} gives
\begin{equation}
 d_L(X,Y)
 =\max\left\{d_L(X_0,Y_0),
       \max_{\gamma\in\Gamma}
       d_{L(\gamma)}(z_\gamma(X),z_\gamma(Y))\right\}+O(1).
 \label{eq:min-CR-product}
\end{equation}
where all error terms here and below are bounded uniformly in terms of $\epsilon$ and the topology of $S$.

In comparison, Minsky's thick-thin decomposition theorem
\cite[Theorem~6.1 and equation~(2.5)]{Minsky1996ProductRegions} gives
\begin{equation}
 d_T(X,Y)
 =\max\left\{d_T(X_0,Y_0),
       \max_{\gamma\in\Gamma}
       \frac12d_{\Hh}^{\mathrm{std}}
       (z_\gamma(X),z_\gamma(Y))\right\}+O(1).
 \label{eq:min-minsky}
\end{equation}
where $d_{\Hh}^{\mathrm{std}}$ denotes the standard hyperbolic metric on the upper-half plane model.

We also use three consequences recorded in the same sources.  First, for an
essential nonperipheral curve $\beta$ in the thick part of $X$,
\begin{equation}
 K_0^{-1}\ell_X(\beta)
 \le\ell_{X_0}(\beta)
 \le K_0\ell_X(\beta),
 \label{eq:min-pinching}
\end{equation}
where $K_0>0$ depends on the topology of $S$ and the fixed cutoff $\epsilon$.
This is the length comparison in the proof of
\cite[Theorem~3.5, p.~754]{CR07:Comparison}; see also the construction in
\cite[proof of Theorem~6.1, pp.~280--281]{Minsky1996ProductRegions}.  Second, if $X, Y \in \Teich(S)$
have no $\epsilon$-short curves in common, then
\begin{equation}
 d_T(X,Y)\le A_0d_L(X,Y)+B_0
 \label{eq:min-no-common}
\end{equation}
by \cite[Proposition~4.1]{CR07:Comparison}.  
Again, $A_0, B_0$ are uniform, depending only on the topology of $S$ and the cutoff $\epsilon$.
Finally, the finite-type form of Wolpert's
length distortion inequality, recorded in \cite[equation~(3)]{CR07:Comparison}, gives
\begin{equation}
 d_L(X,Y)\le2d_T(X,Y).
 \label{eq:min-wolpert}
\end{equation}

The following is~\cite[Lemma~3.4]{CR07:Comparison}.

\begin{lem}
Let $\gamma$ be an $\epsilon$-short curve. For $j = 1,2$, let
\[
 z_j=(t_j,1/\ell_j)\in U_\eps,
\]
and suppose $\ell_1\le\ell_2$. 
Then,
\begin{enumerate}
    \item If
    \[
    |t_1-t_2|\ell_1 \leq \log \frac1{\ell_1}, \quad \text{ then }\quad
    d_{L(\gamma)}(z_1,z_2) \asymp_+ \log \frac{\ell_2}{\ell_1}.
    \]
    \item If 
    \begin{align*}
    |t_1-t_2|\ell_1> \log \frac1{\ell_1},\quad \text{ then }\quad
    d_{L(\gamma)}(z_1,z_2) &\asymp_+ \log \left( \frac{|t_1-t_2|\ell_2}{\log(1/\ell_1)}\right) \\
    &=\log \frac{\ell_2}{\ell_1} + \log \left(\frac{|t_1-t_2|\ell_1}{\log(1/\ell_1)}\right),
    \end{align*}
\end{enumerate}
where $\asymp_+ $ denotes equality up to additive error depending on the topology of $S$ and the cutoff $\epsilon$.
\label{lemma:CR}
\end{lem}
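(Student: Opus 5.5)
Since this lemma is \cite[Lemma~3.4]{CR07:Comparison}, the plan is to re-derive it from the model for $d_{L(\gamma)}$ used there. That model is the Lipschitz distance restricted to the annular factor. It is the larger of two quantities: the log length ratio of the core $\gamma$ itself, and the supremum over curves $\beta$ crossing $\gamma$ of the log ratio of the contributions of the collar to $\ell(\beta)$.

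\textbf{Step 1: collar length estimate.} The key input is the standard collar estimate. For a curve $\beta$ with $i(\beta,\gamma)=1$ and relative twist parameter $s$ measured against $\beta$, the length picked up in the collar of a point $z=(t,1/\ell)\in U_\eps$ is
\[
\ell_z(\beta)\asymp_+ 2\log\frac1\ell+\ell\,|t-s|.
\]
The first term is the collar width and the second is the twisting displacement. Multiplicatively this gives
\[
\ell_z(\beta)\asymp \log\frac1\ell+\ell|t-s|.
\]
By \eqref{eq:min-pinching}, the thick-part contribution is uniformly bounded, so it can be absorbed into the additive error.

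\textbf{Step 2: the two competing terms.} With Step 1 in hand, $d_{L(z_1,z_2)}$ is, up to additive error, the maximum of two terms:
\[
\log\frac{\ell_2}{\ell_1}, \qquad \sup_{s}\ \log\frac{\log(1/\ell_1)+\ell_1|t_1-s|}{\log(1/\ell_2)+\ell_2|t_2-s|}
\]
together with the symmetric expression with the roles of $z_1,z_2$ exchanged. The first term comes from $\gamma$ and the second from the transverse curves $\beta$.

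\textbf{Step 3: case analysis.} Here we use $\ell_1\le\ell_2$.

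In case (1), $|t_1-t_2|\ell_1\le\log(1/\ell_1)$. The twisting discrepancy is then dominated by the collar width on the shorter side. Any ratio of the transverse quantities is bounded above by a uniform constant times $\max\{1,\ \ell_2/\ell_1\}$. To see this, choose $s$ to optimize, and use that $\log(1/\ell)$ for $\ell<\eps<e^{-1}$ is at least $1$. The core term $\log(\ell_2/\ell_1)$ then dominates, which gives (1).

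In case (2), $|t_1-t_2|\ell_1>\log(1/\ell_1)$. Take $s=t_1$ in the sup, or the symmetric choice $s=t_2$. The numerator becomes of order $\ell_2|t_1-t_2|$ and the denominator of order $\log(1/\ell_1)$, which gives the lower bound $\log\bigl(|t_1-t_2|\ell_2/\log(1/\ell_1)\bigr)$. For the matching upper bound, apply the elementary estimate $\frac{a+b}{c+d}\le\max\{a/c,\,b/d\}$ (in the spirit of \Cref{lem:ratiosum}) together with the triangle inequality $|t_2-s|\le|t_1-s|+|t_1-t_2|$. The displayed identity in (2) is then just algebra.

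\textbf{Main obstacle.} The delicate point is the upper bound in case (2). The supremum over all transverse curves $\beta$, including those with large $i(\beta,\gamma)$ and arbitrary twist $s$, must not exceed the single-twist candidate by more than an additive constant. I would handle this by reducing to $i(\beta,\gamma)=1$ arcs: both numerator and denominator are sums over the crossings of $\beta$ with $\gamma$, so the estimate $\frac{a+b}{c+d}\le\max\{a/c,\,b/d\}$ lets us bound the ratio by the worst single crossing. Beyond that, I would rely on the explicit definition of $d_{L(\gamma)}$ and the estimates already carried out in \cite[Section~3.3]{CR07:Comparison}.
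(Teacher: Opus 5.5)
The paper gives no proof of this lemma. It is quoted verbatim as \cite[Lemma~3.4]{CR07:Comparison}, and since $d_{L(\gamma)}$ is never defined in the paper, any argument has to start from Choi--Rafi's definition. Your proposal is a reconstruction rather than a citation. You model $d_{L(\gamma)}$, up to additive error, as the maximum of two things: $|\log(\ell_2/\ell_1)|$, and the supremum, in both directions and over twists $s$ of transverse curves, of $\log\bigl((\log\frac1{\ell_2}+\ell_2|t_2-s|)/(\log\frac1{\ell_1}+\ell_1|t_1-s|)\bigr)$. You then do the case analysis by hand. This explains the dichotomy transparently: the two regimes are whether the twist displacement $\ell_1|t_1-t_2|$ on the thinner side exceeds its collar width $\log\frac1{\ell_1}$. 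It is not independent of the citation, though. The identification of $d_{L(\gamma)}$ with this model is exactly what you defer to Choi--Rafi, and that is where the real content of the citation lies.

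Granting the model, your computation goes through, but several steps need tightening.

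\emph{Case (1).} You need a bound valid for every $s$, not an optimized choice of $s$. Use the triangle inequality for the twists together with $\frac{a+b}{c+d}\le\max\{\frac ac,\frac bd\}$ and the case hypothesis. This shows both ratios are at most $2\ell_2/\ell_1$. For the ratio with $\log\frac1{\ell_1}$ in the numerator, also use $\log\frac1{\ell_1}/\log\frac1{\ell_2}\le 1+\log\frac{\ell_2}{\ell_1}\le\frac{\ell_2}{\ell_1}$.

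\emph{Case (2), lower bound.} The witness must be $s\approx t_1$ in the ratio $\ell_{z_2}(\beta)/\ell_{z_1}(\beta)$. Your ``symmetric choice'' $s=t_2$ only gives order $\ell_1|t_1-t_2|/\log\frac1{\ell_2}$, which is generally smaller.

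\emph{Case (2), upper bound.} The core term $\log(\ell_2/\ell_1)$ is dominated precisely because of the case hypothesis; say so. The reverse ratio is bounded by $2\ell_1|t_1-t_2|/\log\frac1{\ell_2}$. This is controlled by $\ell_2|t_1-t_2|/\log\frac1{\ell_1}$ only because $x\mapsto x\log\frac1x$ is increasing on $(0,e^{-1})$ and $\ell_1\le\ell_2<\eps<e^{-1}$. That is where the choice of cutoff enters, and your sketch omits it.

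\emph{Thick part.} The estimate \eqref{eq:min-pinching} does not justify discarding the thick part; it compares lengths of thick curves in $X$ and $X_0$. What you actually need is:
(i) for the lower bound, that at each twist there is a transverse curve with bounded thick-part length;
(ii) for the upper bound, that thick-part contributions are positive and comparable on both sides, so by the mediant inequality they only push the ratio toward $1$.

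Finally, the multi-crossing reduction you flag as the main obstacle follows immediately from the same mediant inequality. The delicate points are the ones listed above.
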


 Thus, for $d_{L(\gamma)}$ twisting only becomes significant when it exceeds the scale of $\log 1/\ell_1$ of the collar width. 

Next, we give the following estimate on hyperbolic distance in horoballs. In what follows, define $\log^+ : \R_{\geq 0} \to \R$ by
\[
\log^+(x) = \left \{\begin{matrix}
\max\{0,\log x\} & \text{ if } & x > 0\\
0 &\text{ if } &  x = 0.\\
\end{matrix} \right.
\]

\begin{lem}
\label{lem:min-annulus-pre}
For $j = 1,2$, let
\[
 z_j=(t_j,1/\ell_j)\in U_\eps,
\]
and suppose $\ell_1\le\ell_2$. 
Suppose $\ell_1\leq \ell_2$.
Then,
\begin{equation}
d_{\Hh}^{\mathrm{std}}(z_1,z_2)=\log \frac{\ell_2}{\ell_1} +2\log^+ \frac{|t_1-t_2|}{\ell_1} + O(1),
\label{eq:min-H-coarse}
\end{equation}
where the $O(1)$ error depends only on the topology of $S$ and the cutoff $\epsilon$.
\end{lem}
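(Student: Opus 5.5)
The plan is to compute directly from the closed formula for hyperbolic distance in the upper half-plane, and then simplify using elementary estimates of the form $\log(a+b)=\log\max\{a,b\}+O(1)$. For $z_j=(t_j,y_j)$ with $y_j=1/\ell_j$ we have
\[
\cosh d_{\Hh}^{\mathrm{std}}(z_1,z_2)=1+\frac{|t_1-t_2|^2+(y_1-y_2)^2}{2y_1y_2}.
\]
Equivalently,
\[
d_{\Hh}^{\mathrm{std}}(z_1,z_2)=2\log\frac{\sqrt{|t_1-t_2|^2+(y_1-y_2)^2}+\sqrt{|t_1-t_2|^2+(y_1+y_2)^2}}{2\sqrt{y_1y_2}} .
\]
I would start from this second, exact form. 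It avoids inverting $\cosh$ and makes all comparisons multiplicative up to universal constants.

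First I simplify the numerator. Since $\ell_1\le\ell_2$, we have $y_1\ge y_2>0$, so $y_1\le y_1+y_2\le 2y_1$ and $0\le y_1-y_2\le y_1$. Hence the numerator lies between $\sqrt{|t_1-t_2|^2+y_1^2}$ and $2\sqrt{|t_1-t_2|^2+4y_1^2}$. Both are comparable, up to a factor of at most $4$, to $\max\{|t_1-t_2|,y_1\}$. Thus
\[
d_{\Hh}^{\mathrm{std}}(z_1,z_2)=2\log\max\{|t_1-t_2|,y_1\}-\log(y_1y_2)+O(1),
\]
with a universal error.

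Next I split according to which term realises the maximum, mirroring the two cases of \Cref{lemma:CR}. If $|t_1-t_2|\le y_1$, the right-hand side is $\log(y_1/y_2)+O(1)=\log(\ell_2/\ell_1)+O(1)$, and the $\log^+$ term vanishes up to $O(1)$. If $|t_1-t_2|> y_1$, it equals $\log(y_1/y_2)+2\log(|t_1-t_2|/y_1)+O(1)$. Writing both cases as $\log(\ell_2/\ell_1)+2\log^+\bigl(|t_1-t_2|/y_1\bigr)+O(1)$ gives the coarse formula.

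The one point I expect to need care is the normalisation of the horizontal coordinate. The ratio appearing in the $\log^+$ term is the horizontal displacement measured in units of the larger height $y_1$, that is, $|t_1-t_2|/y_1$. Whether this is written as in \eqref{eq:min-H-coarse} depends on the convention for the first coordinate of $z_\gamma$ in \eqref{eq:min-product-coordinates} (raw versus length-normalised twist), and the two conventions differ by a factor of $\ell_\gamma$ in the horizontal direction. I would therefore fix the convention of \cite[\S3.4]{Minsky1996ProductRegions} explicitly, under which Minsky's model has the twist measured so that the formula above takes the stated form. I would then check that the ratio of horizontal displacement to $y_1$ is exactly the quantity written in the statement, and adjust the bookkeeping of $\ell_1$ accordingly.

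Finally, I would note that the $O(1)$ produced here is universal. The lemma allows dependence on the topology of $S$ and on $\eps$, and that dependence only enters through the identification of $U_\eps$ with Minsky's coordinates. Hence the error bound is as claimed. The main obstacle is this normalisation bookkeeping; the metric computation itself is routine.
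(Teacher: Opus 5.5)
Your computation is correct and is essentially the paper's own argument. The paper starts from the $\cosh$ formula, writes $2\cosh d=r(1+r^{-2}+s^2)$ with $r=\ell_2/\ell_1$, uses $d=\log(2\cosh d)+O(1)$, and splits into the cases $s\le 1$ and $s\ge 1$. You use the exact logarithmic form and compare the numerator with $\max\{|t_1-t_2|,y_1\}$, which is the same case split. Two harmless points: the comparison constant is $2\sqrt5$ rather than $4$, and your error term is in fact an absolute constant.

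The normalisation issue you raise is real, but it is not a matter of twist conventions. The last two paragraphs of your plan should be replaced by a correction of the statement.

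\emph{What the correct term is.} The lemma concerns only the two points $z_j=(t_j,1/\ell_j)\in\Hh^2$. Whatever the $t_j$ measure, only $|t_1-t_2|$ enters. What you derived, $2\log^+(|t_1-t_2|/y_1)=2\log^+(|t_1-t_2|\,\ell_1)$, is the correct term.

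\emph{The displayed formula is false.} Take $\ell_1=\ell_2=\ell\to 0$ and $|t_1-t_2|=1$. Then $\cosh d=1+\ell^2/2$, so $d\to 0$, while the right-hand side of \eqref{eq:min-H-coarse} is $2\log(1/\ell)\to\infty$. Turning the factor $\ell_1$ into $1/\ell_1$ would require rescaling the horizontal displacement by the point-dependent factor $\ell_1^{-2}$. So the check you propose to carry out against Minsky's conventions would simply fail.

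\emph{The paper's proof has the matching slip.} It sets $s=|t_1-t_2|/\ell_1$, but the identity $2\cosh d=r+r^{-1}+rs^2$ holds only for $s=|t_1-t_2|\,\ell_1$, since
$\ell_1\ell_2(t_1-t_2)^2=r\,(\ell_1|t_1-t_2|)^2$.

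\emph{The corrected form is the one used later.} The proof of \Cref{lem:min-annulus} works with the term $\log^+|t_1-t_2|\ell_1$, and \Cref{lem:min-annular-cones} likewise uses the corrected form.

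So state the lemma with $2\log^+(|t_1-t_2|\,\ell_1)$; your argument proves exactly that.
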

\begin{proof}

Set
\[
d \coloneqq d_{\Hh}^{\mathrm{std}}(z_1,z_2).
\]
The standard upper-half plane distance formula gives
\begin{equation}
\cosh(d)=1+\frac{(t_1-t_2)^2 + (1/\ell_1-1/\ell_2)^2}{2/(\ell_1\ell_2)}
\label{eq:min-H-exact-first}
\end{equation}
Set
\[
r=\frac{\ell_2}{\ell_1} \geq 1,
\qquad
s=\frac{|t_1-t_2|}{\ell_1}.
\]
Expanding the numerator in~\eqref{eq:min-H-exact-first} and rearranging terms we get
\begin{equation}
    2\cosh(d)=r+r^{-1}+rs^2 = r\left(1+r^{-2}+s^2\right).
    \label{eq:min-H-rs}
\end{equation}
Since $d \geq 0$, 
\begin{equation}
d=\log(2\cosh d)+O(1)
\label{eq:min-arcosh}
\end{equation}
with an additive error bounded by $\log(2)$.
Combining~\eqref{eq:min-H-rs} and~\eqref{eq:min-arcosh}, 
\begin{equation}
d=\log \frac{\ell_2}{\ell_1}+\log\left (1+r^{-2}+s^2\right)+O(1).
\label{eq:min-H-intermediate}
\end{equation}
It remains to estimate the second logarithm.
Because $r \geq 1$, we have $0<r^{-2}\leq 1$.
If $0 \leq s \leq 1$, we have
\[
1 \leq 1+r^{-2}+s^2 \leq 3,
\]
and so
\[
\log \left(1+r^{-2}+s^2\right)=O(1)
\]
If $s \geq 1$, then
\[
s^2\leq 1+r^{-2}+s^2 \leq 3s^2,
\]
so
\[
2\log s \leq \log \left (1+r^{-2}+s^2\right)\leq 2\log s + \log 3
\]
Since $\log^+ s= \log s$ in this case, we obtain
\[
\log \left(1+r^2{-2}+s^2\right)=2\log^+ s + O(1)
\]
Substitution into~\eqref{eq:min-H-intermediate} gives the desired uniform estimate.
\end{proof}

The above results allow us to compare $d_{L(\gamma)}$ and $d_{\Hh}^{\mathrm{std}}$ as follows.
\begin{lem}
\label{lem:min-annulus}
For $j = 1,2$, let
\[
 z_j=(t_j,1/\ell_j)\in U_\eps,
\]
and suppose $\ell_1\le\ell_2$. 
One has
\begin{align}
 d_{L(\gamma)}(z_1,z_2)
 &=\log \frac{\ell_2}{\ell_1}+\log^+\left(\frac{|t_1-t_2|\ell_1}{\log\frac1{\ell_1}}\right)+O(1),
 \label{eq:min-ann-L}
\end{align}
Consequently,
\begin{equation}
 \frac12d_{\Hh}^{\mathrm{std}}(z_1,z_2)
 \le d_{L(\gamma)}(z_1,z_2)+\log \log\frac1{\ell_1}+O(1),
 \label{eq:min-ann-error}
\end{equation}
where the  $O(1)$ error only on the topology of $S$ and on the cutoff $\epsilon$.
\end{lem}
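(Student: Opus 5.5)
The plan is to read off \eqref{eq:min-ann-L} directly from \Cref{lemma:CR}, after rewriting its two cases with the function $\log^+$. I then compare the result term by term with the horoball estimate of \Cref{lem:min-annulus-pre}.

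\textbf{Step 1: the annular formula.} Set
\[
L\coloneqq\log\frac1{\ell_1},
\qquad
u\coloneqq\frac{|t_1-t_2|\,\ell_1}{L}.
\]
Since $z_1\in U_\eps$ we have $\ell_1<\eps<e^{-1}$, so $L>1$ and $u$ is well defined and nonnegative. The dichotomy in \Cref{lemma:CR}, namely $|t_1-t_2|\ell_1\le L$ versus $|t_1-t_2|\ell_1>L$, is exactly $u\le1$ versus $u>1$.
\begin{itemize}
\item If $u\le1$, then $\log^+u=0$, and case (1) of \Cref{lemma:CR} gives $d_{L(\gamma)}(z_1,z_2)=\log\frac{\ell_2}{\ell_1}+O(1)$. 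This is \eqref{eq:min-ann-L}.
\item If $u>1$, then $\log^+u=\log u$, and the second expression in case (2) of \Cref{lemma:CR} is literally $\log\frac{\ell_2}{\ell_1}+\log u$. This is again \eqref{eq:min-ann-L}.
\end{itemize}
In both cases the additive error is the one from \Cref{lemma:CR}, so it depends only on the topology of $S$ and on $\eps$.

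\textbf{Step 2: the comparison with the hyperbolic metric.} I will use \Cref{lem:min-annulus-pre}, with the twist parameter $s$ taken as it actually arises in its proof. Expanding \eqref{eq:min-H-exact-first} into the form \eqref{eq:min-H-rs} forces $r s^2=(t_1-t_2)^2\ell_1\ell_2$, that is, $s=|t_1-t_2|\,\ell_1$. I expect this bookkeeping to be the one delicate point, because the displayed statement of \Cref{lem:min-annulus-pre} writes the ratio the other way. The proof of that lemma does not depend on which form is written, since it only uses $r\ge1$ and splits into the cases $s\le1$ and $s\ge1$. So I will rerun that short argument with $s=|t_1-t_2|\ell_1$ and obtain
\[
\tfrac12 d_{\Hh}^{\mathrm{std}}(z_1,z_2)=\tfrac12\log\frac{\ell_2}{\ell_1}+\log^+ s+O(1).
\]

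\textbf{Step 3: conclusion.} With this choice we have $s=uL$ and $L\ge1$, so
\[
\log^+ s\le\log^+u+\log L .
\]
Indeed, if $s\le1$ the left side is $0$ and the right side is nonnegative. If $s>1$, then $\log s=\log u+\log L\le\log^+u+\log L$. Also $\log\frac{\ell_2}{\ell_1}\ge0$, so $\tfrac12\log\frac{\ell_2}{\ell_1}\le\log\frac{\ell_2}{\ell_1}$. Combining these two bounds with Step 2 and with \eqref{eq:min-ann-L} from Step 1 gives
\[
\tfrac12 d_{\Hh}^{\mathrm{std}}(z_1,z_2)\le d_{L(\gamma)}(z_1,z_2)+\log\log\frac1{\ell_1}+O(1),
\]
which is \eqref{eq:min-ann-error}. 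All error terms come from \Cref{lemma:CR} and \Cref{lem:min-annulus-pre}, so they depend only on the topology of $S$ and on $\eps$.
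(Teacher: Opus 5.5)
Your proposal is correct and follows the paper's own argument: you read off \eqref{eq:min-ann-L} from the two cases of \Cref{lemma:CR}, and then combine the horoball estimate with the elementary inequality $\log^+(|t_1-t_2|\ell_1)\le\log^+ u+\log\log\frac1{\ell_1}$ (with your $u$) and the bound $\frac12\log\frac{\ell_2}{\ell_1}\le\log\frac{\ell_2}{\ell_1}$, exactly as the paper does, which differs only in splitting that inequality into three cases instead of two. Your remark about the twist term is also right: expanding \eqref{eq:min-H-exact-first} forces $s=|t_1-t_2|\ell_1$, so the ratio $|t_1-t_2|/\ell_1$ displayed in \Cref{lem:min-annulus-pre} is a misprint, and $|t_1-t_2|\ell_1$ is the quantity the paper's proof of this lemma actually uses.
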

Here $d_{\Hh}^{\mathrm{std}}$ denotes the standard hyperbolic metric on the upper half plane model.
\begin{proof}
Since $\ell_1 \leq \ell_2$, we have
\[
\log \frac{\ell_2}{\ell_1}\geq 0.
\]
The two cases of \Cref{lemma:CR}, combine into combine to give
\[
d_{L(\gamma)}(z_1,z_2) = \log \frac{\ell_2}{\ell_1}+ \log^+\left(\frac{|t_1-t_2|\ell_1}{\log\frac1{\ell_1}}\right)+O(1)
\]
which is~\eqref{eq:min-ann-L}.

Now, since $\epsilon > e^{-1}$, we have $\log\frac1{\ell_1} \geq 1$. Thus, we claim
\begin{equation}
\log^+ |t_1-t_2|\ell_1 \leq \log^+ \left( \frac{|t_1-t_2|\ell_1}{\log\frac1{\ell_1}}\right)+\log \log\frac1{\ell_1}.
\label{eq:min-log-inequality}
\end{equation}
Indeed:
\begin{itemize}
\item If $0 \leq |t_1-t_2|\ell_1 \leq 1$, then the left hand side is zero.
\item If $1 <|t_1-t_2|\ell_1 \leq \log\frac1{\ell_1}$ then
\[
\log^+ |t_1-t_2|\ell_1 = \log |t_1-t_2|\ell_1 \leq \log \log\frac1{\ell_1}, \text{ while }\log^+\left(|t_1-t_2|\ell_1/\log\frac1{\ell_1}\right)=0.
\]
\item If $|t_1-t_2|\ell_1>\log\frac1{\ell_1}$ then
\[
\log^+ |t_1-t_2|\ell_1 = \log |t_1-t_2|\ell_1 = \log \left(\frac{|t_1-t_2|\ell_1}{\log\frac1{\ell_1}}\right) + \log \log\frac1{\ell_1}.
\]
\end{itemize}

Thence
\[
\frac 12 \log \frac{\ell_2}{\ell_1}+ \log^+ |t_1-t_2|\ell_1 \leq \log \frac{\ell_2}{\ell_1}+ \log^+ \left(\frac{|t_1-t_2|\ell_1}{\log\frac1{\ell_1}}\right)+\log \log\frac1{\ell_1}.
\]
Using~\eqref{eq:min-H-coarse} and~\eqref{eq:min-ann-L}, and absorbing their uniform additive errors into a single constant $C$, we get
\[
\frac12 d_{\Hh}^{\mathrm{std}}(z_1,z_2) \leq d_{L(\gamma)}(z_1,z_2)+\log \log\frac1{\ell_1} + C
\]
as desired.
\end{proof}

Now we can characterize the asymptotic cones of the horoballs for the Lipschitz metric.
\begin{lem}
\label{lem:min-annular-cones}
For each $\gamma \in \Gamma$, the map
\[
 F(t,y)=(t,y\log y)
\]
is a quasi-isometry from $(U_\eps,d_{L(\gamma)})$ onto a \emph{hyperbolic
horoball}: \[
 F(U_\epsilon) = \left\{(t,y) \in \Hh^2 \colon y \geq \frac{ \log(1/\epsilon)}{\epsilon} \right\},
 \]
Consequently, every asymptotic cone of $(U_\eps,d_{L(\gamma)})$ is bilipschitz equivalent to an $\R$--tree.
\end{lem}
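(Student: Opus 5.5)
The plan is to compare the two distance formulas already in hand. We have the coarse formula~\eqref{eq:min-ann-L} for $d_{L(\gamma)}$ from \Cref{lem:min-annulus}, and the coarse formula for the standard hyperbolic metric from \Cref{lem:min-annulus-pre}. We evaluate the latter at the images $F(z_1),F(z_2)$ and check that the two agree up to a multiplicative factor $2$ and a uniform additive error.

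\textbf{Normalization of the twist term.} Before using \Cref{lem:min-annulus-pre} we fix its twist term. Expanding~\eqref{eq:min-H-exact-first}, the twist contribution to $2\cosh d$ is $(t_1-t_2)^2\ell_1\ell_2$, which equals $r s^2$ with $r=\ell_2/\ell_1$ only if $s=|t_1-t_2|\ell_1$. So the correct statement, written in terms of heights $y_1\ge y_2$, is
\[
d_{\Hh}^{\mathrm{std}}\bigl((t_1,y_1),(t_2,y_2)\bigr)=\log\frac{y_1}{y_2}+2\log^+\frac{|t_1-t_2|}{y_1}+O(1).
\]
This normalization is the one consistent with \Cref{lemma:CR} and with the proof of~\eqref{eq:min-log-inequality}. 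Getting it right is the main point of care.

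\textbf{Comparison on the image of $F$.} Write $y_j=1/\ell_j$ with $\ell_1\le\ell_2$, and set $Y_j=y_j\log y_j$. The map $y\mapsto y\log y$ is increasing on $[1/\eps,\infty)$, so $Y_1\ge Y_2$.

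For the height term, $\log(Y_1/Y_2)=\log(\ell_2/\ell_1)+\log(\log y_1/\log y_2)$. The second summand is nonnegative. It is also at most $\log y_1-\log y_2=\log(\ell_2/\ell_1)$, because $\log y\ge1$ on $U_\eps$ (as $\eps<e^{-1}$) and $\log$ is $1$-Lipschitz on $[1,\infty)$. Hence
\[
\log(\ell_2/\ell_1)\le \log(Y_1/Y_2)\le 2\log(\ell_2/\ell_1).
\]

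For the twist term, $|t_1-t_2|/Y_1=|t_1-t_2|\ell_1/\log(1/\ell_1)$, which is exactly the argument of $\log^+$ in~\eqref{eq:min-ann-L}. Combining the two terms gives
\[
d_{L(\gamma)}(z_1,z_2)-O(1)\le d_{\Hh}^{\mathrm{std}}(F(z_1),F(z_2))\le 2\,d_{L(\gamma)}(z_1,z_2)+O(1),
\]
with errors depending only on $\eps$ and the topology of $S$. So $F$ is a $(2,O(1))$ quasi-isometric embedding. Since $y\mapsto y\log y$ is a bijection from $(1/\eps,\infty)$ onto $(\log(1/\eps)/\eps,\infty)$, $F$ is onto the stated horoball (up to the boundary line, which is at distance $0$). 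Hence $F$ is a quasi-isometry.

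\textbf{Asymptotic cones.} A horoball in $\Hh^2$ is convex, so with the restricted hyperbolic metric it is a geodesic $\delta$-hyperbolic space. Every asymptotic cone of a geodesic $\delta$-hyperbolic space is an $\R$-tree. A $(2,C)$-quasi-isometry induces a $2$-bilipschitz homeomorphism between asymptotic cones taken with corresponding basepoints and scales, since additive errors vanish after dividing by $R_i\to\infty$, exactly as in \Cref{lem:min-cube-cone}. Therefore every cone of $(U_\eps,d_{L(\gamma)})$ is bilipschitz to an $\R$-tree.
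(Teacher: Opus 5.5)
Your proof is correct and follows the paper's route. You evaluate the horoball estimate of \Cref{lem:min-annulus-pre} at $F(z_1),F(z_2)$, bound the extra term $\log\bigl(\log(1/\ell_1)/\log(1/\ell_2)\bigr)$ between $0$ and $\log(\ell_2/\ell_1)$, compare with \eqref{eq:min-ann-L}, and then pass to asymptotic cones of a convex horoball. Your correction of the twist normalization to $s=|t_1-t_2|\ell_1$ is right, and it is the version the paper implicitly uses both in this proof and in \eqref{eq:min-log-inequality}.
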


\begin{proof}
Note that $F$ maps $U_\epsilon$ bijectively onto a
hyperbolic horoball, so in particular, its image is coarsely dense.  In fact, the function $\ell \longmapsto \frac{\log(1/\ell)}{\ell}$ maps $(0,\epsilon]$ bijectively onto  $\left[ 
 \frac{\log(1/\epsilon)}{\epsilon},\infty\right).$
 Thus
 \[
 F(U_\epsilon) = \left\{(t,y) \in \Hh^2 \colon y \geq \frac{ \log(1/\epsilon)}{\epsilon} \right\}.
 \]

So all we need to show is that $F$ is a quasi-isometric embedding. Pick $z_1, z_2 \in U_\epsilon$, so that $z_i = (t_i, 1/\ell_i)$ for $i = 1,2$. Assume without loss of generality that $\ell_1 \leq \ell_2$. Then by \Cref{lem:min-annulus-pre}, we have
\begin{equation}
d_{\Hh}^{\mathrm{std}}(z_1,z_2)=\log \frac{\ell_2}{\ell_1} +2\log^+ \frac{|t_1-t_2|}{\ell_1} + O(1).
\label{eq:min-H-coarse-pre}
\end{equation}
Thus, 
\begin{equation}
\begin{aligned}
\frac12d_{\Hh}^{\mathrm{std}}(F(z_1),F(z_2))& 
 ={}\frac12d_{\Hh}^{\mathrm{std}}(F(t_1,1/\ell_1),F(t_2,1/\ell_2))\\
 &={}
 \frac12 \log \left( \frac{\log(1/\ell_1)/\ell_1}{\log(1/\ell_2)/\ell_2}\right)
 +
 \log^+ \left( \frac{|t_1-t_2|\ell_1}{\log(1/\ell_1)}\right)+O(1).
    \label{eq:min-F-formula}
 \end{aligned}
 \end{equation}
 The first term can be expanded as
 \[
\log\left(\frac{\log(1/\ell_1)/\ell_1}{\log(1/\ell_2)/\ell_2}\right)
 = \log \frac{\ell_2}{\ell_1} + \log \left( \frac{\log (1/\ell_1)}{\log(1/\ell_2)}\right).
 \]
 So
 \begin{equation*}
 \begin{aligned}
 \frac12 d_{\Hh}^{\mathrm{std}} \left(F(t_1,1/\ell_1), F(t_2,1/\ell_2)\right) = \\
 & = \frac12 \log \frac{\ell_2}{\ell_1}
 +
 \frac12 \log \left(
 \frac{\log(1/\ell_1)}{\log(1/\ell_2)}\right)
 +
 \log^+ \left(\frac{|t_1-t_2|\ell_1}{\log(1/\ell_1)}\right)+O(1).
 \end{aligned}
 \end{equation*}
 Finally, we estimate the additional logarithmic term.
 Since
 \[
\log\frac1{\ell_1}=\log\frac1{\ell_2}+\log\frac{\ell_2}{\ell_1},
 \]
 we have
 \begin{align*}
 \log \left( \frac{\log(1/\ell_1)}{\log(1/\ell_2)}\right)
 &=
 \log \left(1+\frac{\log(\ell_2/\ell_1)}{\log(1/\ell_2)}\right).
 \end{align*}
 Because $\ell_2\leq \epsilon < e^{-1}$, we have
 $\log(1/\ell_2)\geq 1$. Hence
 \[
 0 \leq \log \left(\frac{\log(1/\ell_1)}{\log(1/\ell_2)}\right)
 \leq \log \frac{\ell_2}{\ell_1}.
 \]
 Here we used $\log(1+x) \leq x$ for $x \geq 0$.

 Comparing this formula with~\eqref{eq:min-ann-L} in \Cref{lem:min-annulus}, and using the non-negativity of the $\log \left(\frac{\log \dots}{\log \dots}\right)$ term we obtain a uniform constant $C>0$ so that
 \begin{align}
 \frac12 d_{L(\gamma)}\left((t_1,1/\ell_1),(t_2,1/\ell_2)\right)-C & \leq \frac12 d_{\Hh}^{\mathrm{std}}\left(F(t_1,1/\ell_1),F(t_2,1/\ell_2) \right) \\
 &\leq  d_{L(\gamma)}\left((t_1,1/\ell_1),(t_2,1/\ell_2)\right)+C.
\label{eq:inequalities}
 \end{align}
 Therefore  $F$ is a quasi-isometry onto a hyperbolic horoball.

 After rescaling to form an asymptotic cone, the additive constant $C$ disappears, so $F$ induces a uniform bilipschitz equivalence of asymptotic cones. A hyperbolic horoball is convex in $\Hh$, and its asymptotic cones are $\mathbb R$-trees (see, e.g., ~\cite[Proposition~9.9]{Kap00:Kapovich2000HyperbolicMA}). Therefore every asymptotic cone of the modified annular factor is bilipschitz equivalent to an $\mathbb R$-tree, as desired.
\end{proof}

For $X,Y\in\Teich(S)$, define
\begin{equation}
 \Gamma_\eps(X,Y)
 =\{\alpha:\ell_X(\alpha)<\eps\text{ and }\ell_Y(\alpha)<\eps\}
\label{eq:simultaneously-short}
\end{equation}
and
\begin{equation}
 P_\eps(X,Y)=\max\left(
 \{2\}\cup
 \left\{
  \log\frac1{\ell_X(\alpha)},
  \log\frac1{\ell_Y(\alpha)}:
  \alpha\in\Gamma_\eps(X,Y)
 \right\}\right).
 \label{eq:min-P}
\end{equation}
Roughly, $P_\epsilon(X,Y)$ is the depth in the thin part over curves that are mutually short for both $X$ and $Y$. 
We then have the following global comparison between the two metrics.

\begin{lem}
\label{lem:min-global}
There are constants $A,B$ depending only on the topology of $S$ and the cutoff constant $\epsilon$ such that
\begin{equation}
 d_T(X,Y)
 \le A d_L(X,Y)+B\log\bigl(P_\eps(X,Y)\bigr)+B
 \label{eq:min-global}
\end{equation}
for all $X,Y\in\Teich(S)$.
\end{lem}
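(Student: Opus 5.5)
Let $\Gamma=\Gamma_\eps(X,Y)$ be the set of curves that are $\eps$-short for both $X$ and $Y$, and split into two cases.

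\textbf{Case $\Gamma=\varnothing$.} Here $X$ and $Y$ have no $\eps$-short curves in common. The estimate~\eqref{eq:min-no-common} gives $d_T(X,Y)\le A_0d_L(X,Y)+B_0$ directly, and since $P_\eps\ge 2$ the term $B\log P_\eps$ is nonnegative, so the bound holds.

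\textbf{Case $\Gamma\neq\varnothing$.} Both $X,Y$ lie in $\Thin_\eps(S,\Gamma)$, so we can use the product coordinates $\Pi_\Gamma$. Minsky's formula~\eqref{eq:min-minsky} bounds $d_T(X,Y)$, up to $O(1)$, by the maximum of two kinds of term.
\begin{itemize}
\item The thick term $d_T(X_0,Y_0)$.
\item The annular terms $\frac12 d_{\Hh}^{\mathrm{std}}(z_\gamma(X),z_\gamma(Y))$ for $\gamma\in\Gamma$.
\end{itemize}
We bound each by a multiple of $d_L(X,Y)$ plus $O(\log P_\eps)$, then combine using the Choi--Rafi formula~\eqref{eq:min-CR-product}, which bounds $d_L(X_0,Y_0)$ and each $d_{L(\gamma)}$ above by $d_L(X,Y)+O(1)$.

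For the annular terms, apply~\eqref{eq:min-ann-error} of \Cref{lem:min-annulus}:
\[
\tfrac12 d_{\Hh}^{\mathrm{std}}(z_\gamma(X),z_\gamma(Y))\le d_{L(\gamma)}(z_\gamma(X),z_\gamma(Y))+\log\log\tfrac1{\ell_1}+O(1),
\]
where $\ell_1=\min\{\ell_X(\gamma),\ell_Y(\gamma)\}$. By definition~\eqref{eq:min-P}, $\log\frac1{\ell_1}\le P_\eps(X,Y)$, so this term is at most $d_L(X,Y)+\log P_\eps(X,Y)+O(1)$.

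For the thick term, apply the no-common-short-curves estimate~\eqref{eq:min-no-common} on $S\setminus\Gamma$, componentwise, to the pinched surfaces $X_0,Y_0$. This gives $d_T(X_0,Y_0)\le A_0 d_L(X_0,Y_0)+B_0$, valid provided $X_0$ and $Y_0$ have no $\eps$-short curves in common. That condition holds, up to adjusting constants, because any non-peripheral curve short on both would be short on both $X$ and $Y$ via~\eqref{eq:min-pinching}, and hence would belong to $\Gamma$.

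\textbf{Main obstacle.} The delicate step is this last transfer. A curve $\beta$ could be short in $X_0$ but only moderately short (between $\eps/K_0$ and $\eps$) in $X$, or short in $X$ but in the thick part of $Y$. I would handle this by applying~\eqref{eq:min-no-common} with a slightly smaller cutoff on the pinched surfaces, chosen using $K_0$ so that every curve short on both $X_0$ and $Y_0$ is genuinely in $\Gamma$. I would also use that the constants in~\eqref{eq:min-no-common} are uniform over the finitely many topological types of $S\setminus\Gamma$.

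If this fails, the fallback is to compare via the thick parts. Both $X_0$ and $Y_0$ are then uniformly thick relative to the pinched cusps, and on the thick part $d_T$ and $d_L$ are quasi-isometric by~\eqref{eq:min-wolpert} together with Choi--Rafi's comparison.
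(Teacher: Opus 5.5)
Your proposal is correct and follows essentially the same route as the paper. You take $\Gamma=\Gamma_\eps(X,Y)$, bound the annular terms of Minsky's formula by \eqref{eq:min-ann-error} together with $\log\frac{1}{\ell_1}\le P_\eps$, bound the complementary term by \eqref{eq:min-no-common} applied to the pinched surfaces at a smaller cutoff $\eta$ with $K_0\eta<\eps$, and then return to $d_L(X,Y)$ via the Choi--Rafi product formula; your ``smaller cutoff chosen using $K_0$'' is precisely the paper's device, so the fallback is unnecessary, and its premise is false in general, since a curve short on only one of $X,Y$ survives as a short curve in $X_0$ or $Y_0$, so these need not be uniformly thick.
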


\begin{proof}
Let $\Gamma=\Gamma_\eps(X,Y)$. Let $K_0$ be the length comparison constant from Eq.~\eqref{eq:min-pinching}. Choose a second
cutoff $\eta$ so small that
\begin{equation}
 K_0\eta<\eps,
 \label{eq:min-nested}
\end{equation} 
Cut $X$ and $Y$ along the $\Gamma$ curves. Let $X_0, Y_0$ be the points in $\Teich(S \setminus \Gamma)$ obtained by pinching the boundary components of $X\setminus \Gamma$ and $Y \setminus \Gamma$ to punctures and preserving the rest of the Fenchel-Nielsen coordinates of $X$ and $Y$, respectively.
We claim that $X_0,Y_0$ have no common $\eta$--short essential closed curves: otherwise
\eqref{eq:min-pinching} and~\eqref{eq:min-nested} would give an additional common
$\eps$--short curve of $X,Y$ not in $\Gamma$.  Hence~\eqref{eq:min-no-common} gives
\[
 d_T(X_0,Y_0)\le A_0d_L(X_0,Y_0)+B_0.
\]
If the complement is disconnected, apply this componentwise and take the
maximum; the constants are uniform over the finitely many component types.
Then
\[
 \max\left\{\log\frac1{\ell_X(\gamma)},
                 \log\frac1{\ell_Y(\gamma)}\right\}
 \le P_\eps(X,Y),
\]
so, combining this inequality and Eq.~\eqref{eq:min-ann-error} from \Cref{lem:min-annulus}, we get
\begin{equation}
\frac12d_{\Hh}^{\mathrm{std}}(z_\gamma(X),z_\gamma(Y))
 \le d_{L(\gamma)}(z_\gamma(X),z_\gamma(Y))
      +\log P_\eps(X,Y)+C.
\label{eq:inequalities2}
\end{equation}
Using the fact that the curves in $\Gamma$ are $\epsilon$-short for both $X$ and $Y$, we can insert Equation~\eqref{eq:inequalities2} into Equation~\eqref{eq:min-minsky}, which allows us to replace the annular terms of $d_T$ by the annular terms of $\dsym$. Finally, we use
\eqref{eq:min-CR-product} to obtain a global comparison with $\dsym$.   This proves 
\begin{equation}
d_T(X,Y)
 \le A d_L(X,Y)+B\log\bigl(2+P_\eps(X,Y)\bigr)+B.
\label{eq:intermediate_eq}
 \end{equation}
 As $P_\epsilon \geq 2$, we can always bound $2+ P_\epsilon \leq 2P_\epsilon$, so after increasing $B$ to $(1+ \log 2)B$, we obtain
~\eqref{eq:min-global}.
\end{proof}

\subsection{Asymptotic cones and the deep multicurve}

For the rest of the upper-bound proof, choose a non-principle ultrafilter $\omega$ of $\N$. Fix a $d_L$--asymptotic cone of $\Teich(S)$ 
with basepoints $X_i$ and scales $R_i \to_\omega \infty$, and denote it by 
\[
\mathcal X= Cone_\omega(X,X_i,R_i;d_L).
\]
See \Cref{subsec:asymptotic_cone} for the definition and relevant background. We will eventually choose the $X_i$ dependent on a sequence of quasi-isometric embeddings $f_i: [0,R_i]^m \to \Teich(S)$.

For $\epsilon>0$, we define the $\eps$--depth of a curve $\alpha$ as
\begin{equation}
 \delta_X(\alpha)
 =\max\left\{0,\log\frac\eps{\ell_X(\alpha)}\right\}.
 \label{eq:min-depth}
\end{equation}
Thus $\delta_X(\alpha)>0$ exactly when $\alpha$ is $\eps$--short. We will simply refer to $\delta_X(\alpha)$ as the \textbf{depth} of $\alpha$, since the  $\epsilon$ is fixed once and for all as the cutoff constant.
We will be interested in sequences of curves with large depth with respect to the scales $R_i$, i.e.,
whose depth goes to infinity. In fact, more precisely, we will single out those sequences of curves whose
depth goes to infinity super-linearly in $R_i$ (along $\omega$). 
Let $(\alpha_i)$ be a sequence of
curves $\alpha_i \subset X_i$.  Relative to $(X_i)$ and the scales $(R_i)$, we call
$(\alpha_i)$ \textbf{shallow}  if
\[
 \delta_{X_i}(\alpha_i)=O_\omega(R_i),
\]
and \textbf{deep} if
\[
 \frac{\delta_{X_i}(\alpha_i)}{R_i}
 \longrightarrow_\omega\infty.
\]
Note that the curves in a shallow sequence $(\alpha_i)$ can be both $\epsilon$-short or not, but the curves in a deep sequence are all $\epsilon$-short.
Observe also that the ultralimit of
$\delta_{B_i}(\alpha_i)/R_i$ shows that shallow and deep are
mutually exclusive and exhaustive for sequences.

Since  $\epsilon$ is smaller than the Margulis constant, we have that the set $\Gamma_i$ of $\epsilon$-short curves on any $X_i$ are disjoint. The following lemma shows that any deep sequence must be chosen from particular sets of curves $\Gamma_i$ in the $X_i$ that have the same size for each $i$. This will mean that, up to mapping class group action, deep sequences can be sampled from a single multicurve $\Gamma$, so they are essentially constant.

\begin{lem}
\label{lem:min-deep-selection}
Let $X_i$ denote the basepoints of the $d_L$-asymptotic cone $\mathcal{X}$. 
There exists $k\le\xi(S)$ so that for each $i$, there is a $k$--component simple closed multicurves $\Gamma_i$ such that any sequence $(\alpha_i)$ where $\alpha_i \in \Gamma_i$ for each $i$ is deep, and any sequence $(\beta_i)$ where $\beta_i \not \in \Gamma_i$ for each $i$ is shallow, i.e.,
\begin{equation}
 \min_{\gamma\in\Gamma_i}
 \frac{\delta_{X_i}(\gamma)}{R_i}\longrightarrow_\omega\infty,
 \qquad
 \sup_{\beta\notin\Gamma_i}\delta_{X_i}(\beta)=O_\omega(R_i).
 \label{eq:min-deep-split}
\end{equation}
\end{lem}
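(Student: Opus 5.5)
The plan is a pigeonhole argument on depth scales, run along the ultrafilter. For each $i$, let $\Sigma_i$ be the set of $\epsilon$-short curves on $X_i$. Since $\epsilon$ is below the Margulis constant, $\Sigma_i$ is a simple multicurve with $|\Sigma_i|\le\xi(S)$. Order its components by depth,
\[
\delta_{X_i}(\gamma_i^1)\ge\delta_{X_i}(\gamma_i^2)\ge\cdots\ge\delta_{X_i}(\gamma_i^{m_i}),
\]
and set $\delta_{X_i}(\gamma_i^j)=0$ for $j>m_i$. The number $m_i$ takes only finitely many values, so it is $\omega$-almost constant, say equal to $m$. For each $j\le m$, consider the normalized depth $a_j(i)=\delta_{X_i}(\gamma_i^j)/R_i$. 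Its $\omega$-limit $L_j\in[0,\infty]$ exists and is nonincreasing in $j$. Take $k$ to be the number of indices $j$ with $L_j=\infty$; then $k\le m\le\xi(S)$. Finally, put $\Gamma_i=\{\gamma_i^1,\dots,\gamma_i^k\}$, which is a $k$-component simple multicurve because it is a submulticurve of $\Sigma_i$.

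The first half of \eqref{eq:min-deep-split} comes from the ordering. We have $\min_{\gamma\in\Gamma_i}\delta_{X_i}(\gamma)=\delta_{X_i}(\gamma_i^k)$, and $a_k\to_\omega\infty$ by the choice of $k$. For the second half, split curves $\beta\notin\Gamma_i$ into two cases. If $\beta$ is not $\epsilon$-short, then $\delta_{X_i}(\beta)=0$. Otherwise $\beta=\gamma_i^j$ for some $j>k$, and then
\[
\delta_{X_i}(\beta)\le\delta_{X_i}(\gamma_i^{k+1})=a_{k+1}(i)R_i .
\]
Since $L_{k+1}<\infty$, we get $a_{k+1}(i)\le L_{k+1}+1$ on an $\omega$-large set, so the supremum over $\beta\notin\Gamma_i$ is at most $(L_{k+1}+1)R_i$ there, i.e.\ it is $O_\omega(R_i)$. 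If $k=m$, the supremum is simply $0$. The statements about sequences then follow directly from these two bounds.

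The only delicate point is making every choice hold on a common $\omega$-large set. These choices are the value of $m$, each of the finitely many limits $L_j$, and the bound on $a_{k+1}$. Because there are finitely many of them, this is handled by intersecting finitely many sets in $\omega$. The argument uses nothing beyond the bound $|\Sigma_i|\le\xi(S)$ and the existence of $\omega$-limits in $[0,\infty]$.
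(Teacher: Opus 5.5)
Your argument is correct and is essentially the paper's proof. Both order the $\epsilon$-short curves by depth, pad with zeros, take $\omega$-limits of the depths divided by $R_i$, let $k$ count the infinite limits, and use monotonicity in $j$ for both halves of \eqref{eq:min-deep-split}. The one cosmetic difference is that you fix the $\omega$-almost constant value $m$ of $m_i$, whereas the paper checks that $\{i : m_i\ge j\}$ is $\omega$-large for each $j\le k$. As the paper does, you should also take $\Gamma_i$ to be an arbitrary $k$-component multicurve on the $\omega$-null set where $m_i<k$, since the statement asks for $\Gamma_i$ at every index.
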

\begin{proof}
Let $m_i$ be the number of $\epsilon$ short curves in $X_i$, for each $i$. The $\epsilon$-short curves in $X_i$ are pairwise disjoint, so $m_i \leq \xi(S)$.  Write the curves as $\alpha_i^1,\ldots,\alpha_i^{m_i}$, ordered so that if
$a_i^j\coloneqq\delta_{X_i}(\alpha_i^j)$ then
\[
a_i^1 \geq \cdots \geq a_i^{m_i}>0,
\]
Pad the sequence by setting
\begin{equation}
\label{eq:padding}
a_i^j=0 \text{ for } m_i < j \leq \xi(S)
\end{equation}
For each fixed $1 \leq j \leq \xi(S)$, define
\[
 \lambda_j=\lim_\omega\frac{a_i^j}{R_i}\in[0,\infty].
\]
Since $a_i^j \geq a_i^{j+1}$ for every $i$, we have
\[
\lambda_1 \geq \cdots \lambda_{\xi(S)}
\]
Hence there is a single integer $0\leq k \leq \xi(S)$ such that 
\[
\lambda_j = \infty\quad
\Longleftrightarrow\quad 1 \leq j \leq k.
\]
Notice that if $j \leq k$,
then the set
\[
\{ i \colon m_i \geq j \}
\]
is $\omega$--large. Indeed, on its complement we have $a_i^j=0$ by~\eqref{eq:padding}, which is incompatible with $\lambda_j = \infty$.
Thus, if $k>0$, then on an $\omega$--large set the curves
\[
\alpha_i^1, \cdots, \alpha_i^k
\]
all exist. On this set we define
\[
\Gamma_i = \{ \alpha_i^1,\cdots, \alpha_i^k \}.
\]
On the complementary $\omega$--null set, choose any $k$-component simple multicurve; when $k=0$, put $\Gamma_i=\varnothing$.
Since the selected depths are decreasing,
\[
\min_{\gamma \in \Gamma_i} \delta_{X_i}(\gamma)=a_i^k
\]
on an $\omega$--large set, and hence
\[
\lim_\omega \min_{\gamma \in \Gamma_i} \frac{\delta_{X_i}(\gamma)}{R_i}=\lambda_k = \infty
\]
when $k>0$. This proves that any sequence sampled from the $\Gamma_i$ is deep.

Now let $(\beta_i)$ be a sequence so that $\beta_i \not \in \Gamma_i$ for each $i$. Then either $\beta_i$ has zero depth, or it has  positive-depth, but it does not belong to $\Gamma_i$. If $\beta_i$ does not belong to $\Gamma_i$, then it has depth at most $a_i^{k+1}$. Since
\[
\lambda_{k+1}=\lim_\omega\frac{a_i^{k+1}}{R_i}<\infty
\]
we obtain
\[
\sup_{\beta \notin \Gamma_i}\delta_{X_i}(\beta)
\leq a_i^{k+1}=O_\omega(R_i).
\]
This is~\eqref{eq:min-deep-split}. So we have shown that any sequence that is not sampled from the $\Gamma_i$ is shallow.
\end{proof}

We next show that the collection of deep multicurves can be stabilized. 

\begin{lem}
\label{lem:min-deep-persistence}
There exists a an asymptotic cone $\mathcal X'$ for $d_L$ that is isometric to $\mathcal X$, and a 
simple closed multicurve $\Gamma$, so that if $(\alpha_i)$ is any sequence where $\alpha_i \in \Gamma$ for each $i$ then $(\alpha_i)$ is deep, and if $(\beta_i)$ is any sequence where $\beta_i \not \in \Gamma$ for each $i$, then $(\beta_i)$ is shallow with respect to the basepoints of $\mathcal X'$.  Furthermore, the same holds with respect to any $d_L$-admissible sequence $(Y_i)$. In particular, for every $d_L$--admissible sequence $(Y_i)$,
\begin{equation}
  \min_{\gamma\in\Gamma}
 \frac{\delta_{Y_i}(\gamma)}{R_i}\longrightarrow_\omega\infty,
 \qquad
 \sup_{\beta\notin\Gamma}\delta_{Y_i}(\beta)=O_\omega(R_i).
 \label{eq:min-persistent-split}
\end{equation}
\end{lem}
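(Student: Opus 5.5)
Starting from \Cref{lem:min-deep-selection}, I obtain $k$ and multicurves $\Gamma_i$ with $k$ components, deep along $\Gamma_i$ and shallow off $\Gamma_i$. There are finitely many $\MCG(S)$-orbits of $k$-component simple multicurves, so on an $\omega$-large set the $\Gamma_i$ share a single topological type. Choosing a representative $\Gamma$ and mapping classes $\phi_i$ with $\phi_i(\Gamma_i)=\Gamma$, I replace the basepoints $X_i$ by $X_i'=\phi_i X_i$. The mapping class group acts isometrically on $(\Teich(S),d_L)$, so $(Y_i)\mapsto(\phi_i Y_i)$ preserves admissibility and rescaled distances. It therefore induces an isometry $\mathcal X\to\mathcal X'=\Cone_\omega(\Teich(S),X_i',R_i;d_L)$. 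Since $\ell_{\phi X}(\phi\alpha)=\ell_X(\alpha)$, depths transform naturally, and the deep/shallow dichotomy of \Cref{lem:min-deep-selection} holds at $X_i'$ with the constant multicurve $\Gamma$.

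Next I transfer this to an arbitrary admissible sequence $(Y_i)$, so $d_L(X_i',Y_i)\le MR_i$ on an $\omega$-large set. The key tool is the length comparison built into the Thurston metric. For every curve $\alpha$,
\[
\left|\log\ell_{Y_i}(\alpha)-\log\ell_{X_i'}(\alpha)\right|\le d_{\Th}(X_i',Y_i)+d_{\Th}(Y_i,X_i')\le 2d_L(X_i',Y_i)\le 2MR_i.
\]
This holds because in Teichm\"uller space the entropy is constant, so $d_{\Th}$ is exactly $\log\sup_\alpha \ell_Y(\alpha)/\ell_X(\alpha)$. Hence $|\delta_{Y_i}(\alpha)-\delta_{X_i'}(\alpha)|\le 2MR_i+O(1)$. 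I need to be careful here because $\delta=\max\{0,\log(\eps/\ell)\}$ is $1$-Lipschitz in $\log\ell$, so the bound passes through.

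For $\gamma\in\Gamma$ I get $\delta_{Y_i}(\gamma)/R_i\ge\delta_{X_i'}(\gamma)/R_i-2M-o(1)\to_\omega\infty$, and taking the minimum over the finitely many components gives the first half of \eqref{eq:min-persistent-split}. For $\beta\notin\Gamma$ I get $\delta_{Y_i}(\beta)\le\delta_{X_i'}(\beta)+2MR_i+O(1)$. Taking the supremum and using the shallow bound at $X_i'$ gives $\sup_{\beta\notin\Gamma}\delta_{Y_i}(\beta)=O_\omega(R_i)$. The deep/shallow statement for sequences sampled from or avoiding $\Gamma$ follows immediately.

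The main obstacle is making the first step rigorous. The $\Gamma_i$ are only defined as sets on an $\omega$-large set, and I must check that translating basepoints by the $\phi_i$ genuinely yields an isometric cone and not merely a bilipschitz one. I will handle this by noting that $\phi_i$ acts isometrically for $d_L$, so the ultralimits of $d_L(\phi_iY_i,\phi_iZ_i)/R_i$ coincide exactly. A secondary check is that $d_{\Th}$ as given by \eqref{eq:asym_metric} restricts on $\Teich(S)$ to Thurston's length-ratio formula, since Liouville currents all have the same entropy $1$ after normalization. This lets me use \eqref{eq:length_inter} directly to compare lengths.
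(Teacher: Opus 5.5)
Your proposal is correct and follows the paper's proof essentially step for step. It uses the same pigeonhole over the finitely many $\MCG(S)$-orbits on an $\omega$-large set, the same level-dependent change of marking $X_i'=\phi_iX_i$ with cone isometry $[Y_i]\mapsto[\phi_iY_i]$, and the same Lipschitz control of depth along admissible sequences.

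The only differences are cosmetic. The paper uses the sharper bound $|\delta_X(\alpha)-\delta_Y(\alpha)|\le d_L(X,Y)$, since each direction of the length ratio is controlled by one of the two Thurston distances and $d_L$ is their maximum; your $2d_L$ bound is looser but equally sufficient. You also do not need the entropy detour: on $\Teich(S)$, $d_{\Th}$ is Thurston's length-ratio metric by definition, which also covers the punctured surfaces treated in that section.
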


\begin{proof}
Let $(\Gamma_i)$ be the sequence of $k$-component multicurves given by \Cref{lem:min-deep-selection}.
There are finitely many $\MCG(S)$--orbits of $k$--component simple multicurves~\cite[Lemma~2.16]{ES22:GeodesicCount}. Thus, at least one $\MCG(S)$ orbit occurs infinitely often. For every infinite set, either the set or its complement is in $\omega$. So there is an $\omega$-large set $A$, a sequence $\phi_i \in \MCG(S)$ for $i \in A$, and a $k$-component simple closed multicurve $\Gamma$, so that $\phi_i(\Gamma_i) = \Gamma$ for all $i$.

Set $\mathbf \phi = (\phi_i)$, and set $X_i' = \phi_i(X_i)$, for each $i$. Then we can define a new asymptotic cone $\mathbf \phi(\mathcal X)$ using the same nonprincipal ultrafilter $\omega$ and the same sequence of scales $R_i$, but with basepoints $X_i'$. We let $\mathcal X' : = \mathbf \phi(\mathcal X)$, 
which we call a \textbf{level-dependent change of marking} of $\mathcal X$. 

In fact, $\mathcal X'$ is isometric to $\mathcal X$, as the $\MCG(S)$-invariance of $d_L$ gives the cone isometry
\[
[Y_i]\longmapsto[\phi_iY_i].
\]
Since
$\delta_{\phi_iX_i}(\phi_i\alpha)=\delta_{X_i}(\alpha)$, the conclusions
of \Cref{lem:min-deep-selection} now hold with the fixed
$\Gamma$.

Admissibility of $(Y_i)$ implies $d_L(X_i, Y_i) = O_\omega(R_i)$. Thus, for any curve $\alpha$,
\begin{equation}
 |\delta_X(\alpha)-\delta_Y(\alpha)|\le d_L(X,Y).
 \label{eq:min-depth-Lip}
\end{equation}
Thus an $O_\omega(R_i)$ move on the basepoint subtracts only $O_\omega(1)$ from the scaled
depths of the selected curves and adds only $O_\omega(R_i)$ to the depths
of the unselected curves.  So a sequence $(\alpha_i)$ is deep (respectively, shallow) for the sequence $X_i$ if and only if it is deep (respectively, shallow) for the sequence $Y_i$.
\end{proof}

\subsection{The product cone and its factors}

From now on, we can assume without loss of generality that the conclusions of \Cref{lem:min-deep-persistence} hold for our original asymptotic cone $\mathcal X$. In particular, we can assume there is a simple closed multicurve $\Gamma$ so that, for all $\gamma \in \Gamma$, the constant sequence $(\gamma)_{i \in \N}$ is deep for any sequence $[Y_i]$ in $\mathcal X$. In particular, on the sequence of basepoints $(X_i)$, we have 
\[
X_i \in \Thin_\eps(S, \Gamma)
\]
on an $\omega$-large set of indices $i$.

 Cut $S$ along $\Gamma$, and replace the boundaries of the resulting connected components with punctures. Let $Q_1, \dots, Q_q$ be the resulting punctured surfaces. We then consider the map 
\[
\Pi_\Gamma \colon \Teich(S) \longrightarrow \prod_{r=1}^q \Teich(Q_r) \times \prod_{\gamma \in \Gamma}U_\epsilon
\]
where
\begin{equation}
 \Pi_\Gamma(X_i)
 =\bigl((X_{i,r})_{r=1}^q,
        (z_\gamma(X_i))_{\gamma\in\Gamma}\bigr).
 \label{eq:min-base-product-coordinates}
\end{equation}
is defined just as in Eq.~\eqref{eq:min-product-coordinates}. Note that in Eq.~\eqref{eq:min-product-coordinates}, we defined the domain of $\Pi_\Gamma$ to be $\Thin_\eps(S, \Gamma)$, but it is well-defined on all of $\Teich(S)$.

We now obtain a factor-wise isometric embedding at the level of cone factors.

\begin{lem}
\label{lem:min-cone-product}
Let
\[
 \mathcal X=\Cone_\omega(\Teich(S),X_i,R_i;d_L),
 \qquad
 \mathcal X_r=\Cone_\omega(\Teich(Q_r),X_{i,r},R_i;d_L),
\]
and set
\[
 \mathcal A_\gamma
 =\Cone_\omega(U_\eps,z_\gamma(X_i),R_i;d_{L(\gamma)}).
\]
Then product coordinates give an isometric embedding
\begin{equation}
 \Psi:\mathcal X\longrightarrow
 \prod_{r=1}^q\mathcal X_r
 \times\prod_{\gamma\in\Gamma}\mathcal A_\gamma
 \label{eq:min-cone-product}
\end{equation}
where we take the sup metric on the right hand side.
\end{lem}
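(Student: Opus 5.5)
We define $\Psi$ on representatives by $\Psi([Y_i]) = \bigl(([Y_{i,r}])_{r=1}^q, ([z_\gamma(Y_i)])_{\gamma\in\Gamma}\bigr)$, where $\Pi_\Gamma(Y_i)=((Y_{i,r})_r,(z_\gamma(Y_i))_\gamma)$. The whole argument rests on the Choi--Rafi product formula~\eqref{eq:min-CR-product}. It applies to pairs of points lying in $\Thin_\eps(S,\Gamma)$, where the $O(1)$ error is uniform in $\eps$ and the topology of $S$. After rescaling by $R_i$, that additive error disappears.

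The first step is to check that every admissible $(Y_i)$ lies in $\Thin_\eps(S,\Gamma)$ on an $\omega$-large set. By \Cref{lem:min-deep-persistence}, equation~\eqref{eq:min-persistent-split}, every $\gamma\in\Gamma$ satisfies $\delta_{Y_i}(\gamma)/R_i\to_\omega\infty$. In particular $\delta_{Y_i}(\gamma)>0$, so $\ell_{Y_i}(\gamma)<\eps$ on an $\omega$-large set; we intersect the finitely many such sets over $\gamma\in\Gamma$. Consequently, for any two admissible sequences $(Y_i),(Z_i)$, formula~\eqref{eq:min-CR-product} gives, on an $\omega$-large set,
\[
d_L(Y_i,Z_i)=\max\Bigl\{\max_r d_L(Y_{i,r},Z_{i,r}),\ \max_{\gamma\in\Gamma} d_{L(\gamma)}(z_\gamma(Y_i),z_\gamma(Z_i))\Bigr\}+O(1).
\]
Here the sup metric on the tuple $(Y_{i,r})_r$ is exactly the metric on $\Teich(S\setminus\Gamma)$.

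The second step is well-definedness. Take $(Z_i)=(X_i)$ in the displayed estimate. Since $d_L(X_i,Y_i)=O_\omega(R_i)$, each factor distance $d_L(X_{i,r},Y_{i,r})$ and $d_{L(\gamma)}(z_\gamma(X_i),z_\gamma(Y_i))$ is at most $d_L(X_i,Y_i)+O(1)=O_\omega(R_i)$. So every coordinate sequence is admissible in its factor cone, with basepoints $X_{i,r}$ and $z_\gamma(X_i)$ respectively. If $(Y_i)\sim(Z_i)$, the same bound shows that every coordinate distance is $o(R_i)$, so the coordinate sequences are equivalent in each factor. Hence $\Psi$ does not depend on the representative.

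The third step is the isometry. We divide the displayed identity by $R_i$ and take $\omega$-limits. The maximum over finitely many terms commutes with $\lim_\omega$, and $O(1)/R_i\to0$. This yields $d_\omega([Y_i],[Z_i])$ equal to the sup of the factor cone distances, so $\Psi$ is an isometric embedding for the sup metric on the product.

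The only delicate point is the first step: we must make sure~\eqref{eq:min-CR-product} is applied with both points in the thin region for the fixed $\Gamma$, with constants independent of $i$. This is exactly what \Cref{lem:min-deep-persistence} provides. The remaining subtlety is that the decomposition uses pinched Fenchel--Nielsen coordinates $X_0$ rather than arbitrary extensions. This is built into~\eqref{eq:min-product-coordinates}, which is valid on $\Thin_\eps(S,\Gamma)$; off the $\omega$-large set the choice of coordinates does not matter.
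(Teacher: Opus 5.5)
Your proposal is correct and follows essentially the same route as the paper. You define $\Psi$ coordinatewise, use \Cref{lem:min-deep-persistence} to place admissible sequences in $\Thin_\eps(S,\Gamma)$ on an $\omega$-large set, apply the Choi--Rafi product formula~\eqref{eq:min-CR-product}, and pass to the $\omega$-limit. Your explicit check, taking $Z_i=X_i$, that the coordinate sequences are admissible in the factor cones is a step the paper leaves implicit, and it is a worthwhile addition.
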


\begin{proof}
We define $\Psi$ by
\[
 \Psi([Y_i])
 =\bigl(([Y_{i,r}])_{r=1}^q,
        ([z_\gamma(Y_i)])_{\gamma\in\Gamma}\bigr).
\]
where $Y_{i,r}$ and $z_\gamma(Y_i)$ are the terms of $\Pi_\Gamma(Y_i)$ for each $i$. 

We first show this is well-defined. Let $(Y_i)$ be $d_L$ admissible, and define
\[
I_Y \coloneqq \{ i : Y_i \in \Thin_\epsilon(S,\Gamma)\}.
\]
By \Cref{lem:min-deep-persistence}, $I_Y$ is $\omega$-large.

Suppose
\[
[Y_i]=[Z_i]\quad\text{in }\mathcal X,
\]
ie,
\[
\lim_\omega \frac{d_L(Y_i,Z_i)}{R_i}=0.
\]

Since intersections of $\omega$-large sets are $\omega$-large, we have that for all $i$ in the $\omega$-large set $I_Y \cap I_Z$, both $Y_i, Z_i \in \Thin_\eps(\Gamma, S)$. Thus, the Choi-Rafi product region formula \cite[Theorem~3.5]{CR07:Comparison}, recorded in Eq~\eqref{eq:min-CR-product}, applies for all such $i$. In fact, set
\[
M_i = \max \left\{\max_r d_L(Y_{i,r},Z_{i,r}),\max_{\gamma \in \Gamma} d_{L(\gamma)}(\bigl(z_\gamma(Y_i),z_\gamma(Z_i)\bigr)\right\}.
\]
The \cite[Theorem~3.5]{CR07:Comparison} gives a uniform constant $C>0$  (depending on the topology of $S$ and the cutoff parameters) so that
\begin{equation}
\bigl|d_L(Y_i,Z_i)-M_i\bigr|\leq C.
\label{eq:M_i}
\end{equation}
on an $\omega$-large set.

So we have
\[
d_L(Y_{i,r},Z_{i,r}) \leq d_L(Y_i,Z_i) + C
\]
and
\[
d_{L(\gamma)}\bigl(z_\gamma(Y_i),z_\gamma(Z_i)\bigr)
\leq 
d_L(Y_i,Z_i)+C.
\]
Since $R_i \to_\omega \infty$, dividing by $R_i$ and passing to the $\omega$-limit, we get
\[
[Y_{i,r}]=[Z_{i,r}]\quad\text{and}\quad[z_\gamma(Y_i)]=[z_\gamma(Z_i)].
\]
So $\Psi$ is well-defined.

Now we show that $\Psi$ is an isometry. Choose any $[Y_i], [Z_i] \in \mathcal X$. Again, the Choi-Rafi product region theorem applies on an $\omega$-large set. Thus, dividing Eq.~\eqref{eq:M_i} by $R_i$ and taking the $\omega$--limit, the error $C/R_i$ disappears.
Since there are finitely many terms, the ultralimit metric commutes with taking the maximum and thus
\begin{align*}
d_L^\omega([Y_i],[Z_i]) = \max \biggl \{& \max_r d_{L,r}^\omega([Y_{i,r}],[Z_{i,r}]),\\
&
\max_{\gamma \in \Gamma}
d_{L(\gamma)}^\omega
\bigl([z_\gamma(Y_i)],[z_\gamma(Z_i)]\bigr)\biggr\}.
\end{align*}
This proves that $\Psi$ is an isometric embedding.
\end{proof}

Now, we show that on each of the components $\mathcal X_r$ above, the Lipschitz and the Teichmuller metrics are bilipschitz equivalent.
\begin{lem} \label{lem:bilip_on_shallow_comps}
Let $Q_r$ be the result of pinching the boundaries of some component of $S\setminus \Gamma$.  For each $i$, let $X_{i,r}\in\Teich(Q_r)$ be the point obtained by keeping only the Fenchel-Nielsen coordinates of $Q_r$. Then there is a bilipschitz
homeomorphism
\begin{equation}
 \Phi: \Cone_\omega(\Teich(Q_r),X_{i,r},R_i;d_L)
 \longrightarrow
 \Cone_\omega(\Teich(Q_r),X_{i,r},R_i;d_T),
 \label{eq:min-shallow-bilip}
\end{equation}
In fact, we show that we can take $\Phi$ to be the identity on representing sequences, and that
\[
 \frac12d_L^\omega\le d_T^\omega\le A d_L^\omega
\]
for $A$ the same constant as in Eq.~\eqref{eq:min-global} of \Cref{lem:min-global}.
Here we set the supremum over an empty curve to be $0$, so if $\xi(Q_r)=1$ (i.e. $Q_r$ is a single pair of pants), the
conclusion above is trivial.
\end{lem}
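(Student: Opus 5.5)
The plan is to verify directly that the identity on representing sequences is well defined and bilipschitz, in three steps: the lower bound, the upper bound via \Cref{lem:min-global} applied in $\Teich(Q_r)$, and the killing of the logarithmic error term. The lower inequality $\frac12 d_L^\omega\le d_T^\omega$ is immediate from Wolpert's inequality \eqref{eq:min-wolpert}, $d_L\le 2d_T$, applied in $\Teich(Q_r)$. Dividing by $R_i$ and taking $\omega$-limits gives it at the cone level. It also shows that $d_T$-admissible sequences are $d_L$-admissible, and that $d_T$-equivalent sequences are $d_L$-equivalent.

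For the upper inequality, I would apply \Cref{lem:min-global} on the surface $Q_r$; it was proved for arbitrary finite type surfaces, with constants depending only on the topology and $\epsilon$. For $Y_{i,r},Z_{i,r}$ this gives
\[
d_T(Y_{i,r},Z_{i,r})\le A\,d_L(Y_{i,r},Z_{i,r})+B\log P_\epsilon(Y_{i,r},Z_{i,r})+B .
\]
After dividing by $R_i$, everything hinges on showing
\[
\log P_\epsilon(Y_{i,r},Z_{i,r})=o_\omega(R_i)
\]
for $d_L$-admissible $(Y_i),(Z_i)$. Once this holds, both admissibility and equivalence transfer from $d_L$ to $d_T$, so $\Phi$ is well defined and bijective. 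Taking limits then gives $d_T^\omega\le A d_L^\omega$.

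The main obstacle is this control of $P_\epsilon$, that is, showing the depth of every $\epsilon$-short curve of $Y_{i,r}$ is $O_\omega(R_i)$. A curve $\beta$ that is short on $Y_{i,r}$ is an essential non-peripheral curve of $Q_r$, hence a curve of $S$ not in $\Gamma$. I would use the pinching comparison \eqref{eq:min-pinching}, possibly after shrinking $\epsilon$ by the factor $K_0$ as in the proof of \Cref{lem:min-global}, to get
\[
\log\frac1{\ell_{Y_{i,r}}(\beta)}\le \log\frac1{\ell_{Y_i}(\beta)}+O(1)=\delta_{Y_i}(\beta)+O(1).
\]
By \Cref{lem:min-deep-persistence}, $\sup_{\beta\notin\Gamma}\delta_{Y_i}(\beta)=O_\omega(R_i)$ for every $d_L$-admissible sequence. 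Hence $P_\epsilon=O_\omega(R_i)$, so $\log P_\epsilon=O_\omega(\log R_i)=o_\omega(R_i)$, which is exactly the bound needed.

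The subtle point is that \eqref{eq:min-pinching} is stated for curves in the thick part. For curves in the thin part of $Y_i$ other than $\Gamma$, I would instead argue directly that pinching $\Gamma$ changes the length of a short curve disjoint from $\Gamma$ only by a bounded multiplicative factor. This is a standard consequence of the Fenchel--Nielsen construction, since that curve's own coordinate is retained, and it is also implicit in \cite{CR07:Comparison,Minsky1996ProductRegions}. The case $\xi(Q_r)=1$ is trivial, since $\Teich(Q_r)$ is a point.
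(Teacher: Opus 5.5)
Your outline follows the paper's proof. Wolpert's inequality \eqref{eq:min-wolpert} gives $\frac12 d_L^\omega\le d_T^\omega$ together with injectivity and surjectivity, and \Cref{lem:min-global} on $Q_r$ plus $\log P_\epsilon=o_\omega(R_i)$ gives the upper bound. The one divergence, and a gap, is in how you bound depths on $Y_{i,r}$. You apply the pinching comparison at $Y_i$ and invoke \Cref{lem:min-deep-persistence} for a $d_L$-admissible sequence $(Y_i)$ in $\Teich(S)$. But a point of $\Cone_\omega(\Teich(Q_r),X_{i,r},R_i;d_L)$ is an arbitrary sequence $(Y_{i,r})$ in $\Teich(Q_r)$ with $d_L(X_{i,r},Y_{i,r})=O_\omega(R_i)$, and no $Y_i\in\Teich(S)$ comes attached to it. As written, your argument therefore proves well-definedness and the bilipschitz bound only on the image of the projection $\mathcal X\to\mathcal X_r$ from \Cref{lem:min-cone-product}. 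That image is all \Cref{lem:min-dimension} uses, but the lemma (and your claim that $\Phi$ is bijective) concerns the whole cone.

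The paper avoids this by working only at the basepoints. From \eqref{eq:min-pinching} at $X_i$ and \Cref{lem:min-deep-persistence} it gets $\delta_{X_{i,r}}(\beta)\le\delta_{X_i}(\beta)+\log K_0=O_\omega(R_i)$ for all $\beta\in\Simples(Q_r)$. Since log-lengths are $1$-Lipschitz with respect to $d_L$ inside $\Teich(Q_r)$, this yields
\[
\log\frac{1}{\ell_{Y_{i,r}}(\beta)}\le \delta_{X_{i,r}}(\beta)+d_L(X_{i,r},Y_{i,r})+\log\frac1\epsilon=O_\omega(R_i)
\]
for every $d_L$-admissible $(Y_{i,r})$, with no reference to $\Teich(S)$.

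Alternatively, you can close the gap by lifting. On the $\omega$-large set where $X_i\in\Thin_\epsilon(S,\Gamma)$, let $Y_i$ be the preimage under $\Pi_\Gamma$ of the product coordinates of $X_i$ with the $Q_r$-factor replaced by $Y_{i,r}$. Then \eqref{eq:min-CR-product} gives $d_L(X_i,Y_i)=d_L(X_{i,r},Y_{i,r})+O(1)$, so $(Y_i)$ is admissible and your argument applies.

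The basepoint version is simpler. It also means your (fair) caveat that \eqref{eq:min-pinching} is stated only for thick curves needs handling only at the points $X_i$; the paper's proof glosses over that caveat there too.
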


\begin{proof}
  If $\xi(Q_r)=1$, there is nothing to prove as $\Teich(Q_r)$ is trivial. So, assume $\xi(Q_r)>1$.

We define the map $\Phi$ as the identity map on admissible sequences. First, we show that $\Phi$ is well-defined. So let $[Y_{i,r}] \in \Cone_\omega(\Teich(Q_r),X_{i,r},R_i;d_L)$  be a $d_L$-admissible sequence. We must show that it is $d_T$-admissible. 

By \Cref{lem:min-global}, 
\begin{equation}
d_T(X_{i,r}, Y_{i,r}) \leq A d_L(X_{i,r}, Y_{i,r}) + B \log P_\epsilon(X_{i,r}, Y_{i,r}) + B
\label{eq:min-global-rewrite}
\end{equation}
for some constants $A,B$. Following Eq~\eqref{eq:min-P},
\[
P_\epsilon(X_{i,r}, Y_{i,r})  =\max\left(
 \{2\}\cup
 \left\{
  \log\frac1{\ell_{X_{i,r}}(\alpha)},
  \log\frac1{\ell_{Y_{i,r}}(\alpha)}
 \right\}\right)
\]
where the maximum is taken over curves $\alpha$ that are $\epsilon$-short for both $X_{i,r}$ and $Y_{i,r}$. To show that $[Y_{i,r}]$ is $d_T$-admissible, we must first bound $P_\epsilon(X_{i,r},Y_{i,r})$.

So suppose $\beta$ is
$\eps$--short for both $X_{i,r}$ and $Y_{i,r}$. First we show that essential simple closed curves in $Q_r$  are shallow. That is, we will show that 
\begin{equation}
 \sup_{\beta\in\Simples(Q_r)}\delta_{X_{i,r}}(\beta)=O_\omega(R_i).
 \label{eq:min-comp-shallow}
\end{equation}
Indeed, if $Q_r$ is a pair of pants, then it has no essential, non-peripheral simple closed curves, so the statement is vacuously true. Otherwise, for each $\beta\in\Simples(Q_r)$, Eq.~\eqref{eq:min-pinching} gives that $\ell_{X_i}(\beta) < K_0 \ell_{X_{i,r}}(\beta)$. So from Eq~\eqref{eq:min-depth}, we see that 
\[
\delta_{X_{i,r}}(\beta) < \delta_{X_i}(\beta) + \log K_0.
\]
By \Cref{lem:min-deep-persistence}, we have $\sup_{\beta\in\Simples(Q_r)}\delta_{X_i}(\beta)=O_\omega(R_i)$. Thus, the same is true for $\ell_{X_{i,r}}(\beta)$, which gives
\eqref{eq:min-comp-shallow}. 

Using the definition of $\delta_{X_{i,r}}(\beta)$ in Eq~\eqref{eq:min-depth}, we get
\[
\log\frac1{\ell_{X_{i,r}}(\alpha)} \leq \delta_{X_{i,r}}(\beta) + \log \frac 1\epsilon
\]
where $\log \frac 1\epsilon$ is a constant. Thus, $\log\frac1{\ell_{X_{i,r}}(\alpha)} = O_\omega(R_i)$.

Next, $\ell_{X_i,r}(\beta) \leq e^{d_L(X_{i,r}, Y_{i,r})} \ell_{Y_{i,r}}(\beta)$. Therefore, 
\[
\log\frac1{\ell_{Y_{i,r}}(\beta)} \leq \log\frac1{\ell_{X_{i,r}}(\alpha)} + d_L(X_{i,r}, Y_{i,r})
\]
and so,
\begin{equation}
 \log\frac1{\ell_{Y_{i,r}}(\beta)}
 \le \delta_{X_{i,r}}(\beta)+d_L(X_{i,r},Y_{i,r})+\log\frac1\eps.
 \label{eq:min-depth-to-P}
\end{equation}
By Eq.~\eqref{eq:min-comp-shallow}, we know $\delta_{X_{i,r}}(\beta) = O_\omega(R_i)$. Since $[Y_{i,r}]$ is $d_L$-admissible, we also have $d_L(X_{i,r},Y_{i,r}) = O_\omega(R_i)$. And $\log \frac 1 \epsilon$ is a constant. Therefore,
\[
 \log\frac1{\ell_{Y_{i,r}}(\beta)} =O_\omega(R_i).
\]
From here, we conclude $P_\epsilon(X_{i,r}, Y_{i,r}) = O_\omega(R)$. Therefore, 
\[
B \log P_\epsilon(X_{i,r}, Y_{i,r}) + B = o_\omega(R_i)
\]
and so $d_T(X_{i,r}, Y_{i,r}) = A d_L(X_{i,r}, Y_{i,r}) + o_\omega(R)$. Thus, if $[Y_{i,r}]$ is $d_L$-admissible, meaning that $d_L(X_{i,r}, Y_{i,r}) = O_\omega(R_i)$, then it is also $d_T$ admissible. Therefore, the map $\Phi$ is well-defined.

We must now show that $\Phi$ is bilipschitz. By the same argument as above, for two $d_L$--admissible sequences $[Y_{i,r}],[Z_{i,r}]$,
\begin{equation}
 P_\eps(Y_{i,r},Z_{i,r})=O_\omega(R_i),
 \qquad \text{ and so, } \qquad
 B\log P_\eps(Y_{i,r},Z_{i,r}) + B=o_\omega(R_i).
 \label{eq:min-P-sublinear}
\end{equation}

Therefore, dividing both sides of Eq.~\eqref{eq:min-global-rewrite} by $R_i$ and taking limits, we get that 
\[
d_T^\omega([Y_{i,r}], [Z_{i,r}]) \leq A d_L^\omega([Y_{i,r}], [Z_{i,r}]) 
\]

On the other hand, we have $d_L({Y_i,r}, Z_{i,r}) \leq 2 d_T({Y_i,r}, Z_{i,r})$ by\cite[equation~(3)]{CR07:Comparison}, as we recorded in Eq~\eqref{eq:min-wolpert}. Again, dividing both sides by $R_i$ and taking the $\omega$-limit gives 
\[
d_L^\omega([Y_{i,r}], [Z_{i,r}]) \leq 2 d_T^\omega([Y_{i,r}], [Z_{i,r}]) 
\]
Therefore, $\frac12d_L^\omega\le d_T^\omega\le A d_L^\omega$, as required.
\end{proof}

The following is the analog of Bowditch's~\cite[Theorem~1.8]{Bow16} and Durham's results~\cite[Theorem~1.1]{Durham} for $d_L$ and $\dsym$.
See \Cref{defi:lebesgue_dim} in Section~\ref{subsec:adim} to recall the precise definition of topological dimension.
\begin{lem}
\label{lem:min-dimension}
Every compact subset of every asymptotic cone of
$(\Teich(S),d_L)$ or $(\Teich(S),\dsym)$ has topological dimension at most
$\xi(S)$.
\end{lem}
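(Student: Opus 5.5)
The plan is to reduce to the product decomposition of \Cref{lem:min-cone-product} and then bound dimensions factor by factor. Since $d_L\le\dsym\le 2d_L$, the identity on representing sequences induces a bilipschitz homeomorphism between the $d_L$-cone and the $\dsym$-cone with the same ultrafilter, basepoints and scales. Topological dimension of compact sets is a topological invariant, so it suffices to treat $d_L$. Fix a cone $\mathcal X=\Cone_\omega(\Teich(S),X_i,R_i;d_L)$. By \Cref{lem:min-deep-persistence}, after a level-dependent change of marking (which gives an isometric cone), there is a $k$-component multicurve $\Gamma$ that is deep for every admissible sequence. \Cref{lem:min-cone-product} then gives an isometric embedding
\[
\Psi\colon\mathcal X\to\prod_{r=1}^q\mathcal X_r\times\prod_{\gamma\in\Gamma}\mathcal A_\gamma .
\]
A compact $K\subset\mathcal X$ is therefore homeomorphic to the compact set $\Psi(K)$. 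Moreover $\Psi(K)\subset\prod_r K_r\times\prod_\gamma K_\gamma$, where $K_r$ and $K_\gamma$ are the coordinate projections of $\Psi(K)$; these are compact because the projections are $1$-Lipschitz.

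Next I bound the dimension of each factor. By \Cref{lem:min-annular-cones}, each $\mathcal A_\gamma$ is bilipschitz to an $\R$-tree, so any compact subset has topological dimension at most $1$. By \Cref{lem:bilip_on_shallow_comps}, each $\mathcal X_r$ is bilipschitz homeomorphic (via the identity on sequences) to the Teichmüller-metric cone $\Cone_\omega(\Teich(Q_r),X_{i,r},R_i;d_T)$. By Bowditch \cite[Theorem~1.8]{Bow16} or Durham \cite[Theorem~1.1]{Durham}, compact subsets of that cone have dimension at most $\xi(Q_r)$, and these results apply to the punctured surfaces $Q_r$. I then use the product inequality for compact metric spaces, $\dim(A\times B)\le\dim A+\dim B$, together with monotonicity of covering dimension for closed subsets of compact metric spaces. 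This gives
\[
\dim K\le\sum_{r}\xi(Q_r)+|\Gamma| .
\]

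Finally I check the count. Cutting $S$ along the $k=|\Gamma|$ curves and extending to a pants decomposition shows $\sum_r\xi(Q_r)=\xi(S)-k$, because every pants curve of $S$ other than those in $\Gamma$ is an interior pants curve of exactly one $Q_r$. Summing gives $\dim K\le\xi(S)-k+k=\xi(S)$. If $k=0$, the cone is simply bilipschitz to the $d_T$ cone by the argument of \Cref{lem:bilip_on_shallow_comps} applied with $Q_1=S$, and the bound is immediate.

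The main obstacle is importing the Bowditch/Durham bound correctly. Those theorems are stated for asymptotic cones of $\Teich(Q_r)$ with the Teichmüller metric and possibly with specific basepoint conventions. I must make sure that the cone in \Cref{lem:bilip_on_shallow_comps} is a genuine $d_T$ asymptotic cone with admissible basepoints $X_{i,r}$, which holds since $d_L$-admissibility implies $d_T$-admissibility there. I must also make sure the statements cover punctured surfaces and arbitrary compact subsets rather than only locally compact pieces. A secondary point is the dimension of compact subsets of an $\R$-tree, which I would justify by noting that such a set is a compact subset of a $1$-dimensional tree, hence has dimension at most $1$.
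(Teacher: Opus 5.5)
Your proposal is correct and follows the paper's proof essentially step for step. You reduce $\dsym$ to $d_L$ via the bilipschitz identity on sequences, stabilize the deep multicurve $\Gamma$, and embed the cone isometrically via $\Psi$. You then bound the shallow factors by Bowditch/Durham through \Cref{lem:bilip_on_shallow_comps} and the annular factors by their $\R$-tree structure, and finish with monotonicity, the product theorem and the count $\sum_r\xi(Q_r)=\xi(S)-|\Gamma|$. Your extra remarks, on the $k=0$ case and on compact subsets of $\R$-trees, are harmless refinements of the same argument.
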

\begin{proof}
Let $\mathcal X$ be an arbitrary $d_L$--cone and let
$K\subset\mathcal X$ be compact. Let $\Gamma$ be the simple closed multicurve given by \Cref{lem:min-deep-persistence}. (Note that the topological dimension is an isometry invariant, so we can assume that $\mathcal X$ already satisfies the conclusions of \Cref{lem:min-deep-persistence}). Pinching the boundaries of the components of $S\setminus \Gamma$ gives us the sequence of punctured surfaces $Q_1, \dots, Q_q$. Then \Cref{lem:min-cone-product} gives an isometric embedding
\begin{equation*}
 \Psi:\mathcal X\longrightarrow
 \prod_{r=1}^q\mathcal X_r
 \times\prod_{\gamma\in\Gamma}\mathcal A_\gamma
\end{equation*}
where $X_r$ is an asymptotic cone defined on $\Teich(Q_r)$ and $A_\gamma$ is an asymptotic cone defined on $U_\epsilon = \R \times (\frac 1 \epsilon, \infty)$.

Project $\Psi(K)$ to compact sets
$K_r\subset\mathcal X_r$ and $K_\gamma\subset\mathcal A_\gamma$.
By \Cref{lem:bilip_on_shallow_comps}, 
$K_r$ is bilipschitz
to a compact subset
of the $d_T$--cone $\Cone_\omega(\Teich(Q_r),X_{i,r},R_i;d_T)$. Bowditch's~\cite[Theorem~1.8]{Bow16} and Durham's~\cite[Theorem~1.1]{Durham} dimensions theorems therefore give
\[
 \dim K_r\le\xi(Q_r),
\]
On the other hand, \Cref{lem:min-annular-cones} gives
$\dim \mathcal A_\gamma = 1$, and so $\dim K_\gamma \leq 1$.  

The set
$\Psi(K)$ is compact, hence closed, in the product of its compact
projections.  Monotonicity and the finite-product theorem for dimension
\cite[Theorems~III.1 and III.4, pp.~26 and~33]{HW} give
\begin{equation}
 \dim K
 \le\sum_{r=1}^q\xi(Q_r)+|\Gamma|.
 \label{eq:min-dim-product}
\end{equation}
Cutting $S$ along each curve in $\Gamma$ and replacing the two new resulting boundary
curves by punctures lowers the sum of the complementary complexities by
one.  Hence
\begin{equation}
 \sum_{r=1}^q\xi(Q_r)=\xi(S)-|\Gamma|.
 \label{eq:min-complexity}
\end{equation}
Equations~\eqref{eq:min-dim-product} and~\eqref{eq:min-complexity} give
$\dim K\le\xi(S)$.

Finally, since $d_L$ and $\dsym$ are bilipschitz equivalent, the identity on representing
sequences is bilipschitz between the corresponding $d_L$-- and
$\dsym$--cones.  The same compact-subset bound therefore holds for
$\dsym$.
\end{proof}

\subsection{Proof of Theorem~\ref{thm:min-rank}}
\label{subsec:proof}

We give the proof of Theorem~\ref{thm:min-rank}, which states that the rank of $\Teich(S)$ with the Lipschitz metric is exactly $\xi(S)$.
\begin{proof}
Let $m=\rank(\Teich(S), \dsym)$. Then there exist $\lambda \geq 1, K > 0$ and a sequence of $(\lambda, K)$ quasi-isometric embeddings $f_i \colon [0,R_i]^m \to \Teich(S)$.
By \Cref{lem:min-cube-cone}, for every nonprincipal ultrafilter $\omega$, the asymptotic $\dsym$-cone $\mathcal{X}$ of
$\Teich(S)$ based at $f_i(0)$ and scaled by $R_i$ contains a compact subset $K$ bilipschitz to $[0,1]^m$. Hence, $m = \dim(K)$. On the other hand, by \Cref{lem:min-dimension}, $\dim(K) \leq \xi(S)$, and hence $m \leq \xi(S)$.

 The lower bound on flat rank follows easily from~\cite[Theorem~C]{CR07:Comparison}:  indeed, consider a pants decomposition $\{\gamma_1,\cdots,\gamma_{\xi(S)}\}$, and for each factor $U_\epsilon(\gamma_i)$ we define a ray $[0,\infty) \to U_\epsilon(\gamma_i)$ of marked hyperbolic structures with length coordinate given by  $\ell_{\gamma_i}(t) \coloneqq \epsilon/2 e^{-t}$, and twist coordinate $t_{\gamma_i}(t)\coloneqq 0$, for every $t \in [0,\infty)$. By~\cite[Theorem~C]{CR07:Comparison}, combined these give a quasi-isometric embedding of $\mathbb{R}^{\xi(S)}_+$ into $\Teich(S)$. Thus, the flat rank of $\Teich(S)$ with the Lipschitz metric is at least $\xi(S)$. By \Cref{lem:flat_rank_lower}, flat rank gives a lower bound on geometric rank. This yields the result.
\end{proof}

\section{Appendix}
\label{sec:appendix}
This next lemma allows us to use many sectors of $\R^n_+$ rather than $\R^n_+$ itself, when bounding sector rank from below.

\begin{lem}
\label{lem:sectors_QI}
 Let $\vec v_1, \dots,\vec v_n$ be $n$ linearly independent vectors in $\R^n_+$. Let $\mathcal R = \{\sum x_i \vec v_i \ | \ x_i \geq 0\}$ be the positive convex hull of the rays through these vectors. Then $\mathcal R$ is quasi-isometric to $\R^n_+$.
\end{lem}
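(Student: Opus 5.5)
The plan is to show that the linear isomorphism $T\colon\R^n\to\R^n$, $T(\vec x)=\sum_i x_i\vec v_i$, restricts to a bilipschitz bijection $\R^n_+\to\mathcal R$. Here $\R^n_+$ is read as the closed orthant $\{x_i\geq 0\}$; if the open orthant is intended, the two are at Hausdorff distance zero, so nothing changes. Both spaces carry the restriction of the Euclidean metric $d_\E$.

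The steps are as follows.
\begin{enumerate}
\item Linear independence of $\vec v_1,\dots,\vec v_n$ makes $T$ invertible, so $T$ is bijective on all of $\R^n$.
\item By definition of $\mathcal R$, the map $T$ sends the orthant $\{x_i\geq 0\}$ onto $\mathcal R$, and bijectively.
\item Every linear map of $\R^n$ is Lipschitz for $d_\E$ with constant its operator norm. This gives
\[
\|T^{-1}\|^{-1}\, d_\E(\vec x,\vec y)\leq d_\E\bigl(T\vec x,T\vec y\bigr)\leq \|T\|\, d_\E(\vec x,\vec y)
\]
for all $\vec x,\vec y$, and in particular for all $\vec x,\vec y\in\R^n_+$.
\item Hence $T|_{\R^n_+}$ is a $(K,0)$-quasi-isometry onto $\mathcal R$ with $K=\max\{\|T\|,\|T^{-1}\|\}$. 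Surjectivity gives coarse density with $D=0$.
\end{enumerate}

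In the application in \Cref{lem:qi_to_Rn}, the region $\mathcal S$ is carried by the $\ell^\infty$ metric rather than $d_\E$, and the generating vectors live in the hyperplane $X_n=0$ of $\R^{n+1}$. Two adjustments handle this. First, $\ell^\infty$ and $d_\E$ are bilipschitz on $\R^n$ with constant $\sqrt n$, so the metric change costs only a bounded multiplicative factor. Second, identifying the hyperplane $\{X_n=0\}$ with $\R^n$ is an isometry. So the same argument applies with a constant $K\sqrt n$.

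The only point needing care, rather than a real obstacle, is checking that the image is exactly $\mathcal R$ and not a proper subset. This follows from the definition of $\mathcal R$ as the set of nonnegative combinations of the $\vec v_i$, together with uniqueness of coefficients given by linear independence. Note also that the hypothesis $\vec v_i\in\R^n_+$ is not used; only linear independence matters.
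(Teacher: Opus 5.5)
Your proof is correct and is essentially the paper's argument. The paper uses the inverse map $A=T^{-1}$, which sends $\vec v_i\mapsto\vec e_i$ and hence $\mathcal R\to\R^n_+$, and obtains a $(\lambda,0)$-quasi-isometry from the two-sided norm bounds of an invertible linear map, just as you do with $\|T\|$ and $\|T^{-1}\|$; your remarks on the $\ell^\infty$ metric in \Cref{lem:qi_to_Rn} and on the unused positivity hypothesis are accurate points the paper leaves implicit.
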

\begin{proof}
As $\vec v_1, \dots, \vec v_n$ are linearly independent, there is some matrix $A$ sending them to the standard basis $\vec e_1, \dots, \vec e_n$. By linearity, $A(\sum x_i v_i) = \sum x_i e_i$, so in particular, it sends $\mathcal R$ to $\R^n_+$.

As $\vec v_1, \dots, \vec v_n$ form a basis for $\R^n$, the matrix $A$ defines a bijection on $\R^n$. Therefore, there are constants $c, C>0$ so that, for all $\vec x$,
\[
 c\norm{x} \leq \norm{A x} \leq C\norm{x}.
\]
Setting $\lambda = \max \{C, \frac 1c\}$, we see that the map $\vec x \mapsto A \vec x$ is a $(\lambda, 0)$-quasi-isometry from $\mathcal R$ to $\R^n_+$.
\end{proof}

The following is a classical inequality relating ratios of sums to the minimum and maximum of ratios.

\begin{lem}
\label{lem:ratiosum}
Let $(a_i)', (w_i)$ be a sequences of non-negative real numbers, and $(a_i)$ a sequence positive of real numbers so that $\sum_{i} w_i a_i' < \infty$ and $\sum_{i} w_i a_i < \infty$.
Then
\[
\inf_i \frac{a_i'}{a_i} \leq \frac{\sum_{i} w_i a_i' }{\sum_{i} w_i a_i} \leq \sup_i \frac{a_i'}{a_i}
\]
\end{lem}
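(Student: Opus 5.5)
This is a weighted-mediant inequality, and I will prove it by comparing the two sums term by term. Put $m \coloneqq \inf_i a_i'/a_i$ and $M \coloneqq \sup_i a_i'/a_i$. Since every $a_i$ is positive, these ratios are well defined, and $0 \leq m \leq M \leq \infty$. For each index $i$ the definitions give
\[
m\, a_i \leq a_i' \leq M\, a_i .
\]
I will multiply this by $w_i \geq 0$, which preserves the inequalities, to obtain $m\, w_i a_i \leq w_i a_i' \leq M\, w_i a_i$.

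Next I sum over $i$. Both series $\sum_i w_i a_i$ and $\sum_i w_i a_i'$ are assumed convergent, and they have nonnegative terms, so summing termwise is legitimate whether the index set is finite or countable. This gives
\[
m \sum_i w_i a_i \;\leq\; \sum_i w_i a_i' \;\leq\; M \sum_i w_i a_i .
\]
If $M = \infty$, the upper bound is vacuous, and the upper half of the argument only needs to be carried out when $M$ is finite.

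The final step is to divide by $\sum_i w_i a_i$, and this is the only point that needs care. The middle quotient in the statement only makes sense when this denominator is nonzero. Because every $a_i > 0$, the denominator is positive exactly when some $w_i > 0$, so I will state that tacit hypothesis explicitly. With a positive denominator, dividing the displayed chain yields
\[
\inf_i \frac{a_i'}{a_i} \leq \frac{\sum_{i} w_i a_i'}{\sum_{i} w_i a_i} \leq \sup_i \frac{a_i'}{a_i},
\]
which is the claim.

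I expect no real obstacle. The only step requiring attention is the one just described: noting that the denominator is positive and that the case $M = \infty$ is trivial. The remaining steps are pointwise comparisons.
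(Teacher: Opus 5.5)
Your proposal is correct and is essentially the paper's proof: bound each ratio $a_i'/a_i$ between the infimum and supremum, multiply by $w_i a_i$, sum over $i$, and divide by $\sum_i w_i a_i$. Your extra remarks (that the denominator must be positive, i.e.\ some $w_i>0$, and that the upper bound is vacuous when the supremum is $\infty$) make explicit hypotheses the paper leaves tacit.
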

\begin{proof}
Let $a \coloneqq \inf_i \frac{a_i'}{a_i}$ and $b \coloneqq \sup_i \frac{a_i'}{a_i}$.
Then
\[
a \leq \frac{a_i'}{a_i} \leq b
\]
so
\[
w_i a_i a \leq w_i a_i' \leq w_i a_i b
\]
so, adding over $i$
\[
\sum_i w_i a_i a \leq \sum_i w_i  a_i' \leq \sum_i a_i w_i b
\]
which, dividing by $\sum_i w_i a_i$ yields the inequality.
\end{proof}

\bibliographystyle{hamsalpha}
\bibliography{main}

\end{document}